\theoremstyle{plain}
\newtheorem{thm}{\protect\theoremname}
\theoremstyle{plain}
\newtheorem{lem}[thm]{\protect\lemmaname}
\theoremstyle{plain}
\newtheorem{cor}[thm]{\protect\corname}
\theoremstyle{definition}
\newtheorem{example}[thm]{\protect\examplename}
\theoremstyle{definition}
\newtheorem{defn}[thm]{\protect\definitionname}
\theoremstyle{plain}
\newtheorem{prop}[thm]{\protect\propositionname}
\theoremstyle{plain}
\newtheorem*{lem*}{\protect\lemmaname}
\theoremstyle{remark}
\newtheorem{claim}[thm]{\protect\claimname}
\theoremstyle{remark}
\newtheorem{remark}[thm]{\protect\remarkname}
\newcommand{\abbr}[1]{{\sc\lowercase{#1}}}
\providecommand{\claimname}{Claim}
\providecommand{\definitionname}{Definition}
\providecommand{\examplename}{Example}
\providecommand{\lemmaname}{Lemma}
\providecommand{\corname}{Corollary}
\providecommand{\propositionname}{Proposition}
\providecommand{\theoremname}{Theorem}
\providecommand{\remarkname}{Remark}
\numberwithin{equation}{section}
\numberwithin{thm}{section}
\newcommand\nc\newcommand
\DeclareMathOperator
\nc{\red}[1]{{\color{red} #1}}
\nc{\blue}[1]{{\color{blue} #1}}
\nc{\green}[1]{{\color{green} #1}}
\nc{\cyan}[1]{{\color{cyan} #1}}
\definecolor{purple}{rgb}{0.9,0,0.8}
\nc{\purple}[1]{{\color{purple} #1}}
\definecolor{gray}{rgb}{0.5,0.5,0.5}
\nc{\gray}[1]{{\color{gray} #1}}
\nc{\note}[1]{{\blue{\textup{[#1]}}}}
\nc{\A}{\mathbb{A}}
\nc{\B}{\mathbb{B}}
\nc{\D}{\mathbb{D}}
\nc{\E}{\mathbb{E}}
\nc{\F}{\mathbb{F}}
\nc{\bI}{\mathbb{I}}
\nc{\J}{\mathbb{J}}
\nc{\K}{\mathbb{K}}
\nc{\M}{\mathbb{M}}
\nc{\N}{\mathbb{N}}
\nc{\bO}{\mathbb{O}}
	\renewcommand{\P}{\mathbb{P}}
\nc{\Q}{\mathbb{Q}}
\nc{\R}{\mathbb{R}}
\nc{\bS}{\mathbb{S}}
\nc{\T}{\mathbb{T}}
\nc{\V}{\mathbb{V}}
\nc{\W}{\mathbb{W}}
\nc{\X}{\mathbb{X}}
\nc{\Y}{\mathbb{Y}}
\nc{\Z}{\mathbb{Z}}
\nc{\cA}{\mathcal{A}}
\nc{\cB}{\mathcal{B}}
\nc{\cC}{\mathcal{C}}
\nc{\cD}{\mathcal{D}}
\nc{\cE}{\mathcal{E}}
\nc{\cF}{\mathcal{F}}
\nc{\cG}{\mathcal{G}}
\nc{\cH}{\mathcal{H}}
\nc{\cI}{\mathcal{I}}
\nc{\cJ}{\mathcal{J}}
\nc{\cK}{\mathcal{K}}
\nc{\cL}{\mathcal{L}}
\nc{\cM}{\mathcal{M}}
\nc{\cN}{\mathcal{N}}
\nc{\cO}{\mathcal{O}}
\nc{\cP}{\mathcal{P}}
\nc{\cQ}{\mathcal{Q}}
\nc{\cR}{\mathcal{R}}
\nc{\cS}{\mathcal{S}}
\nc{\cT}{{\cA}}
\nc{\cU}{{\mathcal{U}}}
\nc{\cV}{\mathcal{V}}
\nc{\cW}{\mathcal{W}}
\nc{\cX}{\mathcal{X}}
\nc{\cY}{\mathcal{Y}}
\nc{\cZ}{\mathcal{Z}}
\nc{\eps}{\varepsilon}
\nc{\epp}{\epsilon}
\nc{\lam}{\lambda}
\nc{\ls}{\lesssim}
\nc{\gs}{\gtrsim}
\def \lf {\lfloor}
\def \rf {\rfloor}
\dmo{\lls}{\,\ls\,}
\dmo{\ggs}{\,\gs\,}
\dmo{\adj}{adj}
\dmo{\Hom}{Hom}
\dmo{\injo}{inj}
\dmo{\proj}{proj}
\dmo{\tr}{tr}
\dmo{\Tr}{Tr}
\nc{\Span}{\operatorname{span}}
\def \tran {\mathsf{T}}
\dmo{\rank}{rank}
\dmo{\pr}{\mathbb{P}}
\dmo{\e}{\mathbb{E}}
\dmo{\Var}{Var}
\dmo{\Corr}{Corr}
\dmo{\Cov}{Cov}
\nc{\eqd}{\stackrel{\text{\tiny $d$}}{=}}
\nc{\iid}{\abbr{iid}}
\nc{\ldp}{\abbr{ldp }}
\nc{\ldps}{\abbr{ldp}s }
\dmo{\Ber}{Ber}
\dmo{\Bern}{Bernoulli}
\dmo{\Bin}{Bin}
\dmo{\Poi}{Poi}
\nc{\bvec}{{\underline{b}}}
\nc{\ind}{\mathbb{I}}
\dmo\indhom{ind}
\nc\Verts{\mathsf{V}}
\nc\Edges{\mathsf{E}}
\nc\verts{\mathsf{v}}
\nc\edges{\mathsf{e}}
\nc\ER{Erd\H{o}s--R\'enyi }
\dmo\UT{UT}
\dmo\LT{LT}
\nc\bA{{\bs{A}}}
\nc\bG{{\bs{G}}}
\nc{\Tens}{{\mathcal{Z}}}
\nc{\Tensn}{{\Tens_n}}
\nc{\Tensnr}{{\Tens_{n,r}}}			
\nc{\tens}{{Z}}
\nc{\Syms}{{\mathcal{S}}}
\nc{\Symsn}{{\Syms_n}}
\nc{\Symsnr}{{\Syms_{n,r}}}		
\nc{\syms}{{S}}
\nc{\usym}{{\underline{\syms}}}
\nc{\uS}{{\underline{\syms}}}
\nc{\cAn}{{\cA_n}}
\nc{\cAnr}{{\cA_{n,r}}}		
\nc{\cQn}{{\cQ_n}}
\nc{\cQnr}{{\cQ_{n,r}}}			
\nc{\Test}{{\mathcal{T}}}
\nc{\test}{{T}}
\nc{\ttest}{{\tau}}
\nc{\htest}{{\wh{\test}}}
\nc{\pnorm}{{\Delta,p}}
\nc{\ba}{\bs a}
\nc{\base}{{\mathsf{f}}}
\nc{\Base}{{\mathsf{F}}}		
\nc{\Basys}{{\mathsf{B}}}
\nc{\BBasys}{{\mathbbmss{B}}}
\nc{\dom}{{\mathsf{f}}}
\nc{\Dom}{{\mathsf{F}}}
\nc{\nbr}{\mathcal{N}}
\nc{\bdy}{\partial}
\nc{\dd}{{d}}
\nc{\ddstar}{\dd_\star}	
\nc{\ddprime}{\dd'}
\nc{\DDprime}{{\Delta'}}
\nc{\DDedge}{{\Delta_\star}}
\nc{\cover}{\mathcal{I}}
\nc{\convex}{{\mathcal{K}}}
\nc{\badset}{{\mathcal{E}_\star}}
\nc{\uX}{\underline{X}}
\nc{\uY}{\underline{Y}}
\nc{\uZ}{\underline{Z}}
\nc{\uA}{\underline{A}}
\nc{\uB}{\underline{B}}
\nc{\uP}{\underline{P}}
\nc{\uQ}{\underline{\smash{Q}}}
\nc{\hQ}{{\wh Q}}
\dmo{\hull}{hull}
\nc{\HG}{{G}}
\nc{\Lg}{{K}}
\nc{\resid}{{R}}
\nc{\tA}{{\wt{A}}}
\nc{\tB}{{\wt{B}}}
\nc{\tZ}{{\wt{Z}}}
\nc{\utB}{\wt{\uB}}
\nc{\utA}{\wt{\uA}}
\nc{\utZ}{\wt{\uZ}}
\nc{\jay}{{J_{n,r}}}
\dmo\str{{str}}
\dmo\rand{{rand}}
\dmo{\ME}{{ME}}
\nc{\nbhd}{{\cU}}
\dmo{\CN}{{CN}}
\dmo{\cn}{{cn}}
\nc{\growing}{{w}}
\nc{\rate}{{R}}
\nc{\PUP}{{(1+\delta)^{-1/\edges(H)}}}
\dmo{\DKL}{{D}}
\dmo{\eye}{{I}}
\nc{\uH}{{\uline{H}}}
\nc{\udelta}{{\uline{\delta}}}
\nc{\uL}{{\uline{L}}}
\nc{\nick}[1]{#1}
\nc{\purp}[1]{{\color{purple} #1}}
\nc{\Lp}{\big|\log(p \wedge (1-p))\big|}
\begin{document}
\title[Regularity method and LDP for random hypergraphs]{Regularity method 
and large deviation principles for the Erd\H{o}s--R\'enyi hypergraph}

\author[N.\ A.\ Cook]{Nicholas A.\ Cook$^\star$}
 \address{$^\star$Department of Mathematics, Duke University, 
120 Science Dr, Durham, NC 27710}\email{nickcook@math.duke.edu}
\author[A.\ Dembo]{Amir Dembo$^{\ddagger}$}\thanks{$
{}^{\ddagger}$Partially supported by NSF grant DMS-1954337.}
\address{$^{\ddagger}$Department of Mathematics, Stanford University, 
Stanford, CA 94305}\email{adembo@stanford.edu}
\author[H.\ Pham]{
Huy Tuan Pham$^{\mathsection}$}
\address{$^{\mathsection}$Department of Mathematics, Stanford University, 
Stanford, CA 94305}\email{huypham@stanford.edu}
\date{\today}

\subjclass[2010]{05C65, 60F10, 15A69, 05C80}
\keywords{Random tensors, large deviations, hypergraph homomorhism, tensor norms, tensor decomposition,
sparse counting lemma}

\maketitle
\begin{abstract}
We develop a quantitative large deviations theory for random 
hypergraphs, which rests on tensor decomposition and counting lemmas 
under a novel family of cut-type norms. As our main application, we obtain
sharp asymptotics for joint upper and lower tails of homomorphism counts in the $r$-uniform Erd\H{o}s--R\'enyi hypergraph for any fixed $r\ge 2$, generalizing and improving on previous results for the Erd\H{o}s--R\'enyi graph ($r=2$). The theory is sufficiently quantitative to allow the density of the hypergraph to vanish at a polynomial rate, and additionally yields tail asymptotics for other nonlinear functionals, such as induced homomorphism counts.
\end{abstract}

\section{Introduction}

\subsection{Overview}
\label{sec:overview}

For a fixed integer $r\ge2$ and (large) integer $n$, let $\cQ_{n,r}=[0,1]^{{[n]\choose r}}$ denote the set of $[0,1]$-valued functions 
on $r$-sets $I=\{i_1,\dots, i_r\}\subset[n]:=\{1,\dots,n\}$.
We associate elements of $\cQ_{n,r}$ with edge-weighted $r$-uniform hypergraphs over $[n]$ with edge weights $Q(I)$, $I\in{{[n]\choose r}}$.
The set $\cQ_{n,r}$ also parametrizes the collection of inhomogeneous \ER measures $\mu_Q$ over \emph{unweighted} $r$-uniform hypergraphs (\emph{$r$-graphs}), where for a random $r$-graph with distribution $\mu_Q$, 
each $r$-set $I$ is included as an edge independently with probability 
$Q(I)$. 
For the case that $Q\equiv p$ for some $p\in (0,1)$ we have that $\mu_Q=:\mu_p$ is the distribution of the \ER $r$-graph $\bG=\bG_{n,p}^{(r)}$. 

Our aim is to establish precise estimates, at exponential scale, for probabilities of rare events for $\bG$, and in particular to justify asymptotics (in the large $n$ limit) of the form
\begin{equation}	\label{LDP.top}
\log\mu_p(\cE) \sim - \inf\big\{ \eye_p(Q): Q\in \cE'\big\}
\end{equation}
for general sets of hypergraphs $\cE$ (viewed as subsets of the discrete cube $\{0,1\}^{{[n]\choose r}}$), where the infimum is taken over an appropriate ``approximation'' $\cE'$
of $\cE$ in the solid cube $\cQ_{n,r}$.
Here, $\eye_p(Q):=\DKL(\mu_Q\|\mu_p)$ 
is the relative entropy
of $\mu_Q$ with respect to $\mu_p$ (see \eqref{def:eyepQ}).

Of particular interest are tail estimates for the number of occurrences of a fixed $r$-graph $H$ as a sub-hypergraph of $\bG$, which have been the subject of intense activity in recent years, mainly for the case $r=2$ (we review the literature in $\mathsection$\ref{sec:previously} below). 
Writing $\Verts(H), \Edges(H)$ for the vertex and edge sets of an $r$-graph $H$, and $\verts(H), \edges(H)$ for their respective cardinalities, we recall the \emph{homomorphism density} of $H$ in 
a weighted hypergraph $Q\in\cQ_{n,r}$ is 
\begin{equation}	\label{def:tQ}
t(H,Q) = \frac1{n^{\verts(H)}} \sum_{\phi: \Verts(H)\to [n]} \prod_{e\in \Edges(H)} Q(\phi(e))\,,
\end{equation}
where we have extended $Q$ symmetrically to a function on ordered $r$-tuples, taking value zero when the coordinates are not all distinct.
For the case that $Q=A_G$ is the 0--1 adjacency tensor of an $r$-graph $G$, this
is the probability that a uniform random mapping of the vertices of $H$ into the vertices of $G$  maps the edges of $H$ onto edges of $G$. In this case we abusively write $t(H,G):=t(H,A_G)$. 

As an application of our main results -- namely, the quantitative \ldp of \Cref{thm:LDP} (a consequence of a tensor \emph{decomposition lemma} (\Cref{thm:reg})) and a \emph{counting lemma} (\Cref{thm:count}) -- we obtain the following instances of \eqref{LDP.top} for intersections of super/sub-level sets of functionals \eqref{def:tQ}. 
For a sequence of $r$-graphs $\uH=(H_1,\dots, H_m)$ and $\udelta=(\delta_1,\dots,\delta_m)\in\R_+^m$, define the joint upper-tail rate and corresponding entropic optimization problem
\begin{small}
\begin{align}
\UT_{n,p}(\uH,\udelta) &:= -\log\P\big(\,t(H_k,\bG)\ge (1+\delta_k)p^{\edges(H_k)}\,,\,1\le k\le m\big)\,, 	\label{def:UT}\\
\Phi_{n,p}(\uH,\udelta) &:= \inf_{Q\in\cQ_{n,r}}\big\{\eye_p(Q): \, t(H_k,Q) \ge (1+\delta_k)p^{\edges(H_k)}\,,\,1\le k\le m\big\}\,, 	\label{def:Phi}
\end{align}
\end{small}
and for $\udelta\in (0,1)^m$ the analogous joint lower-tail quantities
\begin{small}
\begin{align}
\LT_{n,p}(\uH,\udelta) &:= -\log\P\big(\,t(H_k,\bG)\le (1-\delta_k)p^{\edges(H_k)}\,,\, 1\le k\le m\big)\,, 	\label{def:LT}\\
\Psi_{n,p}(\uH,\delta) &:= \inf_{Q\in\cQ_{n,r}}\big\{\eye_p(Q): \, t(H_k,Q) \le (1-\delta_k)p^{\edges(H)}\,,\,1\le k\le m\big\}\,.
\label{def:Psi}
\end{align}
\end{small}
The scaling by $p^{\edges(H)}$ is natural as one checks that $\E\, t(H,\bG) \sim p^{\edges(H)}$ for the range of $p$ considered below. 
For an $r$-graph $H$ we write $\Delta(H)$ for its max-degree -- that is, the maximum over $v\in\Verts(H)$ of $\deg_H(v)=|\{e\in \Edges(H): v\in e\}|$. In the following we additionally refer to a hypergraph parameter $\DDprime(H)$ whose definition is deferred to \eqref{eq:def-delta'},
only noting here that it always lies in the range
\begin{equation}	\label{DDprime.range}
\tfrac1r(\Delta(H)+1)\le \DDprime(H) \le \Delta(H)+1\,
\end{equation}
with the lower bound attained (for instance) by stars, and the upper bound by cliques.
For our conventions on asymptotic notation see $\mathsection$\ref{sec:notation}.

\begin{thm}\label{thm:tails}
Fix $r$-graphs $H_1,\dots, H_m$. 
Let $\Delta_{\max}=\max_k\Delta(H_k)$ and $\Delta'_{\max}=\max_k\DDprime(H_k)$. 
\begin{itemize}[leftmargin=.2cm,topsep=.3cm,itemsep=.2cm]
\item[] \uline{Joint upper tail}:
\nick{If} $n^{-1/\Delta'_{\max}}\ll p<1$, then
for any fixed $\delta_1,\dots,\delta_m>0$, 
\begin{equation}	\label{UT.LB}
\UT_{n,p}(\uH,\udelta) \ge 
(1-o(1)) \Phi_{n,p}(\uH, \udelta-o(1)) 
\end{equation}
and if 
$n^{-1/\Delta_{\max}}\ll p<1$, 
then 
\begin{equation}	\label{UT.UB}
\UT_{n,p}(\uH,\udelta) \le 
(1+o(1)) \Phi_{n,p}(\uH, \udelta+o(1)) \,.
\end{equation}

\item[] \uline{Joint lower tail}: 
If $n^{-1/\Delta'_{\max}}\log n\ll p< 1$, then for any fixed 
$\delta_1,\dots, \delta_m\in(0,1)$,
\begin{equation}	\label{LT.UBLB}
 \LT_{n,p}(\uH,\udelta) =
(1+o(1)) \Psi_{n,p}(\uH, \udelta+o(1)) \,.
\end{equation}
\end{itemize}
\end{thm}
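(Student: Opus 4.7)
The plan is to derive Theorem~\ref{thm:tails} as a consequence of the three main tools highlighted in the excerpt: the quantitative LDP (Theorem~\ref{thm:LDP}), the decomposition lemma (Theorem~\ref{thm:reg}), and the counting lemma (Theorem~\ref{thm:count}). The upper bounds on the tail probabilities (\eqref{UT.LB} and the matching direction of \eqref{LT.UBLB}) will come from applying the LDP to the tail events and then using the counting lemma to translate cut-type approximation into approximate preservation of homomorphism densities; the lower bounds on the tail probabilities (\eqref{UT.UB} and the other direction of \eqref{LT.UBLB}) will come from a classical change-of-measure argument.

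For \eqref{UT.LB}, I would start from $\cE=\bigcap_k\{t(H_k,\bG)\ge(1+\delta_k)p^{\edges(H_k)}\}$ and apply the LDP, which bounds $-\log\mu_p(\cE)\ge(1-o(1))\inf\{\eye_p(Q):Q\in\cE'\}$ for a suitable cut-type-norm thickening $\cE'$ of $\cE$. The counting lemma then asserts that $Q\mapsto t(H_k,Q)$ is stable in the cut-type norm up to additive error $o(1)$, and hence $\cE'\subseteq\{Q:t(H_k,Q)\ge(1+\delta_k-o(1))p^{\edges(H_k)}\text{ for all }k\}$; taking the infimum over $\cE'$ yields the right-hand side of \eqref{UT.LB}. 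The density assumption $n^{-1/\DDprime_{\max}}\ll p$ enters here as the quantitative regime required by the counting lemma.

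For \eqref{UT.UB}, I would use a tilting argument. Pick a near-minimizer $Q^\star$ of $\Phi_{n,p}(\uH,\udelta+\eta)$ for some $\eta=o(1)$, so that $\eye_p(Q^\star)\le(1+o(1))\Phi_{n,p}(\uH,\udelta+\eta)$ and $t(H_k,Q^\star)\ge(1+\delta_k+\eta)p^{\edges(H_k)}$ for all $k$. The standard entropy bound gives
\[ -\log\mu_p(\cE)\;\le\;\frac{\eye_p(Q^\star)+\log 2}{\mu_{Q^\star}(\cE)}, \]
so it suffices to show $\mu_{Q^\star}(\cE)\ge 1/2$; equivalently, under the product measure $\mu_{Q^\star}$ each $t(H_k,\bG)$ concentrates around its mean $t(H_k,Q^\star)$ to within the slack $\eta$. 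This is a polynomial variance bound of Efron--Stein type, and the regime $n^{-1/\Delta_{\max}}\ll p$ is precisely what makes the variance negligible relative to the squared mean. The two-sided lower-tail estimate \eqref{LT.UBLB} is then obtained by running both halves of the above strategy in parallel; the lower-tail event is monotone in $\bG$ and its entropic optimizers have a cleaner structural form, so the mild strengthening $n^{-1/\DDprime_{\max}}\log n\ll p$ suffices to yield a sharp two-sided asymptotic equality.

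The main obstacle is \eqref{UT.LB}, the upper bound on the upper-tail probability. To match $\Phi_{n,p}$ to within a multiplicative $1-o(1)$ one must ensure that \emph{every} graph $\bG\in\cE$ is approximated in the cut-type norm by some low-entropy tensor $Q$ that still satisfies the density constraints up to $o(1)$ slack, with no exceptional family of graphs escaping the net produced by the decomposition lemma. The novel family of cut-type norms and the counting lemma of Theorem~\ref{thm:count}, designed to remain valid down to polynomially vanishing $p$, are what make this step go through for $r\ge 3$ and for the joint multi-hypergraph constraints considered here.
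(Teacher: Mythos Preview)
Your high-level strategy matches the paper's: the LDP upper bound plus the counting lemma for \eqref{UT.LB} and the upper bound in \eqref{LT.UBLB}, and a tilting argument with Efron--Stein concentration for \eqref{UT.UB}. However, you gloss over the step where the argument actually bites.

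The counting lemma (\Cref{thm:count}) does \emph{not} say unconditionally that $t_p(H,\cdot)$ is stable under the $\BBasys^*$-norm; it requires the a priori hypothesis \eqref{assu:count0} that $t_p(H',Q_0)\le L$ for every proper subgraph $H'\subset H$ and some $Q_0$ in the hull of the ball. On the raw upper-tail event this hypothesis can fail catastrophically, and so one cannot simply apply the counting lemma to the thickening $\cE'$ of $\cE$. The paper handles this for \eqref{UT.LB} by proving \Cref{prop:upper} and \Cref{claim:crude-upper} simultaneously by induction on $\edges(H)$: at each step one intersects with the sublevel set $\cL_{<H}(L_0)$ for subgraph counts, uses the inductive crude tail bound to discard its complement at negligible cost, and only then invokes the counting lemma. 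For the lower tail, this inductive route is blocked (the bad event $\cL_{<H}(L_0)^c$ is of upper-tail type and hence vastly more probable than the lower-tail event you are trying to estimate); the paper instead uses the FKG inequality (\Cref{lem:LT.FKG}) to decouple and restrict to $\cL_{<\uH}(C)$. Your proposal mentions neither the induction nor FKG, and without them the counting-lemma step does not go through.

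A second, smaller gap: for the lower bound on the lower-tail probability the paper does not use a generic tilting plus concentration argument. Instead it combines the LDP lower bound \eqref{LDP.lower} with the observation (\Cref{lem:LT.p-threshold}) that the infimum in $\Psi_{n,p}$ is attained on $\cQ_{n,r}^{\le p}$, where the subgraph-count hypothesis of the counting lemma holds trivially with $L=1$; this is what makes the lower-tail lower bound clean. Your appeal to ``entropic optimizers have a cleaner structural form'' gestures at this but is not an argument.
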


\begin{remark}
\label{rmk:WLOG.p}
For the proofs of \eqref{UT.LB}--\eqref{UT.UB} we may assume $p\le\min_k (1+\delta_k)^{-1/\edges(H_k)}$, as otherwise the bounds hold vacuously.
In \Cref{sec:UTUB} we establish \eqref{UT.UB} by an alternative argument (similar to the one for the upper bound in \eqref{LT.UBLB}) which yields a wider range of $p$ for certain graphs; we refrain from pursuing the widest range of $p$ that can be obtained by our arguments under various assumptions on $H$.
We remark that in the graphs setting ($r=2$), where asymptotic formulas for $\Phi_{n,p}(\uH,\udelta)$ have been established \cite{BGLZ,BhDe}, the upper bound \eqref{UT.UB} is easily obtained by computing the probability of specific events that saturate the upper tail. However, in the general $r$-graph setting such formulas have only been obtained in a few cases (see \Cref{cor:LiZh}). 
\end{remark}

\begin{remark}\label{rmk:minmax.degree}
A similar result holds for mixed upper and lower tails, for $p\gg n^{-1/\Delta'_{\max}}\log n$.
However, when $p=o(1)$
the answer is just the lower tail problem \eqref{def:LT} for the sub-collection of the $H_k$ in the lower tail, together with any in the upper tail for which $\Delta(H_k)=1$. 
This is due to the different speeds for lower- versus upper-tail deviations (of order $n^rp$ versus $n^rp^{\Delta(H)}$).
For similar reasons, it turns out that for $p=o(1)$ the \abbr{rhs} in \eqref{UT.LB}--\eqref{UT.UB} is asymptotically equal to $\Phi_{n,p}(\pi(\uH),\pi(\udelta))$, where $\pi$ denotes restriction to the entries $k$ for which $\Delta(H_k) = \Delta_{\min}=\min_\ell\Delta(H_\ell)$ -- see \cite{BhDe} for the case $r=2$ (the idea is the same for general $r$) or the proof of \Cref{prop:lower}.
Consequently, the assumption for \eqref{UT.LB} can be relaxed to $1>p\gg n^{-1/\Delta'_\star}$ for $\Delta'_\star=\max\{ \Delta'(H_k): \Delta(H_k)=\Delta_{\min}\}$.
\end{remark}

\begin{remark}	\label{rmk:inhomogeneous}
By straightforward modifications of our arguments, \Cref{thm:tails} extends to random $r$-graphs drawn from inhomogeneous \ER measures $\mu_P$, provided $P(I)\in [cp,Cp]$ for all $I\in {[n]\choose r}$ and some fixed $0<c<C<\infty$. (One replaces $\eye_p(Q)$ with $\DKL(\mu_Q\|\mu_P)$ in \eqref{def:Phi}, \eqref{def:Psi}.)
\end{remark}

Various cases of \Cref{thm:tails} have been established before, mainly with $r=2$ and/or $m=1$, with some results holding in a wider range of $p$; we review the literature in $\mathsection$\ref{sec:previously} below.
We note that treating joint tail events ($m\ge2$) is important for applications to the analysis of general exponential random graph models, a class of Gibbs distributions on graphs that is widely applied in the social sciences literature -- see \cite{ChDi,ChDe,ElGr:ergm,CoDe:ergm}.

Our aim is more general than joint tail estimates: in this work we initiate a quantitative large deviations theory for random hypergraphs.
In particular,
in \Cref{thm:LDP} we establish versions of the approximation \eqref{LDP.top} for general sets $\cE$ at (large) fixed $n$, 
which amount to quantitative Large Deviation Principles (\abbr{LDP}s) for the \ER measure on $r$-graphs, extending the qualitative \ldp of Chatterjee and Varadhan \cite{ChVa} for the case $r=2$ and fixed $p$. 
The approximating sets $\cE'$ are defined under a new family of tensor norms $\|\cdot\|_\Basys^*$ that generalize the matrix cut norm. 
The main technical ingredient for establishing \Cref{thm:LDP} is a \emph{decomposition lemma} (\Cref{thm:reg}) for sparse tensors that generalizes the classic Frieze--Kannan decomposition for matrices \cite{FrKa1}. The role of the decomposition lemma is analogous to that of Szemer\'edi's regularity lemma in \cite{ChVa}.
Combining with an accompanying \emph{sparse counting lemma} (\Cref{thm:count}) -- a deterministic result establishing sharp Lipschitz control on the functionals $t(H,\cdot)$ under the $\Basys^*$-norms -- we obtain the upper- and lower-tail bounds for homomorphism counts as contractions of the general \abbr{LDP}s. 

We expect that our results could be applied or extended to other natural distributions on $r$-graphs. For instance, apart from inhomogeneous \ER $r$-graphs (see \Cref{rmk:inhomogeneous} above) one may apply the results of this work to  random regular hypergraphs, in a similar way to how large deviations results for the case $r=2$ from \cite{CoDe} were applied to random regular graphs in \cite{BhDe, Gunby}. 

The $\Basys^*$-norms are the main innovation of this work. (The work \cite{CoDe} relied on the spectral norm, which is unavailable for tensors.)
There are several novel features of these norms and the associated decomposition and counting lemmas. First, they are constructed to adapt to the level of sparsity under consideration. Second, as opposed to typical decomposition lemmas in extremal graph theory, our decomposition lemma attains a better quantitative bound that is crucial to obtain \Cref{thm:LDP}, by (necessarily) excluding an exceptional set whose probability can be made arbitrarily small. Finally, both our tensor norm and decomposition lemma make explicit use of the Boolean nature of the test tensors in order to obtain the nearly optimal quantitative bound -- in particular, in the case $r=2$ we improve on the result from \cite{CoDe} for counts of general graphs $H$. 

In extremal graph theory, the combination of decomposition lemmas (and closely related \emph{regularity lemmas}) with counting lemmas is known as a \emph{regularity method}, and our results for the $\Basys^*$-norms thus comprise a new regularity method for sparse hypergraphs, which we expect will have applications outside of large deviations theory.

Within the context of large deviations, the regularity method approach is quite flexible, and we demonstrate this with an application to the upper tail for \emph{induced} homomorphism counts in \Cref{thm:induced-main}. We further obtain strong results for the lower tail of counts of Sidorenko hypergraphs in \Cref{thm:LT}. 


In $\mathsection$\ref{sec:LDPreg} we review the connections between large deviations problems, graph limits and the regularity method, highlighting \nick{a special case of} one of our key technical results, the sparse counting lemma. 
In $\mathsection$\ref{sec:previously} we give an overview of previous works on upper and lower tails for random graphs, and in $\mathsection$\ref{sec:discussion} we discuss the potential scope and limitations of the regularity method approach to quantitative \abbr{ldp}s.

\subsection{Large deviation principles and the regularity method}
\label{sec:LDPreg}

On a conceptual level, the most important antecedent for our results \nick{is} the seminal work of Chatterjee and Varadhan establishing an \ldp for the \ER graph (the case $r=2$) \cite{ChVa}. Their result is a true \ldp in the classical sense, in that it establishes asymptotics of the form \eqref{LDP.top} for subsets $\cE$ of a fixed topological space $\cQ$, where $\cE'$ is an open/closed approximation of $\cE$. It is perhaps unclear how such an \ldp could be formulated in this context, as the \ER measures are on a \emph{sequence} of spaces $\cQ_{n,2}$ of growing dimension, but an appropriate setting is provided by the topological space of \emph{graphons}, which is in some sense the completion of the collection of all finite graphs of all sizes under a topology induced by the cut norm. This is the appropriate topology for studying homomorphism densities $t(H,\cdot)$, as these extend to continuous functionals on graphon space -- a consequence of the classic counting lemma. 
The key ingredient for the \ldp is the compactness of graphon space, which is a consequence of Szemer\'edi's regularity lemma (in fact the Frieze--Kannan \emph{weak regularity lemma} \cite{FrKa1} suffices for their purposes). 
Indeed, graphon theory gives a topological perspective on the classic regularity method in extremal graph theory, which is based on the regularity and counting lemmas.

We note that while large deviations theory was first formulated at the (in some sense ``correct'') level of generality of a topological theory by Varadhan in the 1960s \cite{Varadhan66}, the topological theory of dense graph limits was developed much more recently by Lov\'asz, Szegedy and coauthors \cite{LoSz06,BCLSSV,BCLSV1,BCLSV2}. We refer to the books \cite{Chatterjee:book,Lovasz:book} for further background on graph limits and the regularity method.

Unfortunately, graphon theory is largely unsuitable for the study of sparse graphs, such as \ER graphs with $p= n^{-c}$ for any positive constant $c>0$. 
In \cite{Chatterjee:survey}, Chatterjee poses the problem of developing a sparse graph limit theory that is powerful enough to prove upper-tail asymptotics for sparse \ER graphs.
While there are by now several sparse graph limit theories (see e.g.\ \cite{BCCL19} and references therein), we do not know of any that are generally suitable for the study of large deviations. 

The present work bypasses the development of an appropriate sparse (hyper)graph limit theory by instead developing a sparse hypergraph regularity method at finite $n$. As with sparse graph limits, existing sparse regularity tools are unsuitable for the study of \nick{upper-tail} large deviations (we review the literature in $\mathsection$\ref{sec:sparsereg}).
The main challenge in this context is \emph{localization phenomena}: that the underlying mechanisms for upper-tail deviations in the sparse setting are the appearance of dense configurations of $o(n^2)$ edges, which are invisible to the cut-norm topology. Such localized structures are a general problem for the development of sparse extensions of the regularity and counting lemmas, and hence for a sparse graph limit theory. We remark that such localization phenomena do not occur in the corresponding problem for lower-tail deviations. As such, asymptotics for extremes of the lower tail (the probability of containing no copy of a certain graph $H$) have been obtained previously in beautiful works around the K\L R conjecture and the hypergraph container method \cite{Luc, SaTo:containers, BMS:containers}; see also \Cref{rmk:KLR}. 

In place of the cut norm, we introduce a family of tensor norms designed to detect localization phenomena. 
\nick{A} key result is a (deterministic) sparse counting lemma giving optimal Lipschitz control on homomorphism counts 
\nick{of fixed $r$-graphs $H$ in large sparse $r$-graphs $G$,}
which we expect could be useful for other extremal problems where localization plays an important role.
We highlight here a 
\nick{special case of our sparse counting lemma for homomorphism counts of $K_4^{(3)}$, the complete $3$-graph on 4 vertices (thus $K_4^{(3)}$ contains all 4 possible edges of size 3).}
Recall that the symmetric adjacency tensor for an $r$-graph $G$ is denoted $A_G$. 
\nick{We say that $H'$ is a proper sub-hypergraph of $H$ if $\Verts(H')\subseteq \Verts(H)$ and $\Edges(H')$ is a strict subset of $\Edges(H)$.

\begin{thm}[Sparse counting lemma for $K_4^{(3)}$ counts]
\label{thm:count.K43}
Let $p\in (0,1)$ be arbitrary, and let $G_1,G_2$ be two $3$-graphs over the common vertex set $[n]$ such that
\begin{align}
&\max_{I,J,K\subseteq[n]^2}
\bigg| 
\sum_{(i,j,k)\in I^{(2,3)}\cap J^{(1,3)}\cap K^{(1,2)}} A_{G_1}(i,j,k)- A_{G_2}(i,j,k) 
 \bigg| 	\notag\\
&\le \eps p \cdot \bigg(n^3p^3+ n p^2 ( |I|+|J|+|K|)
+ 
|I^{(2,3)}\cap J^{(1,3)}\cap K^{(1,2)}| 
 \bigg)	\label{Bstar.K43}
\end{align}
for some $\eps\in (0,1]$, where for $I\subset[n]^2$ and $a,b\in \{1,2,3\}$ we write $I^{(a,b)}:=\{(i_1,i_2,i_3): (i_a,i_b)\in I\}\subseteq[n]^3$. 
Assume further that
\begin{equation}	\label{assu:count0.K43}
t(H,G_1)\le L p^{\edges(H')}
\end{equation}
for some $L\ge1$ and
all proper sub-hypergraphs $H'\subset K_4^{(3)}$. 
Then 
\begin{equation}	\label{conc:count0.K43}
|t(K_4^{(3)},G_1)-t(K_4^{(3)},G_2)| \ls \eps Lp^{4}.
\end{equation}
\end{thm}

The left hand side of \eqref{Bstar.K43} is the maximal edge discrepancy between $G_1$ and $G_2$ over sets of the special form $I^{(2,3)}\cap J^{(1,3)}\cap K^{(1,2)}$, which play an analogous role to the cut sets $I\times J\subseteq [n]^2$ in the definition of the cut norm for 2-graphs. For homomorphism densities of general $r$-graphs $H$ we consider edge discrepancies across structured sets with more general shapes, with carefully chosen $(n,p)$-dependent weights as on the right hand side of \eqref{Bstar.K43}, which will be crucial to get accurate control when $G_1,G_2$ are sparse. The shapes of structured sets and the weights are summarized by a \emph{weighted base system} $\BBasys$, which leads to the definition of a norm $\|\cdot\|_{\BBasys}^*$. In that setup, the bound \eqref{Bstar.K43} is equivalent up to constant factors to a bound of the form $\|A_{G_1}-A_{G_2}\|_{\BBasys}^*\le \eps p$ for certain base system $\BBasys$; see Examples \ref{ex:base.K43} and \ref{ex:base.K43b}.

The general sparse counting lemma of \Cref{thm:count} roughly states that for a given $r$-graph $H$, if $G_1,G_2$ are two (large) $r$-graphs such that
\begin{equation}	\label{assu:count0}
t(H',G_i) = O( p^{\edges(H')} )\,,\quad i=1,2
\end{equation}
for all proper sub-hypergraphs $H'\subset H$
(in particular $G_1,G_2$ are $O(p)$-sparse, by the case that $H'$ is a single edge)
and $\|A_{G_1}-A_{G_2}\|_{\BBasys}^*\le \eps p$ for an appropriate choice of weighted base system $\BBasys$ depending on $H$, then
\begin{equation}	\label{conc:count0}
|t(H,G_1)-t(H,G_2)| \ls \eps p^{\edges(H)}
\end{equation}
where the implicit constant depends only on $H$.}

The full definition of the $\Basys^*$-norms is a bit notationally involved (as is common in hypergraph regularity theory), so we first motivate them in $\mathsection$\ref{sec:sparsereg} with a special instance for matrices. 
The key point is that the \abbr{rhs} in \eqref{conc:count0} can be made small compared to the typical value $\sim p^{\edges(H)}$, even when $p=o(1)$ (the result is non-asymptotic so $p$ may depend in an arbitrary way on $n$).

Sparse counting lemmas for the cut norm have been a subject of intense study ever since a sparse extension of Szemer\'edi's regularity lemma was observed by Kohayakawa \cite{Kohayakawa} and R\"odl. 
Existing sparse counting lemmas establish \eqref{conc:count0} under different hypotheses, generally assuming $G_1$ and $G_2$ are both contained in a sparse pseudorandom ``host'' graph -- effectively ruling out localization phenomena, which are treated as a nuisance -- while only assuming they are close in the cut metric, which is sensitive to differences in edge counts only at a macroscopic scale (over a constant proportion of the vertices). 
While such versions are effective for obtaining sparse Ramsey/Tur\'an theorems or certain extreme cases of the lower tails,
they are unsuitable for our purposes of controlling upper tails, which are \emph{governed} by localization phenomena. Our assumption \eqref{assu:count0} is weaker than a pseudorandom host condition, while closeness 
under a $\Basys^*$-norm
is (necessarily) stronger, as these are designed to be sensitive to localization.
We discuss these points further in $\mathsection$\ref{sec:sparsereg}.

The accuracy of the $\Basys^*$-norms is only useful for large deviations if the space $\cQ_{n,r}$ is sufficiently compact under these norms, in a quantitative \nick{(metric entropy) sense. 
Indeed, a typical approach to derive large deviation upper bound is to first derive an upper bound on certain special sets, and combine them by constructing a covering of (most of) the space by these special sets. As encountered later in Theorem \ref{lem:upperLDP.convex}, by a straightforward consequence of the minimax theorem, one has a non-asymptotic large deviation upper bound 
\nick{taking the form of the right hand side of \eqref{LDP.top}} for the measure of convex sets $\cE'$.
Thus, 
\nick{one obtains large deviation upper bounds for more general sets $\cE$ by covering them with convex sets and applying the union bound, leading to}
an error term given by the metric entropy \nick{of the set $\cE$ (i.e.\ the logarithm of the covering number)}.
}

\nick{Suitable control on the metric entropy} is established by the decomposition lemma (\Cref{thm:reg}), which allows general sets $\cE$ to be efficiently covered by small balls centered on ``structured'' tensors. \Cref{thm:reg} generalizes the Frieze--Kannan decomposition for matrices, and crucially provides more efficient decompositions after the (optional) removal of a small set of exceptional tensors. 

\nick{As an example, in the context of $K_4^{(3)}$ counts as in \Cref{thm:count.K43} above, \Cref{thm:reg} implies that the set 
of all $3$-graphs $G$ over $[n]$ is covered by the $\eps p$-neighborhood (under the norm $\|\cdot\|_{\BBasys}^*$) of a small collection of ``structured'' weighted $3$-graphs,
together with a set $\cE$ of ``exceptional'' $3$-graphs of measure $\P(\bG_{n,p}\in \cE) \le p^{Ln^3p^3}$.
The structured weighted $r$-graphs have adjacency tensors $Q$ that are linear combinations of at most $O(L\eps^{-2}p^{-2})$ Boolean ``test tensors'' of the form $1_{(i,j,k)\in I^{(2,3)}\cap J^{(1,3)}\cap K^{(1,2)}}$, with notation as in \Cref{thm:count.K43}. The parameter $L\ge1$ is free to be chosen according to one's needs; note there is a tradeoff between the measure of the exceptional set $\cE$ and the size of the covering.

The decomposition lemma thus allows us to cover super- and sub-level sets for homomorphism densities $t(H,\cdot)$,  by a small number of sets of diameter $O(\eps p)$ in the appropriate $\Basys^*$-norm, together with an exceptional set whose measure can be made small compared to the large deviation rate. The counting lemma then shows that $t(H,\cdot)$ can only change by $O(\eps p^{\edges(H)})$ on these sets (recall that \eqref{Bstar.K43} is equivalent to such a bound) which allows us to justify the approximation of the upper and lower tails by \eqref{def:Phi} and \eqref{def:Psi}, respectively.}

\subsection{Previous works}
\label{sec:previously}

The past decade has seen several results of the form of \Cref{thm:tails} established for various ranges of sparsity $p$, mainly for the case $r=2$ (the \ER graph) and $m=1$, and often focusing only on the upper or lower tail. In the present work we aim for broader \abbr{LDP}-type statements as in \eqref{LDP.top}, which can only be expected to hold in a proper subset of the range of $p$ for which the asymptotics \eqref{UT.LB}--\eqref{LT.UBLB} are expected to hold -- we discuss this point further in $\mathsection$\ref{sec:discussion} below. 

Many works have obtained asymptotics for $\UT_{n,p}(H,\delta)$ and $\LT_{n,p}(H,\delta)$ holding up to constants depending on $\delta$. 
For the lower tail this is accomplished by Janson's inequality \cite{Janson:LT,JaWa:LT}.
For the ``infamous'' upper tail, following works \cite{KiVu,JOR} obtaining upper and lower bounds matching up to a factor $\log(1/p)$, the sharp dependence on $n$ and $p$ was obtained in a wide range of $p$ for triangles \cite{Chatterjee:triangles, DeKa:triangles}, general cliques \cite{DeKa:cliques}, cycles \cite{Raz:cycles}, and stars \cite{SiWa:stars}. 

Following the \ldp of \cite{ChVa} for fixed $p$ and $r=2$, 
the first results establishing sharp asymptotics for $\UT_{n,p}(H,\delta)$ allowing $p=n^{-c}$ took a rather different route from the one taken here, proceeding through a general study of Gibbs measures on the hypercube. This began with the influential work of Chatterjee and Dembo \cite{ChDe} introducing a new \emph{nonlinear large deviations} paradigm, further developed in \cite{Eldan, ElGr:decomp, Augeri, Austin:Gibbs, Yan:NLDT}, with the focus of establishing sufficient conditions for validity of the na\"ive mean-field approximation for the partition function, a problem of independent interest in statistical physics.  Large deviation estimates were deduced through (lossy) approximation arguments, and hence these works could only permit a small sparsity exponent $c$. 

The more direct approach to the large deviations problem  
through an appropriate finite-$n$ regularity method was 
introduced by the first two authors in \cite{CoDe} for the case $r=2$, where improved ranges for $p$ were obtained by replacing the cut norm with the spectral norm. 
The sparse counting lemma step in that work was only sharp for cycle counts, which is ultimately due to the fact that these can be expressed as moments of the spectral distribution of the adjacency matrix.
For the case of cycle counts, similar results (and superior in the case of triangles) were independently obtained by Augeri \cite{Augeri}.
The method was further applied to edge eigenvalues of the adjacency matrix in \cite{CoDe}, with a formula for the corresponding entropic optimization problem obtained in \cite{BhGa}; results on edge eigenvalues of sparser \ER graphs have more recently appeared in \cite{BBG,Basak:perron}. 

The lack of a spectral theory for tensors motivated the development of the $\Basys^*$-norm regularity method, which is the main technical contribution of this work. 

In \cite{HMS}, the upper tail asymptotic \eqref{UT.LB}--\eqref{UT.UB} was extended to an essentially sharp range of $p$ for the case ($r=2$, $m=1$, non-bipartite $\Delta$-regular $H$), with an optimal result for the bipartite case subsequently obtained in \cite{BaBa}. (These works consider counts of \emph{embeddings}, which only allow injective maps $\phi$ in \eqref{def:tQ}; while the difference is negligible in the ranges of $p$ considered here,  embedding counts have significantly different behavior from homomorphism counts when $p\ll n^{-1/\Delta}$.)
We comment further on the method of \cite{HMS, BaBa} in $\mathsection$\ref{sec:discussion} below. 
Very recently (after the first version of this paper appeared on arXiv) the same method was further developed to obtain upper-tail asymptotics for \emph{induced} homomorphism counts of $H=C_4$ in the \ER graph ($r=2$) in an essentially optimal range of $p$ \cite{Cohen22}. 

Upper tails for other random graph models besides the \ER distribution have been studied: $G(n,m)$ (uniformly random with $n$ vertices and $m$ edges) \cite{DeLu}, random regular graphs \cite{DhSe19, BhDe, Gunby}, and sparse inhomogenous \ER graphs, such as stochastic block models \cite{BhDe}. 
For the case of fixed $p$, extensions of the Chatterjee--Varadhan \ldp to inhomogeneous \ER graphs have been established in \cite{BCGPS,GrPi21, Markering}. 

There are only a few works considering hypergraphs with $r\ge3$.
The upper tail asymptotic 
\eqref{UT.LB}--\eqref{UT.UB} was established in \cite{LuZh:dense} for the case of $p$ fixed, $m=1$ and $H$ a \emph{linear} hypergraph (see \Cref{ex:linear} below), and more recently in \cite{LiZh} for general $H$ and $n^{-c(H)}\ll p\ll 1$ for sufficiently small $c(H)>0$, using general nonlinear large deviation tools from \cite{Eldan} (one checks their proof allows $c(H)=1/(6\edges(H))+o(1)$). 
Very recently, the lower tail asymptotic \eqref{LT.UBLB} for the case $m=1$ was established in an optimal range of sparsity in \cite{KoSa} by a beautiful entropy argument (as in \cite{HMS,BaBa} they consider embeddings rather than homomorphisms). 

There is a parallel line of works establishing asymptotic formulas for the entropic optimization problems \eqref{def:Phi}, \eqref{def:Psi}. For $r=2$, $m=1$ and fixed $p$ this was done in \cite{ChVa,LuZh:dense} for the upper tail in a certain region of the $(p,\delta)$-plane. 
The latter work extended \cite{ChVa} to counts of linear hypergraphs in dense \ER hypergraphs,
and further characterized the regime of $(p,\delta)$ for which the infimizer is the constant $Q\equiv p$ in this more general context. 
In \cite{MuBh} such a regime is provided for the variational problem corresponding to counts of general hypergraphs.
For $r=2$ and $1\gg p\gg n^{-1/\Delta}$ an asymptotic formula was obtained for the upper tail for all $\delta>0$ in \cite{LuZh:sparse} ($m=1$, $H$ a clique), \cite{BGLZ} ($m=1$, general $H$) and \cite{BhDe} (general $m$ and $H$). 
In \cite{Zhao:LT} Zhao obtains lower tail formulas with $p$ fixed or decaying as slowly as $n^{-c(H)}$ for a small $c(H)>0$, and certain ranges of $\delta$.
For general $r$, $m=1$ and $1\gg p\gg n^{-1/\Delta}$ an asymptotic formula for the upper tail is obtained in \cite{LiZh} for the case $H$ is a clique or the $3$-graph depicted in \Cref{fig:LiZh} -- see \Cref{cor:LiZh}.

Beyond establishing asymptotic formulas for (joint) upper and lower tails, there is the refined problem of describing the typical structure of $\bG$ \emph{conditioned} on the tail event. This has been addressed for $p$ fixed in some cases in \cite{ChVa,LuZh:dense}, and for the full range of $p=o(1)$ in \cite{HMS} for the upper tail with $r=2$, $m=1$ and $H$ a clique.
\nick{More recently (after this paper was posted to the arXiv) the first two authors established the conditional structure of \ER graphs conditional on general joint upper tail events as in \eqref{def:UT}, with $p=o(1)$ allowed to decay at a certain (suboptimal) polynomial rates, by combining large deviations results of \cite{CoDe} and the present work with a stability analysis for solutions of the entropic optimization problem \eqref{def:Phi} established in \cite{BGLZ,BhDe}. In \cite{CoDe:ergm} the results on  the conditional structure of \ER graphs were used to establish the \emph{typical} structure of sparse exponential random graph models.}
We mention also the line of works \cite{KRRS17a,KRRS17b,KRRS18,RRS} on the related problem of determining the typical structure of dense random graphs with constrained edge and $H$ counts for various choices of $H$.

\subsection{Discussion}
\label{sec:discussion}

Our focus in this work is on the development of quantitative \abbr{LDP}-type statements as in \eqref{LDP.top} applying to general subsets of $\cQ_{n,r}$ at large, fixed $n$, and to translate these to joint tail asymptotics as in \Cref{thm:tails} using a sparse counting lemma.
This approach has the advantage of being quite robust, applying to any functional enjoying a counting lemma under an appropriate $\Basys^*$-norm -- examples include non-monotone functionals such as \emph{induced} homomorphism counts, as well as non-polynomial functions such as the $\Basys^*$-norms themselves (or compositions of these with affine maps, such as centering), which can be viewed as weighted generalizations of the max-cut functional. 
The method also applies \emph{almost}\footnote{We say ``almost'' as there is a technical issue in applying the counting lemma for lower-tail estimates, stemming from the necessity of the crude upper bound \eqref{assu:count0} for counts of subgraphs. For upper tails this can be enforced by arguing inductively over the number of edges in $H$, so that we can restrict to the high-probability event that such a bound holds for all smaller graphs. However, for the lower tail there is the issue that the bad event that \eqref{assu:count0} fails is of upper-tail type and hence is much larger than the event we want to estimate. For the proof of \eqref{LT.UBLB} we get around this by using the FKG inequality to restrict to the event that \eqref{assu:count0} holds. This relies on monotonicity of homomorphism counts, which we do not have for induced homomorphism counts, and hence we do not have a result on the lower tail for the latter. We hope that an alternative argument for restriction to \eqref{assu:count0} can be found that avoids the use of monotonicity.} equally well to upper- and lower-tail events. 

However, \abbr{ldp}s applying to general sets $\cE$ can only be expected to hold in a limited range of sparsity. For instance, under the version of the $\Basys^*$-norms that is needed to analyze clique counts, for which $\Delta(H)={{\verts(H)-1}\choose {r-1}}$, the \ldp only yields \eqref{UT.LB}--\eqref{UT.UB} for $n^{-1/(\Delta(H)+1)}\ll p<1$, whereas the asymptotic should hold for all $p\gg n^{-r/\Delta(H)}$ (up to poly-logarithmic factors). 
For general $\uH$ we believe our method could be sharpened to yield the joint upper and lower tail asymptotics \eqref{UT.LB}--\eqref{LT.UBLB} in the range $1>p\gg n^{-1/\Delta_{\max}}$; this would follow in particular from relaxing the condition \eqref{def:WB} in the decomposition lemma by a factor $p$ (see also \Cref{rmk:pLB}). While we can push further than $n^{-1/\Delta_{\max}}$ for certain $\uH$ for which particularly efficient choices of $\Basys^*$-norm suffice for an accurate counting lemma, in general we believe the upper and lower tails for $t(H,\bG)$ should require very different arguments when $p\ll n^{-1/\Delta(H)}$. 

Thus, for certain sets $\cE$, arguments establishing \eqref{LDP.top} in the optimal range of $p$ will have to exploit special properties of $\cE$ once $p$ is below a certain threshold. For the case of super-level sets for counts of a fixed regular graph $H$ (the case 
of upper tails with $r=2$, $m=1$ and $\Delta$-regular $H$ in \Cref{thm:tails}), this has been accomplished in the optimal sparsity range by a beautiful truncated moment method argument developed in \cite{HMS} and further improved for the bipartite case in \cite{BaBa}. The general argument succeeds in covering the upper tail event by events on which the discrete gradient of the subgraph-counting functional is essentially supported on a small set of edges that they call a ``core'', reducing the problem to the (quite technical) task of counting the possible locations of cores, which they accomplish by exploiting special structure of subgraph-counting polynomials. 
In \cite{HMS} they also apply their general method to the upper tail of counts of $k$-term arithmetic progressions in sparse subsets of $[n]$. While the method is simplest for upper tails of polynomials with non-negative coefficients, it extends to certain non-monotone polynomials including induced subgraph counts (see \cite[Theorem 9.1]{HMS}, which is proved in the recent work \cite{Cohen22} treating the upper tail for induced $C_4$-counts). 

For lower tails, a beautiful entropic method was recently introduced in \cite{KoSa}, where they obtain the asymptotic \eqref{LT.UBLB} (for embedding counts rather than homomorphism counts) in the optimal sparsity range. This approach makes use of the monotonicity of sub-level sets for embedding counts. 

Finally, we note that whereas \eqref{UT.LB} and \eqref{LT.UBLB} are obtained by the sparse regularity method, we obtain the upper bound \eqref{UT.UB} for joint upper tails in the range $p\gg n^{-1/\Delta_{\max}}$ by applying a careful tilting
argument, using the Efron--Stein inequality to derive concentration for homomorphism
counts (as well as induced homomorphism counts) of a random tensor sampled from sparse product measures.
In $\mathsection$\ref{sec:UTUB} we give an alternative argument, more along the lines of the proof of \eqref{UT.LB} and \eqref{LT.UBLB} and holding in a different range of $p$, which may be better or worse depending on $\uH$.

\subsection{Organization}
In $\mathsection$\ref{sec:sparsereg} we discuss previous extensions of the regularity method for sparse graphs, introduce the $\Basys^*$ tensor norms (first in the matrix case), and state our general decomposition and counting lemmas.
$\mathsection$\ref{sec:LDPs} contains our general quantitative \ldps and some corollaries of \Cref{thm:tails} obtained by combining with earlier works on the upper-tail optimization problem $\Phi_{n,p}(H,\delta)$. 
$\mathsection\mathsection$\ref{sec:count.proof}--\ref{sec:LDP.proof} contain the proofs of the counting lemma (\Cref{thm:count}), decomposition lemma (\Cref{thm:reg}) and quantitative \ldps (\Cref{thm:LDP}). 
For the proof of \Cref{thm:tails}, we establish \eqref{UT.LB} in $\mathsection$\ref{sec:tails.upper}, \eqref{LT.UBLB} in $\mathsection$\ref{sec:LT}, and \eqref{UT.UB} in $\mathsection$\ref{sec:tails.lower}.
In $\mathsection$\ref{sec:other} we give extensions of \Cref{thm:tails} to induced homomorphism counts and the lower tail for counts of Sidorenko hypergraphs.

\subsection{Notational conventions}
\label{sec:notation}

We use $C,c,c'$, etc.\ to denote constants that may change from line to line, understood to be absolute if no dependence on parameters (such as $r$) is indicated.
For a (set of) parameter(s) $P$ we write $C(P)$ for a constant depending only on $P$. 

\subsubsection*{(Standard) asymptotic notation:}

For quantities $f,g$ depending on other parameters such as $n$ or $H$, we write $f=O(g)$, $f\ls g$ and $g\gs f$ to mean $|f|\le Cg$, and $f=\Theta(g)$ to mean $f\ls g\ls f$. We indicate dependence of the implied constant on parameters $P$ by writing e.g.\ $f=O_P(g), f\ls_Pg$. 
Notation $o(\cdot),\omega(\cdot), \gg, \ll$ is with respect to the limit $n\to \infty$, with $f=o(g)$, $g=\omega(f)$, $f\ll g$ and $g\gg f$ being synonymous to the statement $f/g\to 0$. 

\subsubsection*{Tensors:} 

Throughout we consider $r$ fixed independently of $n$. 
Denote by $\Tensnr$ the set of order-$r$ tensors of size $n$ (\emph{$r$-tensors}),
which we view as mappings $\tens:[n]^{r}\to\mathbb{R}$.
We equip $\Tensnr$ with the usual $\ell_p$ norms $\|Z\|_p^p = \sum_{i_1,\dots, i_r\in [n]} |Z(i_1,\dots, i_r)|^p$.
The Euclidean inner product on $\Tensnr$ for any $r$ (including $\Tens_{n,1} \cong \R^n$) is denoted $\langle\cdot,\cdot\rangle $.
The orthogonal projection to a subspace $W$ is denoted $P_W$. 
For a set $\cE\subset\Tensnr$ we write $\hull(\cE)$ for its convex hull.

An $r$-tensor is symmetric if it is invariant under permutation of the
 arguments. We write $\Symsnr\subset\Tensnr$ for the set of symmetric $r$-tensors
supported on entries with $r$ distinct coordinates, $\cAnr\subset\Symsnr$ for the subset of Boolean tensors, which are naturally associated to $r$-graphs, and
$\cQnr:=\hull(\cAnr)$, i.e.\ the set of $Q\in \Symsnr$ with all entries lying in $[0,1]$.
For $\syms\in \Symsnr$ we often abusively view its argument as an unordered set, writing $\syms(I):=\syms(i_1,\dots, i_r)$ for $I=\{i_1,\dots, i_r\}$.

Recall the distributions $\mu_Q$ introduced at the start of $\mathsection$\ref{sec:overview}, which we view as measures on $\cAnr$. We generally deal with random hypergraphs through their adjacency tensors in $\cAnr$.
Unless otherwise stated, $\P$ is a probability measure under which $\bA$ has distribution $\mu_p$, so that $\bA$ is the adjacency matrix for the \ER hypergraph $\bG=\bG_{n,p}^{(r)}$, and $\E$ is the associated expectation.
For $Q\in\cQnr$ we write $\P_Q,\E_Q$ for probability and expectation under which $\bA$ has the distribution $\mu_Q$. 
The relative entropy between the $\textrm{Bernoulli}(p)$ and the $\textrm{Bernoulli}(x)$ measures on $\{0,1\}$ is denoted
\begin{equation}	\label{def:eyep}
\eye_p(x) := \DKL(\mu_x\|\mu_p)= x\log\frac xp + (1-x)\log\frac{1-x}{1-p}\,, \qquad x\in [0,1]
\end{equation}
(extended continuously from $(0,1)$ to $[0,1]$).
With some abuse we use the same notation for the relative entropy of $\mu_Q$ with respect to $\mu_p$ on $\cAnr$, defining
\begin{equation}	\label{def:eyepQ}
\eye_p:\cQnr\to [0,\infty)\,,\qquad \eye_p(Q) = \sum_{1\le i_1<\cdots< i_r\le n} \eye_p(Q(i_1,\dots, i_r))\,.
\end{equation}

Note that $\E\bA= p\jay$ for the adjacency tensor $\jay$ of the complete $r$-graph on $n$ vertices.
That is, $\jay(i_1,\dots, i_r)=1$ if the indices are all distinct and zero otherwise.

\subsubsection*{Hypergraphs:}

All $r$-graphs are assumed to be finite and simple (i.e.\ with edge sets having no repeated elements). We often refer to $r$-uniform hypergraphs, sub-hypergraphs, and hyperedges simply as $r$-graphs, subgraphs, and edges, respectively.
For hypergraphs $H=(\Verts,\Edges)$ and $H'= (\Verts',\Edges')$, we 
 say $H'\subseteq H$ if $\Verts'\subseteq\Verts$ and $\Edges'\subseteq\Edges$, and $H'\subset H$ if $\Verts'\subseteq \Verts$ and $\Edges'\subset \Edges$.
We write $\verts(H):=|\Verts(H)|$, $\edges(H):=|\Edges(H)|$, and $\Delta(H)$ for
the maximum degree of $H$, that is, the maximum number of edges
sharing a common vertex $v\in \Verts(H)$. 
For $U\subset \Verts(H)$ we write 
\begin{equation}	\label{def:deg}
\bdy^H U  := \{ e\in \Edges(H): e\ne U, e\cap U\ne \emptyset\}, \qquad  \dd^H(U) := |\bdy^H U|
\end{equation}
for the edge boundary of $U$ and its cardinality, respectively (excluding $U$ itself when it is an edge). 
For $\dom\subset U\subseteq \Verts$ we denote the \emph{$\base$-dominated} boundary and degree of $U$ by
\begin{equation}	\label{def:deg.base}
\bdy_\dom^H U:= \{ e'\in \Edges(H): \emptyset\ne e'\cap U\subseteq \dom\}
= \bdy^HU \setminus \bdy^H(U\setminus \dom)
\,,\qquad
\dd_\dom^H( U) := |\bdy_\dom^H U|.
\end{equation}
This is a subset of $\bdy^HU$ consisting of edges whose overlap with $U$ is contained in $\base$.
We additionally set $\bdy_\emptyset^H U := \emptyset$, $\dd^H_\emptyset (U) := 0$.
We will usually drop the superscript $H$ from all notation, but in some places there will be more than one hypergraph in play and it will be necessary to clarify.

\subsubsection*{Homomorphism counts:}

As in several previous works on upper tails (e.g.\ \cite{ChVa,ChDe,BGLZ}) we count subgraphs in the sense of hypergraph homomorphisms.
Recall that a homomorphism between $r$-graphs $H$ and $G$ is a mapping $\phi:\Verts(H)\to \Verts(G)$ such that the image of every edge in $H$ is an edge in $G$. We do not require that $\phi$ be injective -- in particular, distinct edges of $H$ may be mapped to a common edge in $G$.
We write $\hom(H,G)$ for the number of homomorphisms from $H$ to $G$, so that $t(H,G)$ from \eqref{def:tQ} is $n^{-\verts(H)}\hom(H,G)$.
We extend this to a function on $\Symsnr$ as
\begin{equation}	\label{def:hom}
 \hom(H,\syms) :=\sum_{\phi:\Verts(H)\to [n]}\prod_{e\in \Edges(H)}\syms(\phi(e))
\end{equation}
so that for a graph $G$ over $[n]$ with adjacency tensor $A_G$ we have $\hom(H,G)=\hom(H,A_G)$.
Here, with slight abuse we interpret $\syms(\phi(e))$ for $e=\{v_{1},\dots,v_{r}\}$ as $\syms(\phi(v_{1}),\dots,\phi(v_{r}))$ when $\phi$ is injective on $e$ and 0 otherwise.
We additionally denote the normalized quantities
\begin{equation}	\label{def:ttp}
t(H,\syms) := \frac{\hom(H,\syms)}{n^{\verts(H)}}\,, \qquad t_p(H,\syms):= t(H,\syms/p) = \frac{\hom(H,\syms)}{n^{\verts(H)}p^{\edges(H)}}\,,
\end{equation}
often writing $t(H,G):=t(H,A_G)$ and $t_p(H,G):= t_p(H,A_G)$.\\

\section{Novel cut-type norms and a sparse regularity method}
\label{sec:sparsereg}

Our general approach reduces the problem of large deviations for nonlinear functionals of \ER hypergraphs to the development of a sparse hypergraph regularity method under appropriate extensions of the cut norm. 
This is a problem of general interest in extremal graph theory that goes beyond applications to large deviations, and
there is already a large body of literature on sparse extensions of the regularity method. In this section we begin with a brief overview of such results and explain why their assumptions make them unsuitable for our purposes. 
Then we discuss a special instance of the norms and decompositions in the matrix setting, in order to motivate the more complicated statements for general hypergraphs (deferred to $\mathsection\mathsection$\ref{sec:results.Bstar}--\ref{sec:results.reg}).

\subsection{Previous work on sparse regularity}
\label{sec:sparsereg.background}

Much of the literature on sparse regularity is with an eye towards sparse extensions of classical Tur\'an-type theorems. 
These show that sufficiently dense subsets $G$ of a large set $\Gamma$ are guaranteed to contain some small structure -- specifically, a set from a distinguished class $\cS \subset{\Gamma\choose k}$ of $k$-sets for some fixed $k$. For instance, if $\Gamma$ is the edge set of the complete graph $K_n$ on $n$-vertices, and $\cS$ is the set of ${r+1\choose 2}$-tuples of edges forming a copy of $K_{r+1}$, then Tur\'an's theorem guarantees that $G\subset \Gamma$ contains some element of $\cS$ when $|G|/|\Gamma|$ exceeds $1-\frac1r$ \cite{Turan}. A second example is Szemer\'edi's theorem \cite{Szemeredi}, where $\Gamma=[n]$, $\cS$ is the collection of $k$-term arithmetic progressions, and $G$ must contain some $S\in \cS$ if $|G|\ge\delta|\Gamma|$ for any fixed positive $\delta$ and $n$ sufficiently large. 

Sparse Tur\'an-type theorems establish the same statements when the ``host set'' $\Gamma$ is instead taken to be a sparse pseudorandom subset of the host set $\Gamma_0$ from the corresponding classical theorem.
An example is the Green--Tao theorem establishing existence of arithmetic progressions of arbitrary length in the primes, which proceeded through a ``relative Szemer\'edi theorem'' for a certain set $\Gamma$ of almost-primes that is a sparse pseudorandom subset of $\Gamma_0=[n]$ \cite{GrTa}. 
In the realm of graph theory, Tur\'an's theorem (and more generally, the Erd\H os--Stone theorem and Simonovits's stability theorem) has been transferred to host graphs $\Gamma$ such as sparse \ER graphs \cite{CoGo,Schacht} (see \Cref{rmk:KLR} below for a discussion of related results) and graphs satisfying certain pseudorandomness conditions \cite{CFZ:sparsereg}.

These results can be proved by mimicking proofs of corresponding results for the dense setting, for instance via sparse versions of the hypergraph removal lemma, which in turn are obtained from sparse extensions of hypergraph regularity and counting lemmas. 
Let us briefly recall these in the graph setting (2-uniform hypergraphs).
Recall the normalized matrix cut norm 
\begin{equation}	\label{def:cutnorm.matrix}
\|M\|_\Box = \frac1{n^2}\max_{I,J\subseteq[n]} |\langle M, \1_I\otimes \1_J\rangle|\,,\qquad M\in \R^{n\times n}.
\end{equation}
Here, $\1_I\otimes \1_J$ is the rank-1 matrix $\1_{I}\1_J^\tran$, and $\langle\cdot,\cdot\rangle$ is the Euclidean (Hilbert--Schmidt) inner product on $\R^{n\times n}$.
This extends to a metric $d_\Box$ on the set $\cG_n$ of graphs over the vertex set $[n]$ as $d_\Box(G_1,G_2) = \|A_{G_1}-A_{G_2}\|_\Box$, with $A_{G_i}$ the adjacency matrix for $G_i$.
Thus, we trivially have $d_\Box(G_1,G_2)\le 1$, and a bound $d_\Box(G_1,G_2)\le\eps<1$ provides uniform control on the discrepancy between $G_1,G_2$ of edge counts in vertex subsets of $[n]$ of \emph{macroscopic size}, i.e.\ linear in $n$.

A result of Frieze and Kannan \cite{FrKa1} (from which their weak regularity lemma is easily deduced) states that for any graph $G$, there is a decomposition of its adjacency matrix as 
\begin{equation}	\label{FK:decomp}
A_G = A_{\str} + A_{\rand}
\end{equation}
where the \emph{structured} piece $A_{\str}$ is a linear combination of $O(1/\eps^2)$ \emph{cut matrices} $\1_{I_k}\otimes \1_{J_k}$, and the \emph{pseudorandom} piece $A_{\rand}$ satisfies $\|A_{\rand}\|_\Box \le \eps$. 
The cut-norm counting lemma says that the homomorphism density functionals $t(H,\cdot)$ (recall \eqref{def:ttp})
are $O_H(1)$-Lipschitz in the cut metric. 
For graphs $G_1,G_2$ of density $p=o(1)$ we trivially have $d_\Box(G_1,G_2) \ls p$, so for a sparse regularity lemma we seek a decomposition as in \eqref{FK:decomp} with $\|A_{\rand}\|_\Box \le \eps p$. 
In general such a decomposition requires a growing number of cut matrices, but straightforward modifications of the Frieze--Kannan argument yield sparse decomposition lemmas with $O_\eps(1)$ cuts under additional ``no dense spots'' assumptions on $G$ \cite{CCF:sparsereg}.

One may similarly hope for a sparse counting lemma saying that $|t_p(H,G_1)-t_p(H,G_2)|\ls_H \eps$ (recall the notation \eqref{def:ttp}) if $d_\Box(G_1,G_2) \le \eps p$, but this too is false without additional assumptions: consider for instance the case that $G_1$ and $G_2$ agree on all edges outside a set of vertices $V_0$ of size $\Theta(np)$, where $G_1$ is empty and $G_2$ is full. Since these only differ on $O(n^2p^2)$ edges we have $d_\Box(G_1,G_2) \ls p^2=o(p)$, whereas for triangle counts (say) we have $|t_p(K_3,G_1)-t_p(K_3,G_2)|\gs 1$.
However, in the applications to sparse Tur\'an-type theorems described above, $G_1$ and $G_2$ are both contained in a pseudorandom (or truly random) host graph $\Gamma$, and sparse counting lemmas have been established under various ``(pseudo)random container'' assumptions \cite{GMS:prob-counting,CGSS:KLR,CFZ:sparsereg,CFZ:relative,ABSS}.

\subsection{A modified cut norm for sparse graphs}
\label{sec:Bstar.motivation}

Unfortunately, none of the sparse regularity or counting lemmas just described are useful for us, as the ``no dense spots'' and ``pseudorandom container'' hypotheses rule out the localization phenomena we are trying to detect. 
In \cite{LuZh:sparse,BGLZ} two phenomena are identified as the dominant mechanisms for upper-tail deviations of $t_p(H,\bG)$ in the \ER graph for $n^{-1/\Delta(H)}\ll p\ll 1$: 
the appearance of an almost-clique (of density close to 1) on $\Theta(np^{\Delta(H)/2})$ vertices,\footnote{In fact the almost-clique mechanism only contributes to large deviations when $H$ is a regular graph.}  or of an almost-complete bipartite graph on $J\times [n]\setminus J$ for $|J|=\Theta(n p^{\Delta(H)})$. 
Both types of subgraphs contain $\Theta(n^2p^{\Delta})=o(n^2)$ edges when $p=o(1)$ and are hence invisible to the cut norm; moreover, the cuts that correlate with $\bG$ on these events, namely $\1_{I_0}\otimes\1_{I_0}$ and $\1_{J_0}\otimes \1_{[n]\setminus J_0}$, have factors occurring at three separate scales.

Further localization phenomena have been described in the setting of regular graphs \cite{BhDe,Gunby}, and the possibilities are more numerous in the hypergraph setting \cite{LiZh}.

Our approach is to develop generalizations $\|\cdot\|_{\Basys}^*$ of the cut norm that are sensitive to localization phenomena at all scales. In the general hypergraph setting this is done in terms of a (user-specified) set system $\Base$ over $[r]$, and the class of cut matrices is replaced by a class of \emph{test tensors} $\test$ that are entrywise product of tensors $\ttest_\base,\base\in \Base$ varying only on coordinates in $\base$. 
We defer the general definitions to $\mathsection$\ref{sec:results.Bstar} and discuss here a particular instance of these norms in the case of $2$-graphs.
\nick{(See also \Cref{thm:count.K43} and the discussion that follows it for an example for $3$-graphs, stated there in terms of sets of edges rather than the functional formulation given here.)}

Denote by $\Test=\Test_n$ the class of Bernoulli cut matrices $T=\1_I\otimes \1_J$ with $I,J\subseteq[n]$.
Given a graph $H=(\Verts,\Edges)$ of maximum degree $\Delta$, we set a cutoff scale $n_0:=np^{\Delta-1}$ and for $T=\1_I\otimes \1_J\in \Test$ denote
\begin{equation}	\label{def:B.mat}
\|T\|_{\Delta,2} = (|I|\vee n_0)(|J|\vee n_0).
\end{equation}
(This can be extended to a norm on $\R^{n\times n}$ but we only apply it to cut matrices.)
Now for $M\in \R^{n\times n}$ let
\begin{equation}	\label{def:Bstar.mat}
\|M\|_{\Delta,2}^* = \sup_{T\in \Test} \frac{|\langle M, T\rangle |}{\|T\|_{\Delta,2}} = \max_{I,J\subseteq[n]}\frac{|\1_I^\tran M\1_J|}{(|I|\vee n_0)(|J|\vee n_0)}.
\end{equation}
Note that $\|\cdot\|_{\Delta,2}$ and $\|\cdot\|_{\Delta,2}^*$ depend on $p$, but we suppress this from the notation.
The $\|\cdot\|_{\Delta,2}^*$ norm specializes to the normalized cut norm \eqref{def:cutnorm.matrix} upon taking $p=1$, but for smaller $p$ the $\|\cdot\|_{\Delta,2}^*$ norm is sensitive to changes in density at smaller scales. 
The counting lemma of \Cref{thm:count} specializes to this setting to say that if 2-graphs $G_1,G_2$ over $[n]$ satisfy $\|A_{G_1}-A_{G_2}\|_{\Delta,2}^*\le \eps p$ and $t_p(H',G_1)\le L$ for every proper subgraph $H'\subset G_1$, then $|t_p(H,G_1)-t_p(H,G_2)|\ls_H L\eps$. 
We note that the full version \Cref{thm:count} generalizes this to multilinear homomorphism functionals, and \Cref{thm:count.gen} further extends to signed homomorphisms, which includes induced homomorphisms. 

Note that some form of cutoff scale \nick{$n_0$} is necessary, as otherwise the norm would be too sensitive to changes on single entries. The specific choice $np^{\Delta-1}$ is motivated by the proof of the counting lemma, where it is a critical threshold for the influence of an endpoint of a single edge of $G$ on $t(H,G)$. 
\nick{Indeed, by 
a telescoping decomposition based on the edges of $H$, one can 
express $t_p(H,G_1)-t_p(H,G_2)$ as a sum of terms indexed by edges $e = \{u,v\} \in \Edges(H)$ and embeddings $\psi:\Verts(H)\setminus \{u,v\} \to [n]$. Each term in the sum can be expressed in the form $\langle A_{G_1}-A_{G_2}, \1_I\otimes \1_J\rangle$ where $I$ and $J$ are the common neighbors of $\psi(\partial^H(u)))$ and $\psi(\partial^H(v))$ (recall our notation \eqref{def:deg}). 
In a 
random graph, $|I|$ and $|J|$ are typically of order $np^{\deg_H(u)-1}$ and $np^{\deg_H(v)-1}$ respectively, which are 
at least $np^{\Delta-1}$. This is the motivation for the cutoff $n_0 = np^{\Delta-1}$ in the definition of $\|\cdot \|_{\Delta,2}$.}

The following is a special case of our tensor decomposition lemma (\Cref{thm:reg}). Recall that $\bA=\bA_{n,p}^{(2)}$ is the adjacency matrix for the \ER graph.

\begin{thm}[Decomposition lemma, special case]		\label{thm:reg.2graph}
There exist absolute constants $C_0,c_0>0$ such that the following holds. 
Let $\kappa,\eps>0$ and assume $n$ and $p\in (n^{-2}, 1)$ are such that
\begin{equation}	
np^{\Delta+1}
\ge\frac{C_0\log n}{\eps^2\log(1/p)}\,. \label{assump-n}
\end{equation}
Then there exists a (possibly empty) exceptional set $\badset(\kappa,\eps)\subseteq\cA_{n,2}$ with
$
\mathbb{P}(\bA\in\badset(\kappa,\eps))\le p^{c_0\kappa n^2}
$
such that 
for each $A\in \cA_{n,2}\setminus \badset(\kappa,\eps)$ 
there is a decomposition
\begin{equation}	\label{decomposition.mat}
A = A_{\str}+A_{\rand} 
\end{equation}
where 
\begin{equation*}	\label{Arand.control.mat}
A_{\str}=pJ_{n,2} + 
\sum_{i=1}^k \alpha_i T_i \qquad\text{ and }\qquad \; \|A_{\rand}\|_{\Delta,2}^* \le \eps p
\end{equation*}
for real numbers $\alpha_1,\dots, \alpha_k$, and cut matrices $\test_1,\dots, \test_k$
such that
\begin{equation}
\sum_{i=1}^k \|\test_i\|_{\Delta,2} \le \kappa \nick{\eps}^{-2} p^{-2} n^2\,. \label{complex}
\end{equation}
\end{thm}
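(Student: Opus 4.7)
The plan is to adapt the greedy algorithm behind the classical Frieze--Kannan matrix decomposition to the $\|\cdot\|_{\Delta,2}^*$ setting. The idea is to iteratively subtract cut matrices to drive down the $\|\cdot\|_{\Delta,2}^*$-norm of the residual, using the Frobenius norm as a potential function that decreases at a rate commensurate with the $\|\cdot\|_{\Delta,2}$-weight of the newly subtracted cut. The exceptional set $\badset$ will then be defined so as to bound the initial potential $\|A-pJ_{n,2}\|_F^2$ from above, and its probability will be controlled by a Chernoff bound on the edge count of $\bG$.

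Concretely, I would initialize $M_0 := A - pJ_{n,2}$ and iterate as follows: while $\|M_t\|_{\Delta,2}^* > \eps p$, pick a cut matrix $T_{t+1} = \1_{I_{t+1}}\otimes\1_{J_{t+1}}$ witnessing the violation (which exists since the supremum in \eqref{def:Bstar.mat} is over a finite family), set the optimal projection coefficient $\alpha_{t+1} := \langle M_t, T_{t+1}\rangle/\|T_{t+1}\|_F^2$, and update $M_{t+1} := M_t - \alpha_{t+1} T_{t+1}$. The procedure halts at the first step $k$ with $\|M_k\|_{\Delta,2}^* \le \eps p$, at which point I set $A_{\rm str} := pJ_{n,2} + \sum_{i\le k}\alpha_i T_i$ and $A_{\rm rand} := M_k$. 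By orthogonality of the projection and the witness property,
\[
\|M_t\|_F^2 - \|M_{t+1}\|_F^2 \;=\; \frac{|\langle M_t, T_{t+1}\rangle|^2}{\|T_{t+1}\|_F^2} \;>\; \eps^2 p^2\,\frac{\|T_{t+1}\|_{\Delta,2}^2}{\|T_{t+1}\|_F^2} \;\ge\; \eps^2 p^2 \,\|T_{t+1}\|_{\Delta,2},
\]
where the last step uses the pointwise inequality $\|T\|_{\Delta,2} = (|I|\vee n_0)(|J|\vee n_0) \ge |I||J| = \|T\|_F^2$ satisfied by every Boolean cut $T = \1_I\otimes\1_J$; this is where the crucial asymmetry between $\|\cdot\|_{\Delta,2}^*$ and the Frobenius norm is exploited. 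Telescoping over $t = 0,\dots,k-1$ and discarding $\|M_k\|_F^2 \ge 0$ yields the complexity bound $\sum_i \|T_i\|_{\Delta,2} \le \eps^{-2} p^{-2}\,\|A - pJ_{n,2}\|_F^2$.

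Next I would define the exceptional set by $\badset(\kappa,\eps) := \{A \in \cA_{n,2} : \|A - pJ_{n,2}\|_F^2 > \kappa n^2\}$ (or the mild variant obtained by thresholding $e(G)$ directly), so that the complexity bound \eqref{complex} automatically holds for every $A \notin \badset$. Expanding $\|A-pJ\|_F^2 = 2(1-2p)e(G) + p^2 n(n-1)$, the event $\badset$ is contained in $\{e(G) \ge c\kappa n^2\}$ for an absolute $c>0$ whenever $\kappa\gs p^2$, and a standard Chernoff bound for $e(\bG)\sim\Bin(\binom{n}{2},p)$ yields
\[
\P(\bA \in \badset) \;\le\; \P\big(e(\bG) \ge c\kappa n^2\big) \;\le\; \Big(\frac{Cp}{\kappa}\Big)^{c\kappa n^2} \;\le\; p^{c_0\kappa n^2}
\]
for a suitable absolute $c_0>0$. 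For $\kappa\ll p$ the target bound $p^{c_0\kappa n^2}$ is close to $1$ and the claim is essentially vacuous, matching the fact that the greedy complexity already saturates at $\sim n^2/(\eps^2 p)$ for typical $A$.

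I expect the main technical obstacle to lie in the quantitative hypothesis \eqref{assump-n}, which is needed to guarantee that the greedy algorithm terminates with a meaningful decomposition in the sparse regime. The condition $np^{\Delta+1}\gs \log n/(\eps^2\log(1/p))$ should ensure that the cutoff scale $n_0 = np^{\Delta-1}$ is large enough that binomial fluctuations of $\bA-pJ$ on cuts of the smallest relevant size $n_0 \times n_0$ stay below $\eps p\|T\|_{\Delta,2}$, so that violations of $\|M_t\|_{\Delta,2}^* \le \eps p$ genuinely reflect structure in $A$ rather than pointwise noise. The full \Cref{thm:reg} generalizes the argument to $r$-tensors through $H$-dominating base systems, where the pointwise comparison $\|T\|_{\Delta,2}\ge \|T\|_F^2$ is replaced by an analogous inequality built into the $\|\cdot\|_{\Basys}^*$-norm construction, so that the same energy-decrement argument carries through verbatim.
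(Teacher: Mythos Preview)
Your energy-decrement argument is correct and is indeed the skeleton of the paper's iteration as well: one greedily peels off witnesses $T_i$, and the drop in $\|M_t\|_F^2$ at each step is at least $\eps^2 p^2\|T_i\|_{\Delta,2}$, using the Boolean inequality $\|T\|_{\Delta,2}\ge \|T\|_F^2$. The gap is entirely in your treatment of the exceptional set.

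Your choice $\badset=\{\|A-pJ\|_F^2>\kappa n^2\}$ cannot yield $\P(\badset)\le p^{c_0\kappa n^2}$ in the regime that matters. Since $\|A-pJ\|_F^2=2\sum_{i<j}(A_{ij}-p)^2$ has mean $2p(1-p)\binom{n}{2}\sim pn^2$, the event $\{\|A-pJ\|_F^2>\kappa n^2\}$ has probability $1-o(1)$ whenever $\kappa\ll p$. Your escape clause (``for $\kappa\ll p$ the target bound $p^{c_0\kappa n^2}$ is close to $1$'') is false: in the applications one takes $\kappa=Kp^{\Delta}$ with $\Delta\ge 2$, and under \eqref{assump-n} one has $\kappa n^2\log(1/p)=Kn^2p^\Delta\log(1/p)\gg 1$, so $p^{c_0\kappa n^2}$ is \emph{tiny} while your $\badset$ has measure close to $1$. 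The Frobenius potential sees only the edge count of $A$, which for $\bA$ is concentrated at $\sim pn^2$; it cannot detect that a random graph, though it has $\sim pn^2$ edges, is pseudorandom enough that the greedy procedure terminates after very little total weight.

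The paper does not bound the complexity deterministically via a potential. Instead it \emph{defines} $\badset$ as the set of $A$ for which the greedy process exhausts the budget $\sum_i\|T_i\|_{\Delta,2}>\kappa\eps^{-2}p^{-2}n^2$ before achieving $\|R_k\|_{\Delta,2}^*\le\eps p$, and bounds its probability directly. If $A\in\badset$ then there is a sequence $T_1,\dots,T_k$ with $\sum_i\|T_i\|_{\Delta,2}$ large such that $|\langle \bar A,\hat T_i\rangle|>\eps p\|T_i\|_{\Delta,2}$ for every $i$, where $\hat T_i$ is $T_i$ projected orthogonal to earlier $T_j$'s. For a \emph{fixed} such sequence, a Bernstein/Chernoff bound on the single linear statistic $\langle \bar{\bA},\sum_i\sigma_i\hat T_i\rangle$ (exploiting orthogonality and the Bernoulli tail, which is where the crucial $\log(1/p)$ enters) gives
\[
\P\Big(\bigwedge_i |\langle\bar{\bA},\hat T_i\rangle|\ge \eps p\|T_i\|_{\Delta,2}\Big)\le 2^k\exp\Big(-c\,\eps^2 p^2\log(1/p)\sum_i\|T_i\|_{\Delta,2}\Big).
\]
One then takes a union bound over all admissible sequences $T_1,\dots,T_k$; the entropy of this union is controlled by the number of Boolean cuts of each size, and the hypothesis \eqref{assump-n} is precisely what makes the concentration in the exponent beat this entropy. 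This is the step your proposal is missing, and it is not a technicality: without it there is no way to push $\kappa$ below $p$, which is the whole point of the theorem for sparse large deviations.
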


\begin{remark}
\Cref{thm:reg.2graph} contains the Frieze--Kannan decomposition lemma \eqref{FK:decomp} as a special case:  taking $p=1/2$ (say) makes $n^2\|\cdot\|_{\Delta,2}^*$ equivalent to the cut norm, and then taking $\kappa$ to be a sufficiently large absolute constant makes $\cE_\star(\kappa,\eps)=\emptyset$; from the lower bound $\|T_i\|_{\Delta,2}\ge n_0^2 \gs n^2$ it follows that $k=O(1/\eps^2)$. 
However, the option to remove the exceptional set of tensors $\cE_\star(\kappa,\eps)$ is important for our application to upper tails, as taking a smaller value of $\kappa$ reduces the complexity of the approximation $A_{\str}$, effectively reducing the dimension of the space of tensors. We will set $\kappa$ just large enough that $-\log\P(\cE_\star(\kappa,\eps))$ is above the large deviation rate (for instance, for the upper tail of $t(H,\bG)$ this is $\kappa=Kp^{\Delta(H)}$ for a sufficiently large constant $K$).
A further key difference from the Frieze--Kannan decomposition lemma is that in \eqref{complex}
the complexity is measured in terms of the total size of the cut matrices.
\end{remark}

\begin{remark}	\label{rmk:pLB}
We believe that the right hand side of \eqref{complex} can be improved to $\kappa \epsilon^{-2}p^{-1}n^2$, which would allow us to replace the left hand side of \eqref{assump-n} with $np^{\Delta}$. An assumption of $np^\Delta\gg1$ would be essentially optimal: when $np^\Delta\ls1$,  the row-sums in submatrices of size $np^{\Delta-1}\times np^{\Delta-1}$ (the smallest scale controlled by the norm $\|\cdot\|_{\Delta,2}$) cease to concentrate, and  we can no longer have uniform control on densities in such submatrices holding with high probability. 
\nick{Relaxing the left hand side in \eqref{assump-n} to $np^\Delta$, and more generally saving a factor $p$ in the analogous assumption \eqref{def:WB} for the general decomposition lemma, would immediately 
\nick{imply that \Cref{thm:tails} holds with $\Delta'_{\max}$ replaced by $\Delta_{\max}$ in all cases.}
For more precise details on the improvement in the case $r=2$ as well as in the general case, we refer to Remark \ref{rmk:pLB'}.}
\end{remark}

In the standard way one can deduce a weak regularity lemma-type statement in terms of the partition of $[n]$ generated by the factors of the test tensors $T_i$, but this is not needed for our applications. 

\Cref{thm:reg.2graph} takes the typical form of a decomposition lemma from graph theory and additive combinatorics, in that the summands in the expansion of the structured piece are controlled in some norm $\|\cdot\|$, while the pseudorandom piece is small in the dual norm $\|\cdot\|^*$, a general perspective that was explored by Gowers in \cite{Gowers:decomp}. 
Another common form of decomposition lemma obtains finer control on the pseudorandom piece, making $\|A_{\rand}\|^*$ small relative to the ``complexity'' $k$ of the structured piece, by separating out a further piece $A_{\text{small}}$ that is small in another norm such as $\ell_2$ (the original regularity lemma of Szemer\'edi is of this type).
This comes at the cost of a much larger value of $k$ than in weak regularity lemmas, and we will not need such control.

Before moving on to the general definition of the $\Basys^*$-norms for $r$-tensors, let us mention one other case of the norms when $r=2$, which is to take the smaller class $\Test=\{ \1_I\otimes \1: I\subseteq[n]\}$, with
\begin{equation}	\label{def:B.mat1}
\|T\|_{\Delta,1} = n\cdot(|I|\vee n_0)\quad \text{for } \quad T=\1_I\otimes \1
\end{equation}
and
\begin{equation}	\label{def:Bstar.mat1}
\|M\|_{\Delta,1}^* = \sup_{T\in \Test} \frac{|\langle M, T\rangle |}{\|T\|_{\Delta,1}} = \max_{I\subseteq[n]}\frac{|\1_I^\tran M\1|}{n(|I|\vee n_0)}.
\end{equation}
Alternatively, letting $d_M(i) :=\frac1n \sum_{j=1}^n M(i,j)$ be the vector of normalized row sums for $M$, 
\[
\|M\|_{\Delta,1}^* = \max_{I\subseteq[n]} \frac{\langle d_M, \1_I\rangle}{|I|\vee n_0}\,.
\]
This norm turns out to be effective for studying star homomorphism densities $t(K_{1,\Delta},G)$, which is perhaps unsurprising since star homomorphism counts are determined by the degree sequence of $G$ -- indeed, we have  $t(K_{1,\Delta},G) = \frac1n\|d_{A_G}\|_{\ell_\Delta}^\Delta$. In this case our general decomposition lemma is approximating the degree sequence $d_{A_G}$ of a graph by a short weighted combination of indicators $\1_{I_k}$, which can be done more efficiently than approximating the whole matrix $A_G$ in the norm $\|\cdot\|_{\Delta,2}^*$. As a result, \Cref{thm:tails} gives tail asymptotics for star homomorphism counts in a wider range of $p$ than for, say, clique counts.
The form of \eqref{def:Bstar.mat1} as compared to \eqref{def:Bstar.mat} illustrates a key point of the $\Basys^*$-norms defined below: that to have an effective counting lemma for $H$-counts, we only need to use test tensors that are non-constant on coordinates corresponding to vertices where edges overlap. 

\begin{remark}	\label{rmk:KLR}
It would be remiss to not mention work on the K{\L}R conjecture on an embedding lemma for subgraphs of the sparse \ER graph $\bG_{n,p}^{(2)}$  \cite{KLR97}, which was ultimately proved using the hypergraph container method in \cite{SaTo:containers,BMS:containers}. 
An embedding lemma is weaker than a counting lemma, only providing the existence of at least one appearance of some subgraph $H$, whereas a counting lemma provides roughly the expected number of copies based on edge densities between parts of a vertex partition. 
Stronger ``probabilistic counting lemmas'' for $\bG_{n,p}^{(2)}$ were obtained in \cite{GMS:prob-counting, CGSS:KLR}, motivated in particular by Tur\'an-type theorems (as discussed in $\mathsection$\ref{sec:sparsereg.background}).
We note an interesting contrast: whereas these works make use of the deterministic Kohayakawa--R\"odl sparse regularity lemma (with a no-dense-spots condition) and establish a counting lemma that holds for dense subgraphs of $\bG_{n,p}^{(2)}$ with high probability, here we combine a deterministic counting lemma with a decomposition lemma holding (with acceptable complexity for our application) with high probability.
\end{remark}

\subsection{The $\Basys^*$-norms}
\label{sec:results.Bstar}

There are several natural generalizations of the cut norm to $r$-tensors. 
For instance, one can take cut tensors of the form $\1_{I_1}\otimes\cdots\otimes \1_{I_r}$ for $I_1,\dots, I_r\subset[n]$, as was done for the sparse Frieze--Kannan decomposition proved in \cite{CCF:sparsereg}. However, it turns out that the resulting norm is only useful for controlling homomorphism densities for linear hypergraphs $H$ (see \cite{LuZh:dense}). 

Instead, we will consider the wider class of Boolean tensors formed by entrywise products of tensors varying on strict subsets of the $r$ coordinates. 
We have the following:

\begin{defn}[Weighted base]	\label{def:Base}
Given a set $e$ of size $r$, a \emph{base (over $e$)} is a collection $\Base$ of proper subsets of $e$ with $\emptyset\in \Base$ and such that any two nonempty elements $\base_1,\base_2\in \Base$ are incomparable, i.e.\ $\base_1\not\subset\base_2$.
A \emph{weighted base} over $e$ is a tuple $\Basys=(r,e,\iota,\Base,\ddstar,\{\dd_\base\}_{\base\in\Base})$, with
\begin{itemize}
\item $\iota:[r]\to e$ a bijective mapping;
\item $\Base$ a base over $e$;
\item non-negative integer weights $\dd_\star$ and $\dd_\base$ satisfying $\dd_\base\le \ddstar$ for each $\base\in \Base$, and $\dd_\emptyset:=0$. 
\end{itemize}
A \emph{base system} over an $r$-graph $H$ is a collection $\BBasys=\{\Basys(e)\}_{e\in \Edges(H)}$ with each $\Basys(e)$ a weighted base over $e$. 
\end{defn}

In our applications to $H$-counts, the choice of integer weights $\dd_\star,\dd_\base$ will generally be determined by the degrees of an edge and its subsets.

For a weighted base $\Basys$ we define an
associated set of \emph{test tensors} $\Test_\Basys\subset\Tensnr$ 
consisting of all nonzero Boolean tensors $\test:[n]^r\to \{0,1\}$ of the form
\begin{equation}	\label{test.base}
\test(i_1,\dots,i_r) 
=\prod_{\base\in\Base}\ttest_\base\circ\pi_\base (i_1,\dots,i_r)\,,\qquad i_1,\dots,i_r\in[n]
\end{equation}
for general Boolean functions $\ttest_\base:[n]^\base\to \{0,1\}$, where we denote by
$
\pi_\base : [n]^r\to [n]^\base
$ 
the natural projections
$
(i_1,\dots, i_r) \mapsto (i_v)_{v\in \iota^{-1}(\base)}\,.
$
We always take $\ttest_\emptyset$ to be the constant tensor $\ttest_\emptyset(i_1,\dots, i_r)\equiv 1$.
We quantify the size of the factors of a test tensor as in \eqref{test.base} with the rescaled $\ell_1([n]^\base)$-norm:
\begin{equation}	\label{norm.factors}
\|\test\|_{\base}:= n^{r-|\base|}p^{\ddstar-\dd_\base} \|\ttest_\base\|_1
\end{equation}
(in particular $\|\test\|_{\emptyset}= n^rp^{\ddstar}$),
and define
\begin{equation}	\label{def:size}
\|\test\|_{\Basys} :=\max \big\{ \, \|\test\|_{1}\,,\;\max_{\base\in\Base}\|\test\|_{\base}\,\big\}.
\end{equation}
(This can be extended to a norm on $\Tensnr$ but we only apply it to test tensors.) Note that the inclusion of $\emptyset\in \Base$ means we always have $\|\test\|_{\Basys} \ge n^rp^{\ddstar}$. 
We define a
seminorm $\|\cdot\|_{\Basys}^*$ on $\Tensnr$ via duality:
\begin{equation}	\label{def:dualnorm}
\|\tens\|_{\Basys}^* :=\max_{\test\in\Test_\Basys}\frac{\left|\langle \tens,\test\rangle \right|}{\|\test\|_{\Basys}}.
\end{equation}
When $\Base$ covers $[r]$ this defines a genuine norm on $\Tensnr$, but we do not enforce this in general.
We note that these seminorms additionally depend on all components of the weighted base $\Basys$ (not just the base $\Base$) as well as $p$, but we suppress this dependence from the notation. 
If $\BBasys=\{\Basys(e)\}_{e\in\Edges}$ is a finite collection of weighted bases $\Basys(e)$ over the respective $r$-sets $e$ (such as a base system  over an $r$-graph $H$, with $\Edges=\Edges(H)$) we denote the seminorm
\begin{equation}	\label{def:BBnorm}
\|\tens\|_\BBasys^*:= \max_{e\in\Edges} \|\tens\|_{\Basys(e)}^*.
\end{equation}

\begin{example}
\label{ex:base.2}
For the case $r=2$, we recover the matrix norm \eqref{def:B.mat} by taking the maximal base $\Base = \{ \emptyset, \{1\}, \{2\}\}$ over $\{1,2\}$, $\ddstar=2\Delta-2$ and $\dd_{\{1\}}=\dd_{\{2\}}=\Delta-1$, and this further specializes to the normalized cut norm $\frac1{n^2}\|\cdot\|_{\Box}$ upon taking $p=1$. 

\end{example}

\nick{For general $r\ge2$ it is useful to consider the maximal base $\Base = {[r]\choose r-1}\cup\{\emptyset\}$ with appropriate degree parameters. 
For instance:}

\begin{example}
\label{ex:base.K43}
\nick{In the $K_4^{(3)}$-counting lemma presented in \Cref{thm:count.K43}, the bound \eqref{Bstar.K43} is equivalent up to constant factors to the bound $\|A_{G_1}-A_{G_2}\|_{\Basys}^*\le \eps p$, where for the weighted base $\Basys$ we take $e=\{1,2,3\}$, $\iota$ the identity, $\Base=\{\emptyset, \{1,2\},\{2,3\}, \{1,3\}\}$, $\ddstar=3$ and $\dd_{\{1,2\}}= \dd_{\{2,3\}}=\dd_{\{1,3\}}=1$. 
Indeed, for this choice of base a test tensor takes the form $\1_{I^{(2,3)}\cap J^{(1,3)}\cap K^{(1,2)}}$ for sets $I,J,K\subset[n]^2$.
(The equivalence is up to a constant because we take a sum on the right hand side in \eqref{Bstar.K43} rather than a maximum as in \eqref{def:size}.)}
\end{example}

For general $r$ and $\Base$ and with $p=1$ we have $\|\test\|_{\Basys}=n^r$ and we recover the family of generalized cut norms considered by Conlon and Lee in \cite{CoLe} (various special cases of which had been considered earlier, such as the case $\Base = {[r]\choose r-1}\cup\{\emptyset\}$  by Gowers in  \cite{Gowers:3graphs, Gowers:hypergraphs}). It was shown in \cite{CoLe} for the unweighted setting ($p=1$) that $\|\cdot\|_{\Basys}^*$ is polynomially equivalent to certain generalized Gowers norms, though the polynomial loss appears to make the latter ineffective in the sparse setting (this is related to how various definitions of quasirandomness for dense graphs cease to be equivalent for sparse graphs). 

For our application to $H$-homomorphism counts (through the counting lemma, \Cref{thm:count} below), for each edge $e\in\Edges(H)$  we will select a weighted base over $e$ satisfying the following property:

\begin{defn}[Dominating base]	\label{def:dom}
For an $r$-graph $H$ and $e\in \Edges(H)$, a base $\Base$ over $e$ is \emph{$H$-dominating} if every edge overlap $e\cap e'$ with $e'\ne e$ is contained in some $\base\in\Base$. A weighted base is $H$-dominating if its base is $H$-dominating.
A base system $\BBasys=\{\Basys(e)\}_{e\in\Edges(H)}$ over $H$ is $H$-dominating if each of the weighted bases $\Basys(e)$ is $H$-dominating.
\end{defn}

\nick{Observe that if the weighted bases $\Basys=(r,e,\iota,\Base,d_*,\{d_\base\}_{\base \in \Base})$ and $\Basys'=(r,e,\iota,\Base',d_*,\{d_\base'\}_{\base' \in \Base'})$ over bases $\Base$ and $\Base'$ respectively are such that $d_* = d_*'$, any $\base' \in \Base'$ is contained in $\Base$ and $d_{\base'}=d_{\base}$, then $\|T\|_{\Basys}\ge \|T\|_{\Basys'}$. Thus $\|Z\|_{\Basys}^* \le \|Z\|_{\Basys'}^*$.}

With a fixed choice of dominating base $\Base(e)$ for each $e\in\Edges(H)$, and some arbitrary choice of bijections $\iota_e:[r]\to e$, we take the $H$-dominating base system $\BBasys=\{\Basys(e)\}_{e\in\Edges(H)}$ with 
\begin{equation}	\label{Deg.usual}
\Basys(e)= \big(r,e,\iota_e, \Base(e), \dd^H(e), \{\dd_\base^H(e)\}_{\base\in\Base(e)}\big)
\end{equation}
and with degree parameters as defined in \eqref{def:deg}--\eqref{def:deg.base}. The choice of integer weights is motivated by the proof of the counting lemma, where the prefacfors in \eqref{norm.factors} combined with the hypothesis of a crude upper bound on counts of subgraphs of $H$ (as in \eqref{assu:count0}) will allow us to close an induction on the number of edges in $H$. 
(We only depart from 
the choice of weights \eqref{Deg.usual} in the \emph{proof} of \Cref{thm:count}, via the generalization \Cref{thm:count.gen}, where we take $H$-dominating bases with weights $\dd^{H_+}(e),\dd_\base^{H_+}(e)$ taken according to a subgraph $H_+$ of $H$, but in the statement of the theorem, and hence in all of its applications, we take weights as in \eqref{Deg.usual}.)

\nick{
\begin{example}\label{ex:base.K43b}
Continuing the example of \Cref{thm:count.K43}, the base of \Cref{ex:base.K43} can be made into a base system $\BBasys$ over the edges of $K_4^{(3)}$ by taking the weighted base $\Basys$ for each edge (identifying each with $\{1,2,3\}$), and one verifies the weights $\ddstar,\dd_\base$ given in \Cref{ex:base.K43} are chosen as in \eqref{Deg.usual}. 
\end{example}
}

\nick{While the maximal} base $\Base=\{\emptyset\}\cup {e\choose r-1}$ 
\nick{(as in Examples \ref{ex:base.2} and \ref{ex:base.K43})} is always dominating, 
\nick{for certain $H$ one can use bases with a smaller number of smaller sets, which leads to better quantitative estimates.}
For instance\nick{:} 

\begin{example}
\nick{W}hen $H$ is a \emph{sunflower}, where pairwise overlaps of all edges are equal to a common \emph{kernel} $V_0\subseteq\Verts(H)$ \nick{(thus $\Delta(H)=\edges(H)$)}, then for any $e\in\Edges(H)$ one can take the dominating base $\Base=\{\emptyset,V_0\}$, 
\nick{with weights as in \eqref{Deg.usual} being $\ddstar=\dd_{V_0}=\Delta(H)-1$. 
}
\nick{W}ith this choice \Cref{thm:reg} gives a more efficient approximation of the adjacency tensor $\bA$ by test tensors, leading to a less restrictive decay condition on $p$ (see \eqref{def:WB}).
\end{example}

\begin{example}
For the case of 2-graphs, for an edge $e=\{u,v\}$ one can always take $\Base(e)=\{\emptyset,\{u\},\{v\}\}$ \nick{as in \Cref{ex:base.2}}. If $u$ (resp.\ $v$) has degree 1 then one can take \nick{the smaller dominating base} $\Base(e)=\{\emptyset,\{v\}\}$ (resp.\ $\{\emptyset, \{u\}\}$). 
In this case, the adjacency matrices for two graphs are close in the $\Basys(e)^*$-\nick{(semi-)}norm when they have approximately the same degree sequence. Such an approximation is indeed sufficient for approximating $\hom(H,G)$ when $H$ is a star (for which we can take such a base for every edge) as these are moments of the degree distribution. 
\end{example}

\begin{example}
\nick{When $H$ is a linear hypergraph, for which $|e\cap e'|\in \{0,1\}$ for all distinct $e,e'\in \Edges(H)$, then for each edge $e$ the base}
of singletons $\nick{\Base(e)=}\{\emptyset\}\cup\{\{v\}:v\in e\}$ is dominating, and the class of test tensors thus reduces to the class of cuts $\1_{I_1}\otimes\cdots\otimes\1_{I_r}$\,.
\end{example}

\nick{We return to these and other examples in \Cref{sec:intro.tails} where we state results for tails of homomorphism counts in $\bG_{n,p}$.}

\subsection{$\Basys^*$ decomposition and counting lemmas}
\label{sec:results.reg}

The following is our general decomposition lemma,
showing that, under the Erd\H{o}s--R\'enyi measure, most symmetric Boolean tensors $A\in \cAnr$ can be decomposed into a structured piece $A_{\str}$ that is a combination of a small number of test tensors of controlled size under the $\Basys$-norm, and a pseudorandom piece $A_{\rand}$ that is small in the dual $\Basys^*$-norm. The theorem allows for a tradeoff between the measure of the set of tensors to be excluded and the complexity of the resulting decomposition (in particular one may choose to make no exclusion).
Recall our notation $\jay$ for the symmetric Boolean $r$-tensor with $\jay(i_1,\dots, i_r) =1$ if and only if all of the arguments $i_1,\dots, i_r$ are distinct (so that for the Erd\H{o}s--R\'enyi tensor we have $\e \bA = p\jay$).

\begin{thm}[Decomposition lemma]	\label{thm:reg}
\nick{The following holds for} $C_0,c_0>0$ depending only on $r$.  
Fix a weighted base
 $\Basys=(r,e,\iota,\Base,\ddstar,\{\dd_\base\}_{\base\in\Base})$ and let $\kappa,\eps>0$. Assuming $n$ and $p\in (n^{-2}, 1)$ are such that
\begin{equation}	\label{def:WB}
\growing_{n,p}(\Basys) := \min_{\base\in\Base} \big\{ n^{r-|\base|}p^{\ddstar-\dd_\base+2} \big\}
\ge\frac{C_0\log n}{\eps^2(1\vee\log(1/p))}\,,
\end{equation}
then there exists a (possibly empty) exceptional set $\badset(\kappa,\eps)\subseteq\cAnr$ with
$
\mathbb{P}(\bA\in\badset(\kappa,\eps))\le p^{c_0\kappa n^{r}}
$
such that 
for each $A\in \cAnr\setminus \badset(\kappa,\eps)$ there is a decomposition
\begin{equation}	\label{decomposition}
A = A_{\str}+A_{\rand} 
\end{equation}
where
\begin{equation}	\label{AstrArand}
A_{\str} = p\jay + \sum_{i=1}^k \alpha_i T_i \qquad\text{ and } \qquad\;
\|A_{\rand}\|_{\Basys}^* \le \eps p
\end{equation}
for real numbers $\alpha_1,\dots, \alpha_k$ and test tensors $\test_1,\dots, \test_k\in \Test_\Basys$ satisfying
\begin{equation}	\label{eq:costD}
\sum_{i=1}^k \|\test_i\|_{\Basys} \le \kappa n^r/(\eps p)^2\,.
\end{equation}
Furthermore,  for each $1\le j\le k$,
\nick{the tensor} $T_j$ is separated from 
the span of $\{T_1,\dots, T_{j-1}\}$
by Euclidean distance at least $\eps p^{1+\ddstar} n^{r/2}$.
\end{thm}

\begin{remark}
The final statement on Euclidean distances will be useful for bounding covering numbers of $\cAnr\setminus \badset(\kappa,\eps)$ under the $\Basys^*$-norm.
\end{remark}

The following is our general sparse counting lemma, which we state for a multilinear generalization of homomorphism counts. For a collection
$\usym$ of symmetric tensors $\{\syms^{e}\}_{e\in \Edges(H)}$ in $\Symsnr$, we define 
\[
t_p(H,\usym) = \frac{\hom(H,\usym)}{n^{\verts(H)}p^{\edges(H)}} := \frac{1}{n^{\verts(H)}p^{\edges(H)}}
\sum_{\phi:\Verts(H)\to[n]}\,\,\prod_{e\in \Edges(H)}\syms^{e}(\phi(e))\,.
\]
For $\usym$ with $S^e\equiv S_0$ for some $S_0\in \Symsnr$ the above expression reduces to the previous definition $t_p(H, \usym) = t_p(H, S_0)$ from \eqref{def:ttp}. This multilinear generalization of homomorphism counts allows us to capture other functionals of interest such as induced subgraph counts. It also naturally appears in the proof of the counting lemma, which interpolates between two (weighted) hypergraphs that are close under the $\|\cdot\|_\BBasys^*$ seminorm.

Recall the notion of a dominating base system from \Cref{def:dom}. 

\begin{thm}[Counting lemma]
\label{thm:count}
Let $p\in (0,1)$ and let $H$ be an $r$-graph.
Let $\BBasys$ be an $H$-dominating base system as in \eqref{Deg.usual} and let $\|\cdot\|_\BBasys^*$ be the associated
seminorm on $\Tensnr$
as defined in \eqref{def:BBnorm}.
Let $\cC\subset\cAnr$ be a set of diameter at most $\eps p$ under 
$\|\cdot\|_\BBasys^*$ for some $\eps\in(0,1]$, and assume further that
there exist $Q_0\in \hull(\cC)$ and $L\ge1$ such that 
\begin{equation}	\label{assu:count2.0}
t_p(H',Q_0)\le L 
\end{equation}
for all proper subgraphs $H'\subset H$. 
Then for any 
$\uP=\{P^{e}\}_{e\in \Edges(H)}, \uQ=\{Q^{e}\}_{e\in \Edges(H)}$ with each $P^e,Q^e\in\hull(\cC)$, we have
\[
t_p(H,\uP)-t_p(H,\uQ) \ls_H L\eps\,.
\]
\end{thm}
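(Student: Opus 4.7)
The plan is to proceed by induction on $\edges(H)$, combining multilinearity of $t_p(H, \cdot)$ in each edge-slot with a layer-cake representation of a contracted tensor, using the $H$-dominating property of $\BBasys$ in an essential way.

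For the inductive step, applying the counting lemma to every proper subgraph $H' \subset H$ (with the same $\cC, Q_0, L$) yields the a priori bound $t_p(H', \uR) \le C(H)\, L$ for any configuration $\uR$ with entries in $\hull(\cC)$. By multilinearity in each edge-slot, the difference $t_p(H, \uP) - t_p(H, \uQ)$ telescopes over $e \in \Edges(H)$ into a sum of contributions of the form
$$\frac{\langle D^e, F^e\rangle}{n^{\verts(H)} p^{\edges(H)}}, \qquad F^e(\phi_e) := \sum_{\phi':\Verts(H)\setminus e\to [n]}\, \prod_{e'\ne e} R^{e'}\bigl(\phi_e(e'\cap e),\, \phi'(e'\setminus e)\bigr),$$
where $D^e := P^e - Q^e$ satisfies $\|D^e\|_{\Basys(e)}^* \le \eps p$ (since taking convex hulls preserves the $\|\cdot\|_\BBasys^*$-diameter) and the other $R^{e'}$ lie in $\hull(\cC)$. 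It suffices to bound each single-edge contribution by $O_H(L\eps)$.

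Next I would exploit the $H$-dominating property: for each $e' \ne e$, fix a choice $\base(e') \in \Base(e)$ containing $e \cap e'$. Grouping factors by $\base(e')$ shows that for each fixed $\phi'$, the map $\phi_e \mapsto \prod_{e'\ne e} R^{e'}(\phi(e'))$ factors as a product $\prod_{\base \in \Base(e)} f_\base^{\phi'}(\phi_e|_\base)$ of $[0,1]$-valued functions. Writing each $f_\base^{\phi'}$ by layer-cake as $\int_0^1 \1\{f_\base^{\phi'} > t_\base\}\, dt_\base$ exhibits $F^e$ as a non-negative combination (integrating over $\phi'$ and the product measure on $[0,1]^{\Base(e)}$) of Boolean test tensors $\test^{\phi',\bar t} \in \Test_{\Basys(e)}$ of the form \eqref{test.base}. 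Applying the dual-norm inequality $|\langle D^e, \test\rangle| \le \|D^e\|_{\Basys(e)}^*\|\test\|_{\Basys(e)}$ termwise, together with the crude bound $\|\test\|_{\Basys(e)} \le \|\test\|_1 + \sum_{\base\in\Base(e)}\|\test\|_\base$ from \eqref{def:size}, reduces the task to bounding, for $\bullet \in \{1\} \cup \Base(e)$, the integrated weight $\sum_{\phi'}\int \|\test^{\phi',\bar t}\|_\bullet\, d\bar t$. By Fubini, each such weight collapses (up to the prefactor from \eqref{norm.factors}) to a homomorphism count $\hom(H^\bullet, \uR)$ for a proper subgraph $H^\bullet \subset H$: for the $\|\cdot\|_1$-term, $H^\bullet$ is $H$ with edge $e$ deleted; for the $\|\cdot\|_\base$-term, $H^\bullet$ additionally removes the vertices in $e\setminus\base$ and every cross-edge whose overlap with $e$ is not contained in $\base$. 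The inductive a priori bound controls each count by $C(H)\, L \cdot n^{\verts(H^\bullet)} p^{\edges(H^\bullet)}$, which combined with the prefactors and the degree identities built into \eqref{Deg.usual} yields $\ls_H L\, n^{\verts(H)} p^{\edges(H)-1}$. Multiplying by $\|D^e\|_{\Basys(e)}^* \le \eps p$ and dividing by the normalization gives the desired $\ls_H L\eps$ per edge.

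The main obstacle will be the combinatorial bookkeeping in the last step: matching the weights $\ddstar(e), \dd_\base(e)$ from \eqref{Deg.usual} with the edge and vertex counts of the subgraphs $H^\bullet$ so that the powers of $n$ and $p$ from \eqref{norm.factors} line up precisely with those produced by the inductive subgraph bounds. The $H$-dominating condition is crucial here, as it guarantees that every cross-edge contributes to exactly one base-factor in the decomposition of $F^e$, so the bookkeeping closes without any loss in the exponent of $p$.
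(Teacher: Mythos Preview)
Your overall architecture—induction on $\edges(H)$, telescoping over edges, recognizing the partially contracted tensor as a combination of test tensors, then matching exponents via the subgraph bounds—is exactly right and matches the paper. But there is a real gap in the layer-cake step that breaks the exponent bookkeeping when the dominating base $\Base(e)$ has overlapping elements.

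The problem is your single assignment $\base(e')$ for each cross-edge $e'$. The weight $\dd_\base(e)=\dd_\base^H(e)$ in \eqref{Deg.usual} counts \emph{all} $e'$ with $\emptyset\ne e'\cap e\subseteq\base$, not only those you assigned to $\base$. So when you bound the $\|\cdot\|_\base$-term, integrating $\|\ttest_\base\|_1$ collapses to $\sum_{\psi_\base} f_\base^{\phi'}(\psi_\base)$, which contains only the factors $R^{e'}$ with $\base(e')=\base$; the edges with $e'\cap e\subseteq\base$ but $\base(e')\ne\base$ are missing. The resulting subgraph $\wt H$ has $\edges(\wt H)=|\{e':\base(e')=\base\}|+|\{e':e'\cap e=\emptyset\}|$, whereas the prefactor $p^{\dd^H(e)-\dd^H_\base(e)}$ accounts for $\dd_\base^H(e)$ edges dominated by $\base$. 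The shortfall $\dd_\base^H(e)-|\{e':\base(e')=\base\}|$ leaves you off by a factor $p^{-1}$ for each missing edge, which is fatal in the sparse regime. (A concrete instance: for $r=3$, $e=\{1,2,3\}$, $\Base(e)=\{\emptyset,\{1,2\},\{2,3\}\}$, and a neighbor $e'$ with $e'\cap e=\{2\}$, whichever base you assign $e'$ to, the other base's bookkeeping is short one factor of $p$.)

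The paper avoids this by first expanding each $Q^e\in\hull(\cC)$ as a convex combination of elements of $\cC$, reducing to $\uA,\uB\in\cC^{\Edges(H)}$. Then the factors $\tZ^{e'}$ are Boolean, so one can set $\ttest_\base=\prod_{e'\in\partial_\base^H(e)}\tZ^{e'}$, freely \emph{duplicating} any $e'$ whose overlap lies in several bases without changing the product $\prod_\base\ttest_\base$. Each $\ttest_\base$ then carries all edges with $e'\cap e\subseteq\base$, and the exponents close exactly. Your layer-cake idea can be salvaged by applying it to each individual $R^{e'}$ (writing $R^{e'}=\int_0^1\1\{R^{e'}>s_{e'}\}\,ds_{e'}$) rather than to the grouped $f_\base$; the resulting Boolean indicators can then be duplicated across bases just as in the paper's argument.
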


The above is a consequence of \Cref{thm:count.gen} giving a counting lemma for the broader class of \emph{signed-homomorphism} functionals interpolating  between homomorphism counts and \emph{induced} homomorphism counts.

\section{Quantitative \ldps}
\label{sec:LDPs}

\subsection{Quantitative $\Basys^*$-norm LDPs}

As a consequence of \Cref{thm:reg} we obtain quantitative \ldps for the measure space $(\cAnr,\mu_{p})$ at large fixed $n$. 
For comparison, the classical \ldp for a sequence of measures $\nu_n$ on a topological space $\cQ$ states that for any $\cE\subset\cQ$, 
\begin{equation}	\label{classical-LDP}
-\inf_{x\in \cD} I(x)\le 
\liminf_{n\to \infty}\frac1{\rate_n}\log \nu_n(\cE) \le \limsup_{n\to \infty}\frac1{\rate_n}\log \nu_n(\cE)  \le -\inf_{x\in \cF} I(x)
\end{equation}
for any open $\cD\subseteq\cE$ and closed $\cF\supseteq\cE$, where $\rate_n$ is the speed and $I(\cdot)$ the \abbr{LDP} rate function.

For our quantitative result, the rate function is the relative entropy $\eye_p(Q)=\DKL(\mu_Q\|\mu_p)$ (see \eqref{def:eyepQ}). 
Now we specify our notions of outer and inner approximations of a set $\cE\subset\cAnr$.
Let $\Edges$ be an arbitrary finite collection of $r$-sets, and let $\BBasys=\{\Basys(e)\}_{e\in \Edges}$ be a collection of weighted bases $\Basys(e)$ over $e$.
Recalling the associated seminorm defined in \eqref{def:BBnorm}, for $Q\in\cQnr,\delta>0$ we denote the 
$\delta$-neighborhood of $Q$ in $\cAnr$ by
\begin{equation}	\label{def:Bnbhd}
\nbhd_{\BBasys}(Q,\delta) := \big\{ A\in \cAnr: 
\|Q-A\|_{\BBasys}^*\le \delta\big\}\,.
\end{equation}
For $\cE\subseteq\cAnr$ we denote the outer approximation
\begin{equation}	\label{def:Bnbhd-outer}
(\cE)_{\BBasys,\delta} := \bigcup_{A\in \cE} \hull(\nbhd_{\BBasys}(A,\delta))
\end{equation}
and the inner approximation
\begin{equation}	\label{def:Bnbhd-inner}
(\cE)^\circ_{\BBasys,\delta}:= \{ Q\in\cQnr: \nbhd_{\BBasys}(Q,\delta) \subseteq \cE\}\,.
\end{equation}
N.B.: $(\cE)_{\BBasys,\delta}$ and $(\cE)^\circ_{\BBasys,\delta}$ are subsets of the solid cube $\cQnr$.
We also emphasize that the convex hulls in \eqref{def:Bnbhd-outer} are different from (and generally proper subsets of) the balls $\{Q\in\cQ_{n,r}:\|Q-A\|_\BBasys^*\le \delta\}$. 

\begin{thm}[Quantitative \abbr{LDP}]
\label{thm:LDP}
Let $\BBasys$ be a collection of weighted bases as above.
\begin{enumerate}
\item[(a)] (LDP upper bound).
With $C_0,c_0$ as in \Cref{thm:reg}, let $p\in (n^{-2},1)$, 
$\kappa,\eps>0$, and assume $\growing_{n,p}(\BBasys):=  \min_{e\in \Edges}\growing_{n,p}(\Basys(e))$ satisfies the lower bound in \eqref{def:WB}.
Then for any $\cE\subseteq\cAnr$, 
\begin{equation}	\label{LDP.upper}
\log\P( \bA\in \cE) \le- \min\Big( \,\rate_\star \;,\;\, \inf\big\{\eye_p(Q) : Q\in (\cE)_{\BBasys,\eps p} \big\} - \rate_{\ME}
\,\Big)\,
\end{equation}
for some \emph{cutoff rate} 
\begin{equation}	\label{Rstar}
\rate_\star =c_0\kappa n^r\log(1/p) - O_{\BBasys}(1)
\end{equation}
and \emph{metric entropy rate} 
\begin{equation}	\label{RME}
\rate_{\ME} \ls_{\BBasys} 
\frac{\kappa n^r \log n}{\eps^2\growing_{n,p}(\BBasys)}\,.
\end{equation}
\item[(b)] 
(LDP lower bound).
If $\growing_{n,p}(\BBasys) \ge C_0'\eps^{-2}\log n$ for a sufficiently large constant $C_0'(r)>0$, then for any $\cE\subseteq\cAnr$,
\begin{equation}	\label{LDP.lower}
\log\P(\bA\in \cE) \ge -\inf\big\{ \eye_p(Q): Q\in (\cE)^\circ_{\BBasys,\eps p}\big\} - O\Big(1+n^{r/2}
\Lp
\Big)
\end{equation}
where the implied constant is absolute.
\end{enumerate}
\end{thm}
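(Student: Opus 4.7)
The plan is to handle parts (a) and (b) separately, with both reducing to the Sanov/Chernoff machinery for product Bernoulli measures; \Cref{thm:reg} supplies the geometric input in (a), while measure tilting plus concentration does the work in (b).

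For the upper bound (a), first split
\[
\P(\bA\in\cE)\le \P(\bA\in\badset(\kappa,\eps))+\P(\bA\in\cE\setminus\badset(\kappa,\eps)),
\]
with the first term bounded by $p^{c_0\kappa n^r}$ via \Cref{thm:reg}, which produces the cutoff rate $\rate_\star$ after absorbing the $O_\BBasys(1)$ slack. On the complement every $A$ admits $A=A_\str+A_\rand$ with $A_\str = p\jay+\sum\alpha_i T_i$ and $\|A_\rand\|_\BBasys^*\le \eps p$. Combining the total size budget $\sum\|T_i\|_\BBasys\le \kappa n^r/(\eps p)^2$ with the Euclidean separation $\eps p^{1+\ddstar} n^{r/2}$ of successive $T_i$, and discretizing the coefficients $\alpha_i$ at a suitable scale, one obtains a net of structured representatives of cardinality $N=\exp(O_\BBasys(\rate_\ME))$; after projecting each onto $\cQnr$ (which is non-expansive in $\|\cdot\|_\BBasys^*$), call these $Q^{(1)},\dots,Q^{(N)}$. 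Every $A\in\cAnr\setminus\badset$ then satisfies $\|A-Q^{(j(A))}\|_\BBasys^*\le \eps p$ for some $j(A)$, and for each $j$ with $B_j:=\{A\in\cAnr:\|A-Q^{(j)}\|_\BBasys^*\le \eps p\}$ meeting $\cE$ we have $\hull(B_j)\subseteq (\cE)_{\BBasys,\eps p}$ (up to an adjustment of $\eps$ by a bounded factor). Applying the product-measure Sanov bound $\P(\bA\in S)\le \exp(-\inf_{Q\in\hull(S)}\eye_p(Q))$ to each $B_j\cap\cE$ and union-bounding over $j$ produces the second alternative in \eqref{LDP.upper}.

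For the lower bound (b), fix any $Q\in(\cE)^\circ_{\BBasys,\eps p}$, so $\nbhd_\BBasys(Q,\eps p)\subseteq \cE$. Tilting to $\mu_Q$ gives
\[
\P_p(\bA\in\cE)\ge e^{-\eye_p(Q)-t}\,\P_Q\!\big(\|\bA-Q\|_\BBasys^*\le \eps p\text{ and }L_Q(\bA)\ge -t\big),
\]
where $L_Q(A):=\log\tfrac{d\mu_p}{d\mu_Q}(A)+\eye_p(Q)$ is a centered sum of $\binom{n}{r}$ bounded independent random variables under $\mu_Q$, with $\Var_Q L_Q = O(n^r\log^2\tfrac{1-p}{p})$. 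Taking $t\asymp n^{r/2}|\log\tfrac{1-p}{p}|$ makes $\P_Q(L_Q\ge -t)\ge 3/4$ by Chebyshev, and the hypothesis $\growing_{n,p}(\BBasys)\gs \eps^{-2}\log n$ should give $\P_Q(\|\bA-Q\|_\BBasys^*\le \eps p)\ge 3/4$ via a Hoeffding-and-dyadic argument over the class of Boolean test tensors. Intersecting the two events and optimizing over $Q$ yields \eqref{LDP.lower}.

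The main obstacle is the $\Basys^*$-norm concentration step in (b): the class of test tensors \eqref{test.base} has doubly-exponential cardinality in $n$, and the mild lower bound $\growing_{n,p}(\BBasys)\gs \log n$ is far too weak for a naive union bound over all test tensors. The intended strategy is to partition test tensors by the dyadic size classes of their factors $\|\tau_\base\|_1$, on each of which the Hoeffding exponent for $\langle\bA-Q,T\rangle$ is uniformly lower-bounded in terms of $\growing_{n,p}(\BBasys)$ (the rescaling in \eqref{norm.factors} being engineered so that small factors produce proportionally large exponents), while the union-bound cost remains at most $\exp(O(n))$ per range. A secondary but nontrivial point in (a) is to verify that the Euclidean separation guarantee of \Cref{thm:reg} translates into a net of the claimed size $\exp(O_\BBasys(\rate_\ME))$, and that the projection step to $\cQnr$ used in constructing the net centers preserves the $\Basys^*$-closeness.
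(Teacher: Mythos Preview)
Your proposal is essentially correct and follows the same architecture as the paper. A few clarifications will sharpen it.

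For part (a), the projection to $\cQnr$ is unnecessary and introduces the very difficulty you flag. The paper avoids it entirely: the net elements are not points $Q^{(j)}\in\cQnr$ but rather \emph{convex hulls} $\cK$ of the sets $\{A\in\cAnr:\|A-p\jay-\sum_i\lambda_i\htest_i\|_{\Basys(e)}^*\le 2\eps p\}$, parametrized by discretized $(\bs T,\bs\lambda)$. Since each such $\cK$ is convex and compact, the Sanov-type bound $\mu_p(\cK)\le\exp(-\inf_{Q\in\cK}\eye_p(Q))$ (\Cref{lem:upperLDP.convex}) applies directly, and if $\cK$ meets $\cE$ then $\cK\subseteq(\cE)_{\BBasys,4\eps p}$ by the triangle inequality. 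No projection is needed because the centers $p\jay+\sum_i\lambda_i\htest_i$ never appear in the infimum; only the convex hulls of Boolean tensors do.

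For the net cardinality in (a), the Euclidean separation of the $\htest_i$ is used only to bound $|\lambda_i|\le\|\bar A\|_2/\|\htest_i\|_2\le\eps^{-1}p^{-1-\ddstar(e)}$ via Cauchy--Schwarz, which controls the range for the lattice discretization of $\bs\lambda$. The dominant contribution to $\log N$ comes instead from counting the possible sequences of Boolean test tensors $(T_1,\dots,T_k)$: given $\|T_i\|_{\Basys(e)}$, the number of choices for each factor $\tau_\base$ is at most $\exp(|\base|\|T_i\|_{\Basys(e)}(\log n)/(n^{r-|\base|}p^{\ddstar-\dd_\base}))$, and summing over $i$ against the budget $\sum_i\|T_i\|_{\Basys(e)}\le\kappa n^r/(\eps p)^2$ yields exactly $\rate_\ME\ls_\BBasys\kappa n^r(\log n)/(\eps^2\growing_{n,p}(\BBasys))$. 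The counts of $k$, of the values $\|T_i\|_{\Basys(e)}$, and of the lattice points for $\bs\lambda$ are all lower-order.

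For part (b), your plan is exactly what the paper does (Lemmas \ref{lem:tilt}--\ref{lem:Bstar.conc}). The concern about the doubly-exponential class size is resolved just as you suggest: the Hoeffding exponent for a single test tensor $T$ is $c\eps^2p^2\|T\|_\Basys$, while the log-number of test tensors with $\|T\|_\Basys=L$ is at most $O_r(Lp^2(\log n)/\growing_{n,p}(\Basys))$, so the hypothesis $\growing_{n,p}(\Basys)\ge C(r)\eps^{-2}\log n$ makes the union bound succeed at every value of $L$, and one sums over the (polynomially many) possible values of $L\ge n^rp^{\ddstar}$.
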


In applications we take $\kappa$ such that $\rate_\star$ exceeds the rate of the rare event of interest -- for the upper tail of $t_p(H,\bG)$ this means taking $\kappa=Kp^\Delta$ for a sufficiently large constant $K$. 
From our assumption on $\growing_{n,p}(\BBasys)$ we have $\rate_{\ME}=O(\rate_\star)$,
but we must further ensure that $\growing_{n,p}(\BBasys)\gg1$ to have $\rate_{\ME}$ be negligible compared to the main term. This amounts to a lower bound constraint on $p$, and there is generally flexibility (within the requirements of a counting lemma) to choose the weighted bases to lighten this constraint.  

The upper bound of \Cref{thm:LDP} follows from \Cref{thm:reg} by a straightforward covering argument, combined with the non-asymptotic bound \eqref{upperLDP.convex}.
The term $\rate_{\ME}$ is the sum of log-covering numbers of $\cAnr\setminus\badset(Kp^\Delta,\eps)$ by $\eps p$-balls in the $\Basys(e)^*$-norms. 
(The refinement to convex hulls of their intersections is important when combining \Cref{thm:LDP} with \Cref{thm:count}.)
To obtain bounds for upper tails of functionals $f:\cQnr\to \R$ 
we apply \Cref{thm:LDP} to $\cE= \{f\ge t\}$, 
and then use \Cref{thm:count} to show that $\{f\ge t+\eta\}\subset (\cE)^\circ_{\BBasys,\eps p}\subset(\cE)_{\BBasys, \eps p}\subset\{f\ge t-\eta\}$ for some $\eta=o_{\eps\to0}(1)$.

\subsection{Upper and lower tails for hypergraph counts}
\label{sec:intro.tails}

Our main application of \Cref{thm:LDP}, in combination with the counting lemma (\Cref{thm:count}),
is \Cref{thm:tails} on joint upper and lower tails for homomorphism counts.
In this subsection we state some corollaries of \Cref{thm:tails} for specific classes of hypergraphs and define the parameter $\DDprime(H)$. 
Further applications of Theorems \ref{thm:LDP} and \ref{thm:count} are given in $\mathsection$\ref{sec:other}.

For the case $m=1$ and $p=o(1)$, the optimization problem $\Phi_{n,p}(H,\delta)$ was recently analyzed in \cite{LiZh} for certain $H$ -- specifically, complete hypergraphs and the 3-graph depicted in Figure \ref{fig:LiZh} -- where they deduced upper tail asymptotics for $p\gg n^{-1/(6 
\nick{\edges(H)})}\log n$ via the general framework from \cite{Eldan}, which required bounding the Gaussian width of gradients for the homomorphism counting functionals. Combining \Cref{thm:tails} with \cite[Theorem 2.3]{LiZh} we obtain the following:

\begin{cor}	\label{cor:LiZh}
For fixed $r,k$, and $K_k^{(r)}$ the $r$-uniform clique on $k$ vertices, we have
\begin{align}	
\log & \; \mathbb{P}\big(\, t_p(K_k^{(r)},\bG)\ge 1+\delta\,\big) \nonumber \\
& = 
 -(1+o(1)) \min\Big\{ \frac{\delta^{r/k}}{r!}, \frac{\delta}{(r-1)!k} \Big\} n^r p^{{k-1\choose r-1}} \log(1/p) 
 \label{LiZh:asymp1}
\end{align}
if $n^{-c(r,k)} \ll p\ll 1$ with $c(r,k) = 1/({k-1\choose r-1} + 1)$. 
Furthermore, the lower bound holds for the wider range $n^{-1/{k-1\choose r-1}}\ll p\ll1$.

Moreover, with $H$ the 3-graph depicted in Figure \ref{fig:LiZh}, for $n^{-1/2}\ll p\ll1$, we have
\begin{align}	
\log & \;  \mathbb{P}\big(\, t_p(H,\bG)\ge 1+\delta\,\big) \nonumber \\
& =
 -\Big(\frac16+o(1)\Big) \min\Big\{ \sqrt{9+3\delta}-3, \sqrt{\delta} \Big\} n^3 p^2 \log(1/p) \,.
 \label{LiZh:asymp2}
\end{align}
\end{cor}

The ranges of $p$ for the upper bounds follow from our computation of the parameters $\DDprime(K_k^{(r)})$ and $\DDprime(H)$ in Examples \ref{ex:cliques} and \ref{ex:linear} below. 
Analogously to the case $r=2$, the asymptotic \eqref{LiZh:asymp1} for cliques matches the probability of appearance of higher-rank analogues of the ``clique'' and ``hub'' structures that were first described in \cite{LuZh:sparse}. 
One may view $H$ from Figure \ref{fig:LiZh} as the 3-graph obtained by transposing the incidence matrix of the complete 2-graph $G=K_4$.
The interest in this particular hypergraph is that the mechanism for large deviations of $\hom(H, \bG)$ is more intricate than the simple appearance of a clique or hub structure as is the case for $H=K_k^{(r)}$ -- see \cite{LiZh} for further discussion.

\begin{figure}
\includegraphics[width=5cm]{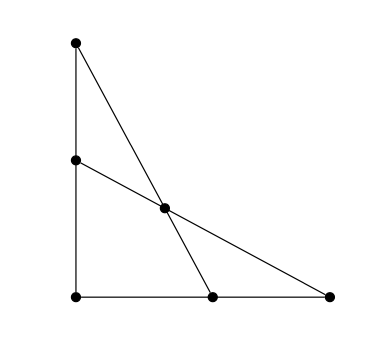}
\caption{\label{fig:LiZh}The 3-graph considered in \cite{LiZh} (see \Cref{cor:LiZh}) has six vertices (dots)
with four edges denoted by straight lines. (Reproduced with permission from \cite{LiZh}.) }
\end{figure}

We next highlight consequences of \Cref{thm:tails} combined with a result from \cite{BGLZ} providing an asymptotic for $\Phi_{n,p}(H,\delta)$ for the for the case $r=2$, $m=1$ and $p=o(1)$.

\begin{cor}[The case of $2$-graphs]
\label{cor:tails2}
Let $H$ be a fixed 2-graph of maximal degree $\Delta\ge2$. 
\begin{itemize}
\item[(a)]
For any fixed $\delta>0$, assuming $n^{-1/(\Delta+1)}\ll p\le\PUP$, 
\begin{equation}	\label{UT.2-graphs} 
\log \mathbb{P}\big(\, t_p(H,\bG)\ge 1+\delta\,\big)
=  -
(1+o(1))\Phi_{n,p}(H,\delta+o(1)) 
\end{equation}
and for fixed $\delta\in (0,1)$ and $n^{-1/(\Delta+1)}\log n\ll p<1$,
\begin{equation}	\label{LT.2-graphs} 
\log \mathbb{P}\big(\, t_p(H,\bG)\le 1-\delta\,\big)
=  -
(1+o(1))\Psi_{n,p}(H,\delta+o(1)) \,.
\end{equation}

\item[(b)]
Furthermore, \eqref{UT.2-graphs} (resp.\ \eqref{LT.2-graphs}) holds in the range $n^{-1/\Delta}\ll p\le \PUP$ (resp.\ $p\gg n^{-1/\Delta}\log n$) whenever each vertex of $H$ of degree $\Delta$ is contained in an isolated star. 

\item[(c)]
Further specializing to the case that $H=K_{1,\Delta}$ (the $\Delta$-armed star) and $n^{-1/\Delta}\ll p\ll1$, 
\begin{equation}	\label{tail:stars}
\log\mathbb{P}\big(\, t_p(K_{1,\Delta},\bG)\ge 1+\delta\,\big) = 
 -(1+o(1)) \delta n^2 p^{\Delta} \log(1/p) .
\end{equation}
\end{itemize}
\end{cor}

\begin{remark}
For general $r$ the asymptotic \eqref{UT.2-graphs} holds in the range\\
$n^{-1/\Delta(H)}\ll p\le\PUP$ for instance when $H$ is a sunflower with at least $2/(r-|V_0|)$ petals, where $V_0\subset\Verts(H)$ is the kernel of $H$ -- see \Cref{ex:sunflower}.
\end{remark}

\begin{remark}
The assumption on $p$ in Part (c) is sharp, as the upper tail rate is known to be of size $\Theta_\delta(n^{1+1/\Delta}p\log(1/p))$ for $n^{-(1+1/\Delta)}(\log n)^{1/(\Delta-1)}\ls p\ls n^{-1/\Delta}$ \cite{SiWa:stars}.
\end{remark}

\begin{proof}
Part (a) is immediate from \Cref{thm:tails}. For (b) we only need to verify that $\DDprime(H)\le\Delta(H)$ for such 2-graphs, which we do in \Cref{ex:2graphs} below.
Part (c) follows from \Cref{thm:tails} and \cite[Theorem 1.5]{BGLZ}, where we note that the independence polynomial of the graph $H^*$ defined in that work is $P_{H^*}(\theta)=1+\theta$ whenever $H$ is a star.
\end{proof}

We now define the parameter $\DDprime(H)$ appearing in \Cref{thm:tails}.
Roughly speaking, it measures how efficiently one can cover the edge overlaps of $H$, with a small value for $r$-graphs with overlaps concentrated on a small number of small vertex sets (such as sunflowers or sparse linear hypergraphs) and a large value for cliques.
A key point is that it depends only on the neighborhood structure of single edges, similarly to how $\Delta(H)$ depends only on the neighborhood of single vertices. Thus it is a \emph{local} hypergraph parameter that is independent of the size of $H$ (as quantified by $\verts(H)$ or $\edges(H)$). 

Recall the notion of a dominating weighted base from \Cref{def:dom}.
Given a dominating base $\Base$ over an edge $e\in\Edges(H)$, 
recalling the edge degree parameters from \eqref{Deg.usual}, we set
\begin{align}	\label{def:dprime.dom}
\ddprime_\Dom(e) &= \max\{ \ddprime_\dom(e):\dom\in \Dom\} \,, \nonumber \\
\ddprime_\dom(e) 
&= 
\frac{\dd^H(e) - \dd^H_\base(e)+2}{|e\setminus \base|}
=
\begin{cases}
\frac{\dd^H(e\setminus\dom)+1 }{|e\setminus \dom|}	& \dom\ne \emptyset\\
\frac{\dd^H(e) +2}{r} & \dom = \emptyset 
\end{cases}	
\;.
\end{align}
(Recall that $\dd^H(e)$ does not count $e$ itself.)
Note that $\ddprime_\dom(e)$ is a normalized count of the edges overlapping $e\setminus \base$, i.e.\ those that are \emph{not} dominated by $\dom$. (If the 2 were replaced by 1 in the first expression for $\ddprime_\base(e)$ then it would be the average degree of vertices in $e\setminus\base$.)
We define
\begin{equation}\label{eq:def-delta'}
\ddprime(e) = \min_{\Dom}\ddprime_\Dom(e)\,,\qquad\quad
\DDprime(H) =\max_{e\in \Edges(H)}\ddprime(e)
\end{equation}
where the minimum is taken over all dominating bases $\Dom$ for $e$.
That is, $\DDprime(H)$ is the smallest number $\Delta'$ such that every edge $e\in \Edges(H)$ has a dominating base $\Base(e)$ 
such that 	
\begin{equation}	\label{def:DDprime}
|\{e'\in \Edges(H): e'\cap (e\setminus\base)\ne\emptyset\}|\le \Delta'|e\setminus\base|-1
\end{equation}
for every $\base\in\Base(e)$ (N.B.: the left hand side counts the edge $e$). 
From this definition, recalling the notation of \eqref{def:WB} and \Cref{thm:LDP}, we have that for any $r$-graph $H$, whenever $np^{\DDprime(H)}\ge1$, there exists 
an $H$-dominating base system $\BBasys=\{\Basys(e)\}_{e\in\Edges(H)}$ 
with weights as in \eqref{Deg.usual} such that
\begin{equation}	\label{growing.LB}
\growing_{n,p}(\BBasys) =\min_{e\in\Edges(H)} \growing_{n,p}(\Basys(e))
=  \min_{e\in \Edges(H)} \min_{\base\in \Base(e)} (np^{\ddprime_\base(e)})^{r-|\base|} 
 \ge np^{\DDprime(H)}.
\end{equation}

We record some general bounds on $\DDprime(H)$ by considering specific dominating bases. 
We drop the superscript $H$ from all notation for the remainder of this section.
We write
\[
\Delta_\star(H) = \max_{e\in \Edges(H)} \dd(e)
\]
for the maximal edge degree (which does not count the edge $e$ itself). 
For $1\le s\le r$ let 
\[
\Delta_s(H) = \max_{e\in \Edges(H)} \max_{U\in {e\choose s}}  |\{e': U\cap e'\ne\emptyset\}|
\]
denote the largest number of hyperedges intersecting a size-$s$ subset of some edge of $H$;
in particular $\Delta_1(H) = \Delta(H)$, $\Delta_r(H)=\Delta_\star(H)+1$, 
and $\Delta_s(H) \le s\Delta(H)$ for every $1\le s\le r-1$. 
Since
$
\ddprime(e) \ge (\dd(e) +2)/r
$,
it follows that for any $r$-graph $H$, 
\begin{equation}	\label{DD.LB}
\DDprime(H) \ge \frac{  \Delta_\star(H) +2}r 
\ge \frac{\Delta(H)+1}{r}
\end{equation}
whereas taking $\Dom(e)={e\choose r-1}\cup\{\emptyset\}$ (which is always a dominating base) shows
\begin{equation}	\label{DD.bound1}
\DDprime(H)\le\Delta(H)+1.
\end{equation}	
Indeed, for each $\dom=e\setminus\{v\}$ we have $\dd(e\setminus\base)=\dd(v)\le\Delta(H)$.
If every pair of edges overlaps in at most $s_0$ vertices, then taking the bases $\Dom(e)=\{\emptyset\}\cup {e\choose s_0}$ 
we obtain the sharper bound
\begin{equation}	\label{DD.bound3}
\DDprime(H) \le \max_{s\in \{r,r-s_0\}} \frac{\Delta_{s}(H) + 1}{s} .
\end{equation}

\begin{example}[Cliques]
\label{ex:cliques}
When $H$ is the $r$-uniform clique on $k$ vertices, we have $\Delta(H)=\binom{k-1}{r-1}$
and $\DDprime(H)=\binom{k-1}{r-1}+1$, so that equality holds in \eqref{DD.bound1}.
Indeed, for each hyperedge $e$ we are forced to take $\Dom(e) = {e\choose r-1}\cup \{\emptyset\}$ to satisfy the domination condition.
\end{example}

\begin{example}[Sunflowers and stars]
\label{ex:sunflower}
When $H$ is a sunflower, with pairwise intersections of all $\Delta$ edges (``petals'') equal to a common ``kernel'' $V_0\subset \Verts(H)$,
for every edge the optimal base is $\Dom(e) = \{\emptyset, V_0\}$, for which we have $\ddprime_\emptyset(e)=\frac{\Delta+1}{r}$ and $\ddprime_{V_0}(e) = \frac2{r-|V_0|}$, and so 
$
\DDprime(H) = \max\{ \frac{\Delta+1}{r} , \frac{2}{r-|V_0|}\}.
$
Thus, sunflowers attain the minimum in \eqref{DD.LB} as long as the kernel is of size $|V_0| \le r\frac{\Delta-1}{\Delta+1}$.
For the $r$-uniform $\Delta$-armed star, with $\Delta\ge2$ (with $|V_0|=1$) we have $\DDprime(H)= \frac{\Delta+1}{r}$ when $r\ge3$ and $\DDprime(H)=\max\{\frac{\Delta+1}2,2\}\le\Delta$ when $r=2$.
\end{example}

\begin{example}[Linear hypergraphs]
\label{ex:linear}
When all pairs of edges share at most one vertex then \eqref{DD.bound3} holds with $s_0=1$.
For instance, for linear cycles (or disjoint unions thereof),
$
\DDprime(H) =\max\{ \frac4r,  \frac{3}{r-1}\}
$
which 
attains the lower bound \eqref{DD.LB} of $4/r$ for all $r\ge 4$. 
For 2-graphs of degree 2 we have$\DDprime(H) \le 3$, and one checks that in fact $\DDprime(H) = 3$. 
For the linear $3$-graph of \Cref{cor:LiZh}, taking $\Base(e) = {e\choose 1}\cup \{\emptyset\}$ shows that $\DDprime(H) = \frac12(\Delta_2(H)+1) = 2$.
For the Fano plane one checks that $\DDprime(H)=\Delta(H)=3$.
\end{example}

\begin{example}[2-graphs]
\label{ex:2graphs}
As was noted in \eqref{DD.bound1} we always have $\DDprime(H)\le \Delta(H)+1$.
Here we verify that $\DDprime(H)\le\Delta(H)$ for any 2-graph $H$ as in \Cref{cor:tails2}(b).
Indeed, for any edge $e=\{u,v\}$ with an end $v$ of maximal degree we take the base $\Base(e)=\{\emptyset,\{v\}\}$, giving $\ddprime(e) = \max\{\frac12(\Delta(H)+1),2\}\le \Delta(H)$. For any other edge, the maximal base $\Base(e)=\{\emptyset, \{u\},\{v\}\}$ verifies that $\ddprime(e)\le \Delta(H)$, and thus $\DDprime(H)=\max_e\ddprime(e)\le \Delta(H)$. 
\end{example}

\nick{\begin{remark}\label{rmk:pLB'}
As in Remark \ref{rmk:pLB}, we believe that the right hand side of \eqref{eq:costD} can be improved to $\kappa n^r \epsilon^{-2}p^{-1}$, which would allow us to replace the left hand side of \eqref{def:WB} with $\min_{\base\in \Base}\left\{n^{r-|\base|}p^{d_*-d_f+1}\right\}$. This would in turn imply that the conclusion of Theorem \ref{thm:tails} holds as long as $p\gg n^{-1/\tilde{\Delta}(H)}$ where $\tilde{\Delta}(H)$ is defined as follows. Given a dominating base $\Base$ over an edge $e\in \Edges(H)$, we set 
\begin{align*}	
\tilde{d}_\Dom(e) &= \max\{ \tilde{d}_\dom(e):\dom\in \Dom\}\,, \\
\tilde{d}_\dom(e) 
&= 
\frac{\dd^H(e) - \dd^H_\base(e)+1}{|e\setminus \base|}
=
\begin{cases}
\frac{\dd^H(e\setminus\dom) }{|e\setminus \dom|}	& \dom\ne \emptyset\\
\frac{\dd^H(e) +1}{r} & \dom = \emptyset 
\end{cases}	
\;.
\end{align*}
(Compare \eqref{def:dprime.dom}.)
Note that $\tilde{d}_\base(e)$ is the average degree of vertices in $e\setminus\base$.
We define
\[
\tilde{d}(e) = \min_{\Dom}\tilde{d}_\Dom(e)\,,\qquad\quad
\tilde{\Delta}(H) =\max_{e\in \Edges(H)}\tilde{d}(e)
\]
where the minimum is taken over all dominating bases $\Dom$ for $e$.
\end{remark}}

\section{\label{sec:count.proof}Proof of \Cref{thm:count} (counting lemma)} 

We will actually prove a more general version, involving a generalization of homomorphism counts that also includes \emph{induced} homomorphism counts as a special case. 
We say a pair $\cH= (H, \xi)$ is a \emph{signed hypergraph}
if $H=(\Verts, \Edges)$ is a hypergraph and $\xi:\Edges\to \{-1,+1\}$ is a labeling of the edges by signs. 
Recall from $\mathsection$\ref{sec:notation} that  $H'= (\Verts',\Edges')\subseteq H$ if $\Verts'\subseteq\Verts$ and $\Edges'\subseteq\Edges$, and $H'\subset H$ if $\Verts'\subseteq \Verts$ and $\Edges'\subset \Edges$.
We say $\cH'=(H',\xi')\subseteq \cH=(H,\xi)$ (resp.\ $\cH'=(H',\xi')\subset \cH$) if $H'\subseteq H$ (resp.\ $H'\subset H$) and $\xi'= \xi|_{\Edges'}$. 
For a signed hypergraph $\cH=(H,\xi)$, the signing induces two subgraphs of $H$ given by
$H_\pm$ with $\Verts(H_\pm) = \Verts(H)$ and $\Edges(H_\pm)= \xi^{-1}(\pm1)$.
We extend the definition of homomorphism counts to signed hypergraphs by defining for any $\cH'\subseteq\cH$ and $\usym=(S^e)_{e\in \Edges(H)}\in \Syms^{\Edges(H)}$,
\begin{equation}	\label{def:genhom}
\hom(\cH', \usym) = \sum_{\phi: \Verts(H')\to [n]} \prod_{e\in \Edges(H'_+)} S^e(\phi_e) \prod_{e\in \Edges(H'_-)} (1-S^e(\phi_e)).
\end{equation}
For compactness, here and in the remainder of the section we write 
\[
\phi_v:=\phi(v),\qquad \phi_e:= \phi(e)= \{\phi_v\}_{v\in e}
\]
and similarly $\phi_U:=\phi(U)$ for general $U\subset \Verts$.
 
We can \nick{alternatively} express this using the functional $\hom(H,\cdot)$ as follows: with $\xi$ fixed, we denote
\begin{equation}	\label{def:Sxi}
\wt{\uS} = \wt{\uS}_\xi = (\wt{S}^e)_{e\in \Edges(H)}\,, \qquad 
\wt{S}^e:= \begin{cases} S^e & \xi(e) = +1\\ \jay-S^e & \xi(e) = -1\,, \end{cases}
\end{equation}
(Recall $\jay$ is the tensor with entries 1 when all indices are distinct and 0 otherwise.)
We have
\begin{equation}	\label{hom:Sxi}
\hom(\cH, \uS) = \hom(H, \wt{\uS}).
\end{equation}

\Cref{thm:count} follows immediately from the next result upon taking the trivial labeling $\xi(e)\equiv 1$.

\begin{thm}[Counting lemma for signed homomorphisms]	\label{thm:count.gen}
Let $p\in (0,1)$ and let $\cH=(H, \xi)$ be a signed hypergraph as above. 
For each $e\in \Edges(H)$ let $\Dom(e)$ be an $H$-dominating base for $e$, 
and define a weighted base $\Basys(e)=(r,e,\iota_e,\Base(e),\dd^{H_+}(e), \{\dd^{H_+}_\dom(e)\}_{\dom\in \Dom(e)})$.
With base system $\BBasys=\{\Basys(e)\}_{e\in\Edges(H)}$, let $\|\cdot\|_{\BBasys}^*$ be the associated
seminorm on $\Tensnr$ 
as defined in \eqref{def:BBnorm}.
Let $\cC\subset\cAnr$ be 
a set of diameter at most $\eps p$ under $\|\cdot\|_\BBasys^*$ for some $\eps\in(0,1]$, and assume further that
there exists $Q_0\in \hull(\cC), L\ge1$ such that 
\begin{equation}	\label{assu:count2}
\hom(\cH', Q_0) \le L n^{\verts(H')}p^{\edges(H'_+)} \qquad \forall \cH'\nick{=(H',\xi')} \subset \cH.
\end{equation}
Then for all 
$\uP= (P^e)_{e\in \Edges(H)}$ and $\uQ=(Q^e)_{e\in\Edges(H)}$ with each $P^e,Q^e\in\hull(\cC)$, 
\begin{equation}	\label{count:goal0}
\big|\hom(\cH, \uP) - \hom(\cH, \uQ)  \big| \ls_{\cH} L\eps n^{\verts(H)} p^{\edges(H_+)}.
\end{equation}
\end{thm}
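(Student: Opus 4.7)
The plan is to prove the theorem by induction on $|\Edges(H)|$, using a telescoping sum combined with a layer-cake decomposition that produces explicit test tensors. Via $\hom(\cH,\uS)=\hom(H,\wt{\uS}_\xi)$ from \eqref{def:Sxi}--\eqref{hom:Sxi}, together with the identity $\wt{P}^e-\wt{Q}^e=\xi(e)(P^e-Q^e)$ (which preserves the hypothesized closeness in $\BBasys^*$), the problem reduces to estimating $\hom(H,\wt{\uP})-\hom(H,\wt{\uQ})$. The base case $|\Edges(H)|=1$ is $|\langle P^e-Q^e,\jay\rangle|\le\|P^e-Q^e\|_{\Basys(e)}^*\cdot\|\jay\|_{\Basys(e)}\le\eps p\cdot n^r$, using $\jay\in\Test_{\Basys(e)}$ with $\|\jay\|_{\Basys(e)}=n^r$. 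For the inductive step I fix any ordering of $\Edges(H)$ and telescope:
\[
\hom(H,\wt{\uP})-\hom(H,\wt{\uQ})=\sum_{e\in\Edges(H)}\xi(e)\,\langle P^e-Q^e,\,T_e\rangle,\qquad T_e(\phi_e)=\sum_{\phi_{\Verts(H)\setminus e}}\prod_{e'\ne e}\wt{S}^{e'}(\phi_{e'}),
\]
where each $\wt{S}^{e'}\in\{\wt{P}^{e'},\wt{Q}^{e'}\}$ is dictated by the ordering, and the task becomes showing each summand is $O_{\cH}(L\eps n^{\verts(H)}p^{\edges(H_+)})$.

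To extract test tensors from $T_e$, I layer-cake each factor $\wt{S}^{e'}(\phi_{e'})=\int_0^1 \1_{\wt{S}^{e'}>t_{e'}}(\phi_{e'})\,dt_{e'}$ and write $T_e=\int M_{\vec t}\,d\vec t$ with $M_{\vec t}(\phi_e)=\sum_{\phi_{V\setminus e}}\prod_{e'\ne e}\1_{G^{e'}_{\vec t}}(\phi_{e'})$ for thresholded graphs $G^{e'}_{\vec t}:=\{\wt{S}^{e'}>t_{e'}\}$. For each fixed $\phi_{V\setminus e}$ the defining product of $M_{\vec t}(\phi_e)$ splits as $c(\vec t,\phi_{V\setminus e})\cdot \test_{\vec t,\phi_{V\setminus e}}(\phi_e)$, where $c:=\prod_{e':\,e\cap e'=\emptyset}\1_{G^{e'}_{\vec t}}$ is independent of $\phi_e$ and
\[
\test_{\vec t,\phi_{V\setminus e}}(\phi_e)=\prod_{\base\in\Base(e)}\ttest_\base(\phi_\base),\qquad
\ttest_\base(\phi_\base):=\prod_{\substack{e'\ne e,\,e\cap e'\ne\emptyset\\ e\cap e'\subseteq\base}}\1_{G^{e'}_{\vec t}}(\phi_{e'}).
\]
The $H$-dominating hypothesis ensures every overlapping $e'$ is captured by at least one $\base\supseteq e\cap e'$, and since $\1^k=\1$ the product $\prod_\base\ttest_\base$ collapses to $\prod_{e':\,e\cap e'\ne\emptyset}\1_{G^{e'}_{\vec t}}$, giving $\test_{\vec t,\phi_{V\setminus e}}\in\Test_{\Basys(e)}$. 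The dual-norm bound together with $\|\test\|_{\Basys(e)}\le\|\test\|_1+\sum_{\base\ne\emptyset}\|\test\|_\base$ reduces the estimate to bounding, after integration in $\vec t$, the two contributions $\int\sum_{\phi_{V\setminus e}}c\,\|\test\|_1\,d\vec t=\hom(\cH-e,\uS)$ and, for each $\base\ne\emptyset$, $\int\sum_{\phi_{V\setminus e},\phi_\base}c\cdot\ttest_\base\,d\vec t=\hom(\cH'_\base,\uS)$, where $\cH'_\base$ is the strict sub-signed-hypergraph on $\Verts(H)\setminus(e\setminus\base)$ with edges $\{e'\ne e:e\cap e'=\emptyset\text{ or }e\cap e'\subseteq\base\}$ and inherited signs.

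Since both $\cH-e$ and each $\cH'_\base$ are strict sub-signed-hypergraphs of $\cH$, assumption \eqref{assu:count2} bounds the corresponding homomorphism counts with $Q_0$ in place of $\uS$, and the inductive counting lemma applied to the pair $(\uS,(Q_0)_{e'})$ (both in $\hull(\cC)$) closes the gap up to a multiplicative factor $1+C_{\cH}\eps$. The step I expect to be the main obstacle is the exponent bookkeeping: with the \emph{overlap-inclusive} definition of $\ttest_\base$ above, the positive-edge count $\edges((H'_\base)_+)$ equals $|E''_+|+\dd^{H_+}_\base(e)$, where $E''$ denotes the set of edges of $H$ disjoint from $e$, so the exponent of $p$ in the $\base\ne\emptyset$ contribution telescopes to
\[
\big(\dd^{H_+}(e)-\dd^{H_+}_\base(e)\big)+|E''_+|+\dd^{H_+}_\base(e)=\dd^{H_+}(e)+|E''_+|=\edges(H_+)-[\xi(e)=+1],
\]
and one more factor of $p$ from $\|P^e-Q^e\|_{\Basys(e)}^*\le\eps p$ produces exactly $L\eps\,n^{\verts(H)}p^{\edges(H_+)}$; the $\|\test\|_1$ contribution is handled analogously via $\hom(\cH-e,\uS)$. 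A naive assignment of each overlapping $e'$ to a \emph{unique} $\base(e')\in\Base(e)$ would produce $|\{e':\base(e')=\base\}|<\dd^{H_+}_\base(e)$ whenever $e\cap e'$ lies in multiple elements of $\Base(e)$, losing factors of $p$; the overlap-inclusive definition, valid because $\1^k=\1$, is exactly what avoids this loss. Summing the $O_{\cH}(1)$ contributions per edge and per base closes the induction.
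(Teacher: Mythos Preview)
Your argument is correct and follows essentially the same approach as the paper: induction over sub-signed-hypergraphs of a fixed $\cH$, telescoping over edges, identification of the link at each edge as a test tensor via the $H$-dominating base, application of the $\Basys^*$-diameter hypothesis, and closing the induction through \eqref{assu:count2}. The one cosmetic difference is that where the paper first expands each $Q^e\in\hull(\cC)$ as a finite convex combination of elements of $\cC$ (thereby reducing to Boolean $\utZ_k^{e'}$, which makes the test tensors Boolean automatically), you instead layer-cake each $\wt{S}^{e'}$; both maneuvers serve the same purpose of producing Boolean factors so that the link lies in $\Test_{\Basys(e)}$, and the resulting sub-hypergraph counts and exponent bookkeeping coincide with the paper's $H^{(k)}$ and $H^{(k,\dom)}$.

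One point to tighten: as phrased, you induct on the theorem across varying $\cH$, so to invoke the inductive counting lemma for $\cH'_\base$ you implicitly need that the diameter hypothesis on $\cC$ transfers to the $\BBasys'$-norm built from weights $\dd^{(H'_\base)_+}$. This holds because $\dd^{H_+}(e')-\dd^{H_+}_\base(e')\ge \dd^{(H'_\base)_+}(e')-\dd^{(H'_\base)_+}_\base(e')$ for any subgraph, whence $\|\cdot\|_{\BBasys'}^*\le\|\cdot\|_{\BBasys}^*$; the paper sidesteps this by inducting over $\cH'\subseteq\cH$ with the base system $\BBasys$ held fixed.
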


Note that while the bases for $\Basys(e)$ are $H$-dominating, the weights are taken from the neighborhood structure in the subgraph $H_+$.

\begin{proof}
Fix $\cH$ and $\cC$ as in the statement of the lemma. 
We prove by induction on $m\le \edges(H)$ that for all $\cH'=(H',\xi')\subseteq \cH$ with $\edges(H')\le m$, all $\uA=(A^e)\in \cC^{\Edges(H')}$ and $\uQ=(Q^e)\in \hull(\cC)^{\Edges(H')}$,
we have
\begin{equation}	\label{count:goal1}
\big|\hom(\cH', \uA) - \hom(\cH', \uQ) \big| \le C(m,r) L\eps n^{\verts(H')} p^{\edges(H'_+)}
\end{equation}
for some $C(m,r)<\infty$. 
One can then replace $\uA$ with $\uP$ as in the theorem statement via the triangle inequality.

The base case $m=0$ holds trivially. Assume now \eqref{count:goal1}
holds for all $\cH'\subseteq\cH$ with $\edges(H')\le m-1$.
We fix $\cH'=(H',\xi')\subseteq\cH$ with $\edges(H')=m$ and $\uA$ and $\uQ$ as above.
For brevity we write
\[
\Verts':= \Verts(H'),\qquad \Edges':=\Edges(H'), \qquad \Edges'_\pm := \Edges(H'_\pm).
\]

We first express $\hom(\mathcal{H}',\uQ)$ as a convex combination
of homomorphism counts for Boolean tensors. Labeling the elements of $\cC$ as $B_j$, $1\le j\le |\cC|$, for each $e\in \Edges(H)$ we express $Q^e=\sum_jc_j^eB_j$ for coefficients $c_j^e\in [0,1]$ with $\sum_jc_j^e=1$. 
We have
\begin{align*}
\hom(\mathcal{H}',\uQ) 
&  
=\sum_{\phi:\Verts'\to[n]}\,\,\prod_{e\in \Edges'_+}Q^{e}(\phi_e)\prod_{e\in\Edges'_-}(1-Q^{e}(\phi_e))\\
 &  =\sum_{\phi:\Verts'\to[n]}\,\,\prod_{e\in \Edges'_+}\bigg[\sum_{j}c_{j}^{e}B_j(\phi_e)\bigg]\prod_{e\in\Edges'_-}\bigg[\sum_{j}c_{j}^{e}(1-B_j(\phi_e))\bigg]\\
 & =\sum_{\phi:\Verts'\to[n]}\,\,\sum_{\bs j}\,\,\prod_{e\in \Edges'}c_{j_{e}}^{e}\prod_{e\in \Edges'_+}B_{j_{e}}(\phi_e)\prod_{e\in\Edges'_-}(1-B_{j_{e}}(\phi_e)) \\
 &  =\sum_{\bs j}\,\, c_{\bs j} \hom(\mathcal{H}',\uB_{\bs j})
\end{align*}
where sums over $\bs j$ run over all $\bs j=(j_e)_{e\in \Edges'}\in |\cC|^{\Edges'}$, and we set
\[
c_{\bs j}:= \prod_{e\in \Edges'}c_{j_{e}}^{e}\,,\qquad 
\uB_{\bs j}:= (B_{j_e})_{e\in \Edges'}.
\]
Now noting that $\sum_{\bs j}c_{\bs j}=1$, we have
\begin{align*}
 \left|\hom(\mathcal{H}',\uA) - \hom(\mathcal{H}',\uQ) \right|
 & = \bigg| \sum_{\bs j}\,\, c_{\bs j} \big( \hom(\mathcal{H}',\uA) - \hom(\mathcal{H}',\uB_{\bs j})  \big) \bigg| \\
 &\le \sum_{\bs j}\,\, c_{\bs j} \big|  \hom(\mathcal{H}',\uA) - \hom(\mathcal{H}',\uB_{\bs j}) \big|.
\end{align*}
Thus, fixing collections $\uA= (A^{e})_{e\in \Edges'}$ and $\uB =(B^{e})_{e\in \Edges'}$ of tensors in $\cC$, it suffices to show 
\begin{align}
\big| \hom(\mathcal{H}',\uA) - \hom(\mathcal{H}',\uB) \big|
 & \le C(m,r)L\eps n^{|\Verts'|}p^{|\Edges'_+|}.
 \label{goal:count2}
\end{align}

Label the hyperedges of $\Edges'$ as $e_1,\dots, e_m$.
Recalling the notation \eqref{def:Sxi}, we express the difference of homomorphism counts as a telescoping sum
\begin{align*}
\hom(\mathcal{H}',\uA) &- \hom(\mathcal{H}',\uB)
=\hom(H', \utA) - \hom(H', \utB) \\
&
=\sum_{\phi:\Verts'\to [n]} \sum_{k=1}^m \big[ \tA^{e_k}(\phi_{e_k}) - \tB^{e_k}(\phi_{e_k}) \big] \prod_{j<k} \tB^{e_j}(\phi_{e_j}) \prod_{j>k} \tA^{e_j}(\phi_{e_j}) \\
&
=\sum_{k=1}^m \sum_{\phi: \Verts'\setminus e_k\to [n]} \widetilde \Gamma^{e_k}_{k,\phi}
\prod_{\substack{e\in \Edges': \\e\cap e_k=\emptyset}} \tZ_k^e (\phi_e) 
\end{align*}
where 
\begin{align*}
\cAnr\ni \tZ_k^e & := \begin{cases} \tB^{e} & \text{ for $e= e_j$ with  $j\le k$}\\ 
\tA^{e} & \text{ for $e=e_j$ with  $j>k,$}\end{cases}\\
\widetilde \Gamma^{e_k}_{k,\phi} &:=
\sum_{\psi:e_k\to [n]} \big[ \tA^{e_k}(\psi_{e_k}) - \tB^{e_k}(\psi_{e_k}) \big]
\prod_{e\in \bdy^{H'}(e_k)} \tZ_k^e(\phi_{e\setminus e_k}\cup \psi_{e\cap e_k}),
\end{align*}
and $\tZ_k^e(\phi_{e\setminus e_k}\cup \psi_{e\cap e_k})$ is the value of the symmetric $r$-tensor $\tZ_k^e$ evaluated at an arbitrary ordering of the set $\phi_{e\setminus e_k}\cup \psi_{e\cap e_k}$. 

Now we recognize the expression
\begin{align}
\prod_{e\in \bdy^{H'}(e_k)} \tZ_k^e(\phi_{e\setminus e_k} \cup \psi_{e\cap e_k}) 
&= \prod_{\dom\in \Dom(e_k)} \prod_{e\in \bdy^{H'}_\dom(e_k)} \tZ_k^e (\phi_{e\setminus e_k}\cup \psi_{e\cap e_k}) \notag\\
&=: \prod_{\dom\in \Dom(e_k)} \ttest_\dom\big( (\psi_v)_{v\in \dom}\big) 
=: \test_{e_k, \phi}\big( (\psi_v)_{v\in e_k}\big)	\label{def:Tek.test}
\end{align}
as the output of a test tensor $\test_{e_k,\phi}\in \Test(e_k)$.
Hence we can express
\begin{align*}
\widetilde{\Gamma}^{e_k}_{k,\phi}=
\big\langle \tA^{e_k} - \tB^{e_k}\, , \, \test_{e_k,\phi}\big\rangle \,.
\end{align*}
Noting that $\tA^{e_k}- \tB^{e_k} = \pm (A^{e_k} - B^{e_k})$ for each $k$, we can apply the triangle inequality and our assumption 
on the diameter of $\cC$ to bound
\begin{align}
\big|\hom(\mathcal{H}',\uA)  & - \hom(\mathcal{H}',\uB) \big| \notag \\
&\le  \sum_{k=1}^m \sum_{\phi: \Verts'\setminus e_k\to [n]}
 \big| \big\langle A^{e_k} - B^{e_k}\, , \, \test_{e_k,\phi}\big\rangle \big| 
 \prod_{\substack{e\in \Edges': \\e\cap e_k=\emptyset}} \tZ_k^e (\phi_e) 
\notag \\
&\le \eps p \sum_{k=1}^m \sum_{\phi: \Verts'\setminus e_k\to [n]} \|\test_{e_k,\phi}\|_{\Basys(e_k)}
\prod_{\substack{e\in \Edges': \\e\cap e_k=\emptyset}} \tZ_k^e (\phi_e) \,.	\label{bound:homAB}
\end{align}
Recalling our choice of weights for the weighted base $\Basys(e_k)$, by definition we have
\[
\|\test_{e_k,\phi}\|_{\Basys(e_k)} 
\le \|\test_{e_k,\phi}\|_1 + \sum_{\dom \in \Dom(e_k)} n^{r-|\dom|} p^{\dd^{H_+}(e_k) - \dd^{H_+}_\dom(e_k)}\|\ttest_\dom\|_1.
\]
From \eqref{def:Tek.test},
\[
 \|\test_{e_k,\phi}\|_1
= \sum_{\psi: e_k\to [n]}\prod_{e\in \bdy^{H'}(e_k)} \tZ_k^e(\phi_{e\setminus e_k} \cup \psi_{e\cap e_k}) 
\]
and for $\|\ttest_\dom\|_1$ we have the same expression with $\bdy^{H'}_\dom(e_k)$ in place of $\bdy^{H'}(e_k)$. 
Substituting these bounds in \eqref{bound:homAB} we obtain
\begin{align}
&\big|\hom(\mathcal{H}',\uA) - \hom(\mathcal{H}',\uB) \big|		\notag\\
&
\le \eps p \sum_{k=1}^m \bigg\{ \hom(H^{(k)}, \utZ_k) 
+ \sum_{\dom\in \Dom(e_k)} n^{r-|\dom|} p^{\dd^{H_+}(e_k) - \dd^{H_+}_\dom(e_k)} \hom(H^{(k,\dom)}, \utZ_k) \bigg\}	\label{bound:homAB2}
\end{align}
where $H^{(k)}= (\Verts', \Edges'\setminus\{e_k\})$, and for $H^{(k,\dom)}$, 
\[
\Verts(H^{(k,\dom)}) = \Verts'\setminus (e_k\setminus \dom), \qquad \Edges(H^{(k,\dom)}) = \{ e\in \Edges': e\cap (e_k\setminus \dom) = \emptyset\}.
\]
In particular, 
\begin{align}
\verts(H^{(k,\dom)}) &= |\Verts'| -r + |\dom| 	\label{Vhkf}
\end{align}
and
\begin{align}
\edges(H^{(k,\dom)}_+) & = |\{ e\in \Edges'_+: e\cap (e_k\setminus \dom) = \emptyset\}|	\notag\\
&= |\Edges'_+ \cap\{e_k\}^c \cap \bdy^{H_+}(e_k)\cap \bdy^{H_+}_\dom(e_k)^c|	\notag\\
&= |\Edges'_+|-1_{\xi(e_k)=1} -\dd^{H'_+}(e_k) +  \dd^{H'_+}_\dom(e_k)	\notag\\
&\ge |\Edges'_+|-1_{\xi(e_k)=1} -\dd^{H_+}(e_k) +  \dd^{H_+}_\dom(e_k).	\label{Ehkf}
\end{align}
By restricting $\xi$ to the edge sets of $H^{(k)}$ and $H^{(k,\dom)}$ we obtain signed hypergraphs $\cH^{(k)}\subset \cH'$ and $\cH^{(k,\dom)}\subset\cH'$ for each $k\in [m]$ and $\dom\in \Dom(e_k)$. 
For any $\cH''=(H'',\xi'')$ in this collection of signed hypergraphs we have $\edges(H'')\le m-1$, and by the induction hypothesis and the assumption \eqref{assu:count2}, for any $\uQ\in \hull(\cC)^{\Edges'}$,
\begin{align*}
\hom(H'',\uQ) = \hom(\cH'', \uQ) 
& \le \hom(\cH'', Q_0) + |\hom(\cH'', \uQ) - \hom(\cH'', A_0)| \\
&\le (1+ C(m-1,r) ) L n^{\verts(H'')} p^{\edges(H''_+)} 
\end{align*}
(recalling $\eps\le 1$).
Applying this for each $\cH^{(k)}$ and $\cH^{(k,\dom)}$ with $\uQ= \uZ_k$ and combining with \eqref{Vhkf}, \eqref{Ehkf} we obtain
\begin{align*}
\hom(\cH^{(k)}, \utZ_k) 
&\le (1+ C(m-1,r) ) L n^{|\Verts'|} p^{\edges(H'_+)-1} \\
\hom(\cH^{(k,\dom)}, \utZ_k)
&\le (1+ C(m-1,r) ) L n^{|\Verts'|-r+|\dom|} p^{\edges(H'_+)-1-\dd^{H_+}(e_k)+\dd^{H_+}_\dom(e_k)} .
\end{align*}
Substituting these bounds into \eqref{bound:homAB2} we obtain \eqref{goal:count2} upon taking $C(m,r):=m2^r(1+C(m-1,r))$. This completes the induction step to conclude the proof of \Cref{thm:count.gen}.
\end{proof}

\section{\label{sec:reg.proof}Proof of \Cref{thm:reg} (decomposition lemma) }

Throughout this section we write $\Test:=\Test_\Basys$.
For $A\in \cAn$ we denote the centered tensor
\[
\bar{A}=A-\mathbb{E}{\bA} = A - p\jay.
\]
We refer the reader to \Cref{sec:notation} for our notational conventions for tensors.

\begin{lem}
\label{lem:bad-event-1}
Let $k\ge 1$ and $\test_1,\dots,\test_k\in\Test$.
For $1\le i\le k$ let $W_i$ be the span of $\{\test_1,\dots,\test_{i}\}$ and set
\begin{equation}	\label{def:bartest}
\htest_i :=  P_{W_{i-1}^\perp} (\test_i)
\end{equation}
(with $\htest_1={\test}_1$). 
We have
\begin{align*}
&\mathbb{P}\,\Big(\,\bigwedge_{i\in [k]} \big| \langle\bar{{\bA}} \,,\, \htest_{i} \rangle  \big|
\ge\eps p\|\test_i\|_\Basys\, \Big)
\le2^{k}\exp\Big(-c(r)\eps^{2}p^{2}(1\vee\log(1/p))\sum_{i=1}^{k}\|\test_i\|_\Basys\Big)
\end{align*}
for some $c(r)>0$ depending only on $r$. 
\end{lem}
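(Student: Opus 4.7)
The statement is a joint Chernoff-type estimate for the $k$ linear functionals $X_i := \langle \bar{\bA}, \htest_i\rangle$, where $\bar{\bA} = \bA - p\jay$ has independent centered $\mathrm{Bern}(p)$ entries indexed by the $\binom{n}{r}$ unordered edges. My plan is a standard Chernoff strategy exploiting (a) the independence of these entries, (b) the orthogonality of the Gram--Schmidt residuals $\htest_i$, and (c) a sparse-Bernoulli cumulant estimate to harvest the $\log(1/p)$ factor.

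First I decouple the absolute values by a union bound over sign patterns $\epsilon \in \{\pm 1\}^k$, producing the leading factor $2^k$ and reducing to one-sided events $\bigcap_i \{\epsilon_i X_i \ge \eps p \|\test_i\|_{\Basys}\}$. For each fixed $\epsilon$, exponential Markov with a scalar parameter $\lambda \ge 0$ gives
\[
\pr\!\Big(\bigcap_i \{\epsilon_i X_i \ge \eps p \|\test_i\|_{\Basys}\}\Big) \le \exp\!\Big(-\lambda \eps p \sum_i \|\test_i\|_{\Basys}\Big)\,\e\exp\!\big(\lambda \langle \bar{\bA}, V_\epsilon\rangle\big),
\]
where $V_\epsilon := \sum_i \epsilon_i \htest_i$. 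Expanding the inner product as a sum over unordered $r$-sets and invoking the standard bound $\log \e e^{u(X-p)}\le p(e^u - 1 - u)$ for $X\sim \mathrm{Bern}(p)$, together with independence, yields
\[
\log \e \exp\!\big(\lambda \langle \bar{\bA}, V_\epsilon\rangle\big) \le \sum_{I\in \binom{[n]}{r}} p\bigl(e^{\lambda \widetilde{V}_\epsilon(I)} - 1 - \lambda \widetilde{V}_\epsilon(I)\bigr),
\]
where $\widetilde{V}_\epsilon(I)$ denotes the symmetrization of $V_\epsilon$ over orderings of $I$.

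The Pythagorean identity for the orthogonal family $(\htest_i)$ reads $\|V_\epsilon\|_2^2 = \sum_i \|\htest_i\|_2^2$; combining with $\|\htest_i\|_2 \le \|\test_i\|_2$, $\|\test_i\|_2^2 = \|\test_i\|_1$ (since $\test_i$ is $\{0,1\}$-valued), and $\|\test_i\|_1 \le \|\test_i\|_{\Basys}$ by definition, I obtain $\|V_\epsilon\|_2^2 \le \sum_i \|\test_i\|_{\Basys}$, and symmetrization loses at most a factor of $r!$ when passing to $\|\widetilde{V}_\epsilon\|_2^2$. To finish, I apply the Bennett-type inequality $p(e^u - 1 - u)\ls u^2$ valid in the range $|u|\ls 1\vee \log(1/p)$ (reflecting the sparse-Bernoulli effective variance $\sim p$ in this range), which bounds the cumulant sum by $C(r)\,\lambda^2 p\sum_i \|\test_i\|_{\Basys}$. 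Balancing the linear and quadratic terms by taking $\lambda\asymp \eps p (1\vee \log(1/p))$ produces the target exponent $-c(r)\eps^2 p^2 (1\vee \log(1/p))\sum_i \|\test_i\|_{\Basys}$, which combined with the $2^k$ from the sign decoupling yields the claimed estimate.

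The main obstacle is that the Bennett regime requires the pointwise control $\lambda |\widetilde{V}_\epsilon(I)| \ls 1\vee \log(1/p)$, while the Gram--Schmidt residuals $\htest_i = P_{W_{i-1}^\perp}\test_i$ can have $\ell^\infty$ norm substantially exceeding $1$ even though each $\test_i$ is $\{0,1\}$-valued. The cleanest workaround I would attempt is to split the sum over $I$ into a ``typical'' part where $\lambda |\widetilde{V}_\epsilon(I)|\le 1\vee \log(1/p)$ (where the quadratic cumulant bound applies) and a small ``exceptional'' part controlled crudely via $\|V_\epsilon\|_2^2$ and Cauchy--Schwarz; alternatively, one can shrink $\lambda$ slightly to enforce the pointwise constraint everywhere, at the cost of a mild degradation in the constant $c(r)$.
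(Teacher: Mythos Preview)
Your overall architecture matches the paper's proof exactly: union bound over signs to produce the $2^k$, exponential Markov with a single parameter $\lambda$, the Pythagorean identity $\|V_\epsilon\|_2^2=\sum_i\|\htest_i\|_2^2\le\sum_i\|\test_i\|_2^2=\sum_i\|\test_i\|_1\le\sum_i\|\test_i\|_{\Basys}$ (using that the $\test_i$ are Boolean), passage to independent coordinates at the cost of an $r!$ factor, and a sparse-Bernoulli cumulant bound to extract the $\log(1/p)$. So the plan is right.

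The gap is precisely where you flag it. Neither of your proposed fixes for the lack of pointwise control on $\widetilde V_\epsilon$ actually closes the argument. For the ``shrink $\lambda$'' option: there is no a priori bound on $\|\widetilde V_\epsilon\|_\infty$ in terms of $r$ alone (Gram--Schmidt of Boolean tensors can produce arbitrarily large entries), so enforcing $\lambda\|\widetilde V_\epsilon\|_\infty\le 1\vee\log(1/p)$ may force $\lambda$ far below the optimal value $\asymp\eps p(1\vee\log(1/p))$, and the exponent degrades to something like $(1\vee\log(1/p))\eps p\,S/\|\widetilde V_\epsilon\|_\infty$, which is not the claimed bound. For the ``split'' option: on the exceptional set the summand $p(e^{u}-1-u)$ is genuinely exponential in $u$, and an $\ell^2$ bound on $V_\epsilon$ does not control a sum of exponentials of its entries. (There is also a small bookkeeping slip: the quadratic cumulant bound you want is $p(e^u-1-u)\le u^2/(1\vee\log(1/p))$ for $|u|\le 1\vee\log(1/p)$, not $\ls u^2$, so that the cumulant sum is $\ls\lambda^2 S/(1\vee\log(1/p))$ rather than $\lambda^2 p\,S$; otherwise the optimization does not produce the $\log(1/p)$ factor.)

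The paper resolves the obstacle differently and cleanly: it proves the single-coordinate inequality
\[
pe^{(1-p)z}+(1-p)e^{-pz}\;\le\;\exp\!\big(z^2/(1\vee\log(1/p))\big)
\]
for \emph{all} real $z$. The regime $|z|\le\log(1/p)$ is the Bennett computation you describe; for $|z|>\log(1/p)$ one simply notes that $e^{z^2/\log(1/p)}$ grows fast enough to dominate $pe^{(1-p)z}$ and $(1-p)e^{-pz}$ directly (e.g.\ $pe^{(1-p)z}=e^{(1-p)z-\log(1/p)}\le e^{z^2/\log(1/p)}$ since $z^2-z\log(1/p)+(\log(1/p))^2>0$). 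With this unrestricted pointwise bound in hand, the product over coordinates gives $\E\exp(\lambda\langle\bar{\bA},V_\epsilon\rangle)\le\exp\big(c(r)\lambda^2\|V_\epsilon\|_2^2/(1\vee\log(1/p))\big)$ with no assumption on $\|V_\epsilon\|_\infty$, and optimizing $\lambda$ finishes as you intended.
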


\begin{proof}
By the union bound, 
\begin{align*}
\mathbb{P}\,\Big(\,\bigwedge_{i\in [k]} \big| \langle\bar{{\bA}} \,,\, \htest_i \rangle  \big|
\ge\eps p\|\test_i\|_\Basys\, \Big)
 \le2^{k}\mathbb{P}\,\Big( \,\bigwedge_{i\in [k]}  \langle\bar{{\bA}},\sigma_{i}\htest_i\rangle \ge\eps p\|\test_i\|_\Basys\,\Big),
\end{align*}
where $\sigma_{i}\in\{+1,-1\}$. Fix a choice of $\sigma_{1},\dots,\sigma_{k}$
and let $\widetilde{\test}_i=\sigma_{i}\htest_i$.
Then 
\[
\mathbb{P}\, \Big( \,\bigwedge_{i\in [k]} \langle\bar{{\bA}},\sigma_{i}\htest_i\rangle \ge\eps p\|\test_i\|_\Basys\,\Big)
\le\mathbb{P}\,\Big( \,\langle\bar{{\bA}},\sum_{i=1}^{k}\wt{\test}_i\rangle \ge\eps p\sum_{i=1}^{k}\|\test_i\|_\Basys\Big).
\]
Since $\wt{\test}_1,\dots,\wt{\test}_k$ are orthogonal we have
\begin{equation}
\|\sum_{i=1}^{k}\wt{\test}_i\|_{2}^{2}=\sum_{i=1}^{k}\|\wt{\test}_i\|_{2}^{2}\le\sum_{i=1}^{k}\|\test_i\|_{2}^{2}=\sum_{i=1}^{k}\|\test_i\|_{1},\label{eq:bound-\test}
\end{equation}
where the last equality uses that the $\test_i$ are Boolean tensors. 

Let $\wt{\test}=\sum_{i=1}^{k}\wt{\test}_i$. Then, for any $\lambda>0$,
\[
\mathbb{P}\, \Big(\,\langle\bar{{\bA}},\sum_{i=1}^{k}\wt{\test}_i\rangle \ge\eps p\sum_{i=1}^{k}\|\test_i\|_\Basys\,\Big)\le\exp\Big(-\lambda\eps p\sum_{i=1}^{k}\|\test_i\|_\Basys\Big)\mathbb{E}\exp\Big(\lambda\langle\bar{{\bA}},\wt{\test}\rangle \Big).
\]
Recall that the entries of $\bar{{\bA}}$ with distinct coordinates
are independent centered $\textrm{Bernoulli}(p)$ random variables,
up to the symmetry constraint. Let $\bar{{\bA}}':\{n\}^{r}/S_{r}\to\mathbb{R}$
be the independent entries of $\bar{{\bA}}$. Notice that 
\[
\langle\bar{{\bA}},\wt{\test}\rangle =\langle\bar{{\bA}}',\wt{\test}'\rangle 
\]
for some tensor $\wt{\test}'$ in which each coordinate is a sum of
at most $r!$ entries of $\wt{\test}$. We thus have $\|\wt{\test}'\|_{2}^{2}\le r!\|\wt{\test}\|_{2}^{2}$.
Hence, 
\begin{align*}
\mathbb{E}\exp(\lambda\langle\bar{{\bA}},\wt{\test}\rangle ) & =\mathbb{E}\exp(\lambda\langle\bar{{\bA}}',\wt{\test}'\rangle )  =\prod_{{\bf i}\in[n]^{r}/S}\left(pe^{\lambda(1-p)\wt{\test}'({\bf i})}+(1-p)e^{-\lambda p\wt{\test}'({\bf i})}\right).
\end{align*}
We claim that 
\[
p^{\lambda(1-p)x}+(1-p)e^{-\lambda px}\le e^{\lambda^{2}x^{2}/\log(1/p)}.
\]
Indeed, we have 
\begin{align*}
pe^{\lambda(1-p)x}+(1-p)e^{-\lambda px} & \le\exp(-\lambda px+p[e^{\lambda x}-1]) \le\exp(\lambda^{2}x^{2}/\log(1/p))
\end{align*}
assuming $|\lambda x|\le\log(1/p)$, since for $|z|\le\log(1/p)$,
we have $\exp(z)\le1+z+z^{2}/(p\log(1/p)^{2})$ by monotonicity of
the function $z\mapsto\frac{\exp(z)-1-z}{z^{2}}$. Otherwise, $|\lambda x|>\log(1/p)$
and we have 
\begin{align*}
pe^{\lambda(1-p)x}+(1-p)e^{-\lambda px} & =e^{\lambda(1-p)x+\log p}+(1-p)e^{-\lambda px} \\
 & \le\exp(\lambda^{2}x^{2}/\log(1/p)).
\end{align*}
Thus, 
\[
\mathbb{E}\exp(\lambda\langle\bar{{\bA}},\wt{\test}\rangle )\le\exp\left(\lambda^{2}\|\wt{\test}'\|_{2}^{2}/\log(1/p)\right)\le\exp\left(c_{1}(r)\lambda^{2}\|\wt{\test}\|_{2}^{2}/\log(1/p)\right).
\]
By choosing $\lambda=c_{2}(r)\eps p\log(1/p)\sum_{i=1}^{k}\|\test_i\|_\Basys/\|\wt{\test}\|_{2}^{2}$,
we obtain 
\begin{align*}
 \mathbb{P}\,\Big( \, \Big\langle\bar{{\bA}},\sum_{i=1}^{k}\wt{\test}_i\Big\rangle & \ge\eps p\sum_{i=1}^{k}\|\test_i\|_\Basys\Big) \\
& \le\exp\Big( -c_{3}(r)\eps^{2}p^{2}\log(1/p)\Big(\sum_{i=1}^{k}\|\test_i\|_\Basys\Big)^{2}/\|\tilde{\test}\|_{2}^{2}\Big)\\
 & \le\exp\Big(-c(r)\eps^{2}p^{2}\log(1/p)\sum_{i=1}^{k}\|\test_i\|_\Basys\Big),
\end{align*}
using (\ref{eq:bound-\test}) and $\sum_{i=1}^{k}\|\test_i\|_{1}\le\sum_{i=1}^{k}\|\test_i\|_\Basys$. 
Moreover, the same bound with $\log(1/p)$ replaced by $1$ follows from Hoeffding's inequality.
\end{proof}

We establish \Cref{thm:reg} by the following iterative procedure.
We initialize $\resid_{0}=\bar{A}$.
If $\|\resid_0\|_{\Basys}^*\le \eps p$ then the claim follows with $k=0$.
Otherwise we proceed to step $k=1$.
At step $k\ge 1$, having obtained $\resid_0,\dots \resid_{k-1}$ and $\test_1,\dots, \test_{k-1}$,
if $\|\resid_{k-1}\|_{\Basys}^*>\eps p$ then there exists $\test_k\in{\Test}$
so that $|\langle \resid_{k-1},\test_k\rangle |>\eps p\|\test_k\|_\Basys$. Taking such a $\test_k$, we set
\[
\resid_{k}=\resid_{k-1}-P_{\Span( \htest_{k})}(\resid_{k-1})= P_{W_k^\perp} (\bar{A})\,.
\]
where for brevity we denote the subspace $W_k:=\Span(\test_1,\dots,\test_k)$.
We stop the process at step $k$ if either 
\begin{equation}	\label{stop}
\|\resid_{k}\|_{\Basys}^*\le\eps p
\qquad\text{ or }\qquad
\sum_{i=1}^{k}\|\test_i\|_\Basys
>\kappa\eps^{-2}n^{r}p^{-2},
\end{equation}
and otherwise proceed to step $k+1$.
Note that the process must stop at step $k$ for some 
\begin{equation}	\label{k.stop}
k\le k_\star := \lf 1+ \kappa\eps^{-2} p^{-\ddstar -2}\rf.		
\end{equation}
Indeed, if the process hasn't stopped after step $k-1$ for some $k\ge 1$, then
$\sum_{i=1}^{k-1}\|\test_i\|_\Basys\le \kappa\eps^{-2}n^{r}p^{-2}$, while on the other hand 
$\|\test_i\|_\Basys \ge \|\test_i\|_\emptyset =n^rp^{\ddstar}$ for each $1\le i\le k-1$, and \eqref{k.stop} follows by combining these bounds.

We take $\sum_{i=1}^k\alpha_i\test_i$ to be the expansion of $P_{W_k}(\bar{A})$ in the basis $\{T_1,\dots, T_k\}$.
Note that for each $1\le j\le k$, since $R_{j-1}$ is orthogonal to $T_1,\dots, T_{j-1}$, 
\[
\eps p \|T_j\|_{\Basys} <|\langle R_{j-1}, T_j\rangle |= |\langle R_{j-1}, \htest_j\rangle | \le \|R_{j-1}\|_2 \|\htest_j\|_2 \le \|A\|_2\|\htest_j\|_2
\]
(recalling the notation \eqref{def:bartest}), and so
the distance of $T_j$ to the span of $\{T_1,\dots,T_{j-1}\}$ is 
\[
\|\htest_j\|_2 \ge \frac{\eps p\|T_j\|_{\Basys}}{\|A\|_2} \ge \frac{\eps p n^rp^{\ddstar}}{n^{r/2}} =\eps p^{1+\ddstar}n^{r/2} 
\]
as claimed.

If the process stops at some $k$ for which 
$\sum_{i=1}^{k}\|\test_i\|_\Basys\le \kappa\eps^{-2}n^{r}p^{-2}$, then the first condition in \eqref{stop} holds, i.e.
\[
\left\| \bar{A}-P_{W_k}(\bar{A})\right\| _{\Basys}^*
\le\eps p
\]
and we obtain the claim.
We take $\badset(\kappa,\eps)$ to be the set of $A\in \cAnr$ for which the process runs until the second condition in \eqref{stop} holds for some $k\le k_\star$.

Thus, it only remains to bound the measure of $\badset(\kappa,\eps)$. 
For the case that the process ends at step $k=1$ we obtained the desired probability bound from the second bound in \eqref{stop} and \Cref{lem:bad-event-1}, so we may henceforth assume $k\ge2$. In particular, from \eqref{k.stop} it follows that $\kappa\eps^{-2}p^{ -\ddstar-2}\ge 1$ in this case.
Denoting the event in Lemma \ref{lem:bad-event-1} by $\mathcal{E}(\test_1,\dots,\test_k)$, we have
\begin{equation}	\label{reg.goal1}
\badset(\kappa,\eps) \subseteq 
\bigcup_{\substack{ \test_1,\dots,\test_k:\\ \sum_{i=1}^{k}\|\test_i\|_\Basys>\kappa\eps^{-2}n^{r}p^{ -2} }}\mathcal{E}(\test_1,\dots,\test_k).
\end{equation}
By Lemma \ref{lem:bad-event-1}, for each fixed sequence $\test_1,\dots,\test_k$,
\begin{align}	\label{E1k.bd}
\mathbb{P}\big( \mathcal{E}(\test_1,\dots,\test_k)\big) & \le2^{k}\exp\Big(-c(r)\eps^{2}p^{2}(1\vee\log(1/p))\sum_{i=1}^{k}\|\test_i\|_\Basys\Big).
\end{align}
We break up the union on the right hand side of \eqref{reg.goal1} into dyadic ranges for $\sum_{i=1}^{k}\|\test_i\|_\Basys$. For each $j\ge 0$ let
\[
\badset_j(\kappa,\eps):= \bigcup_{\substack{ \test_1,\dots,\test_k:\\ \sum_{i=1}^{k}\|\test_i\|_\Basys\in I_j }}\mathcal{E}(\test_1,\dots,\test_k).
\]
where $I_j :=\kappa\eps^{-2}n^{r}p^{ -2} \cdot [2^j, 2^{j+1})$.
Writing $\test_i=\prod_{\base\in \Base} \ttest^{(i)}_\base\circ\pi_\base$ as in \eqref{test.base}
we have that for all $\base\in\Base$, 
\[
\|\ttest_{\base}^{(i)}\|_{1}\le\|\test_i\|_\Basys/\big( n^{r-|\base|}p^{\ddstar-\dd_\base}\big).
\]
The number of choices for the Boolean tensor $\ttest_{\base}^{(i)}$ given $\|\test_i\|_\Basys$
is thus at most 
\begin{equation}
\label{bd:factors}
n^{|\base| \|\ttest_{\base}^{(i)}\|_1}
\le \exp\Big(\frac{|\base|\cdot\|\test_i\|_\Basys(\log n)}{n^{r-|\base|}p^{\ddstar-\dd_\base}}\Big)
\end{equation}
and so the number of choices for $\test_i$ given $\|\test_i\|_\Basys$
is at most 
\[
\exp\Big(r2^{r}(\log n)\cdot\|\test_i\|_\Basys
\cdot \max_{\base\in \Base}\big\{n^{|\base|-r}p^{\dd_\base-\ddstar}\big\}
\Big).
\]
Since each $\|\test_i\|_\Basys$ can take at most $O_r(b)$ different values in an interval $[a,b]$,
the total number of choices of $\test_1,\dots,\test_k$ with
$\sum_{i=1}^{k}\|\test_i\|_\Basys\in I_j$
is at most 
\begin{align*}
 & \sum_{z_{1}+\dots+z_{k}\in I_j}\exp\Big(r2^{r}(\log n)\cdot \big(\sum_{i=1}^{k}z_{i}\big)
 \cdot \max_{\base\in \Base}\big\{n^{|\base|-r}p^{\dd_\base-\ddstar}\big\}
 \Big)\\
  & \le\exp\Big(2^{j+1+2r}\kappa\eps^{-2}(\log n)p^{ -\ddstar-2}
  \max_{\base\in \Base}\big\{n^{|\base|}p^{\dd_\base}\big\}
 +O_r\big(k\log(2^{j+1}\kappa\eps^{-2}n^{r}p^{ -2})\big)\Big)\\
 &=\exp\Big(O_r(2^{j})\cdot \kappa\eps^{-2}(\log n)p^{ -\ddstar-2}
  \cdot \max_{\base\in \Base}\big\{n^{|\base|}p^{\dd_\base}\big\}\Big)\,,
\end{align*}
where we used that 
\[
\max_{\base\in \Base}\big\{n^{|\base|}p^{\dd_\base}\big\}
\ge n^{|\emptyset|}p^{\dd_\emptyset}=1
\]
along with \eqref{k.stop} to absorb the errors depending on $k$ (recall that we reduced to the case $\kappa\eps^{-2}p^{ -\ddstar-2}\ge1$, and note that $\kappa\eps^{-2} =O(n)$ from our assumptions).
Combining with \eqref{E1k.bd}, our assumption \eqref{def:WB}, and taking the constant $C_0=C_0(r)$ there sufficiently large, we obtain
\begin{align*}
& \pr ( \badset_j(\kappa,\eps)) \\
& \le\exp\Big(O_r(2^{j})\cdot \kappa\eps^{-2}(\log n)p^{ -\ddstar-2}
  \cdot \max_{\base\in \Base}\big\{n^{|\base|}p^{\dd_\base}\big\}
-c(r)2^j \kappa n^r (1\vee\log(1/p))\Big)\\
& \le \exp \big( - c(r) 2^j \kappa n^r(1\vee\log(1/p))\big)
\end{align*}
for a modified constant $c(r)>0$. Summing the above bound over $j$ and combining with \eqref{reg.goal1} and the union bound, this
completes the proof of \Cref{thm:reg}.

\section{Proof of \Cref{thm:LDP} (quantitative \abbr{LDP})}
\label{sec:LDP.proof}

\subsection{Proof of the \ldp upper bound}

In this section we prove \Cref{thm:LDP}(a).
We use the following non-asymptotic \ldp upper bound for convex sets, which holds in wide generality, and is a simple consequence of the minimax theorem -- see \cite[Exercise 4.5.5]{dz}.

\begin{lem}
\label{lem:upperLDP.convex}
For a Borel probability measure $\mu$ on a topological vector space $\cV$, and any convex, compact subset $\cB\subset\cV$, we have
\begin{equation}	\label{upperLDP.convex}
\mu(\cB) \le \exp\Big( -\inf_{Q\in \cB} \Lambda^*(Q)\Big)
\end{equation}
where $\Lambda^*:\cV\to\R_+$ is the convex dual of the log-moment generating function $\Lambda(\lambda)=\log\int_{\cV} e^{\lambda(Q)} d\mu(Q)$ on the dual vector space $\cV^*$.
\end{lem}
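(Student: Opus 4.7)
The plan is a standard convex duality argument: extract a family of exponential Chebyshev--Markov bounds parameterized by linear functionals on $\cV$, then pass to the sharp bound via a minimax swap. Concretely, fix any $\lambda\in\cV^*$ and set $m_\lambda:=\inf_{Q\in\cB}\lambda(Q)$, which is finite because $\cB$ is compact. Since every $A\in\cB$ satisfies $\lambda(A)\ge m_\lambda$, the exponential Markov inequality gives
\begin{equation*}
\mu(\cB) \le \mu\bigl(\{A:\lambda(A)\ge m_\lambda\}\bigr) \le e^{-m_\lambda}\int_{\cV}e^{\lambda(A)}\,d\mu(A) = \exp\bigl(\Lambda(\lambda)-m_\lambda\bigr).
\end{equation*}
Taking logarithms and optimizing over $\lambda\in\cV^*$ yields
\begin{equation*}
\log\mu(\cB)\le \inf_{\lambda\in\cV^*}\sup_{Q\in\cB}\bigl[\Lambda(\lambda)-\lambda(Q)\bigr].
\end{equation*}

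The key step is now to exchange the infimum and the supremum so as to recognize $\Lambda^*$ pointwise. The payoff $F(\lambda,Q):=\Lambda(\lambda)-\lambda(Q)$ is convex and lower semicontinuous in $\lambda$ (since $\Lambda$ is a log of an integral of exponentials, hence convex and lower semicontinuous on $\cV^*$), and jointly affine, hence continuous and both quasi-convex and quasi-concave, in $Q$. Because $\cB$ is convex and compact, Sion's minimax theorem applies and yields
\begin{equation*}
\inf_{\lambda\in\cV^*}\sup_{Q\in\cB}F(\lambda,Q)
= \sup_{Q\in\cB}\inf_{\lambda\in\cV^*}F(\lambda,Q)
= \sup_{Q\in\cB}\bigl(-\Lambda^*(Q)\bigr)
= -\inf_{Q\in\cB}\Lambda^*(Q),
\end{equation*}
where the middle identity is just the definition $\Lambda^*(Q)=\sup_{\lambda\in\cV^*}[\lambda(Q)-\Lambda(\lambda)]$. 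Combining this with the preceding display gives the claim. We note also that $\Lambda^*\ge 0$ trivially, by testing the supremum defining $\Lambda^*(Q)$ at $\lambda=0$ and using $\Lambda(0)=\log 1=0$.

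The main obstacle is verifying the hypotheses of Sion's theorem in the abstract topological vector space setting --- specifically, arranging compatible topologies on $\cV$ and $\cV^*$ so that $\cB$ is compact, continuous linear functionals are genuinely continuous, and $\Lambda$ is lower semicontinuous on $\cV^*$. In the finite-dimensional applications to \Cref{thm:LDP} (where $\cV=\Tensnr$ and $\mu$ has bounded support) this is automatic and $\Lambda$ is in fact smooth, so I do not expect further technical difficulty beyond this bookkeeping; in full generality the point is covered by \cite[Exercise 4.5.5]{dz}.
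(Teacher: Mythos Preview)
Your proof is correct and follows exactly the approach the paper indicates: the paper does not give a proof but simply remarks that the lemma ``is a simple consequence of the minimax theorem'' and cites \cite[Exercise 4.5.5]{dz}. Your argument --- exponential Chebyshev--Markov followed by a Sion minimax swap --- is precisely this, and your closing remark about the finite-dimensional setting sufficing for the applications in \Cref{thm:LDP} is apt.
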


For the case of the \ER measure $\mu_p$ on the vector space $\Symsnr$ of real symmetric $r$-tensors with zero diagonals one checks that $\Lambda^*(Q) = \eye_p(Q)$. 

We commence with the proof of \Cref{thm:LDP}(a). 
For $t\ge0$ and sequences $\bs{\test}=(\test_1, \dots, \test_k)\in \Test(e)^k$ and $\bs{\lam}=(\lam_1,\dots, \lam_k)\in \R^k$ we let 
$\convex_e(\bs{\test}, \bs{\lam}; t)$ be the convex hull of all $A\in \cAnr$ such that
\begin{equation}	\label{Atball}
\Big\| A-\mathbb{E}{\bA}-\sum_{i=1}^{k}\lambda_{i}\htest_{i}\Big\| _{\Basys(e)}^*\le t\,,
\end{equation}
where we recall from \eqref{def:bartest} the notation $(\htest_1,\dots, \htest_k)$ for the associated orthogonal sequence. 
For each $e\in \Edges$, let $\cover_e$ be the collection of all sets of the form $\convex_e(\bs{\test},\bs{\lam}; 2\eps p)$ for some 
$1\le k\le \lf 1+ \kappa\eps^{-2} p^{-\ddstar(e) -2}\rf$,
some $\bs{\test}=(\test_1,\dots, \test_k)\in \Test(e)^k$ and some
$\bs{\lam}$ in the scaled integer lattice $\Lambda^k:=(\eps p^{1+\ddstar(e)/2}/k)\cdot \Z^k$ such that
\begin{equation}	\label{testlam}
\sum_{i=1}^k \|\test_i\|_{\Basys(e)} \le \kappa\eps^{-2}n^r p^{-2}
\qquad\text{and}\qquad 
\|\bs\lam\|_\infty\le p^{-1-\ddstar(e)}\eps^{-1}.
\end{equation}
We claim that for each $e\in \Edges$, 
\begin{equation}	\label{cover.goal1}
\pr\,\Big\{ \bA\notin \bigcup_{\convex\in \cover_e} \convex \Big\} \le \exp( -c_0\kappa n^r  \log(1/p))
\end{equation}
with $c_0>0$ as in \Cref{thm:reg}.
Indeed, it suffices to show that $\cover_e$ covers the complement in $\cAnr$ of the exceptional set $\badset_e(\kappa,\eps)$ provided by the application of  \Cref{thm:reg} with weighted base $\Basys(e)$. To that end, fix an arbitrary $A\in \cAnr\setminus \badset_e(\kappa,\eps)$. 
From \Cref{thm:reg} we have that $A$ satisfies \eqref{Atball} with $t=\eps p$ for some $\bs \test\in \Test(e)^k$ and  $\bs\lam\in \R^k$, with $\|\htest_j\|_2\ge \eps p^{1+\ddstar(e)}n^{r/2}$ for each $1\le j\le k$, where we henceforth write $\ddstar(e),\dd_\base(e)$ for the weights associated to the weighted base $\Basys(e)$.
It follows from the Cauchy--Schwarz inequality that
\[
|\lambda_{j}|=
\frac{|\langle P_{W_j}(\bar{A}),\htest_j\rangle |}{\|\htest_j\|_{2}^{2}}
\le\frac{\|\bar{A}\|_{2}}{\|\htest_j\|_{2}} \le \eps^{-1} p^{-1-\ddstar(e)} \,,
\]
so $\|\bs\lambda\|_\infty \le \eps^{-1}p^{-1-\ddstar(e)}$.
Now let $\bs\lam'\in \Lambda^k$ be as in \eqref{testlam} with $\|\bs\lam-\bs\lam'\|_\infty \le \eps p^{1+\ddstar(e)/2}/k$.
By an application of the triangle inequality for the $\|\cdot\|_{\Basys(e)}^*$ seminorm, we only need to show
\begin{equation}	\label{cover.goal2}
\|\htest_{i}\|_{\Basys(e)}^* \le p^{-\ddstar(e)/2}
\end{equation}
for each $1\le i\le k$. 
For this, note that 
$\|\htest_{i}\|_2\le \|\test_i\|_2 \le n^{r/2}$ since $\test_i$ is Boolean.
Now for
any $\tens\in\Tensnr$ with $\|\tens\|_{2}\le n^{r/2}$,
\[
\|\tens\| _{\Basys(e)}^*=\sup_{\test\in{\Test}(e)}\frac{|\langle \tens,\test\rangle |}{\|\test\|_{\Basys(e)}}\le\frac{\|\tens\|_{2}\|\test\|_{2}}{\|\test\|_{1}^{1/2}(n^{r}p^{\ddstar(e)})^{1/2}}=\frac{\|\tens\|_{2}}{(n^{r}p^{\ddstar(e)})^{1/2}}\le p^{-\ddstar(e)/2},
\]
where in the second equality we used that $\|\test\|_{2}^{2}=\|\test\|_{1}$ for Boolean $\test$. Thus we obtain \eqref{cover.goal2} and hence \eqref{cover.goal1} as desired.

Now set
\[
\cover_\Edges' = \Big\{ \bigcap_{e\in \Edges} \convex_e: \convex_e\in \cover_e \;\text{ for each } e\in \Edges\Big\}
\]
and let $\cover_\Edges$ be obtained by replacing each $\convex\in \cover_H'$ with the convex hull of $\convex\cap \cAnr$.
We claim
\begin{equation}	\label{claim:upper.RME}
\log|\cover_\Edges| \ls_{\BBasys} 
\kappa n^r\eps^{-2} \growing_{n,p}^{-1}\log n.
\end{equation}
Fixing $e\in \Edges$, it suffices to prove the claimed bound holds for $\log|\cover_e|$ (up to modification of the constant by a factor $|\Edges|$). 
First, recalling the bound \eqref{bd:factors}, the number of $\test\in \Test(e)$ with a given value of $\|\test\|_{\Basys(e)}$ is at most 
\[
\prod_{\base\in \Base(e)} n^{|\base| \|\test\|_{\Basys(e)} / (n^{r-|\base|} p^{\ddstar(e) - \dd_\base(e)})}
\]
so the total number of choices for $\bs\test$ as in \eqref{testlam} is at most
\begin{align}	\label{TT.choices}
 \exp \Big((\log n) \sum_{i=1}^k & \sum_{\base\in \Base(e)} \frac{ |\base| \|\test_i\|_{\Basys(e)} }{ n^{r-|\base|} p^{\ddstar(e) - \dd_\base(e)} } \Big) \notag \\
& \le 
\exp \Big( O_r(1) \growing_{n,p}(\Basys(e))^{-1} \kappa \eps^{-2} n^r  \log n \Big).
\end{align}
The number of choices for $k$, $\lam_1,\dots, \lam_k$ and $\|\test_1\|_{\Basys(e)}, \dots, \|\test_k\|_{\Basys(e)}$ is
\begin{align}	\label{LL.choices}
\sum_{k\le 1+ \kappa\eps^{-2} p^{-\ddstar(e)-2}} & \Big(\frac{2}{\eps^2p^{2+\frac32\ddstar(e)}}\Big)^k O_r(\kappa\eps^{-2}n^rp^{-2})^k \notag \\
& = n^{O_r(1)} n^{O_r (\kappa\eps^{-2} p^{-\ddstar(e)-2})}
\end{align}
where we noted that the bases of the exponentials in $k$ are all $n^{O_r(1)}$ by our assumptions on $n,p,\kappa$ and $\eps$. 
Now since $\growing_{n,p}(\Basys(e)) \le n^{r-|\emptyset|} p^{\ddstar(e) - d_\emptyset+2} = n^rp^{\ddstar(e)+2}$ we see that the second factor in \eqref{LL.choices} is dominated by the right hand side of \eqref{TT.choices}.
We thus obtained the claimed bound on $|\cover_e|$, establishing \eqref{claim:upper.RME}.

Fix $\cE\subseteq\cAnr$. We claim that for any $\cK\in \cI_\Edges$,
\begin{equation}	\label{claim:upper.contain}
\cK\cap\cE\ne\emptyset \;\Longrightarrow\; \cK\subseteq (\cE)_{\BBasys,4\eps p}.
\end{equation}
Indeed, fix arbitrary $\cK\in \cI_\Edges$ with $\cK\cap \cE\ne\emptyset$. It suffices to show that for any fixed $A_1,A_2\in \cK$, we have
\[
\|A_1-A_2\|_{\Basys(e)}^*\le 4\eps p\quad \forall e\in \Edges.
\]
But this is immediate from the definitions: we have $\cK=\bigcap_{e\in \Edges} \cK_e$ for some choices of $\cK_e\in \cI_e$, and each $\cK_e$ is contained in the $2\eps p$-neighborhood of some $A_e'\in \cAnr$ under $\|\cdot\|_{\Basys(e)}^*$, so the above bound follows by the triangle inequality.

Now we are ready to conclude.
For $\cF\subset\cQnr$ we abbreviate
\begin{equation}	\label{def:ip.short}
\eye_p(\cF):= \inf\{ \eye_p(Q): Q\in\cF\}.
\end{equation}
Applying the union bound and \eqref{cover.goal1} we have
\begin{align*}
\P(\bA\in \cE) 
&\le |\Edges| p^{c_0\kappa n^r} + \sum_{\cK\in \cI_\Edges: \cK\cap \cE\ne \emptyset} \P(\bA\in \cK).
\end{align*}
For the latter term we apply \eqref{upperLDP.convex} to bound
\begin{align*}
 \sum_{\cK\in \cI_\Edges: \cK\cap \cE\ne \emptyset}  & \P(\bA\in \cK) \\
 &\le \sum_{\cK\in \cI_\Edges: \cK\cap \cE\ne \emptyset} \exp( - \eye_p(\cK))
\le  |\cI_\Edges| \max_{\cK\in \cI_\Edges: \cK\cap \cE\ne \emptyset} \exp( - \eye_p(\cK))\\
&= |\cI_\Edges| \exp\Big( - \eye_p\big( \bigcup_{\cK\in \cI_\Edges: \cK\cap \cE\ne \emptyset} \cK \big) \Big)
\le |\cI_\Edges| \exp(- \eye_p((\cE)_{\BBasys,4\eps p})) \,,
\end{align*}
where in the final line we used \eqref{claim:upper.contain}.
Applying \eqref{claim:upper.RME}, the claim now follows by using $\log(a+b) \le \max(\log(2a) ,\log(2b))$.

\subsection{Proof of the \ldp lower bound \eqref{LDP.lower}}

The claim quickly follows from the next lemma:

\begin{lem}
\label{lem:tilt.Bstar}
Fix a collection $\BBasys=\{\Basys(e)\}_{e\in \Edges}$ of weighted bases,
let $p\in(0,1)$, $\eps>0$, and assume 
\begin{equation}	\label{Bstar.conc-growing}
\min_{e\in\Edges}\growing_{n,p}(\Basys(e))\ge C(r) \eps^{-2}\log n
\end{equation}
for a sufficiently large constant $C(r)>0$. Then for any $Q\in\cQnr$,
\[
\log\mu_p(\nbhd_{\BBasys}(Q,\eps p)) \ge -\eye_p(Q) -O\Big(1+ n^{r/2}\Lp \Big).
\]
\end{lem}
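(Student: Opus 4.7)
The plan is a standard change-of-measure argument from $\mu_p$ to $\mu_Q$. Writing $E=\nbhd_{\BBasys}(Q,\eps p)$ and $X=\log(d\mu_p/d\mu_Q)$ (restricting both measures to $\mathrm{supp}(\mu_Q)$, which only strengthens the bound in case some entries of $Q$ lie in $\{0,1\}$), Jensen's inequality applied to $e^X$ under $\mu_Q(\,\cdot\mid E)$ gives
\[
\log\mu_p(E) \;\ge\; \log\mu_Q(E) + \E_Q[X\mid E].
\]
Since $\E_Q[X]=-\eye_p(Q)$, setting $Y:=X+\eye_p(Q)$ yields the clean identity
\[
\E_Q[X\mid E]\;=\;-\eye_p(Q) - \E_Q[Y\mathbf{1}_{E^c}]/\mu_Q(E),
\]
reducing the task to showing $\mu_Q(E)\ge 1/2$ (so that $-\log\mu_Q(E)=O(1)$) and bounding $\E_Q[Y\mathbf{1}_{E^c}]$ by Cauchy--Schwarz combined with a variance estimate for $X$.

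The key step is concentration of $\|\bA-Q\|_{\BBasys}^*$ under $\mu_Q$, for which I will establish the quantitative bound $\mu_Q(E^c)\le n^{-r}$. Fix $e\in\Edges$ and a test $\test\in\Test_{\Basys(e)}$. After accounting for symmetry of $\bA$ and $Q$, the inner product $\langle\bA-Q,\test\rangle$ is a sum of independent centered bounded variables with coefficients of absolute value at most $r!$, so Hoeffding's inequality gives
\[
\P_Q\bigl(|\langle\bA-Q,\test\rangle|\ge \eps p\|\test\|_{\Basys(e)}\bigr) \;\le\; 2\exp\bigl(-c_r\eps^2 p^2\|\test\|_{\Basys(e)}\bigr),
\]
where we used $\|\test\|_1\le\|\test\|_{\Basys(e)}$. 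For Boolean tests with $\|\test\|_{\Basys(e)}\le L$, each factor $\ttest_\base$ satisfies $\|\ttest_\base\|_1\le L/(n^{r-|\base|}p^{\ddstar-\dd_\base})$, so the number of such tests is at most $\exp\bigl(O_r(Lp^2\log n/\growing_{n,p}(\Basys(e)))\bigr)$. Summing dyadically over $L\ge n^r p^{\ddstar}$ and invoking the growing condition \eqref{Bstar.conc-growing} (with $C_0'(r)$ chosen sufficiently large) drives each per-edge failure probability below $n^{-r-1}$; a final union bound over $e\in\Edges$ yields $\mu_Q(E^c)\le n^{-r}$.

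For the variance step, direct computation gives
\[
\Var_Q[X] \;=\; \sum_{I} Q(I)(1-Q(I))\bigl(\log(p/(1-p)) - \log(Q(I)/(1-Q(I)))\bigr)^2,
\]
and since $x(1-x)(\log(x/(1-x)))^2$ is uniformly bounded on $(0,1)$, this gives $\Var_Q[X]\le C n^r\bigl(1+|\log((1-p)/p)|^2\bigr)$. Cauchy--Schwarz together with $\mu_Q(E^c)\le n^{-r}$ and $\mu_Q(E)\ge 1/2$ then yields $|\E_Q[Y\mathbf{1}_{E^c}]/\mu_Q(E)|\le O(1+|\log((1-p)/p)|)$, and combining with $-\log\mu_Q(E)=O(1)$ delivers the claimed lower bound (in fact with the sharper error $O(1+|\log((1-p)/p)|)$, well within $O(1+n^{r/2}|\log((1-p)/p)|)$). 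The main technical obstacle is the entropy-versus-deviation bookkeeping in the concentration step: one must count test tensors with $\|\test\|_{\Basys(e)}$ in a given dyadic range using the product structure over $\Base(e)$ and verify that the resulting log-cardinality is dominated by the Hoeffding exponent exactly when \eqref{Bstar.conc-growing} holds.
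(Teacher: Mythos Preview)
Your proof is correct and follows the same two-ingredient strategy as the paper: a tilting/change-of-measure bound combined with concentration of $\|\bA-Q\|_{\BBasys}^*$ under $\mu_Q$. The concentration step you sketch (Hoeffding plus a union bound over test tensors, with the entropy of the test class controlled via the product structure over $\Base(e)$) is exactly the content of the paper's Lemma~\ref{lem:Bstar.conc}.

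The implementations differ slightly in how the concentration is fed into the tilting step. The paper invokes a black-box tilting lemma (Lemma~\ref{lem:tilt}, quoted from \cite{CoDe}) that only uses $\mu_Q(E)\ge 1/2$ and incurs the error $O(n^{r/2}|\log\frac{1-p}{p}|)$. You instead apply Jensen directly, split off $\E_Q[Y\mathbf{1}_{E^c}]$, and use Cauchy--Schwarz together with the \emph{quantitative} bound $\mu_Q(E^c)\le n^{-r}$; this cancels the $n^r$ from $\Var_Q[X]$ and yields the sharper error $O(1+|\log\frac{1-p}{p}|)$. So your route is a mild refinement rather than a genuinely different argument, and the gain is that you extract more from the same concentration estimate (which the paper already proves in stronger form in Lemma~\ref{lem:Bstar.conc}).
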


Indeed, for any $\cE\subseteq\cAnr$ and any $Q\in(\cE)^\circ_{\BBasys,\eps p}$ we have $\P(\bA\in\cE) \ge \mu_p(\nbhd_{\BBasys}(Q,\eps p))$ by monotonicity, and the claim follows from the above lemma and taking the supremum over such $Q$.

It only remains to establish \Cref{lem:tilt.Bstar}. To this end we use the following two lemmas,
starting with a complementary lower bound for \Cref{lem:upperLDP.convex}.
\begin{lem}
\label{lem:tilt}
For $d\ge1$ and $p\in(0,1)$ let $\mu_p$ be the product Bernoulli($p$) measure on $\{0,1\}^d$ and let $\nu$ be any other product measure on $\{0,1\}^d$. For any $\cE\subseteq\{0,1\}^d$,
\[
\log\mu_p(\cE) \ge -\DKL(\nu\|\mu_p) + \log\nu(\cE) - O\bigg( \frac{d^{1/2} \Lp} {\nu(\cE)^{1/2}}\bigg).
\]
\end{lem}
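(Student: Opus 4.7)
The plan is a standard tilting argument, essentially reproducing the proof of \cite[Lemma 6.3]{CoDe}. We may assume $\nu \ll \mu_p$, else $\DKL(\nu\|\mu_p) = \infty$ and the bound is vacuous. Set $L := \log(d\nu/d\mu_p)$, so $\E_\nu[L] = \DKL(\nu\|\mu_p)$. The Radon--Nikodym identity combined with Jensen's inequality applied to the conditional measure $\nu(\cdot \mid \cE)$ give
\begin{equation*}
\mu_p(\cE) \,=\, \nu(\cE)\cdot \E_{\nu(\cdot\mid\cE)}\bigl[e^{-L}\bigr] \,\ge\, \nu(\cE)\exp\bigl(-\E_{\nu(\cdot\mid\cE)}[L]\bigr).
\end{equation*}
Writing $\E_{\nu(\cdot\mid\cE)}[L] = \E_\nu[L] + \bigl(\E_{\nu(\cdot\mid\cE)}[L] - \E_\nu[L]\bigr)$ and taking logarithms reduces the lemma to showing $\bigl|\E_{\nu(\cdot\mid\cE)}[L] - \E_\nu[L]\bigr| = O\bigl(d^{1/2}|\log\tfrac{1-p}{p}|\,\nu(\cE)^{-1/2}\bigr)$.

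For this correction I would use Cauchy--Schwarz: since $\E_{\nu(\cdot\mid\cE)}[L] - \E_\nu[L] = \nu(\cE)^{-1}\E_\nu\bigl[(\1_\cE - \nu(\cE))(L - \E_\nu L)\bigr]$,
\begin{equation*}
\bigl|\E_{\nu(\cdot\mid\cE)}[L] - \E_\nu[L]\bigr| \,\le\, \sqrt{\Var_\nu(L)/\nu(\cE)}.
\end{equation*}
The product structure of $\nu$ gives $L = \sum_{i=1}^d L_i$ with $L_i := \log(d\nu_i/d\mu_{p,i})$ independent under $\nu$, whence $\Var_\nu(L) = \sum_{i=1}^d \Var_{\nu_i}(L_i)$. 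It therefore suffices to show $\Var_{\nu_i}(L_i) \le C\bigl(1 + \log^2\tfrac{1-p}{p}\bigr)$ uniformly in $\nu_i$, for an absolute constant $C$; summing over $i$ and taking square roots delivers the stated error term, absorbing the additive constant into the $O(\cdot)$.

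The per-coordinate variance bound is the only non-routine step. Parameterizing $\nu_i = \mathrm{Ber}(q)$ and setting $t := \log\tfrac{q(1-p)}{(1-q)p}$, the identity $L_i(x) = xt + \log\tfrac{1-q}{1-p}$ on $\{0,1\}$ gives $\Var_{\nu_i}(L_i) = q(1-q)\,t^2$. Writing $t = a + b$ with $a = \log\tfrac{q}{1-q}$ and $b = \log\tfrac{1-p}{p}$, and noting that $q(1-q) = e^a/(1+e^a)^2$ so $q(1-q)a^2$ is uniformly bounded in $a \in \R$, the inequality $(a+b)^2 \le 2(a^2+b^2)$ together with $q(1-q)\le \tfrac14$ yields
\begin{equation*}
q(1-q)\,t^2 \,\le\, 2\bigl[q(1-q)a^2 + q(1-q)b^2\bigr] \,=\, O(1 + b^2),
\end{equation*}
as needed. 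The main (mildly tedious) obstacle is this uniform variance bound; the remainder is textbook change-of-measure bookkeeping.
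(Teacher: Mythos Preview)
Your argument is correct and is precisely the standard tilting proof the paper defers to \cite[Lemma~6.3]{CoDe}: change of measure, Jensen on the conditional law, then Cauchy--Schwarz to reduce to the per-coordinate variance bound $\Var_{\nu_i}(L_i)=q(1-q)t^2$, which you handle cleanly via the decomposition $t=a+b$.

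One small quibble: the final step ``absorbing the additive constant into the $O(\cdot)$'' is not valid for $p$ near $1/2$, where $\lvert\log\tfrac{1-p}{p}\rvert$ can be arbitrarily small while $q(1-q)a^2$ remains of order one. What your computation actually yields is the error $O\big(d^{1/2}(1+\lvert\log\tfrac{1-p}{p}\rvert)\,\nu(\cE)^{-1/2}\big)$. This is a minor imprecision in the lemma as stated rather than a flaw in your reasoning; the additive $1$ reappears in the downstream statements (\Cref{lem:tilt.Bstar} and \Cref{thm:LDP}(b)) where the error is recorded as $O(1+n^{r/2}\lvert\log\tfrac{1-p}{p}\rvert)$.
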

\begin{proof} While \cite[Lemma 6.3]{CoDe} is stated in terms of two adjacency matrices of simple graphs of $d={n \choose 2}$ edges, one drawn from $\mu_p$ for some 
$p \in (0,\frac{1}{2}]$ and the other from some product measure $\nu$ on $\{0,1\}^d$, 
this lemma and it's proof apply to any value of $d$ and to any such product measures of Bernoulli 
variables. Further, with $p \le \frac{1}{2}$ used only for the elementary bound of \cite[(6.7)]{CoDe}, upon changing there $p$ to $(1-p)$, the same proof applies also for $p \in (\frac{1}{2},1)$.
\end{proof}

\begin{lem}
\label{lem:Bstar.conc}
Fix a weighted base $\Basys=(r,e,\iota,\Base,\ddstar,\{\dd_\base\}_{\base\in\Base})$, let $p\in(0,1)$, $\eps>0$. If $\growing_{n,p}(\Basys)\ge C(r) \eps^{-2}\log n$ for a sufficiently large constant $C(r)>0$, then 
\[
\mu_Q(\{ A\in\cAnr: \|A-Q\|_{\Basys}^*\ge \eps p\}) 
\le \exp(-c \eps^2p^{\ddstar+2}n^r).
\]
\end{lem}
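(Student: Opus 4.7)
The plan is a union bound over test tensors $T\in\Test_\Basys$, preceded by a standard concentration bound for each fixed $T$.

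First, for fixed $T\in\Test_\Basys$, we write
\[
\langle A-Q,T\rangle = \sum_{I\in\binom{[n]}{r}} (A(I)-Q(I))\,\bar T(I), \qquad \bar T(I):=\sum_{\sigma\in S_r} T(\sigma(I)),
\]
a sum of at most $\|T\|_1$ independent centered random variables (under $\mu_Q$), each bounded by $r!$ since $T$ is Boolean. Hoeffding's inequality then gives
\[
\mu_Q\bigl(|\langle A-Q,T\rangle|\ge \eps p\|T\|_\Basys\bigr) \le 2\exp\bigl(-c(r)\,\eps^2 p^2\,\|T\|_\Basys^2/\|T\|_1\bigr) \le 2\exp\bigl(-c(r)\eps^2 p^2\|T\|_\Basys\bigr),
\]
using $\|T\|_1\le\|T\|_\Basys$. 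This is the only probabilistic input.

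Second, I will count test tensors by dyadic scales of $\|T\|_\Basys$. Writing $T=\prod_{\base\in\Base}\tau_\base\circ\pi_\base$ with Boolean $\tau_\base:[n]^\base\to\{0,1\}$, the constraint $\|T\|_\Basys\le M$ forces $\|\tau_\base\|_1\le M/(n^{r-|\base|}p^{\ddstar-\dd_\base})$ for each $\base$, so the number of choices for the factor $\tau_\base$ is at most $\exp\bigl(|\base|(\log n)\cdot M/(n^{r-|\base|}p^{\ddstar-\dd_\base})\bigr)$ (cf.\ \eqref{bd:factors}). Multiplying over $\base\in\Base$ and invoking the definition of $\growing_{n,p}(\Basys)=\min_\base n^{r-|\base|}p^{\ddstar-\dd_\base+2}$, the total number of $T$ with $\|T\|_\Basys\le M$ is bounded by
\[
\exp\Bigl(O_r(1)\cdot M p^2(\log n)/\growing_{n,p}(\Basys)\Bigr).
\]

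Finally, since $\emptyset\in\Base$ forces $\|T\|_\Basys\ge\|T\|_\emptyset=n^rp^{\ddstar}$, I will partition $\Test_\Basys$ into dyadic shells $\{T:\|T\|_\Basys\in[2^j n^rp^\ddstar,2^{j+1} n^rp^\ddstar)\}$ for $j\ge 0$, apply a union bound inside each shell, and sum. The exponent becomes
\[
O_r(1)\cdot 2^j n^r p^{\ddstar+2}(\log n)/\growing_{n,p}(\Basys) \;-\; c(r)\,\eps^2\,2^j n^r p^{\ddstar+2},
\]
and the assumption $\growing_{n,p}(\Basys)\ge C(r)\eps^{-2}\log n$ (with $C(r)$ taken large enough) absorbs the entropy term into half the concentration exponent. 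A geometric sum over $j\ge 0$ then yields $\exp(-c\,\eps^2 p^{\ddstar+2}n^r)$, as desired.

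The only (mild) technical step is the dyadic bookkeeping balancing the entropy of $\Test_\Basys$ against the Hoeffding exponent; this is essentially the combinatorial estimate used inside the proof of the decomposition lemma (\Cref{thm:reg}), stripped of the orthogonalization/iteration. So no new machinery is needed beyond what was already developed for \Cref{thm:reg}.
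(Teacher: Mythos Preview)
Your proposal is correct and follows essentially the same approach as the paper's proof: Hoeffding for a fixed test tensor, the entropy bound \eqref{bd:factors} for counting test tensors of a given size, and then a union bound balanced against the assumption on $\growing_{n,p}(\Basys)$. The paper sums over the actual (integer-valued) range of $\|T\|_\Basys$ as a union of arithmetic progressions rather than your dyadic shells, but this is a cosmetic difference in bookkeeping.
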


\begin{proof}
Let $\P_Q$ be a probability measure under which $\bA$ has distribution $\mu_Q$, and let $\E_Q$ be the associated expectation. 
From Hoeffding's inequality we get that for any $T\in\Test_\Basys$,
\[
\P_Q(|\langle \bA-Q,T\rangle|>\eps p \|T\|_{\Basys}) 
\le 2\exp\bigg( -\frac{c\eps^2p^2\|T\|_{\Basys}^2}{\|T\|_2^2}\bigg)
\le 2\exp( -c'\eps^2p^2\|T\|_{\Basys}).
\]
Recalling \eqref{bd:factors}, by the union bound we have that for any $L>0$,
\begin{align*}
&\P_Q\Big( \exists T\in \Test_\Basys: \|T\|_{\Basys} =L\,, \; |\langle \bA-Q,T\rangle|>\eps p \|T\|_{\Basys} \Big)\\
&\qquad\qquad\qquad\le 2\exp\Big( O_r(\log n) \max_{\base\in\Base} \{ n^{|\base|-r} p^{\dd_\base-\ddstar}\} L  - c'\eps^2p^2L\Big)\\
&\qquad\qquad\qquad\le 2\exp(-(c'/2) \eps^2p^2L)
\end{align*}
assuming $C(r)$ in our assumption \eqref{Bstar.conc-growing} is sufficiently large. 
Since the set of possible values for $\|T\|_{\Basys}$ is contained in a union of $|\Base|+1\le 2^r$ arithmetic progressions in $[n^rp^{\ddstar},n^r]$ of step at least 1, and $n^rp^{\ddstar+2} \ge \growing_{n,p}(\Basys)$, the claim follows from another union bound and summing the at most $2^r$ geometric series (and we can take $C(r)$ larger if necessary to absorb the prefactor $2^r$).
\end{proof}

\begin{proof}[Proof of \Cref{lem:tilt.Bstar}]
From \Cref{lem:tilt}, identifying $\cAnr$ with $\{0,1\}^{{n\choose r}}$, we get that for any $\cE\subseteq\cAnr$ and $Q\in\cQnr$,
\[
\log\mu_p(\cE) \ge -\eye_p(Q) + \log\mu_Q(\cE) - O\bigg( \frac{n^{r/2}\Lp}{\mu_Q(\cE)^{1/2}}\bigg).
\]
Taking $\cE =\bigcap_{e\in \Edges}\cE_e$ with $\cE_e=\{A\in \cA_{n,r}: \|A-Q\|_{\Basys(e)}^*< \eps p\}$, from \Cref{lem:Bstar.conc} we have $\mu_Q(\cE_e)\ge1-1/(2|\Edges|)$ if $C(r)$ is sufficiently large. From the union bound we get that $\mu_Q(\cE) \ge 1/2$, and the claim follows upon substituting this estimate in the above display.
\end{proof}

\section{\label{sec:tails.upper} Proof of \Cref{thm:tails} -- the upper tail}

In this section we establish \eqref{UT.LB}. We also show how \eqref{UT.UB} can be established along similar lines under some alternative assumptions on $p$ and $\uH$. The proof of \eqref{UT.UB} under the assumption $p\gg n^{-1/\Delta}$ is quite different and is given in $\mathsection$\ref{sec:tails.lower}.

To lighten notation, we will first present the proof of \eqref{UT.LB} for the case $m=1$ in $\mathsection$\ref{sec:UTLB1}, and then describe in $\mathsection$\ref{sec:UTLBm} the simple modifications that are needed for the general case.

\subsection{Upper bound for the upper tail probability (case $m=1$)}
\label{sec:UTLB1}

In this subsection we prove the following proposition, yielding \eqref{UT.LB} for the case $m=1$.

\begin{prop}
\label{prop:upper}
For any $r$-graph $H$ and $\delta,\xi>0$, assuming
\begin{equation}\label{UB.np-LB1}
np^{\DDprime(H)}\log(1/p)>C_2\xi^{-3}\log n
\end{equation}
for sufficiently large $C_2(H,\delta)>0$, 
we have 
\begin{equation}	\label{UB.bound1}
\log\P\big(\,t_p(H,\bA)>1+\delta\,\big)
\le -(1-\xi)\Phi_{n,p}(H,\delta-\xi).
\end{equation}
\end{prop}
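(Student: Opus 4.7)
The plan is to obtain \eqref{UB.bound1} as a contraction of the quantitative LDP upper bound \eqref{LDP.upper} applied to the super-level set $\cE = \{A\in\cAnr : t_p(H,A) > 1+\delta\}$, using the counting lemma \Cref{thm:count} to translate the outer approximation $(\cE)_{\BBasys,\eps p}$ back into a super-level set for $t_p(H,\cdot)$ with slightly weakened threshold.

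First I will select an $H$-dominating base system $\BBasys=\{\Basys(e)\}_{e\in\Edges(H)}$ with weights as in \eqref{Deg.usual} achieving the lower bound $\growing_{n,p}(\BBasys)\ge np^{\DDprime(H)}$ from \eqref{growing.LB}; this is the role of the parameter $\DDprime(H)$ in the hypothesis \eqref{UB.np-LB1}. Next I will apply \Cref{thm:LDP}(a) with $\kappa = Kp^{\Delta(H)}$ for a sufficiently large $K=K(H,\delta)$, and with $\eps$ chosen as a small multiple of $\xi$ (depending only on $H$). The cutoff rate then satisfies $\rate_\star \ge c_0Kn^rp^{\Delta(H)}\log(1/p) - O_\BBasys(1)$, which exceeds $\Phi_{n,p}(H,\delta)$ for $K$ large, since $\Phi_{n,p}(H,\delta)\ls_{H,\delta} n^rp^{\Delta(H)}\log(1/p)$ (as witnessed by any tilting of $Q\equiv p$ that plants a clique or hub). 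The metric entropy term obeys
\[
\rate_{\ME} \ls_\BBasys \frac{\kappa n^r \log n}{\eps^2 \growing_{n,p}(\BBasys)} \ls \frac{n^{r-1}p^{\Delta(H)}\log n}{\xi^2 p^{\DDprime(H)}}\,,
\]
and the assumption \eqref{UB.np-LB1} (with $C_2$ sufficiently large) makes this at most $\tfrac\xi4\cdot n^rp^{\Delta(H)}\log(1/p)$, hence negligible compared to $\Phi_{n,p}(H,\delta-\xi)$.

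The crux is to show that every $Q\in(\cE)_{\BBasys,\eps p}$ satisfies $t_p(H,Q) \ge 1+\delta - O_H(\eps)$, so that the infimum of $\eye_p$ over $(\cE)_{\BBasys,\eps p}$ is at least $\Phi_{n,p}(H,\delta-\xi)$ (upon taking the implicit constant in $\eps$ small). To apply \Cref{thm:count} to the pair $(Q,A)$ with $A\in\cE$ and $Q\in\hull(\nbhd_\BBasys(A,\eps p))$, I need the crude bound \eqref{assu:count2.0}, namely $t_p(H',A)\le L$ for all proper $H'\subset H$. This is the main obstacle, and I plan to resolve it by an induction on $\edges(H)$: the inductive hypothesis, applied to each $H'\subset H$ with $\delta' = L-1$ for a fixed large constant $L=L(H)$, furnishes a tail bound of the form $\P(t_p(H',\bA)>L) \le \exp(-\omega_n(1)\cdot \Phi_{n,p}(H,\delta-\xi))$, so by the union bound over finitely many $H'\subset H$ we may restrict the analysis to the complement event on which \eqref{assu:count2.0} holds for $Q_0 = A$ and every $H'\subset H$. (The base case $\edges(H)=1$ is trivial.) The monotonicity of $t_p(H',\cdot)$ means the restriction to this event does not interfere with the super-level set structure.

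With the counting lemma in hand, on the above event any $Q\in\hull(\nbhd_\BBasys(A,\eps p))$ satisfies $|t_p(H,Q)-t_p(H,A)|\ls_H L\eps$, so choosing $\eps = c(H)\xi$ gives $t_p(H,Q)\ge 1+\delta-\xi$, establishing the inclusion $(\cE)_{\BBasys,\eps p}\subseteq\{Q\in\cQnr: t_p(H,Q)\ge 1+\delta-\xi\}$. Substituting into \eqref{LDP.upper} yields
\[
\log\P(\bA\in\cE) \le -\Phi_{n,p}(H,\delta-\xi) + \rate_{\ME} \le -(1-\xi)\Phi_{n,p}(H,\delta-\xi)\,,
\]
while the cutoff $\rate_\star$ is safely larger. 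Combining with the union bound against the failure of \eqref{assu:count2.0} concludes the proof.
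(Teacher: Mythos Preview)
Your approach is essentially the paper's: induct on $\edges(H)$, intersect the super-level set with the event $\{t_p(H',\bA)\le L\text{ for all }H'\subsetneq H\}$ to enable the counting lemma hypothesis \eqref{assu:count2.0}, apply \Cref{thm:LDP}(a) with $\kappa=Kp^{\Delta(H)}$ and $\eps\asymp\xi$, then use \Cref{thm:count} to pass from $(\cE)_{\BBasys,\eps p}$ to $\{t_p(H,\cdot)\ge 1+\delta-\xi\}$. The choice of base system via \eqref{growing.LB} and the bookkeeping of $\rate_\star,\rate_{\ME}$ are the same.

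Two points where the paper is more careful than your sketch. First, rather than reusing \Cref{prop:upper} inductively with $\delta'=L-1$, the paper formulates a companion statement (\Cref{claim:crude-upper}) giving $\P(t_p(H,\bA)\ge L)\le\exp(-cL^{1/\edges(H)}n^rp^{\Delta(H)}\log(1/p))$ and carries it through the induction jointly with \Cref{prop:upper}. Your route is equivalent, but your claim that the inductive bound is $\exp(-\omega_n(1)\cdot\Phi_{n,p}(H,\delta-\xi))$ is wrong as stated: the exponent is a \emph{fixed} (large) multiple of $n^rp^{\Delta(H)}\log(1/p)$, not something growing in $n$ relative to $\Phi_{n,p}$. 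Comparing it to $\Phi_{n,p}(H,\delta-\xi)$ requires both directions of the estimate $\Phi_{n,p}(H,\delta)\asymp_{H,\delta}n^rp^{\Delta(H)}\log(1/p)$; the paper invokes \Cref{lem:Phi.LB} for the lower bound (this is what absorbs $\rate_{\ME}$ into $\xi\Phi_{n,p}$), and the upper bound is what makes $\rate_\star$ and the subgraph-exception terms dominate. Second, the paper treats the base case $\edges(H)=1$ and the case $\Delta(H)=1$ separately (for matchings, \eqref{UB.boundD1} replaces \eqref{UB.bound2}, losing a $\log(1/p)$ that is recovered by the gain $p^{1-\Delta(H)}$ when such an $H'$ is a proper subgraph of $H$ with $\Delta(H)\ge2$). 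Your ``base case is trivial'' glosses over this.
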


We need the following lemma, which one obtains by the same lines as in \cite[Theorem 2.2]{LiZh} (for the lower bound they do not use the stated assumption that $p\gg n^{-1/\Delta(H)}$, and their upper bound assumption on $p$ is also not necessary, as one can use \Cref{lem:ipbelow} below in place of their Lemma 4.7 to get sufficient on control $\eye_p$). 

\begin{lem}	\label{lem:Phi.LB}
For any $n\in\N$, $p\in (0,1)$, $u>0$, and $r$-graph $H$ with $\Delta(H)\ge2$,
\begin{equation*}
\Phi_{n,p}(H,u)\gs_{H,u} n^rp^{\Delta(H)}\log(1/p)\,.
\end{equation*}
Moreover, for $u\ge1$,
\begin{equation*}	
\Phi_{n,p}(H,u)\gs_H u^{\Delta(H)/\edges(H)} n^rp^{\Delta(H)}\log(1/p)\,.
\end{equation*}
\end{lem}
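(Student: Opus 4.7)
The plan is to adapt the argument of \cite[Theorem 2.2]{LiZh}, which establishes a matching lower bound on $\Phi$ via a two-step reduction: first to a constraint on vertex-degree moments, then to a one-dimensional convex optimization.

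First I would reduce to $Q \ge p\jay$: since $\eye_p(\cdot)$ is minimized at $p$ and $t(H,\cdot)$ is coordinate-wise nondecreasing, replacing $Q(I)$ with $Q(I)\vee p$ can only decrease $\eye_p$ and can only increase $t(H,Q)$. Writing $Q = p\jay + R$ with $0 \le R \le (1-p)\jay$, the binomial expansion
\[
t_p(H,Q) \;=\; \sum_{F \subseteq \Edges(H)} t(H_F,\, R/p)
\]
turns the hypothesis $t_p(H,Q)\ge 1+u$ into $\sum_{\emptyset \ne F} t(H_F,R/p) \ge u$ (where $H_F:=(\Verts(F),F)$). Applying generalized H\"older (Finner) inequalities to each $t(H_F, R/p)$ and isolating the contribution of sub-hypergraphs containing the link of a max-degree vertex $v_0 \in \Verts(H)$ would yield a vertex-degree moment constraint of the form
\[
\frac{1}{n}\sum_{v\in[n]} y_v^\Delta \;\ge\; (1 + c_{H,u})\,p^\Delta,\qquad y_v := \frac{\sum_{I\ni v} Q(I)}{\binom{n-1}{r-1}} \;\in\;[p,1],
\]
for some positive $c_{H,u}$, with the refined scaling $c_{H,u} \gs u^{\Delta/\edges(H)}$ when $u \ge 1$.

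On the entropy side, convexity of $\eye_p$ and Jensen's inequality applied edge-by-edge at each vertex give
\[
\eye_p(Q) \;=\; \sum_I \eye_p(Q(I)) \;\ge\; \frac{\binom{n-1}{r-1}}{r}\sum_{v\in [n]} \eye_p(y_v),
\]
so it suffices to prove that any $(y_v)\subset[p,1]$ with $\sum_v y_v^\Delta \ge (1+c)\,n p^\Delta$ satisfies $\sum_v\eye_p(y_v) \gs_c np^\Delta\log(1/p)$. Binning $[n]$ dyadically according to $y_v/p$, the excess moment cannot be economically supplied by vertices with $y_v = O(p)$ (where $\eye_p(y_v) \asymp p$, which per unit of $y_v^\Delta$ is only $p^{1-\Delta}$); a Lagrange analysis (or direct case-check comparing spike and bulk contributions) shows the cost-optimal way to meet the constraint places $\gs_c np^\Delta$ vertices at $y_v \asymp 1$, each contributing $\eye_p(y_v) \gs \log(1/p)$. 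Multiplying yields the desired bound, with the sharper $u^{\Delta/\edges(H)}$ dependence propagating through for $u \ge 1$.

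The main obstacle is the derivation of the vertex-moment constraint above with the correct $p^\Delta$ on the right-hand side (rather than $p^{\edges(H)}$, which a naive bound $R \le 1$ would give and which would be too weak by a factor $p^{\edges(H) - \Delta}$ when $H$ is not a star). The fix is to exploit the entropic structure of $Q$ to argue that sub-hypergraph counts $t(H_F,R/p)$ of small ``density'' $|F|/\verts(F)$ cannot carry the excess without a concurrent concentration at the link of some vertex; this is the combinatorial core of \cite[Theorem 2.2]{LiZh}, and extracting the sharp $u^{\Delta/\edges(H)}$ dependence when $u \ge 1$ is the most delicate part of that argument.
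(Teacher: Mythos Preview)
Your reduction to $Q\ge p\jay$ and the binomial expansion $t_p(H,Q)-1=\sum_{\emptyset\ne F}t(H_F,R/p)$ are correct, and you correctly point to \cite[Theorem~2.2]{LiZh}. However, the route you then describe --- reducing the constraint $t_p(H,Q)\ge 1+u$ to a vertex degree moment bound $\frac1n\sum_v y_v^\Delta\ge(1+c_{H,u})p^\Delta$ --- is not what that argument does, and in fact this reduction is false. Take $r=2$, $H=C_4$ (so $\Delta=2$), and let $Q$ equal $1$ on a clique on $m:=\lfloor np\rfloor$ vertices and $p$ elsewhere. The clique contributes $\sim m^4=n^4p^4$ extra $C_4$-homomorphisms, so $t_p(C_4,Q)\ge 2$. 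Yet the clique vertices have $y_v\approx 2p$ and there are only $np$ of them, giving $\frac1n\sum_v y_v^2=p^2(1+3p)$; no fixed gap $c_{H,u}>0$ survives as $p\to0$. The ``main obstacle'' you anticipate is thus not a technicality but a genuine failure of the approach.

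The actual argument (encoded in the paper's \Cref{lem:countGip}, and in the remark preceding the lemma about substituting \Cref{lem:ipbelow} for \cite[Lemma~4.7]{LiZh}) bypasses vertex degrees entirely and is considerably simpler. After the expansion, Finner's inequality applied with exponent $\Delta$ on each edge factor gives
\[
t(H_F,R/p)\le \big(n^{-r}\|R/p\|_\Delta^\Delta\big)^{|F|/\Delta}\le X^{|F|/\Delta},\qquad X:=n^{-r}p^{-\Delta}\|R\|_2^2,
\]
using $R\in[0,1]$ and $\Delta\ge2$. Summing over $F$ yields $u\ls_H\max(X^{1/\Delta},X^{\edges(H)/\Delta})$, which forces $X\gs_{H,u}1$, and $X\gs_H u^{\Delta/\edges(H)}$ once $u\ge1$. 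The pointwise bound $\eye_p(p+x)\gs x^2\log(1/p)$ of \Cref{lem:ipbelow}, summed over entries, then gives $\eye_p(Q)\gs\|R\|_2^2\log(1/p)=Xn^rp^\Delta\log(1/p)$, which is exactly the claim. No detour through vertex links is needed.
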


In the remainder of this subsection we set $\Delta:=\Delta(H)$ and denote
\begin{equation}	\label{def:Rnp}
R_{n,p}:= n^rp^\Delta\log(1/p).
\end{equation}
The following claim, giving the tail probability for the event under
which we can apply \Cref{thm:count}, will be proved together
with \Cref{prop:upper} by induction on the number of edges
in $H$. 

\begin{claim}
\label{claim:crude-upper}
There exists $C_2'(H)>0$ such that for any $L>C_2'(H)$, if
\begin{equation}	\label{UB.np-LB2}
np^{\Delta'(H)}\log(1/p) \ge C_2'L^3\log n
\end{equation}
then if $\Delta\ge2$, 
\begin{equation}	\label{UB.bound2}
\P\big(\, t_p(H,\bA)\ge L\,\big)
\le\exp\left(-cL^{1/\edges(H)} R_{n,p} \right) 	
\end{equation}
and when $\Delta=1$, 
\begin{equation}	\label{UB.boundD1}
\P\big(\, t_p(H,\bA)\ge L\,\big)
\le\exp\left(-cL^{1/\edges(H)} n^rp\log L \right)\,.
\end{equation}
\end{claim}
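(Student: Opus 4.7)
I would prove \Cref{claim:crude-upper} jointly with \Cref{prop:upper} by strong induction on $\edges(H)$; the inductive subgraph-count bounds provide exactly the crude control \eqref{assu:count2.0} required to apply the counting lemma \Cref{thm:count} to $H$ itself. The base case $\edges(H)=1$, and more generally the case $\Delta(H)=1$ where $H$ is a matching and $t_p(H,\bA)$ factors as $t_p(\text{edge},\bA)^{\edges(H)}$, reduces to a Bernstein-type Binomial tail bound on the normalized edge density and yields \eqref{UB.boundD1}.

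For the inductive step with $\Delta:=\Delta(H)\ge 2$, fix $L>C_2'(H)$ and set $\cE_L=\{A\in\cAnr:t_p(H,A)\ge L\}$. I would choose an $H$-dominating base system $\BBasys$ as in \eqref{Deg.usual} attaining $\growing_{n,p}(\BBasys)\ge np^{\DDprime(H)}$ via \eqref{growing.LB}, and with $L_0:=C_H L$ for a large constant $C_H=C_H(H)$ introduce the ``good'' event
\[
\cG_{L_0}:=\{A\in\cAnr:t_p(H',A)\le L_0\text{ for every }H'\subsetneq H\}\,.
\]
Since $\DDprime(H')\le\DDprime(H)$, hypothesis \eqref{UB.np-LB2} also holds for each proper $H'\subsetneq H$, so the inductive claim applies and a union bound gives $\P(\cG_{L_0}^c)\ls_H\exp(-cL_0^{1/(\edges(H)-1)}R_{n,p})$, which for $C_H$ large is dominated by the target \eqref{UB.bound2}.

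It thus suffices to estimate $\P(\bA\in\cE_L\cap\cG_{L_0})$, for which I would apply \Cref{thm:LDP}(a) with $\BBasys$, a small $\eps=\eps(L,H)>0$, and $\kappa:=KL^{1/\edges(H)}p^{\Delta}$ for sufficiently large $K=K(H)$. The cutoff rate is then $R_\star\ge c_0KL^{1/\edges(H)}R_{n,p}$, while the hypothesis \eqref{UB.np-LB2} comfortably ensures $R_{\ME}\lesssim \kappa n^r\log n/(\eps^2\growing_{n,p}(\BBasys))$ is negligible compared to $R_\star$. Any $Q\in(\cE_L\cap\cG_{L_0})_{\BBasys,\eps p}$ lies in $\hull(\nbhd_\BBasys(A_0,\eps p))$ for some $A_0\in\cE_L\cap\cG_{L_0}$; taking $Q_0:=A_0$ (whose subgraph counts are bounded by $L_0$) and $\cC:=\nbhd_\BBasys(A_0,\eps p)$, the counting lemma gives $|t_p(H,Q)-t_p(H,A_0)|\ls_H L_0\eps\le L/2$ for $\eps\le c'/L_0$, hence $t_p(H,Q)\ge L/2$. \Cref{lem:Phi.LB} then yields
\[
\inf\{\eye_p(Q):Q\in(\cE_L\cap\cG_{L_0})_{\BBasys,\eps p}\}\ge\Phi_{n,p}(H,L/2-1)\gs_H L^{\Delta/\edges(H)}R_{n,p}\ge L^{1/\edges(H)}R_{n,p}\,,
\]
matching \eqref{UB.bound2} once $K$ is large. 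The companion proposition \Cref{prop:upper} follows along the same lines, but with $\cE:=\{t_p(H,\cdot)\ge 1+\delta\}$ and the infimum controlled directly by $\Phi_{n,p}(H,\delta-\xi)$ rather than via \Cref{lem:Phi.LB}.

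The main obstacle is the delicate circularity between the counting lemma's subgraph-count hypothesis and the target tail bound, which is itself an upper-tail statement; resolving this forces the joint induction with \Cref{prop:upper}. The simultaneous choices $L_0\sim L$ and $\eps\sim 1/L$ keep the counting-lemma error $O_H(L_0\eps)$ bounded, keep $\P(\cG_{L_0}^c)$ subdominant relative to the target, and keep the metric-entropy cost $\ls \eps^{-2}\log n/(np^{\DDprime(H)})$ negligible; the $L^3\log n$ margin in \eqref{UB.np-LB2} safely accommodates these three simultaneous requirements.
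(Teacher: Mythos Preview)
Your proposal is correct and follows essentially the same route as the paper: joint induction with \Cref{prop:upper} on $\edges(H)$, the matching case $\Delta=1$ handled by a Binomial tail bound, and for $\Delta\ge2$ the combination of the induction hypothesis (to secure $\cG_{L_0}$), the \abbr{ldp} upper bound of \Cref{thm:LDP}(a) with $\kappa\asymp L^{1/\edges(H)}p^\Delta$, the counting lemma, and \Cref{lem:Phi.LB}. Your parameter choices $L_0\sim L$, $\eps\sim 1/L$ match the paper's (which takes $L_0=2L$, $\eps=c''/(10L)$, $K=2L^{1/\edges(H)}$); the only point you glossed over is the small bookkeeping that for subgraphs $F\subsetneq H$ with $\Delta(F)=1$ one uses \eqref{UB.boundD1} rather than \eqref{UB.bound2}, which is harmless since the gain $p^{1-\Delta}$ more than compensates for the loss of $\log(1/p)$.
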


The key to the proof of \Cref{prop:upper} is that  \Cref{thm:count} only requires control over \emph{proper} subgraphs of $H$ as given by \Cref{claim:crude-upper}, and thus can be guaranteed inductively.
(Observe the conclusion of \Cref{claim:crude-upper} can be essentially obtained from the conclusion of \Cref{prop:upper} and the crude upper bound in \Cref{lem:Phi.LB}; \nick{however, the constant $C_2'$ in \Cref{claim:crude-upper} is independent of $L=1+\delta$, which will be important for closing the inductive argument}.)

\begin{proof}[Proof of \Cref{prop:upper}]
 We proceed by induction on the number of edges in $H$. 
For the case $\edges(H)=1$,  \Cref{claim:crude-upper} follows from a standard tail bound for the binomial distribution, and  \Cref{prop:upper} 
follows immediately from \Cref{lem:upperLDP.convex} as the super-level set in this case is a (convex) half space.
From the fact that $t_p(H_1\cup H_2, Q) = t_p(H_1,Q)t_p(H_2,Q)$ for $H$ a disjoint union of two graphs $H_1,H_2$ we further obtain the case $\Delta=1$ for \Cref{prop:upper} and \Cref{claim:crude-upper}.

Assume now that $\Delta\ge2$ and that \Cref{prop:upper} and \Cref{claim:crude-upper}
hold when
$\edges(H)\le \ell-1$ for some $\ell\ge2$. Consider now an $r$-graph $H$ with $\edges(H)=\ell$.

Recalling \eqref{def:dprime.dom}, for each $e\in \Edges(H)$ we fix a dominating weighted base $\Basys(e)$ as in \eqref{Deg.usual} with $\ddprime_{\Dom(e)}(e) \le \DDprime(H_k)$, and form the base system $\BBasys=\{\Basys(e)\}_{e\in\Edges(H)}$. 
This implies
\begin{equation}	\label{UB.WLB}
\growing_{n,p}(\BBasys) \ge \min_{e\in \Edges(H)} \min_{\base\in \Base(e)} (np^{\Delta'(H)})^{r-|\base|} \ge np^{\Delta'(H)}
\end{equation}
since by definition we have $|\base|<r$ for any element $\base$ of any base (note that we may alternatively express $\ddprime_\dom(e)$ in \eqref{def:dprime.dom} as $(\dd^H(e) -\dd_\dom^H(e) + 2)(r-|\dom|)$). 

We denote sub-level sets by
\begin{equation}	\label{def:subL}
\cL_H( L) := \{ Q\in\cQnr: 
t_p(H,Q) \le L \}\,,\qquad L>0
\end{equation}
and additionally denote
\begin{equation}	\label{def:subLsub}
\cL_{<H}( L) := \bigcap_{F\subsetneq H} \cL_{F}( L)\,.
\end{equation}
With $C_2,C_2'$ to be determined over the course of the proof, consider for now arbitrary $\delta,\xi>0$, and additional parameters $L_0>C_2'(H)$, $K\ge1$ and $\eps>0$.
We may assume $\xi<\delta/2$.
Taking $C_2'(H)\ge \max_{F\subsetneq H}C_2'(F)$, from the induction hypothesis and the union bound we have
\begin{equation}
\P\Big( \bA \notin \cL_{<H}( L_0) \Big) \ls_H \exp(-c L_0^{1/\edges(H)} \rate_{n,p}).
\end{equation}
(Here we used our assumption $\Delta\ge2$: note that for $F\subset H$ of max-degree 1, while \eqref{UB.boundD1} loses a factor $\log(1/p)$ in the exponent as compared to \eqref{UB.bound2}, we gain a factor $p^{1-\Delta}$.)
Now set
\[
\cE := \cA_{n,r}\cap\cL_H( 1+\delta)^c \cap  \cL_{<H}(L_0).
\]
From the previous bound and \Cref{thm:LDP} we have
\begin{align*}
 \P(\bA\in \cL_H(1+\delta)^c)  
\ls_H & \exp(-cL_0^{1/\edges(H)}\rate_{n,p})
+ \P(\bA\in \cE)\\
\ls_H  &\exp(-cL_0^{1/\edges(H)}\rate_{n,p})
+ \exp( - c_0 K\rate_{n,p}) \\
& + \exp\Big( \frac{C_1 K \rate_{n,p} \log n}{\eps^2 np^{\Delta'(H)}\log(1/p)} - \eye_p\big( (\cE)_{\BBasys, \eps p}\big) \Big)
\end{align*}
(recall the shorthand notation \eqref{def:ip.short}). 
From \Cref{thm:count} it follows that
\[
(\cE)_{\BBasys, \eps p} \subseteq \cL_H( 1+\delta - O_H(L_0\eps))^c.
\]
and hence
\begin{equation}
\eye_p\big((\cE)_{\BBasys,\eps p}\big) \ge \Phi_{n,p}(H, \delta - O_H(\eps L_0)).
\end{equation}
Substituting this bound into the previous bound, we get that for any $K\ge 1, L_0>C_2'(H)$, $\delta\ge 2\xi>0$ and $\eps<c(H)\xi/L_0$ for $c(H)>0$ sufficiently small, 
\begin{align}
\P(\bA\in\cL_H(1+\delta)^c)
&\ls_H \exp(-cL_0^{1/\edges(H)}\rate_{n,p})
+ \exp(- c_0 K\rate_{n,p})	
\notag \\ &
+ \exp\Big( \frac{C_1 K \rate_{n,p} \log n}{\eps^2 np^{\Delta'(H)}\log(1/p)} -\Phi_{n,p}(H, \delta-\xi)  \Big)		\label{up.summary}\\
&=: \text{(I)} + \text{(II)} + \text{(III)}\,.\notag
\end{align}

Now to establish \eqref{UB.bound1} under the assumption \eqref{UB.np-LB1}, from \Cref{lem:Phi.LB} we can take $L_0,K$ sufficiently large depending on $H,\delta$ to make the terms (I) and (II) in \eqref{up.summary} negligible. 
Fixing such $L_0, K$, we can then fix $\eps=c'\xi$ for sufficiently small $c'(H,\delta)>0$, so that, together with \eqref{UB.WLB} and our assumption \eqref{UB.np-LB1}, 
by taking $C_2(H,\delta)$ sufficiently large we make the first term in the exponential of (III) at most
$
\frac{\xi}2\Phi_{n,p}(H, \delta-\xi)
$,
and \eqref{UB.bound1} follows.

For \eqref{UB.bound2}, in \eqref{up.summary} we take $L_0=2L\ge C_2'(H)$, $\delta = L-1$, $\xi=1/2$ (say), $K=2L^{1/\edges(H)}$, $\eps=c''/10L$ for sufficiently small $c''(H)>0$, and combining with \eqref{UB.WLB} and \eqref{UB.np-LB2}, the claim then follows upon taking $C_2'(H)$ sufficiently large. 
\end{proof}

\subsection{Upper bound for the upper tail probability (general case)}
\label{sec:UTLBm}

For the case of general $m\in\N$ we follow similar lines as in $\mathsection$\ref{sec:UTLB1} with some minor modifications.
The proof is now by induction on $\ell:=\max_k\edges(H_k)$. The case $\ell=1$ is handled exactly as before (the half-spaces being intersected have parallel boundaries). For $\ell\ge2$, we fix a dominating weighted base $\Basys(e)$ for each $k\in[m]$ and $e\in \Edges(H_k)$, and \eqref{UB.WLB} now holds with the minimum now taken over all edges in all graphs $H_k$.
In place of \eqref{def:Rnp} we now take
\begin{equation}	\label{def:Rnp.m}
R_{n,p}:=n^r p^{\min_k\Delta(H_k)}\log(1/p).
\end{equation}
The bounds in \Cref{lem:Phi.LB} extend to $\Phi_{n,p}(\uH,\udelta)$ by restricting the infimum to a single superlevel set $\{t_p(H_k,\cdot)\ge 1+\delta_k\}$ for which $\Delta(H_k)= \min_\ell\Delta(H_\ell)$. 
We apply the upper-LDP bound of \Cref{thm:LDP}(a) with 
\[
\cE := \cA_{n,r}\cap\cL_{\uH}( \1+\udelta)^c \cap  \cL_{<\uH}(L_0).
\]
where for $\uL\in\R_+^m$,
\begin{equation}	\label{def:subLint}
\cL_{\uH}(\uL):= \bigcap_{k\in[m]} \cL_{H_k}(L_k)\,,
\qquad
\cL_{<\uH}( L_0) := \bigcap_{k\in[m]}\bigcap_{F\subsetneq H_k} \cL_{F}( L_0)\,.
\end{equation}
For this choice of $\cE$, \Cref{thm:count} implies $(\cE)_{\BBasys, \eps p} \subseteq \cL_{\uH}(\1+\udelta-O_{\uH}(L_0\eps))^c$, and the rest of the argument proceeds as before. \qed

\subsection{Lower bound for the upper tail probability}
\label{sec:UTUB}

In this subsection we show how the bound \eqref{UT.UB} easily follows from Theorems \ref{thm:LDP} and \ref{thm:count} under some alternative assumptions.
As in the proof of \eqref{UT.LB} we present only the case $m=1$ to lighten notation, but the argument extends to general $m$ in a straightforward way, following similar modifications as in $\mathsection$\ref{sec:UTLBm}.
The proof assuming $p\gg n^{-1/\Delta(H)}$ is quite different and is given in \Cref{sec:tails.lower}.

\begin{prop}	\label{prop:UTUB}
In the setting of \Cref{thm:tails}, the bound \eqref{UT.UB} holds if $p\gg n^{-\min(\frac1{\DDprime(H)},\frac{r}{2\Delta(H)})}\log n$ and if we further assume either (a) $p\ge n^{-1/\Delta(H)}$ or (b) $H$ is regular.
\end{prop}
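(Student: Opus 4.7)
The plan is to apply the LDP lower bound \eqref{LDP.lower} of Theorem \ref{thm:LDP}(b) to $\cE := \{A \in \cA_{n,r} : t_p(H,A) \ge 1+\delta\}$ with the $H$-dominating base system $\BBasys = \{\Basys(e)\}_{e\in\Edges(H)}$ whose weights are chosen as in \eqref{Deg.usual} so that \eqref{growing.LB} holds. The two pieces of the hypothesis on $p$ serve distinct roles: the condition $p \gg n^{-1/\DDprime(H)}\log n$ ensures the growing prerequisite $\growing_{n,p}(\BBasys) \ge C_0'\eps^{-2}\log n$ of Theorem \ref{thm:LDP}(b) is met for a fixed small $\eps = \eps(H,\delta)$, while $p \gg n^{-r/(2\Delta)}\log n$, combined with the lower bound $\Phi_{n,p}(H,\delta) \gtrsim_{H,\delta} n^r p^\Delta \log(1/p)$ from Lemma \ref{lem:Phi.LB}, makes the additive error $O(n^{r/2}|\log\frac{1-p}{p}|)$ in \eqref{LDP.lower} negligible compared to $\Phi_{n,p}(H,\delta)$.

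Fix $\eta>0$ (to be sent to zero) and pick $Q \in \cQ_{n,r}$ with $t_p(H,Q) \ge 1+\delta+3\eta$ and $\eye_p(Q) \le \Phi_{n,p}(H,\delta+3\eta)+1$. It remains to verify the containment $\nbhd_\BBasys(Q,\eps p) \subseteq \cE$. Setting $\cC := \nbhd_\BBasys(Q,\eps p) \cap \cA_{n,r}$, the counting lemma (Theorem \ref{thm:count}) reduces this to producing some $Q_0 \in \hull(\cC)$ with $t_p(H,Q_0) \ge 1+\delta+2\eta$ and $t_p(H',Q_0) \le L$ for all $H' \subsetneq H$: for $\eps$ small in terms of $L$, every $A \in \cC$ will then satisfy $t_p(H,A) \ge 1+\delta$. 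We produce such a $Q_0 = A^* \in \cC$ probabilistically: Lemma \ref{lem:Bstar.conc} gives $\mu_Q(\cC) = 1-o(1)$, and standard concentration for homomorphism counts under the product measure $\mu_Q$ (via Efron--Stein, noting that $\E_{\mu_Q} t_p(F, \bA) = t_p(F,Q) + O(1/n)$ for every $F \subseteq H$) ensures $t_p(F,\bA)$ concentrates around $t_p(F,Q)$. This reduces the problem to the deterministic task of choosing $Q$ with $t_p(H',Q) \le L_0(H,\delta)$ for all $H' \subsetneq H$.

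This last step is the main obstacle, and it is handled under (a) or (b) by different means. Under (b), when $H$ is regular, the structural analysis of near-minimizers of $\Phi_{n,p}(H,\delta)$ in the relevant range of $p$ (for $r=2$ this is \cite{BGLZ}; the extension to general $r$ follows from the same convexity/entropy template) shows that any such $Q$ is within $o(1)$ in $\eye_p$ of a planted-clique or planted-hub tensor supported on $O_{H,\delta}(np^{c_H})$ vertices for some $c_H>0$, and either structure satisfies $t_p(H',Q) = O_{H,\delta}(1)$ for every proper $H' \subsetneq H$. Under (a), when $p \ge n^{-1/\Delta}$, we bypass characterizations of arbitrary minimizers and construct $Q$ directly: a planted clique $Q(I)=1$ on $I\subseteq V_0$ and $Q(I)=p$ otherwise, with $|V_0|$ chosen of the appropriate order (or a hub construction when that is near-optimal for $H$), gives $\eye_p(Q) = (1+o(1))\Phi_{n,p}(H,\delta)$, while the inequality $np^\Delta \ge 1$ controls $t_p(H',Q) = O_H(1)$ for all $H' \subsetneq H$ by a direct expansion of the homomorphism sum. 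Combining these ingredients yields $-\log\P(\bA\in\cE) \le (1+o(1))\Phi_{n,p}(H,\delta+o(1))$, which is \eqref{UT.UB}; the joint case $m \ge 2$ proceeds identically after intersecting events and taking the union of the base systems over $k$.
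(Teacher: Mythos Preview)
Your overall framework (apply the LDP lower bound of Theorem~\ref{thm:LDP}(b) and then use the counting lemma to show the inner approximation contains enough of the superlevel set) matches the paper's route via Lemma~\ref{lem:UTUB}. The gap is in how you verify the subgraph-count hypothesis \eqref{assu:count2.0} of the counting lemma for your near-minimizer $Q$.

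Under (a) you claim to ``construct $Q$ directly'' as a planted clique or hub with $\eye_p(Q)=(1+o(1))\Phi_{n,p}(H,\delta)$. But asserting that some explicit tensor achieves $(1+o(1))\Phi_{n,p}(H,\delta)$ is exactly the asymptotic formula for the variational problem, which is \emph{open} for general $r$-graphs (see Remark~\ref{rmk:WLOG.p} and the discussion around Corollary~\ref{cor:LiZh}); your parenthetical ``or a hub construction when that is near-optimal for $H$'' concedes that you do not know which construction wins. Under (b) you invoke a structural characterization of near-minimizers, citing \cite{BGLZ} for $r=2$ and asserting the extension to general $r$ ``follows from the same template''; no such result is available here, and for $r\ge 3$ it has only been carried out for cliques and one sporadic $3$-graph.

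The paper avoids this entirely by an entropy-comparison argument that needs only \emph{order-matching} bounds on $\Phi_{n,p}$, never the sharp constant. Concretely: one shows $(\cE)^\circ_{\BBasys,\eps p}\supseteq \cL_H(1+\delta+\xi)^c\cap\cL_{<H}(K)$ via Theorem~\ref{thm:count} (applied with $Q_0$ any element of this intersection), and then argues that the added constraint $\cL_{<H}(K)$ does not move the infimum. The latter uses that for any $F\subsetneq H$, Lemma~\ref{lem:Phi.LB} gives $\eye_p(\cL_F(K)^c)=\Phi_{n,p}(F,K-1)\gtrsim_F K^{\Delta(F)/\edges(F)}n^rp^{\Delta(F)}\log(1/p)$, while the hub construction (under (a)) or the clique construction (under (b), using regularity) gives only the crude upper bound $\Phi_{n,p}(H,2\delta)\lesssim_{H,\delta} n^rp^{\Delta(H)}\log(1/p)$; since $\Delta(F)\le\Delta(H)$, taking $K$ large forces $\Phi_{n,p}(F,K-1)>\Phi_{n,p}(H,2\delta)$, so no near-minimizer lies in $\cL_{<H}(K)^c$. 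This is the point where assumptions (a)/(b) enter, and it requires nothing about the structure of optimizers. Your probabilistic step producing $A^*\in\cC$ is also an unnecessary detour: once $Q\in\cL_{<H}(K)$, the counting lemma applies directly with $Q_0=Q$.
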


This is an immediate consequence of \Cref{lem:Phi.LB} and the following, together with the assumption $p\gg n^{-1/\DDprime(H)}\log n$ (via \eqref{growing.LB}).

\begin{lem}	\label{lem:UTUB}
Let $H$ be an $r$-graph and let $\delta>\xi>0$. 
Suppose there exists an $H$-dominating base system $\BBasys=\{\Basys(e)\}_{e\in\Edges(H)}$ 
with weights as in \eqref{Deg.usual}
such that $\min_{e\in\Edges(H)} \growing_{n,p}(\Basys(e))\ge C(H,\delta)\xi^{-2}\log n$ for a sufficiently large constant $C(H,\delta)>0$. Then if $p\ge n^{-1/\Delta(H)}$, or if $p\ge n^{-r/\Delta(H)}$ and $H$ is regular, we have
\[
\log\P(t_p(H,\bA)\ge1+\delta) \ge -\Phi_{n,p}(H,\delta+\xi) - O\Big(1+ n^{r/2}\Lp\Big).
\]
\end{lem}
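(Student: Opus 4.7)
My plan is to apply the quantitative LDP lower bound \Cref{thm:LDP}(b) to the super-level set $\cE = \{A \in \cAnr : t_p(H, A) \ge 1+\delta\}$, taking $\eps = c(H,\delta,\xi) > 0$ small enough that the hypothesis $\growing_{n,p}(\BBasys) \ge C(H,\delta)\xi^{-2}\log n$ exceeds the threshold $C_0'\eps^{-2}\log n$ appearing in \Cref{thm:LDP}(b). That theorem then reduces the lower bound on $\log\P(\bA \in \cE)$ to producing $Q^* \in (\cE)^{\circ}_{\BBasys, \eps p}$ with $\eye_p(Q^*) \le \Phi_{n,p}(H,\delta+\xi)+1$. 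I would pick $Q^* \in \cQnr$ nearly attaining the infimum in $\Phi_{n,p}(H,\delta+\xi/2)$; then $t_p(H, Q^*) \ge 1+\delta+\xi/2$, and monotonicity of $\Phi_{n,p}(H,\cdot)$ in its argument supplies the required entropy bound.

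To verify $Q^* \in (\cE)^{\circ}_{\BBasys, \eps p}$ I would invoke the sparse counting lemma \Cref{thm:count} with $\cC = \{A \in \cAnr : \|A - Q^*\|^*_{\BBasys} \le \eps p\}$ (of $\BBasys^*$-diameter at most $2\eps p$) and compare every $A \in \cC$ with a single Boolean $A_0 \in \cC$ that plays the roles of both $\uQ$ and $Q_0$. Provided $A_0$ satisfies $t_p(H', A_0) \le L$ for all $H' \subsetneq H$ and also $t_p(H, A_0) \ge 1+\delta+\xi/4$, \Cref{thm:count} gives $|t_p(H,A) - t_p(H,A_0)| \ls_H L\eps$, which is below $\xi/4$ once $\eps \le c'_H\, \xi/L$, whence $t_p(H, A) \ge 1+\delta$ for every $A \in \cC$ as required.

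The main obstacle is producing such $A_0$, which I would do by showing that the intersection of the three corresponding events on $\bA$ has positive probability under the tilted measure $\mu_{Q^*}$, allowing $A_0$ to be chosen by the probabilistic method. First, $\mu_{Q^*}(\bA \in \cC) \ge 1-o(1)$ by \Cref{lem:Bstar.conc} under the growing assumption. Second, $\mu_{Q^*}(t_p(H,\bA) \ge 1+\delta+\xi/4) \ge 1-o(1)$ by Chebyshev, provided $\Var_{\mu_{Q^*}}(t_p(H,\bA)) = o(\xi^2)$; this follows from an Efron--Stein computation in case (a) (using the influence bound $O_H(n^{-r}p^{-\edges(H)})$ per entry together with $\sum_I Q^*(I) \ls_H n^r p$, which is forced by $\eye_p(Q^*) \ls_H n^r p^{\Delta}\log(1/p)$ through \Cref{lem:Phi.LB} and convexity of $\eye_p(\cdot)$), and from a refined overlap-based second-moment bound exploiting regularity of $H$ in case (b). Finally, for the crude subgraph bound I would use change of measure:
\[
\mu_{Q^*}(t_p(H',\bA) > L) \le e^{\eye_p(Q^*)+T}\,\mu_p(t_p(H',\bA) > L) + \mu_{Q^*}\Bigl(\log\tfrac{d\mu_{Q^*}}{d\mu_p}(\bA) > \eye_p(Q^*)+T\Bigr),
\]
with $T = C\, n^{r/2}\log(1/p)$; the second term is $o(1)$ by Chebyshev applied to the log-density (of $\mu_{Q^*}$-variance $O(n^r\log^2(1/p))$), and the first is $o(1)$ by \Cref{claim:crude-upper} for $L = L(H,\delta,\xi)$ large enough, since $L^{1/\edges(H')}\, n^r p^{\Delta}\log(1/p) \gg \eye_p(Q^*)+T$ in the stated range of $p$. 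A union bound over the $O_H(1)$ proper subgraphs $H' \subsetneq H$ completes the construction of $A_0$.
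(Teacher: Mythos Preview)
Your high-level plan (apply \Cref{thm:LDP}(b) and the counting lemma) is exactly what the paper does, but your execution diverges from the paper's and contains a genuine gap.

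\textbf{The gap.} Your variance bound under $\mu_{Q^*}$ in case (a), using the pointwise influence estimate $|\partial_I t_p(H,\bA)|\ls_H n^{-r}p^{-\edges(H)}$ together with $\sum_I Q^*(I)\ls_H n^rp$, yields
\[
\Var_{Q^*}\big(t_p(H,\bA)\big)\ \ls_H\ n^{-r}p^{1-2\edges(H)}\,,
\]
and this is \emph{not} $o(1)$ in the stated range $p\ge n^{-1/\Delta(H)}$ once $r\Delta(H)<2\edges(H)$. Already for the triangle ($r=2$, $\Delta=2$, $\edges=3$) with $p=n^{-1/2}$ you get $n^{-2}p^{-5}=n^{1/2}\to\infty$. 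A correct bound requires the refined second-moment analysis of \Cref{prop:tilt.concentration}, i.e.\ essentially all of $\mathsection$\ref{sec:tails.lower}; your one-line Efron--Stein does not suffice. Your change-of-measure step for the crude subgraph bound is also more delicate than presented (you need the hypothesis of \Cref{claim:crude-upper} for every $H'\subsetneq H$ with your eventual choice of $L$), though it can likely be made to work.

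\textbf{The paper's shortcut.} The paper avoids \emph{all} of your probabilistic work by arguing directly at the level of the variational problem. Instead of fixing a near-minimizer $Q^*$ and constructing a Boolean witness $A_0$, it proves the set containment
\[
(\cE)^\circ_{\BBasys,\eps p}\ \supseteq\ \cL_H(1+\delta+\xi)^c\cap\cL_{<H}(K)
\]
for a large constant $K=K(H,\delta)$: any $Q_0$ in the right-hand side already satisfies the crude hypothesis \eqref{assu:count2.0} (it lies in $\cL_{<H}(K)$ by definition), so \Cref{thm:count} applied with $\cC=\cU_\BBasys(Q_0,\eps p)$ gives $t_p(H,A)\ge t_p(H,Q_0)-O_H(K\eps)\ge1+\delta$ for every $A\in\cU_\BBasys(Q_0,\eps p)$. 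Then one only needs that intersecting with $\cL_{<H}(K)$ does not raise the infimum of $\eye_p$, which follows because $\eye_p(\cL_F(K)^c)=\Phi_{n,p}(F,K-1)\gs_H K^{\Delta(F)/\edges(F)}n^rp^{\Delta(F)}\log(1/p)$ by \Cref{lem:Phi.LB}, while $\Phi_{n,p}(H,2\delta)\ls_{H,\delta} n^rp^{\Delta(H)}\log(1/p)$ via the hub (or clique, in the regular case) construction --- \emph{this} is precisely where the hypothesis $p\ge n^{-1/\Delta(H)}$ (resp.\ $p\ge n^{-r/\Delta(H)}$) enters. No concentration, no tilting, no Boolean witness needed.
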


(To deduce case (b) in \Cref{prop:UTUB} note that $n^{-1/\DDprime(H)}\ge n^{-r/\Delta(H)}$ from \eqref{DDprime.range}, and the assumption $p\gg n^{-r/(2\Delta(H))}$ implies the error term in \Cref{lem:UTUB} is $o(\rate_{n,p})=o(\Phi_{n,p}(H,\delta))$.) 

\begin{proof}
Setting $\cE:=\cAnr\cap \cL_H(1+\delta)^c$ and letting $\eps=c\xi$ for $c=c(H,\delta)>0$ to be taken sufficiently small, from \Cref{thm:LDP}(b) we have
\begin{equation}
\P(t_p(H,\bA)\ge1+\delta) \ge -\eye_p ((\cE)_{\BBasys,\eps p}^\circ) - O\Big(1+ n^{r/2} \Lp\Big)
\end{equation}
assuming $C(H,\delta)$ is sufficiently large depending on $c$ and the constant $C_0'(r)$ from \Cref{thm:LDP}(b). It only remains to show
\begin{equation}	\label{UTLB1}
\eye_p ((\cE)_{\BBasys,\eps p}^\circ)\le \Phi_{n,p}(H,\delta+\xi)
\end{equation}
if $c$ is sufficiently small. 
Letting $1\le K=O_{H,\delta}(1)$ to be chosen later, we first argue that
\begin{equation}	\label{UTLB2}
(\cE)_{\BBasys,\eps p}^\circ\supseteq \cL_H(1+\delta+\xi)^c \cap \cL_{<H}(K).
\end{equation}
Indeed, letting $Q_0$ be an arbitrary element of the \abbr{lhs}, for any $A\in\cU_{\BBasys}(Q_0,\eps p)$ we have from \Cref{thm:count} applied to $\cC=\cU_{\BBasys}(Q_0,\eps p)$ that 
\[
t_p(H,A) \ge t_p(H,Q_0) - O_{H,\delta}(\eps) \ge 1+\delta+\xi-O_{H,\delta}(\eps) \ge 1+\delta
\]
if $c$ is sufficiently small, and \eqref{UTLB2} follows. 
Next, by considering $Q\in\cQnr$ of the form $Q(I)=1$ whenever $I\cap[\lf L p^{\Delta(H)} n\rf]\ne \emptyset$ and otherwise $Q(I)=p$, one easily checks that 
\begin{equation}	\label{eq:upper-Phi}
\Phi_{n,p}(H,\gamma) \ls_{H,\gamma}n^rp^{\Delta(H)}\log(1/p)
\end{equation}
for any fixed $\gamma>0$, assuming $p\ge n^{-1/\Delta(H)}$ and taking $L=L(H,\gamma)$ sufficiently large. If $H$ is regular we get the same bound for $p\ge n^{-r/\Delta(H)}$ by considering $Q$ of the form $Q(I)=1$ for $I\subset[\lf Lp^{\Delta(H)/r}n\rf]$ and otherwise $Q(I)=p$. 
From \Cref{lem:Phi.LB}, we can hence fix $K$ sufficiently large depending on $H$ and $\delta$ so that
\[
\eye_p(\cL_F(K)^c) = \Phi_{n,p}(F,K-1) > \Phi_{n,p}(H,2\delta)
\]
for every subgraph $F$ of $H$. Since $ \Phi_{n,p}(H,2\delta)\ge \Phi_{n,p}(H,\delta+\xi)$, it follows that
\[
 \Phi_{n,p}(H,\delta+\xi) = \eye_p(\cL_H(1+\delta+\xi)^c) = \eye_p(\cL_H(1+\delta+\xi)^c\cap \cL_{<H}(K))
\]
and \eqref{UTLB1} follows upon combining the above with \eqref{UTLB2}.
\end{proof}

\section{Proof of \Cref{thm:tails} -- lower tail}
\label{sec:LT}

Recall our notation $\cL_H(u)=\{ Q\in\cQnr: t_p(H,Q)\le u\}$, and $\cL_{<H}(u)=\bigcap_{F\subset H} \cL_F(u)$. We will make frequent use of the shorthand notation \eqref{def:ip.short}.
We need the following elementary estimate for the lower tail optimization problem \eqref{def:Psi}; for a proof see \cite[Lemma 22]{KoSa}.

\begin{lem}\label{lem:Psi.size}
For any $r$-graph $H$ and $\delta\in(0,1)$, 
$
 \Psi_{n,p}(H,\delta) =\eye_p(\cL_H(1-\delta)) 
\gs _{H,\delta} n^r p\,.
$
\end{lem}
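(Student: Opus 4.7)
The plan is to unpack the definition and then establish the lower bound via a three-step reduction. The identity $\Psi_{n,p}(H,\delta)=\eye_p(\cL_H(1-\delta))$ is a direct translation of \eqref{def:Psi} using the shorthand \eqref{def:ip.short} together with the notation $\cL_H(1-\delta)$ from \eqref{def:subL}, so the real content is the lower bound $\eye_p(\cL_H(1-\delta)) \gs_{H,\delta} n^r p$.

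First I would reduce to the case $Q\le p\jay$ entrywise. For any $Q\in\cL_H(1-\delta)$, its truncation $Q'':=Q\wedge(p\jay)$ still lies in $\cL_H(1-\delta)$ by monotonicity of $t_p(H,\cdot)$, and satisfies $\eye_p(Q'')\le\eye_p(Q)$ because the scalar map $x\mapsto\eye_p(x)$ is minimized at $x=p$. Thus it suffices to bound $\eye_p(Q)$ from below for $Q\in\cL_H(1-\delta)$ with $Q\le p\jay$.

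Second, I would establish an $\ell_1$-deficit bound $\sum_I(p-Q(I))\gs_{H,\delta} n^r p$. Setting $Y(I):=1-Q(I)/p\in[0,1]$ and applying the elementary inequality $\prod_e(1-Y_e)\ge 1-\sum_e Y_e$ edge-by-edge in the definition of $t_p(H,Q)$, followed by the routine counting $\sum_\phi Y(\phi(e))=r!\,n^{\verts(H)-r}\sum_I Y(I)$ for each $e\in\Edges(H)$, would give
\[
t_p(H,Q)\ge 1-O(1/n)-\edges(H)\,r!\,n^{-r}\sum_I Y(I).
\]
If $\sum_I Y(I) < c_{H,\delta}\, n^r$ for $c_{H,\delta}>0$ small enough, the right-hand side exceeds $1-\delta$, contradicting $Q\in\cL_H(1-\delta)$; multiplying the resulting lower bound on $\sum_I Y(I)$ by $p$ yields the claim.

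Third, I would convert the $\ell_1$-deficit into the entropy lower bound via the pointwise estimate $\eye_p(x)\gs \min\bigl(p,(p-x)^2/p\bigr)$ for $x\in[0,p]$, which follows by combining Pinsker's inequality in the regime $x$ near $p$ with the monotonicity bound $\eye_p(x)\ge\eye_p(p/2)\gs p$ for $x\le p/2$. I would then partition the indices into $\cI_1:=\{I:Q(I)\le p/2\}$ and $\cI_2:=\{I:p/2<Q(I)\le p\}$. If $|\cI_1|\gs_{H,\delta} n^r$ the desired bound follows at once; otherwise $\cI_1$ contributes only a small constant fraction of the $\ell_1$-deficit, forcing $\sum_{\cI_2}(p-Q(I))\gs_{H,\delta} n^r p$, and then Cauchy--Schwarz gives $\sum_{\cI_2}(p-Q(I))^2\gs_{H,\delta} n^r p^2$, so that $\eye_p(Q)\gs \sum_{\cI_2}(p-Q(I))^2/p\gs_{H,\delta} n^r p$.

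The main obstacle will be the third step: a naive application of Pinsker's inequality directly to the $\ell_1$-deficit via Cauchy--Schwarz gives only $\eye_p(Q)\gs n^r p^2$, off by a factor of $p$. The case split is tailored to exploit the fact that $\eye_p(x)$ is already of order $p$ (not $p^2$) at arguments below a constant fraction of $p$, and this jump is exactly enough to recover the correct $n^r p$ scaling.
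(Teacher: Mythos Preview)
Your argument is correct. The paper itself does not prove this lemma; it simply cites \cite[Lemma~22]{KoSa}, so your self-contained three-step proof (truncation to $Q\le p\jay$, linearization to get the $\ell_1$ deficit $\sum_I(p-Q(I))\gs_{H,\delta}n^rp$, and the dichotomy on $\{Q(I)\le p/2\}$ versus $\{Q(I)>p/2\}$ to upgrade to entropy) is a welcome complement. The linearization in Step~2 is clean and the case split in Step~3 correctly avoids the $n^rp^2$ trap you flag.

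One small correction: the pointwise bound $\eye_p(x)\gs (p-x)^2/p$ for $x\in[p/2,p]$ is \emph{not} Pinsker's inequality --- Pinsker only gives $\eye_p(x)\ge 2(p-x)^2$, which is weaker by a factor of $p$ and would land you back at the $n^rp^2$ bound you are trying to avoid. The estimate you need follows instead from the second-derivative bound $\eye_p''(y)=\tfrac{1}{y(1-y)}\ge \tfrac{1}{p}$ for $y\in(0,p]$, together with $\eye_p(p)=\eye_p'(p)=0$, giving $\eye_p(x)\ge (p-x)^2/(2p)$ on $[0,p]$ by Taylor's theorem. With that fix the proof goes through as written (for $n\ge n_0(H,\delta)$, so that the $O_H(1/n)$ term in Step~2 is at most $\delta/2$).
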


Since $\Psi_{n,p}(\uH,\udelta)\ge \Psi_{n,p}(H_k,\delta_k)$ for any $k\in [m]$ it immediately follows that $\Psi_{n,p}(\uH,\udelta) \gs_{\uH,\udelta}n^rp$ for any $\uH$ and $\udelta\in(0,1)^m$.

\subsection{Upper bound on the lower tail probability}

As in the proof of the upper bound for the upper tail, we will combine the upper-\abbr{ldp} of \Cref{thm:LDP} with the counting lemma (\Cref{thm:count}). 
However, whereas for the upper tail we needed to argue by induction on the number of edges in $H$ in order to satisfy the crude bound assumption \eqref{assu:count2.0} for subgraphs, here we can reduce to this assumption with a simple application of the FKG inequality:

\begin{lem}	\label{lem:LT.FKG}
Recalling the notation \eqref{def:subLint}, there exists $C=C(\uH)>0$ such that 
\[
\P( \bA\in\cL_{\uH}(\1-\udelta)) \le 2\P(\bA\in \cL_{\uH}(\1-\udelta)\cap \cL_{<\uH}(C)).
\]
\end{lem}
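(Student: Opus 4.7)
The plan is to apply the Harris--FKG inequality to decouple the two events, after which the main task becomes bounding $\P(\bA\in\cL_{<\uH}(C))$ from below using the upper-tail estimates from \Cref{claim:crude-upper}.

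I would first observe that for any $r$-graph $H$, the map $A\mapsto t_p(H,A)$ is a non-negative polynomial in the entries of $A$ with non-negative coefficients, and hence is non-decreasing in the product order on $\cAnr\cong\{0,1\}^{{[n]\choose r}}$. Both $\cL_{\uH}(\1-\udelta)=\bigcap_k\{t_p(H_k,\cdot)\le 1-\delta_k\}$ and $\cL_{<\uH}(C)=\bigcap_k\bigcap_{F\subsetneq H_k}\{t_p(F,\cdot)\le C\}$ are therefore downward-closed subsets of $\cAnr$. The Harris--FKG inequality applied to the product Bernoulli measure $\mu_p$ then gives
\[
\P\big(\bA\in\cL_{\uH}(\1-\udelta)\cap\cL_{<\uH}(C)\big) \,\ge\, \P\big(\bA\in\cL_{\uH}(\1-\udelta)\big)\cdot\P\big(\bA\in\cL_{<\uH}(C)\big),
\]
reducing the lemma to producing $C=C(\uH)$ for which the second factor is at least $1/2$.

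The key intermediate step is to show that $\DDprime(F)\le\DDprime(H_k)\le\DDprime_{\max}$ for every proper sub-hypergraph $F\subsetneq H_k$, which together with the assumption $p\gg n^{-1/\DDprime_{\max}}\log n$ verifies the hypothesis \eqref{UB.np-LB2} of \Cref{claim:crude-upper} applied with $F$ in place of $H$ at any fixed $L$. This monotonicity follows from \eqref{def:dprime.dom}: any $H_k$-dominating base $\Dom(e)$ for an edge $e\in\Edges(F)\subseteq\Edges(H_k)$ is automatically $F$-dominating, since every overlap of $e$ with another edge of $F$ is already an overlap with an edge of $H_k$; and for each $\dom\in\Dom(e)$ one has $\dd^F(e\setminus\dom)\le\dd^{H_k}(e\setminus\dom)$, hence $\ddprime_\dom^F(e)\le\ddprime_\dom^{H_k}(e)$. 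Taking maxima over $\dom$, then minima over the $H_k$-dominating bases $\Dom$, and finally maxima over $e$ yields the claim.

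With this in hand, one invokes \Cref{claim:crude-upper} with $F$ in place of $H$ for each $F\subsetneq H_k$ (valid since \Cref{prop:upper} and \Cref{claim:crude-upper} were jointly established for all hypergraphs by induction on $\edges(H)$). Choosing $C=C(\uH)$ larger than both $\max_k\max_{F\subsetneq H_k} C_2'(F)$ and $2^{\max_k\edges(H_k)}$, one obtains for $p\le 1/2$ the bound $\P(t_p(F,\bA)>C)\le\exp\big(-c_F\,C^{1/\edges(F)} n^r p^{\Delta(F)}\log(1/p)\big)$ (with the analogous bound when $\Delta(F)=1$), whose exponent diverges in the assumed range of $p$: from \eqref{DDprime.range} one has $\Delta(F)\le\Delta_{\max}\le r\DDprime_{\max}-1$, and combined with $np^{\DDprime_{\max}}\to\infty$ this forces $n^rp^{\Delta(F)}\to\infty$, while $\log(1/p)$ is bounded away from $0$. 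For $p>1/2$ the event $\{t_p(F,\bA)>C\}$ is empty, since $t_p(F,\bA)\le p^{-\edges(F)}\le 2^{\edges(F)}<C$. A union bound over the finite collection of pairs $(k,F)$ then yields $\P(\cL_{<\uH}(C)^c)\le 1/2$, completing the proof. The only substantive step is the monotonicity $\DDprime(F)\le\DDprime(H_k)$, which ensures the inductively proved upper-tail estimates apply at the correct level of sparsity; the rest is a combination of Harris--FKG with the inductively established upper-tail bound.
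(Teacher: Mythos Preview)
Your proof is correct but unnecessarily heavy. The FKG step is identical to the paper's, but for the remaining task of showing $\P(\bA\in\cL_{<\uH}(C))\ge 1/2$ the paper simply applies Markov's inequality: for each $F\subsetneq H_k$ one has $\P(t_p(F,\bA)>C)\le \E\,t_p(F,\bA)/C=O_F(1)/C$, and a union bound over the finitely many pairs $(k,F)$ lets one choose $C=C(\uH)$ large enough. This bypasses entirely the invocation of \Cref{claim:crude-upper}, and hence the verification of the monotonicity $\DDprime(F)\le\DDprime(H_k)$, the hypothesis \eqref{UB.np-LB2}, and the separate treatment of $p>1/2$. Your route does work (the monotonicity argument is fine, and the exponential tail bound is certainly strong enough), but it imports machinery designed for large-deviation rates when only a first-moment bound is needed.
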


\begin{proof}
Since the sets $\cL_{\uH}(\1-\udelta)\}$ and $\cL_{<\uH}(C)\}$ are monotone subsets of the hypercube, we have by the FKG inequality that
\begin{align*}
\P(\bA\in & \cL_{\uH}(\1-\udelta)) \\
&= \P(\bA\in\cL_{\uH}(\1-\udelta)\cap\cL_{<\uH}(C)) + \P(\bA\in\cL_{\uH}(\1-\udelta), \bA\notin\cL_{<\uH}(C))\\
&\le \P(\bA\in\cL_{\uH}(\1-\udelta)\cap\cL_{<\uH}(C)) + \P(\bA\in\cL_{\uH}(\-\udelta))\P( \bA\notin\cL_{<\uH}(C)).
\end{align*}
Rearranging yields
\[
\P(\bA\in\cL_H(1-\delta)) 
\le \frac{ \P(\bA\in\cL_{\uH}(\1-\udelta)\cap\cL_{<\uH}(C)) }{1- \P( \bA\notin\cL_{<\uH}(C))}.
\]
Now for any $k\in[m]$ and subgraph $F\subset H_k$, from Markov's inequality we get that $\P(\bA\notin \cL_F(C)) \ls 1/C$. The claim follows by applying this together with the union bound over $F\subset H_k, k\in [m]$ and taking $C$ sufficiently large depending on $\uH$.
\end{proof}

We proceed with the proof of the upper bound in \eqref{LT.UBLB}.
Let $c=c(\uH)>0$ to be taken sufficiently small and set $\eps:=c\xi$.
By our lower-bound assumption on $p$ and \eqref{growing.LB}, we can select a collection $\BBasys=\{\Basys(e)\}_{e\in\bigcup_{k=1}^m\Edges(H_k)}$ of dominating weighted bases over the edges of each $H_k$ with weights as in \eqref{Deg.usual}, such that for any fixed $W_0=W_0(\uH,\delta,\xi)$ and all $n$ sufficiently large,
\begin{equation}
\growing_{n,p}(\Basys(e)) \ge W_0\eps^{-2}\log n.
\end{equation} 
With $C=C(\uH)$ as in \Cref{lem:LT.FKG}, we denote $\cE:= \cAnr\cap \cL_{\uH}(\1-\udelta)\cap\cL_{<\uH}(C)$.
Taking $W_0$ larger than the constant $C_0(r)$ from \Cref{thm:reg}, we can apply \Cref{thm:LDP}(a) with $\Delta=1$ and some $1\le K=O_H(1)$ to be chosen later to bound
\begin{align}	\label{LTUB.applyLDP}
\P(\bA\in \cE) \le -\min\big\{ & c_0Kn^rp\log(1/p) - O_{\BBasys}(1)\,,\; \nonumber \\
& \eye_p((\cE)_{\BBasys,\eps p}) - O_{\BBasys}( KW_0^{-1} n^rp\log(1/p)) \big\}\,.
\end{align}
Now we claim that
\begin{equation}	\label{LTUB.claim}
(\cE)_{\BBasys,\eps p} \subseteq \cL_{\uH}(\1-\udelta+\xi\1)
\end{equation}
if $c$ is sufficiently small. Indeed, letting $A_0\in\cE$ be arbitrary, applying \Cref{thm:count} with $\cC=\cU_{\BBasys}(Q,\eps p)$ we have that for each $1\le k\le m$, 
\[
t_p(H_k,Q) \le t_p(A_0) + O_{\uH}(\eps) \le 1-\delta_k+\xi
\]
if $c$ is sufficiently small, and \eqref{LTUB.claim} follows.
Thus, $\eye_p((\cE)_{\BBasys,\eps p}) \ge \Psi_{n,p}(\uH, \udelta-\xi\1)$. From \Cref{lem:Psi.size} (and the remark that follows it) we can fix $K$ sufficiently large and then fix $W_0$ sufficiently large such that the minimum in \eqref{LTUB.applyLDP} is attained by the second argument, and such that the error term in the second argument is at most $\xi\Psi_{n,p}(\uH,\udelta-\xi\1)$. The lower bound on $\LT_{n,p}(\uH,\udelta)$ in \eqref{LT.UBLB} follows. \qed

\subsection{Lower bound for the lower tail probability}

The upper bound on $\LT_{n,p}(H,\delta)$ in \eqref{LT.UBLB} is a consequence of \Cref{lem:Psi.size} and the following, together with our assumption on $p$ (via \eqref{growing.LB}).

\begin{prop}	\label{prop:LT.LB}
Let $H$ be an $r$-graph and let $\delta\in(0,1)$, $\xi\in(0,1-\delta)$. Assume there exists a collection $\BBasys=\{\Basys(e)\}_{e\in\Edges(H)}$ of weighted bases such that $\min_{e\in\Edges(H)} \growing_{n,p}(\Basys(e))\ge C(r)\xi^{-2}\log n$ for a sufficiently large constant $C(r)>0$. Then
\[
\log\P(t_p(H,\bA)\le 1-\delta) \ge -\Psi_{n,p}(H,\delta+\xi) - O\Big(1+ n^{r/2}\Lp\Big).
\]
\end{prop}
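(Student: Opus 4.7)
The plan is to apply the quantitative LDP lower bound of \Cref{thm:LDP}(b) to the event $\cE := \cAnr \cap \cL_H(1-\delta)$, and then bound the corresponding infimum over the inner approximation $(\cE)^\circ_{\BBasys, \eps p}$ by $\Psi_{n,p}(H,\delta+\xi) + o(1)$, for $\eps := c\xi$ with $c = c(H) > 0$ small. Provided $C(r)$ is taken large enough for \Cref{thm:LDP}(b) to apply with this $\eps$, the former step yields
\[
\log \P(\bA \in \cE) \ge -\inf\{\eye_p(Q) : Q \in (\cE)^\circ_{\BBasys, \eps p}\} - O\Big(1 + n^{r/2}\Big|\log \tfrac{1-p}{p}\Big|\Big),
\]
so it suffices to construct a point of $(\cE)^\circ_{\BBasys, \eps p}$ with entropy at most $\Psi_{n,p}(H,\delta+\xi) + o(1)$.

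Take $Q^*$ to be an optimizer of $\Psi_{n,p}(H,\delta+\xi)$, which exists by compactness of $\cL_H(1-\delta-\xi)$ and lower semicontinuity of $\eye_p$. Since $t_p(H,\cdot)$ is coordinatewise nondecreasing on $\cQnr$ while $x \mapsto \eye_p(x)$ is minimized at $x=p$, replacing $Q^*(I)$ by $\min(Q^*(I),p)$ only preserves the constraint and decreases $\eye_p$, so I may assume $Q^* \le p$ pointwise. In particular $t_p(H',Q^*) \le 1$ for every $H' \subseteq H$, which is the crude subgraph bound needed to invoke the counting lemma.

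It remains to show $Q^* \in (\cE)^\circ_{\BBasys, \eps p}$, i.e.\ every $A \in \cC := \nbhd_{\BBasys}(Q^*, \eps p)$ satisfies $t_p(H,A) \le 1-\delta$. Here the main obstacle is that \Cref{thm:count} requires a point $Q_0 \in \hull(\cC)$ with controlled subgraph densities, while $Q^*$ itself need not lie in $\hull(\cC)$. I will resolve this by using the conditional mean under the tilted measure as a surrogate: by \Cref{lem:Bstar.conc} and a union bound over $e \in \Edges(H)$, $\mu_{Q^*}(\cC) \ge 1/2$ once $C(r)$ is large enough, so the conditional expectation $\tilde{Q} := \E_{Q^*}[\bA \mid \bA \in \cC]$ is well-defined and lies in $\hull(\cC)$. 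A direct calculation yields $|\tilde{Q}(I) - Q^*(I)| \le \mu_{Q^*}(\cC^c)$ for every $I$, which by taking $C(r)$ still larger can be driven well below $p$, so that $\tilde{Q} \le 2p$ pointwise and $t_p(H',\tilde{Q}) = O_H(1)$ for every proper $H' \subset H$. Invoking \Cref{thm:count} with $Q_0 = \tilde{Q}$ then gives $|t_p(H,A) - t_p(H,\tilde{Q})| \ls_H \eps$ for every $A \in \cC$, while the pointwise estimate yields $|t_p(H,\tilde{Q}) - t_p(H,Q^*)| = o(1)$. Combining, $t_p(H,A) \le 1 - \delta - \xi + O_H(\eps) + o(1) \le 1-\delta$ once $c = c(H)$ is sufficiently small, as required.
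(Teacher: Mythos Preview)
Your argument is correct and follows the same route as the paper: apply the LDP lower bound \Cref{thm:LDP}(b) to $\cE=\cAnr\cap\cL_H(1-\delta)$, reduce to $Q^*\le p$ pointwise by monotonicity (this is the paper's \Cref{lem:LT.p-threshold}), and then use the counting lemma to show any such $Q^*$ with $t_p(H,Q^*)\le 1-\delta-\xi$ lies in $(\cE)^\circ_{\BBasys,\eps p}$ (the paper's \Cref{lem:LT.cont}).

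The one point where you diverge is worth flagging. The paper's proof of \Cref{lem:LT.cont} simply invokes \Cref{thm:count} with $Q_0=Q$, but the hypothesis there requires $Q_0\in\hull(\cC)$, and it is not immediate that $Q^*\in\hull(\nbhd_\BBasys(Q^*,\eps p))$. You patch this by passing to $\tilde Q=\E_{Q^*}[\bA\mid\bA\in\cC]$, which lies in $\hull(\cC)$ by construction, and then checking via \Cref{lem:Bstar.conc} that $\|\tilde Q-Q^*\|_\infty\le\mu_{Q^*}(\cC^c)$ is negligible (polynomially small in $n$ once $C(r)$ is large), so that $\tilde Q\le 2p$ pointwise and $t_p(H,\tilde Q)$ is within $o(1)$ of $t_p(H,Q^*)$. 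This is a clean way to make the application of the counting lemma airtight; the paper's version leaves this membership implicit. A minor remark: the constant you call $C(r)$ ends up absorbing a factor $c(H)^{-2}$ from your choice $\eps=c(H)\xi$, so strictly speaking the threshold depends on $H$ as well as $r$ --- the same imprecision is present in the paper's statement.
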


To establish the proposition we need two lemmas.
In the following we denote by $\cQ_{n,r}^{\le p}$ the set of all $Q\in\cQnr$ with entries uniformly bounded by $p$.

\begin{lem}	\label{lem:LT.p-threshold}
For any sequence of $r$-graphs $\uH=(H_1,\dots, H_m)$ and $\udelta\in(0,1)^m$, we have
$
\eye_p(\cL_{\uH}(\1-\udelta)) = \eye_p(\cL_{\uH}(\1-\udelta)\cap \cQ_{n,r}^{\le p}).
$
\end{lem}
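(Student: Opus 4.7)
The plan is to prove both inequalities. The bound $\eye_p(\cL_{\uH}(\1-\udelta)\cap \cQ_{n,r}^{\le p})\ge \eye_p(\cL_{\uH}(\1-\udelta))$ is immediate since the infimum on the left is over a subset of the feasible region on the right. For the reverse direction I would use a simple pointwise truncation: given an arbitrary $Q\in \cL_{\uH}(\1-\udelta)$, define $Q'\in \cQnr$ by
\[
Q'(I) := Q(I)\wedge p\,,\qquad I\in \binom{[n]}{r}\,.
\]
Then $Q'\in \cQ_{n,r}^{\le p}$ by construction, and it remains only to check that $Q'$ still satisfies the lower-tail constraint and that $\eye_p(Q')\le \eye_p(Q)$, after which taking the infimum over $Q$ completes the proof.

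For the constraint, I would use monotonicity of $t_p(H_k,\cdot)$ on $\cQnr$: the expression \eqref{def:hom}--\eqref{def:ttp} writes $t_p(H_k,Q)$ as a non-negatively weighted sum of products of entries of $Q$ (all of which lie in $[0,1]$), hence it is coordinatewise nondecreasing in $Q$. Since $Q'\le Q$ entrywise, we get $t_p(H_k,Q')\le t_p(H_k,Q)\le 1-\delta_k$ for each $k$, so $Q'\in\cL_{\uH}(\1-\udelta)$.

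For the entropy bound, I would appeal to the fact that $x\mapsto \eye_p(x)$ (as defined in \eqref{def:eyep}) is minimized at $x=p$, where it vanishes; this follows at once from $\eye_p'(x)=\log\frac{x(1-p)}{p(1-x)}$, which is negative on $[0,p)$ and positive on $(p,1]$. Consequently, for each $r$-set $I$, if $Q(I)\le p$ then $Q'(I)=Q(I)$ and the $I$-contribution is unchanged, while if $Q(I)>p$ then $\eye_p(Q'(I))=\eye_p(p)=0\le \eye_p(Q(I))$. Summing over $I$ via \eqref{def:eyepQ} yields $\eye_p(Q')\le \eye_p(Q)$, which gives the desired inequality after taking the infimum over $Q\in \cL_{\uH}(\1-\udelta)$.

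There is no real obstacle here: all three ingredients (monotonicity of $t_p(H,\cdot)$, the fact that $\eye_p$ is separable and minimized at $p$, and the preservation of the $[0,1]$ bounds under truncation) are elementary. The only point worth noting is that monotonicity of $t_p(H_k,\cdot)$ crucially relies on working with \emph{non-induced} homomorphism counts and $Q$ having non-negative entries; this is exactly the reason the analogous reduction would fail for induced counts, echoing the footnote earlier in the paper.
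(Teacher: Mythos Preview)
Your proof is correct and follows essentially the same argument as the paper: truncate $Q$ to $Q\wedge p$, use coordinatewise monotonicity of $t_p(H_k,\cdot)$ to retain the constraint, and use that $\eye_p$ is minimized at $p$ to conclude $\eye_p(Q\wedge p)\le \eye_p(Q)$.
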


\begin{proof}
Clearly the left hand side is bounded by the right hand side. For the reverse inequality, let $Q\in\cL_{\uH}(\1-\udelta)$ be arbitrary. Since $t_p(H_k,\cdot)$ is monotone increasing in each coordinate we have $t_p(H_k, Q\wedge p)\le t_p(H_k,Q)$ for each $k$, so $Q\wedge p\in \cL_{\uH}(\1-\udelta)$. Furthermore, since $\eye_p$ is monotone increasing under increasing $Q(I)\in [p,1]$ for any $I\in {[n]\choose r}$ with all other coordinates held fixed, we have that $\eye_p(Q)\ge \eye_p(Q\wedge p)$, and the claim follows. 
\end{proof}

\begin{lem}	\label{lem:LT.cont}
There exists $c=c(\uH)>0$ such that for any $\xi>0$,
\[
\cL_{\uH}(\1-\udelta-\xi\1)\cap \cQ_{n,r}^{\le p}\subseteq(\cA\cap\cL_{\uH}(\1-\udelta))^\circ_{\BBasys,c\xi p}.
\]
\end{lem}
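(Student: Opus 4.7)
The plan is to reduce the inclusion to an application of the counting lemma (\Cref{thm:count}) after constructing an auxiliary reference tensor. Fix $Q \in \cL_{\uH}(\1 - \udelta - \xi\1) \cap \cQ_{n,r}^{\le p}$ and $A \in \cAnr$ with $\|A - Q\|_{\BBasys}^* \le c\xi p$, where the constant $c = c(\uH) > 0$ will be chosen at the end; it then suffices to show $t_p(H_k, A) \le 1 - \delta_k$ for each $k \in [m]$. The central obstacle is that $Q$ is not Boolean, so \Cref{thm:count} cannot be applied directly with $\cC = \{A, Q\}$; instead we need a set $\cC \subset \cAnr$ of small $\BBasys^*$-diameter whose convex hull contains both $A$ and a tensor that simultaneously (i) satisfies the crude subgraph-count hypothesis and (ii) is close to $Q$.

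First I would set $\cC := \{A' \in \cAnr : \|A' - Q\|_{\BBasys}^* \le c\xi p\}$, so that $\cC$ has $\BBasys^*$-diameter at most $2c\xi p$ and contains $A$ by assumption. The concentration bound \Cref{lem:Bstar.conc} applies (with $Q$ in place of $p\jay$) to the product Bernoulli measure $\mu_Q$, since its proof rests only on Hoeffding's inequality together with a union bound over test tensors; combining over $e \in \bigcup_k \Edges(H_k)$ yields $\mu_Q(\cC) \ge 1/2$ under the growing assumption, provided $C(r)$ is taken large enough depending on $\uH$. Setting $p_1 := \mu_Q(\cC) \ge 1/2$ and
\[
Q_1 := \E_Q[\bA \mid \bA \in \cC] \in \hull(\cC),
\]
one decomposes $Q = p_1 Q_1 + (1-p_1) Q_2$ for some $Q_2 \in \cQnr$; since $Q_2 \ge 0$ entrywise and $Q \le p$ entrywise, this yields the key pointwise bound $Q_1 \le Q/p_1 \le 2p$.

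Next I would invoke \Cref{thm:count} with $Q_0 := Q_1$ and $L := 2^{\max_k \edges(H_k)}$: the bound $Q_1 \le 2p$ gives $t_p(F, Q_1) \le 2^{\edges(F)} \le L$ for every $F \subsetneq H_k$, and taking $\uP, \uQ$ constantly equal to $A$ and $Q_1$ respectively (both in $\hull(\cC)$) yields $|t_p(H_k, A) - t_p(H_k, Q_1)| \ls_{\uH} c\xi$ for each $k$. A crude telescoping estimate, using $\|Q_1 - Q\|_\infty \le 1-p_1 =: \eta$ (which follows from $Q_1 - Q = (1-p_1)(Q_1-Q_2)$) together with $Q, Q_1 \le 2p$, yields $|t_p(H_k, Q_1) - t_p(H_k, Q)| \ls_{\uH} \eta/p$; by the concentration bound, $\eta$ is $n^{-\omega(1)}$ under our hypotheses, so $\eta/p = o(\xi)$. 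Putting these together,
\[
t_p(H_k, A) \le t_p(H_k, Q) + O_{\uH}(c\xi) + o(\xi) \le 1 - \delta_k - \xi + O_{\uH}(c\xi),
\]
which is at most $1 - \delta_k$ upon taking $c = c(\uH)$ sufficiently small.

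The step most worth care is the construction of $Q_1$ and the bound $Q_1 \le 2p$: this is precisely where the hypothesis $Q \in \cQ_{n,r}^{\le p}$ is essential, and it is this pointwise bound that simultaneously enables $Q_1$ to serve as the reference tensor for the subgraph-count hypothesis of \Cref{thm:count} and as a close $\ell_\infty$-proxy for $Q$ to transfer the lower-tail constraint. The remaining verifications (the transfer of \Cref{lem:Bstar.conc} to $\mu_Q$, and the absorption of the $\eta/p$ error into $o(\xi)$ via the growing assumption) are routine.
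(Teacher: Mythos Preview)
Your argument is correct, but it takes a considerably more elaborate detour than the paper's. The paper dispenses with the auxiliary tensor $Q_1$ entirely: since $Q\le p$ entrywise, monotonicity gives $t_p(F,Q)\le 1$ for every $r$-graph $F$, and the counting lemma is invoked directly with $\cC=\cU_{\BBasys}(Q,\eps p)$ and $Q_0=Q$ to conclude $|t_p(H_k,A)-t_p(H_k,Q)|\ls_{\uH}\eps$ for each $A\in\cC$, finishing the proof in one line.

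The issue you flag --- that \Cref{thm:count} formally requires $Q_0\in\hull(\cC)$, and $Q$ is not Boolean --- is a fair reading of the theorem statement. But the concentration-based workaround is heavier than needed: it imports \Cref{lem:Bstar.conc} and with it the growing hypothesis (which the lemma does not carry), along with the $\|Q_1-Q\|_\infty$ and $\eta/p$ bookkeeping. The lighter fix is to note that the \emph{proof} of the counting lemma runs unchanged with $Q$ in the role of $Q_0$: telescope $\hom(H_k,A)-\hom(H_k,Q)$ one edge at a time, using $\|A-Q\|^*_{\BBasys}\le\eps p$ directly for the paired factor; to make the test tensors Boolean, expand each remaining $Q$-factor as $Q=\E_{Q}[\bA]$ over \emph{all} of $\cAnr$ (not just $\cC$), and after averaging back by linearity the subgraph counts that appear have factors in $\{A,Q\}$, which the bound $Q\le p$ together with the induction hypothesis controls by $O_H(1)\,n^{\verts}p^{\edges}$. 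Your route buys a black-box application of \Cref{thm:count} exactly as stated, at the price of the concentration machinery and an extraneous asymptotic hypothesis; the paper's route buys a one-line argument at the price of implicitly re-running the counting-lemma induction with a non-Boolean endpoint.
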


\begin{proof}
We write $\cE:=\cA\cap\cL_{\uH}(\1-\udelta)$. Let $c>0$ to be taken sufficiently small depending on $H$ and put $\eps=c\xi$.
Fixing an arbitrary $Q\in \cL_{\uH}(\1-\udelta-\xi\1)\cap \cQ_{n,r}^{\le p}$,
our aim is to show that $\nbhd_{\BBasys}(Q,\eps p)\subseteq\cL_{\uH}(\1-\udelta)$. 
Since $Q\in\cQ_{n,r}^{\le p}$ we have by monotonicity that $t_p(F,Q)\le t_p(F,p)\le 1$ for every $r$-graph $F$. 
In particular, $Q\in \cL_{<\uH}(1)$. Applying \Cref{thm:count} with $\cC=\nbhd_{\BBasys}(Q,\eps p)$ and $Q_0=Q$ we have 
\[
t_p(H_k,Q) - t_p(H_k, A) \ls_H \eps
\]
for each $k\in [m]$ and every $A\in\nbhd_{\BBasys}(Q,\eps p)$. From the triangle inequality it follows that
$t_p(H_k,A)\le 1-\delta_k-\xi+O_{\uH}(\eps)$ for every such $k$ and $A$, and the claim follows by 
taking $\eps=c \xi$ for some $c=c(\xi)>0$ sufficiently small.
\end{proof}

\begin{proof}[Proof of \Cref{prop:LT.LB}]
Setting $\eps = c\xi$ for $c(\uH)>0$ sufficiently small and denoting $\cE:=\cA\cap \cL_{\uH}(\1-\udelta)$, from \Cref{thm:LDP}(b) it suffices to show that 
\[
\eye_p((\cE)^\circ_{\BBasys,\eps p}) \le \Psi_{n,p}(\uH, \udelta+\xi\1) = \eye_p(\cL_{\uH}(\1-\udelta-\xi\1)).
\]
From \Cref{lem:LT.p-threshold} the right hand side is equal to $\eye_p(\cL_{\uH}(\1-\udelta-\xi\1)\cap \cQ_{n,r}^{\le p})$, and from \Cref{lem:LT.cont} this is bounded below by the left hand side above as long as $c$ is sufficiently small.
\end{proof}

\section{\label{sec:tails.lower} Alternative lower bound argument: Concentration under the tilted law }
\label{sec:UTUB.tilt}

In this section we establish the bound \eqref{UT.UB} of \Cref{thm:tails} under the stated assumption $p\gg n^{-1/\Delta_{\min}}$. We have already shown in $\mathsection$\ref{sec:UTUB} how the lower bound can be established using the general \ldp lower bound of \Cref{thm:LDP}(b) together with the counting lemma. \Cref{thm:LDP}(b) in turn is based on concentration of the $\Basys^*$-norms under the tilted \ER laws $\mu_Q$ (see \Cref{lem:Bstar.conc}). 
Here we instead show the homomorphism counts themselves are concentrated under the tilted law. In fact we show this for the more general signed homomorphism counts of \Cref{thm:count}, which also includes \emph{induced} homomorphism counts as a special case. 

The bound  \eqref{UT.UB} under the assumption $1>p\gg n^{-1/\Delta(H)}$ is a consequence of the following.
For the upper bound assumption on $p$ recall \Cref{rmk:WLOG.p}.

\begin{prop}
\label{prop:lower}
In the setting of \Cref{thm:tails}, if $n^{-1/\Delta_{\max}}\ll p\le\min_k(1+\delta_k)^{-1/\edges(H_k)}$,
then for any fixed $\xi>0$ and all $n$ sufficiently large, 
\[
\mathbb{P}  \big(\, t_p(H_k,\bA) >1+\delta_k\,,\; 1\le k\le m\, \big) 
\ge\frac{1}{2}\exp\left(-(1+\xi)\Phi_{n,p}(\uH,\udelta+\xi\1)\right).
\]
\end{prop}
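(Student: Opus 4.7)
The plan is to apply the classical change-of-measure (tilting) strategy for large-deviations lower bounds, tilting the base measure $\mu_p$ to an inhomogeneous Erd\H{o}s--R\'enyi law $\mu_{Q^\ast}$ concentrating on the rare event. I would first select a near-minimizer $Q^\ast\in\cQnr$ of $\Phi_{n,p}(\uH,\udelta+\xi\1)$, so that $t_p(H_k,Q^\ast)\ge 1+\delta_k+\xi$ for every $k$ and $\eye_p(Q^\ast)\le (1+o(1))\Phi_{n,p}(\uH,\udelta+\xi\1)$. By appending further constraints of the form $t_p(F,Q)\le K$ for proper subgraphs $F\subset H_k$ (which do not change the optimal rate asymptotically for $K=K(\uH,\udelta,\xi)$ sufficiently large), one may additionally arrange that $t_p(F,Q^\ast)=O_{\uH,\udelta,\xi}(1)$ for every $F\subset H_k$ and every $k$.

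The main step is to show that under $\mu_{Q^\ast}$ the joint upper-tail event $\cE:=\{A\in\cAnr: t_p(H_k,A)>1+\delta_k\text{ for all }k\}$ satisfies $\mathbb{P}_{Q^\ast}(\cE)\ge 1/2$ for $n$ large. Since $\mathbb{E}_{Q^\ast}[t_p(H_k,\bA)] = t_p(H_k,Q^\ast)\ge 1+\delta_k+\xi$, by Chebyshev and a union bound this reduces to showing $\mathrm{Var}_{Q^\ast}(t_p(H_k,\bA)) = o(\xi^2)$ for each $k$. I would establish this via the Efron--Stein inequality, bounding
\[
\mathrm{Var}_{Q^\ast}[\hom(H_k,\bA)] \le \sum_I Q^\ast(I)(1-Q^\ast(I))\,\mathbb{E}_{Q^\ast}\!\big[(\partial_I\hom(H_k,\bA))^2\big],
\]
with $\partial_I\hom(H_k,\bA) = \sum_{e\in\Edges(H_k)}\sum_{\phi:\phi(e)=I}\prod_{e'\ne e}\bA(\phi(e'))$. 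Expanding the square in the expectation and using the independence of entries of $\bA$ yields a linear combination of sub-homomorphism counts of $Q^\ast$ in auxiliary hypergraphs obtained by gluing two copies of $H_k$ along overlapping edges, which are controlled by \Cref{thm:count} together with the a priori bound $t_p(F,Q^\ast)=O_{\uH,\udelta,\xi}(1)$.

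Finally, I would invoke the tilt inequality (\Cref{lem:tilt}, identifying $\cAnr\cong\{0,1\}^{\binom{n}{r}}$) with $\nu=\mu_{Q^\ast}$:
\[
\log\mathbb{P}_p(\cE)\ge -\eye_p(Q^\ast)+\log\mathbb{P}_{Q^\ast}(\cE)-O\!\left(\frac{n^{r/2}\big|\log\frac{1-p}{p}\big|}{\mathbb{P}_{Q^\ast}(\cE)^{1/2}}\right).
\]
Using $\mathbb{P}_{Q^\ast}(\cE)\ge 1/2$ and $\Phi_{n,p}(\uH,\udelta+\xi\1)\gs n^rp^{\Delta_{\min}}\log(1/p)$ (from \Cref{lem:Phi.LB}), the error term $O(n^{r/2}\log(1/p))$ is $o(\xi\Phi_{n,p}(\uH,\udelta+\xi\1))$ under the assumption $p\gg n^{-1/\Delta_{\max}}$. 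This yields $\log\mathbb{P}_p(\cE)\ge -(1+\xi)\Phi_{n,p}(\uH,\udelta+\xi\1)+\log(1/2)$ for $n$ large, matching the statement.

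The hard part is the Efron--Stein variance estimate. Since $Q^\ast$ can have entries as large as $1$ (e.g., on planted cliques or hubs in the extremal constructions of \cite{BGLZ,LiZh}), naive uniform pointwise bounds on $Q^\ast$ are unavailable; one must separate the contribution from the $p$-bulk of $Q^\ast$, where sub-homomorphism counts and second-moment estimates yield variance controlled by the background $p$-density via the Bernoulli second moment, from the contribution of the localized high-density regions, whose small total $\ell^1$-mass (bounded by the optimality of $Q^\ast$) ensures their variance contribution is also small. A closely related inhomogeneous Efron--Stein calculation in the case $r=2$ appears in \cite{BhDe}; extending to general $r$ requires care in the sub-homomorphism bookkeeping but follows the same general philosophy.
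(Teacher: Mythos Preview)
Your overall tilting strategy is correct and matches the paper's, but there are two gaps.

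First, a small point: $\E_{Q^\ast}[t_p(H_k,\bA)]$ is not \emph{equal} to $t_p(H_k,Q^\ast)$, since homomorphism counts include non-injective maps and $\E_{Q^\ast}[\bA(I)^a]=Q^\ast(I)\ge Q^\ast(I)^a$. The correct relation is $\E_{Q^\ast}[\hom(H_k,\bA)]\ge\hom(H_k,Q^\ast)$, which still gives what you need. Also, the counting lemma (\Cref{thm:count}) is not the right tool for the variance step: the auxiliary graphs arising from $\sum_I Q^\ast(I)\E_{Q^\ast}(\partial_I\hom)^2$ are quotients of two glued copies of $H_k$, not subgraphs of $H_k$, so your a~priori bound $t_p(F,Q^\ast)=O(1)$ for $F\subset H_k$ does not apply to them. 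The paper instead bounds these via Finner's inequality (\Cref{lem:finner}) together with the entropy constraint, which gives $t_p(G,Q^\ast)=O_K(1)$ for any $G$ with $\Delta(G)\le\Delta(H_k)$ provided $\eye_p(Q^\ast)\le K n^rp^{\Delta(H_k)}\log(1/p)$ (this is \Cref{lem:countGip}/\Cref{lem:hom-control}, packaged as \Cref{prop:tilt.concentration}).

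The more serious gap is the last hypothesis just mentioned. A near-minimizer $Q^\ast$ of the \emph{full} problem $\Phi_{n,p}(\uH,\udelta+\xi\1)$ satisfies $\eye_p(Q^\ast)\asymp n^rp^{\Delta_{\min}}\log(1/p)$, which is \emph{too large} to feed into the variance bound for those $H_k$ with $\Delta(H_k)>\Delta_{\min}$: the Finner-based control of the auxiliary homomorphism counts degenerates by a factor $p^{\Delta_{\min}-\Delta(H_k)}\gg1$. The paper sidesteps this rather than powering through it. When $p$ is small it takes $Q$ to be a near-minimizer only over the subcollection $U=\{k:\Delta(H_k)=\Delta_{\min}\}$ (for which the entropy hypothesis of \Cref{prop:tilt.concentration} is met), obtains concentration for those $k$, and then replaces $Q$ by $Q_\star=Q\vee Q_0'$ where $Q_0'$ is a hub on $\Theta(np^{\Delta_{\min}+1})$ vertices. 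By monotonicity this \emph{deterministically} forces $t_p(H_k,\bA)\ge1+\delta_k+\xi$ for all $k\notin U$, and its entropy cost $O(n^rp^{\Delta_{\min}+1}\log(1/p))$ is negligible. Finally, since $\Phi_{n,p}(\uH_U,\udelta_U+\xi\1)\le\Phi_{n,p}(\uH,\udelta+\xi\1)$, the bound with $\Phi_{n,p}(\uH_U,\cdot)$ implies the one you want. This restriction-plus-hub step is the missing idea in your proposal.
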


The main step is to show that for any $r$-graph $H$ and any $Q\in\cQ_n$ satisfying
\[
\eye_p(Q) \ls n^rp^{\Delta(H)} \log(1/p) \,,
\]
we have that if $p\ge n^{-1/\Delta(H)}$, then the homomorphism counting functional concentrates under $\mu_Q$, in the sense that
\[
\Var_Q\big( t_p(H, \bs A)\big) \ll \big( \E_Q t_p(H,\bs A) \big)^2.
\]
We prove the following more general statement for signed homomorphisms (recall the definitions from \eqref{def:genhom}).

\begin{prop}\label{prop:tilt.concentration}
Let $n^{-1/\Delta(H_+)}\le p\le 1-\eps_0$ for some $\eps_0>0$ and let $Q\in\cQnr$ be such that
\[
\eye_p(Q) \le K n^rp^{\Delta(H_+)}\log(1/p)
\]
for some $K\in(0,\infty)$. 
Then 
\[
\Var_{Q}(\hom(\mathcal{H},{\bA})) \ls_{K,\cH,\eps_0} \frac{\hom(\cH,Q)^2}{np^{\Delta(H_+)}}.
\]
\end{prop}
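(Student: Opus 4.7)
The plan is to apply the Efron--Stein inequality under $\mu_Q$. Since the coordinates $\{\bA(I)\}_{I\in\binom{[n]}{r}}$ are independent under $\mu_Q$, Efron--Stein yields
\[
\Var_Q\bigl(\hom(\cH,\bA)\bigr) \;\le\; \sum_{I\in\binom{[n]}{r}} Q(I)(1-Q(I))\,\E_Q\bigl[\bigl(\Delta_I \hom(\cH,\bA)\bigr)^2\bigr],
\]
where $\Delta_I \hom(\cH,\bA) := \hom(\cH,\bA^{I\to 1}) - \hom(\cH,\bA^{I\to 0})$ is the discrete derivative (independent of $\bA(I)$). Expanding using the signed factors $\wt{\bA}^e$ from \eqref{def:Sxi}, to leading order
\[
\Delta_I \hom(\cH,\bA) \;=\; \sum_{e\in\Edges(H)} \sigma_e \sum_{\phi\,:\,\phi(e)=I}\;\prod_{e'\ne e} \wt{\bA}^{e'}(\phi(e')) \;+\;(\text{lower order}),
\]
with $\sigma_e = +1$ for $e\in H_+$ and $\sigma_e = -1$ for $e\in H_-$; the lower-order terms come from maps $\phi$ in which several edges of $H$ are routed through $I$, and carry an extra factor of $O(1/n)$ in the bounds below.

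The next step is to recognize the Efron--Stein sum as a finite combination of signed-homomorphism counts of edge-doubled hypergraphs. Squaring $\Delta_I \hom$, taking $\mu_Q$-expectation, and summing against $Q(I)$, each cross-term $\sum_I Q(I)\,\E_Q[N_{e_1,I} N_{e_2,I}]$ is exactly $\hom(\cF_{e_1,e_2},Q)$, where $\cF_{e_1,e_2}$ is the signed hypergraph obtained by gluing two copies of $\cH$ along the edges $e_1\in \Edges(H^{(1)})$ and $e_2\in \Edges(H^{(2)})$ (summed over the $r!$ bijections $e_1\to e_2$), and the weight $Q(I)$ precisely supplies the $\mu_Q$-expectation of the identified edge. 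Passing to absolute values via Cauchy--Schwarz (at the cost of a factor $|\Edges(H)|^2$) and controlling the lower-order contributions in the same way, the proof reduces to showing that for every such glued hypergraph $\cF$,
\[
\hom(\cF,Q) \;\ls_{\cH,K,\eps_0}\; \frac{\hom(\cH,Q)^2}{n\,p^{\Delta(H_+)}}.
\]

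Heuristically this is clear: the minimal (single-edge) gluing $\cF$ has $2\verts(H)-r$ vertices and $2\edges(H_+)-1$ positive edges, so for $Q\equiv p$ one has $\hom(\cF,Q)/\hom(\cH,Q)^2 \asymp 1/(n^r p)$, which is at most $1/(np^{\Delta(H_+)})$ since $p\le 1$ and $\Delta(H_+)\ge 1$. The main obstacle is rigorously promoting this heuristic to general $Q$ satisfying only $\eye_p(Q) \le K n^r p^{\Delta(H_+)}\log(1/p)$, since such a $Q$ may support localized clique-type or hub-type high-density regions of total size $\Theta(n^r p^{\Delta(H_+)})$. I would split $Q = Q_{\mathrm{bulk}} + Q_{\mathrm{loc}}$ according to the threshold $Q(I) \le 1/2$ vs.\ $Q(I) > 1/2$; since $\eye_p(x)\gs\log(1/p)$ for $x\ge 1/2$, the entropic hypothesis forces $|\mathrm{supp}(Q_{\mathrm{loc}})|\ls_K n^r p^{\Delta(H_+)}$, while $p\le 1-\eps_0$ keeps the $(1-Q)$-factors coming from $H_-$ bounded away from $0$. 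A case analysis on how many of the $2(\edges(H)-1)$ non-identified edges of $\cF$ are routed through $\mathrm{supp}(Q_{\mathrm{loc}})$, combined with a Hölder / Cauchy--Schwarz comparison to $\hom(\cH,Q)$ in each case, should close the estimate; arranging this analysis uniformly across all glued hypergraphs $\cF_{e_1,e_2}$ (including the incidental secondary edge identifications created by the bijection $e_1\to e_2$) is the technically delicate step.
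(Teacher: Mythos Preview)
Your opening via Efron--Stein and the identification of the glued hypergraphs matches the paper, and you correctly isolate the key estimate $\hom(\cF,Q)\ls_{\cH,K,\eps_0}\hom(\cH,Q)^2/(np^{\Delta(H_+)})$ for each glued structure $\cF$. Where you diverge is in how to prove that estimate, and this is where your sketch has a gap: you propose a threshold split $Q=Q_{\mathrm{bulk}}+Q_{\mathrm{loc}}$ followed by a case analysis on how many edges route through $\mathrm{supp}(Q_{\mathrm{loc}})$, but you do not carry it out and yourself flag it as ``technically delicate.''

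The paper's route avoids decomposing $Q$ altogether. First, immediately after squaring $\Delta_I$ it drops every $(1-\bA(\phi_e))$ factor (bounding it by $1$); together with the FKG inequality this reduces the Efron--Stein sum to a finite combination $\sum_F\hom(F,Q)$ over \emph{unsigned} $r$-graphs $F$, each of which (after removing one edge) is a quotient of two disjoint copies of $H_+$ with strictly fewer than $2\verts(H)$ vertices. Second --- and this is the idea your sketch is missing --- the bound on each $\hom(F,Q)$ comes not from a support decomposition of $Q$ but from Finner's generalized H\"older inequality. Passing to $Q':=Q\vee p$ (which only increases $\hom$ and only decreases $\eye_p$), one expands $\hom(F,Q')=\sum_{F'\subseteq F}p^{\edges(F)-\edges(F')}\hom(F',Q'-p)$ and applies Finner with exponents $\Delta(H_+)$ on every edge to obtain
\[
n^{-\verts(F')}\hom(F',Q'-p)\le\Big(n^{-r}\sum_{\bs i}(Q'(\bs i)-p)^2\Big)^{\edges(F')/\Delta(H_+)}.
\]
The entropic hypothesis enters only through the elementary pointwise bound $\eye_p(p+x)\gs x^2\log(1/p)$, which gives $\sum_{\bs i}(Q'(\bs i)-p)^2\ls_K n^rp^{\Delta(H_+)}$ and hence $\hom(F,Q')\ls_{K,F}n^{\verts(F)}p^{\edges(F)}$ for every $F$ of maximum degree at most $\Delta(H_+)$. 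The saving factor $(np^{\Delta(H_+)})^{-1}$ then drops out of a two-line vertex/edge count: any such quotient has lost at least one vertex relative to $H_+^0\cup H_+^1$, at a cost of at most $\Delta(H_+)$ positive edges per lost vertex. Your H\"older/Cauchy--Schwarz instinct is in the right family, but Finner's inequality is exactly the tool that handles all the glued structures uniformly and renders the case analysis on the support of $Q$ unnecessary.
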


Taking \Cref{prop:tilt.concentration} as given, we now establish \Cref{prop:lower} by a tilting argument. 

\begin{proof}[Proof of \Cref{prop:lower}]

Fix $\xi>0$. We may assume $\xi\le 1$. 
The argument is slightly simpler when either $p$ is fixed or all of the $H_k$ have the same max-degree.
We thus introduce a cutoff parameter $p_0(\xi,\uH,\udelta)>0$ to be taken sufficiently small, and let $U\subseteq [m]$ be defined to be $U:=\{k\in[m]:\Delta(H_k)=\Delta_{\min}\}$ when $p<p_0$, and $U:=[m]$ when $p\ge p_0$, where we write $\Delta_{\min}:=\min_k\Delta(H_k)$. 
We write $\uH_U,\udelta_U$ for the restriction of the sequences $\uH,\udelta$ to the indices in $U$. 

For the main step of the argument, we find an element $Q_\star\in\cQnr$ for which
\begin{equation}	\label{eyep.Qstar}
\eye_p(Q_\star)\le (1+\xi/2 ) \Phi_{n,p}(\uH_U,\udelta_U+\xi\1)
\end{equation}
and such that the joint upper tail event is likely under the tilted law $\mu_{Q_\star}$, specifically:
\begin{equation}	\label{tilt:goal1}
\P_{Q_\star}\bigg( \bA\in \bigcap_{k=1}^m \cL_{\nick{H_k}}(1+\delta_k)^c\bigg) \ge 3/4
\end{equation}
(recalling the notation \eqref{def:subLint}). 
We begin by fixing an arbitrary element $Q\in\cQnr$ such that
\[
t_p(H_k,Q)\ge 1+\delta_k+\xi \qquad \forall k\in U
\]
and 
\begin{equation}	\label{tilt:assuQ}
\eye_p(Q)\le (1+\xi/4) \Phi_{n,p}(\uH_U,\udelta_U+\xi\1).
\end{equation}
By considering $Q_0\in\cQ_{n,r}$ of the form $Q_0(I) = p+ (1-p)1_{I\cap [n_0]\ne\emptyset}$ for $n_0=\lf Cnp^{\Delta_{\min}}\rf$, one easily verifies that $t_p(H_k,Q_0)\ge 2+\delta_k\ge 1+\delta_k+\xi$ for all $k\in U$ if $C=C(\uH,\udelta)>0$ is sufficiently large, and thus we see  that the \abbr{rhs} above is $O_{\uH,\udelta}(n^rp^{\Delta_{\min}} \log(1/p))$.
Hence,
\begin{equation}	\label{ipQ.bd}
\eye_p(Q) \ls_{\uH,\udelta} n^r p^{\Delta_{\min}} \log(1/p).
\end{equation}

Note that for each $k\in U$, 
\begin{equation}	\label{E_QhA-hQ}
\E_Q\hom(\nick{H_k},\bA) \ge \hom(\nick{H_k},Q).
\end{equation}
Combining this with \Cref{prop:tilt.concentration} applied with $\mathcal{H} = H_k$ (all edges receiving the positive sign) and $\eps_0=\eps_0(\udelta,\cH) = 1-\min_k(1+\delta_k)^{-1/\edges(H_k)}>0$ we have  
\[
\Var_{Q}(t_p(H_k,{\bA}))\ll (\mathbb{E}_{Q}t_p(H_k,{\bA}))^{2}
\]
for each $k\in U$.
In particular, from Chebyshev's inequality and the union bound we get 
\[
\P_{Q}\bigg( \bA\in \bigcap_{k\in U} \cL_{\nick{H_k}}(1+\delta_k)^c\bigg) \ge 1-o(1)\,.
\]
For the case that $U=[m]$ this yields \eqref{tilt:goal1} with $Q_\star=Q$. 

For the case that $p<p_0$ and not all $\Delta(H_k)$ are equal we modify $Q$ slightly.
First, setting $n_1=\lf C'np^{\Delta_{\min}+1}\rf$, 
let $Q_0'$ have entries $Q_0'(I)=p$ for all $I\subset[n_1+1,n]$, and otherwise $Q_0'(I)=1$.
By the same computation as for $Q_0$ above, we have that $t_p(H_k, Q_0') \ge 1+\delta_k+\xi$ for each $k\in [m]\setminus U$ if $C'(\uH,\udelta)$ is sufficiently large. Now let $Q_\star:= Q\vee Q_0'$ be the entrywise maximum of $Q$ and $Q_0'$. By monotonicity we have that \eqref{tilt:goal1} holds. 
Moreover, since $Q_\star$ only differs from $Q$ on $O_{\uH,\udelta}(n^rp^{\Delta_{\min}+1})$ entries we have
\[
\eye_p(Q_\star) \le \eye_p(Q) + O_{\uH,\udelta}(n^rp^{\Delta_{\min}+1}\log (1/p))\,.
\]
On the other hand, for any $k\in U$ we can lower bound $\Phi_{n,p}(\uH_U,\udelta_U+\xi\1)\ge \Phi_{n,p}(H_k,\delta_k+\xi)$, and combining with \Cref{lem:Phi.LB}, the above and \eqref{tilt:assuQ} we get
\begin{align*}
\eye_p(Q_\star) 
\le (1+\xi/4 + O_{\uH,\udelta}(p)) & \Phi_{n,p}(\uH_U,\udelta_U+\xi\1) \\
 \le (1+\xi/2 ) & \Phi_{n,p}(\uH_U,\udelta_U+\xi\1)
\end{align*}
taking $p_0$ sufficiently small depending on $\xi,\uH$ and $\udelta$, giving \eqref{eyep.Qstar} as desired. 

Setting $\cE:= \bigcap_{k=1}^m \cL_H(1+\delta_k)^c$, we next observe that
\begin{align*}
 & \mathbb{E}\,\ind({\bA}\in \cE)  =\mathbb{E}_{Q_\star}\ind({\bA}\in \cE)\exp\left(-W({\bA})\right)
\end{align*}
where 
\[
W({\bA})=\sum_{I\in{[n]\choose r}}\Big\{ {\bA}(I)\log\frac{Q(I)}{p}+(1-{\bA}(I))\log\frac{1-Q(I)}{1-p}\Big\} .
\]
By \Cref{lem:Phi.LB}, the random variable $W({\bA})$ has expectation
\[
\mathbb{E}_{Q_\star}[W({\bA})]=\eye_p(Q_\star)\gs_{\uH,\udelta}n^{r}p^{\Delta_{\min}}\log(1/p)
\]
and variance 
\begin{align*}
  \Var_{Q_\star}(W({\bA}))
 & =\sum_{I}\left(\log\frac{p}{Q_\star (I)}-\log\frac{1-p}{1-Q_\star (I)}\right)^{2}Q_\star (I)(1-Q_\star (I))\\
 & \le4\sum_{I}\left((\log p)^{2}+(\log(1-p))^{2}\right)Q_\star (I)(1-Q_\star (I))\\
 & \quad+4\sum_{I}\left((\log Q_\star (I))^{2}+(\log(1-Q_\star (I)))^{2}\right)Q_\star (I)(1-Q_\star (I))\\
 & \ls_{\uH,\udelta} n^r(\log p)^2
\end{align*}
since we assumed $1-p\gs_{\uH,\udelta} 1$.
Thus, 
\[
\Var_{Q_\star}(W({\bA})) \ll (\mathbb{E}_{Q_\star }[W({\bA})])^{2}.
\]
In particular, for sufficiently large $n$, we have a subset $\cE'$
of $\cE$ with $\mathbb{P}_{Q_\star }(\cL')\ge\mathbb{P}_{Q_\star }(\cE)-1/4\ge1/2$
such that for all $A\in \cE'$, $W(A)\le (1+\xi/10)\eye_p(Q_\star )$. 
Therefore, 
\[
\P({\bA}\in \cE)\ge\mathbb{E}_{Q_\star }\ind({\bA}\in \cE')\exp(-W({\bA}))\ge\frac{1}{2}\exp(-(1+\xi/10)\eye_p(Q_\star )),
\]
and hence
\begin{align*}
\mathbb{P}(\bA\in \cE)
&\ge\frac{1}{2}\exp(-(1+\xi/10)(1+\xi/2)\Phi_{n,p}(\uH_U,\udelta_U+\xi\1))\\
&\ge\frac{1}{2}\exp(-(1+\xi)\Phi_{n,p}(\uH_U,\udelta_U+\xi\1))\\
&\ge\frac{1}{2}\exp(-(1+\xi)\Phi_{n,p}(\uH,\udelta+\xi\1))
\end{align*}
as desired.
\end{proof}

\begin{remark}
\nick{As was pointed out to us by a referee,} for $r\ge 3$ and for $r=2$ under the stronger assumption that $(np^{\Delta_{\min}})^2 \gg \log(1/p)$, one can prove \cref{prop:lower} using much weaker control of the event $\bA \in \bigcap_{k=1}^{m} \mathcal{L}_{H_k}(1+\delta_k)^c$ (for example, via Markov's inequality), together with stronger control of the random variable $W(\bA)$, which is a sum of independent random variables. We have decided to proceed via \cref{prop:tilt.concentration} as it 
\nick{may be of independent interest.}
\end{remark}

It remains to establish \Cref{prop:tilt.concentration}. We begin by collecting a few lemmas. 
The first is a Brascamp--Lieb-type 
generalization of H\"older's inequality that has been applied extensively in
previous works analyzing the upper tail optimization problem.

\begin{lem}[Finner's inequality \cite{Finner} (see also {\cite[Theorem 3.1]{LuZh:sparse})}]
\label{lem:finner}For each $i\in[n]$, let $\Omega_{i}$ be
a probability space with measure $\mu_{i}$. Let $\mu=\bigotimes_{i=1}^{n}\mu_{i}$.
Let $A_{1},A_{2},\dots,A_{n}$ be nonempty subsets of $[n]=\{1,2,\dots,n\}$
and for $A\subseteq[n]$ let $\mu_{A}=\bigotimes_{i\in A}\mu_{i}$ and $\Omega_{A}=\prod_{i\in A}\Omega_{i}$.
Let $f_{i}\in L^{q_i}(\Omega_{A_{i}},\mu_{A_{i}})$ for each $i\le m$.
Assume that $\sum_{i:j\in A_{i}}q_i^{-1}\le1$ for all $j\le n$.
Then we have
\[
\int \prod_{i=1}^{m}f_{i}\,d\mu\le\prod_{i=1}^{m}\left(\int|f_{i}|^{q_i}d\mu_{A_{i}}\right)^{1/q_i}.
\]
\end{lem}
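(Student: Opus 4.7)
The plan is to prove Finner's inequality by induction on the number of factors of the product probability space, peeling off one coordinate at a time via Fubini and the standard (generalized) H\"older inequality.

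First I would reduce to the case where each $f_i$ is non-negative (replacing $f_i$ by $|f_i|$ only increases the left-hand side). Then I induct on $n$, the number of factors of $\mu$. The base case $n=1$: since each $A_i\subseteq[1]$ is non-empty we must have $A_i=\{1\}$ for all $i$, and the assumption reduces to $\sum_{i=1}^m q_i^{-1}\le 1$. In that case the inequality is exactly the generalized H\"older inequality on the probability space $(\Omega_1,\mu_1)$; if $\sum q_i^{-1}<1$ one introduces the constant ``dummy'' exponent $q_0$ with $q_0^{-1}=1-\sum_i q_i^{-1}$ and the constant function $f_0\equiv 1$, which has $\|f_0\|_{q_0}=1$ since $\mu_1$ is a probability measure.

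For the inductive step, partition the indices as $I=\{i:n\in A_i\}$ and $J=[m]\setminus I$. Writing $\mu=\mu_n\otimes\mu'$ with $\mu'=\bigotimes_{j<n}\mu_j$, Fubini gives
\[
\int \prod_{i=1}^m f_i\, d\mu \;=\; \int \prod_{j\in J} f_j \cdot \Bigl(\int \prod_{i\in I} f_i\, d\mu_n\Bigr)\, d\mu'\,,
\]
since the $f_j$ with $j\in J$ do not depend on $x_n$. The assumption at coordinate $n$ says $\sum_{i\in I} q_i^{-1}\le 1$, so applying the one-variable base case on $(\Omega_n,\mu_n)$ (pointwise in the other coordinates) yields
\[
\int \prod_{i\in I} f_i(x_{A_i})\, d\mu_n(x_n) \;\le\; \prod_{i\in I} g_i(x_{A_i\setminus\{n\}})\,,\qquad g_i(x_{A_i\setminus\{n\}}):=\Bigl(\int f_i^{q_i}\, d\mu_n\Bigr)^{1/q_i}.
\]
Setting $g_j:=f_j$ and $A_j':=A_j$ for $j\in J$, and $A_i':=A_i\setminus\{n\}$ for $i\in I$, one obtains
\[
\int \prod_{i=1}^m f_i\, d\mu \;\le\; \int \prod_{i=1}^m g_i\, d\mu'\,,
\]
and $g_i$ is a function of the variables indexed by $A_i'$ with $\|g_i\|_{L^{q_i}(\mu_{A_i'})}=\|f_i\|_{L^{q_i}(\mu_{A_i})}$ by Fubini. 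For every $j\le n-1$ the assumption $\sum_{i:j\in A_i}q_i^{-1}\le 1$ is identical to $\sum_{i:j\in A_i'}q_i^{-1}\le 1$, so the inductive hypothesis on $n-1$ factors applies to the $g_i$ and gives the desired bound
\[
\int \prod_{i=1}^m g_i\, d\mu' \;\le\; \prod_{i=1}^m \|g_i\|_{L^{q_i}(\mu_{A_i'})} \;=\; \prod_{i=1}^m \|f_i\|_{L^{q_i}(\mu_{A_i})}\,.
\]

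The one bookkeeping point to handle is that some $A_i'$ may be empty (precisely when $A_i=\{n\}$). In that case $g_i$ is a scalar, $g_i=\|f_i\|_{q_i}$, and it simply factors out of the remaining integral, so the inductive step still closes; this, together with consistently tracking the H\"older constraints as coordinates are peeled off, is the only real technicality, and I expect it to be the main (mild) obstacle.
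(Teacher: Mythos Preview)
Your proof is correct and is the standard induction-on-coordinates argument for Finner's inequality. Note, however, that the paper does not actually prove this lemma: it is stated with a citation to Finner's original paper and to \cite[Theorem 3.1]{LuZh:sparse}, and is used as a black box. So there is no ``paper's own proof'' to compare against; you have simply supplied the classical proof of a cited result.
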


\begin{lem}
\label{lem:ipbelow}
For any $p\in (0,1)$ and $0\le x\le 1-p$ we have $\eye_p(p+x) \gs x^2\log(1/p)$.
\end{lem}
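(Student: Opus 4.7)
My plan is to deduce the bound from a chain of three elementary convex-analysis inequalities, with no case analysis on $x$ versus $p$. The starting point is the Taylor-with-integral-remainder representation
\[
\eye_p(p+x) = \int_0^x \frac{x-t}{(p+t)(1-p-t)}\,dt,
\]
which follows from $\eye_p(p) = \eye_p'(p) = 0$ together with $\eye_p''(y) = 1/(y(1-y))$. Using the trivial bound $1-p-t \le 1$ and evaluating the resulting integral reduces the problem to $\eye_p(p+x) \ge p\,\phi(x/p)$, where I set $\phi(u) := (1+u)\log(1+u) - u$.

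Next I will prove the auxiliary inequality $\phi(u) \ge \tfrac{u}{2}\log(1+u)$ for $u \ge 0$. Writing the difference as $(1 + u/2)\log(1+u) - u$ and differentiating gives $\tfrac12\bigl[\log(1+u) - u/(1+u)\bigr]$; non-negativity reduces to the standard inequality $\log v \ge 1 - 1/v$ for $v \ge 1$. Since the difference vanishes at $u = 0$, the auxiliary inequality follows. Applied with $u = x/p$, this yields $\eye_p(p+x) \ge \tfrac{x}{2}\log(1 + x/p)$.

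Finally, I will exploit the concavity of $g(x) := \log(1 + x/p)$ on $[0, 1-p]$ (since $g''<0$), noting that $g(0)=0$ and $g(1-p) = \log(1/p)$. The chord bound gives $g(x) \ge \tfrac{x}{1-p}\log(1/p) \ge x\log(1/p)$, and combining with the previous step produces $\eye_p(p+x) \ge \tfrac{x^2}{2}\log(1/p)$, which is the desired conclusion. There is no real obstacle: each step is a one-line derivative check, and the only slightly delicate point is choosing the intermediate bound $\phi(u) \ge \tfrac{u}{2}\log(1+u)$ strong enough that, after pairing with the linear concavity bound on $\log(1+x/p)$, the factor of $\log(1/p)$ survives intact.
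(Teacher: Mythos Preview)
Your proof is correct and in fact yields the explicit constant $\tfrac12$. Each step checks out: the integral remainder formula, the evaluation $\int_0^x \frac{x-t}{p+t}\,dt = (p+x)\log(1+x/p) - x = p\,\phi(x/p)$, the auxiliary bound $\phi(u)\ge \tfrac{u}{2}\log(1+u)$ via the derivative computation you indicate, and the concavity chord bound $\log(1+x/p)\ge \frac{x}{1-p}\log(1/p)\ge x\log(1/p)$.

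The paper takes a different and somewhat sketchier route. It first disposes of $p$ bounded away from $0$ by uniform convexity, then for small $p$ splits according to the size of $x$: for $x\le 1/\log(1/p)$ the second-derivative bound $\eye_p''(y)\ge 1/y$ integrated twice gives the quadratic estimate, while for larger $x$ one uses that $\eye_p'(y)\gtrsim \log(1/p)$ once $y\ge\sqrt{p}$. Your approach avoids all case analysis, gives an explicit constant, and is arguably more transparent; the paper's argument is shorter to write down but leaves more to the reader to verify (in particular the transition between the small-$x$ and large-$x$ regimes).
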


\begin{proof}
The claim is trivial for $p\in [c,1)$ for any fixed constant $c>0$ by the uniform convexity of $\eye_p$, so we may assume $p$ is sufficiently small.
Since $\eye_p(p)=\eye_p'(p)=0$ and $\eye_p''(x) \ge 1/x$, the claimed bound holds for $x\le 1/\log(1/p)$. 
For larger $x$ one simply notes that $\eye_p'(x) \gs \log(1/p)$ for $x\ge \sqrt{p}$ (say).
\end{proof}

\begin{lem}
\label{lem:countGip} Let $\Delta\ge2$, $K>0$, and 
let $\HG$ be an $r$-graph with maximum degree at most
$\Delta$. Suppose $Q\in\cQnr$ has all entries in $[p,1]$, and
\[
\eye_p(Q)\le Kn^{r}p^{\Delta}\log(1/p).
\]
Then
\[
 t_p(G,Q) \ls_{K,G} 1 .
\]
\end{lem}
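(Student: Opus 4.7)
The plan is to expand $Q = p\jay + X$, where $X(I) := Q(I) - p \in [0, 1-p]$ for $r$-sets $I$ (and $X = 0$ on non-distinct coordinates), and then expand the product defining $\hom(G,Q)$ over subsets $F \subseteq E(G)$ of edges on which the ``$X$-piece'' is selected. This yields
\[
\hom(G,Q) = \sum_{F\subseteq E(G)} p^{e(G)-|F|}\sum_{\phi:V(G)\to[n]}\prod_{e\in F}X(\phi(e))\prod_{e\notin F}\jay(\phi(e))\,.
\]
Dropping the $\jay$-factors on edges outside $F$ and summing out the vertices of $V(G)$ not touched by $F$ gives a factor $n^{v(G)-v(F)}$, where $v(F)$ is the number of vertices incident to edges in $F$.

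Next, I would bound each remaining sum $\sum_{\phi:V_F\to[n]}\prod_{e\in F}X(\phi(e))$ by Finner's inequality (\Cref{lem:finner}) with exponent $q_e = \Delta$ on every edge; the hypothesis $\sum_{e\ni v}1/q_e \le 1$ is exactly the max-degree condition $\Delta(F) \le \Delta(G) \le \Delta$. This produces
\[
\sum_{\phi:V_F\to[n]}\prod_{e\in F}X(\phi(e))\le \Big(\textstyle\sum_{I\in[n]^r}X(I)^\Delta\Big)^{|F|/\Delta}\,.
\]

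To control $\sum_I X(I)^\Delta$, first reduce to $\Delta = 2$: since $X \in [0,1]$ and $\Delta \ge 2$, we have $X^\Delta \le X^2$. Then \Cref{lem:ipbelow} gives $\eye_p(Q(I)) \gs X(I)^2\log(1/p)$ for every $r$-set $I$, whence
\[
\textstyle\sum_I X(I)^2 \ls \frac{\eye_p(Q)}{\log(1/p)} \le Kn^r p^\Delta\,.
\]
(The symmetric extension to ordered tuples only introduces an $r!$ factor, absorbed by the implied constant.)

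Combining these estimates, division by $n^{v(G)}p^{e(G)}$ yields
\[
t_p(G,Q)\ls_{G,K} \sum_{F\subseteq E(G)}n^{r|F|/\Delta - v(F)}\,,
\]
since the factor $p^{-|F|}(p^\Delta)^{|F|/\Delta}=1$ cancels exactly. The main (and only) point left to verify is that each exponent of $n$ is non-positive: this is precisely the handshake bound $r|F| = \sum_{v\in V_F}\deg_F(v) \le \Delta\cdot v(F)$, which holds because $\Delta(F)\le\Delta$. Hence each of the at most $2^{e(G)}$ terms is $O_{G,K}(1)$, completing the proof. The only delicate step is invoking Finner's inequality correctly with the integer exponent $\Delta$ (rather than the local degree of $F$), which is what lets the $p^\Delta$ in the entropic $\ell_2$-bound match the $p^{-|F|}$ coming from the normalization in $t_p$.
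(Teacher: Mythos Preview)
Your approach is essentially identical to the paper's: write $Q=p\jay+X$, expand $\hom(G,Q)$ over the subset $F\subseteq \Edges(G)$ of edges carrying the $X$-factor, apply Finner with exponent $\Delta$ on each edge, and use \Cref{lem:ipbelow} to bound $\sum_I X(I)^2 \ls \eye_p(Q)/\log(1/p)\le Kn^rp^\Delta$.

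There is, however, a normalization slip in your Finner display. \Cref{lem:finner} is stated for \emph{probability} measures, so the correct inequality reads
\[
n^{-v(F)}\sum_{\phi:V_F\to[n]}\prod_{e\in F}X(\phi(e))\;\le\;\prod_{e\in F}\Big(n^{-r}\sum_{I}X(I)^\Delta\Big)^{1/\Delta},
\]
i.e.\ your right-hand side should carry an extra factor $n^{\,v(F)-r|F|/\Delta}\ge 1$. (As written, your display already fails for a single edge: it would say $\sum_I X(I)\le(\sum_I X(I)^\Delta)^{1/\Delta}$.) With the correct normalization the $n^{v(F)}$ cancels against the $n^{-v(F)}$ from summing out the vertices outside $V_F$, and one gets $t_p(G,Q)\ls_{K,G}\sum_F 1=2^{\edges(G)}$ directly; the handshake bound $r|F|\le \Delta\, v(F)$ is then unnecessary. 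Your final conclusion is correct, but the intermediate inequality is too strong as stated, and the handshake step is (fortuitously) compensating for the missing normalization.
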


\begin{proof}	
For economy of notation we write $\bs i= (i_{1},\dots,i_{r})$. 
Let $\tens(\bs i)=Q(\bs i)-p\in[0,1-p]$. We
have 
\[
\hom(\HG,Q)=\sum_{\HG'\subseteq \HG}p^{\edges(\HG)-\edges(\HG')}\hom(\HG',\tens),
\]
where $\HG'$ ranges over subgraphs of $\HG$ with the same
vertex set. 

Let $\HG'\subseteq \HG$. Then as the maximum degree of $\HG$ (and hence
$\HG'$) is at most $\Delta$, we have $\sum_{e\in \Edges(\HG'):v\in e}\frac{1}{\Delta}\le1$.
Thus, by Lemma \ref{lem:finner},
\begin{align*}
n^{-\verts(\HG')} & \hom(\HG',\tens)  =n^{-\verts(\HG')}\sum_{\psi:\Verts(\HG)\to[n]}\prod_{e\in \Edges(\HG')}\tens(\psi(e))\\
 & \le\prod_{e\in \Edges(\HG')}\Big(n^{-r}\sum_{\psi:e\to[n]}\tens(\psi(e))^{\Delta}\Big)^{1/\Delta} 
  \le\Big(n^{-r}\sum_{\bs i\in[n]^{r}}\tens(\bs i)^{2}\Big)^{\edges(\HG')/\Delta}.
\end{align*}
By Lemma \ref{lem:ipbelow}, 
\[
\sum_{\bs i \in[n]^{r}}\tens(\bs i)^{2}\ls \frac{\eye_p(Q)}{\log(1/p)}.
\]
Thus, 
\[
\hom(\HG',\tens)\le n^{\verts(\HG')}O\Big(\frac{n^{-r}\eye_p(Q)}{\log(1/p)}\Big)^{\edges(\HG')/\Delta}\le O(K+1)^{\edges(\HG')/\Delta} n^{\verts(\HG)}p^{\edges(\HG')},
\]
and hence
\begin{align*}
\hom(\HG,Q) & \le O(K+1)^{\edges(\HG)/\Delta}n^{\verts(\HG)}\sum_{\HG'\subseteq \HG}p^{\edges(\HG)-\edges(\HG')+\edges(\HG')}
 \ls_{K,\HG} n^{\verts(\HG)}p^{\edges(\HG)},
\end{align*}
\nick{as claimed.}
\end{proof}

Below we will need to compare $\E_Q\hom(\HG,\bA)$ with $\hom(\HG,Q)$.  By expanding the polynomial $\hom(\HG,\bA)$ and taking expectations, one sees that since the entries of $Q$ lie in $[0,1]$ we have $\E_Q\hom(\HG,\bA)\ge\hom(\HG,Q)$. The next lemma shows that a nearly matching upper bound holds.
For a hypergraph $\HG$, let $\mathcal{S}(\HG)$ be the collection of hypergraphs
$\HG'$ such that there exists a surjective map $\Psi$ from $\Verts(\HG)$ to
$\Verts(\HG')$ such that $\Edges(\HG')=\{\Psi(e): e\in \Edges(\HG)\}$ (without repeated elements). In particular, $\HG\in\cS(\HG)$. We have
\begin{equation}	\label{EQin}
\mathbb{E}_{Q} \hom(\HG,{\bA}) \le\sum_{\HG'\in\mathcal{S}(\HG)}\hom(\HG',Q)\,.
\end{equation}
The next lemma shows that the contribution from $\HG'\in\mathcal{S}(\HG)$ with $\HG'\ne\HG$ is negligible.

\begin{lem}
\label{lem:hom-control}
Let $\HG$ be an $r$-graph with
maximum degree at most $\Delta$ and let $\HG'\in\mathcal{S}(\HG)$.
Assume that $\verts(\HG')<\verts(\HG)$ and 
$p\ge n^{-1/\Delta}$.
If $Q\in\cQnr$ has all entries at least $p$ and satisfies
\[
\eye_p(Q)\le K n^{r}p^{\Delta}\log(1/p)
\]
then 
\begin{equation}\label{eq:little-o}
\hom(\HG',Q) 
\ls_{K,G} \frac{n^{\verts(\HG)}p^{\edges(\HG)}}{np^\Delta}. 
\end{equation}
Furthermore, the same conclusion holds if all entries of $Q$ are
equal to some $q\le K p$.
\end{lem}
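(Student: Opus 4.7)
The plan is to reduce the bound on $\hom(\HG',Q)$ to Lemma \ref{lem:countGip} applied to a spanning subgraph of $\HG'$ with controlled maximum degree. The key combinatorial claim I will establish is: for any $\HG'\in\cS(\HG)$ with $v(\HG')=v(\HG)-k$, $k\ge1$, there is a spanning subgraph $\HG''\subseteq\HG'$ with $\Delta(\HG'')\le\Delta$ and $e(\HG'')\ge e(\HG)-k\Delta$. Given this, the first assertion follows quickly: since entries of $Q$ lie in $[0,1]$ and $\HG''$ is a spanning subgraph of $\HG'$, we have $\hom(\HG',Q)\le\hom(\HG'',Q)$; applying Lemma \ref{lem:countGip} to $\HG''$ (whose max degree is $\le\Delta$) yields $\hom(\HG'',Q)\ls_{K,\HG}n^{v(\HG')}p^{e(\HG'')}$; and the inequality $n^{v(\HG)-k}p^{e(\HG)-k\Delta}\le n^{v(\HG)-1}p^{e(\HG)-\Delta}$ follows from $p\ge n^{-1/\Delta}$. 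The constant-$Q$ case reduces similarly to the trivial bound $\hom(\HG',qJ)\le q^{e(\HG')}n^{v(\HG')}\le(Kp)^{e(\HG')}n^{v(\HG')}$, compared to the target using $p\ge n^{-1/\Delta}$ and the weaker consequence $e(\HG)-e(\HG')\le k\Delta$ of the combinatorial claim.

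To prove the combinatorial claim, I will perform the $k$ vertex identifications producing $\HG'$ from $\HG$ one at a time, with an ordering chosen so that each step merges a \emph{fresh} vertex (one not yet identified) into the current representative of its $\Psi$-class. Let $v_i$ denote the fresh vertex merged at step $i$, set $b_i=\deg_\HG(v_i)\le\Delta$, and let $c_i$ be the number of edge collisions created at step $i$ (pairs of edges in the intermediate hypergraph $\HG^{(i-1)}$ that become equal upon identification). Since $v_i$ has not been identified before step $i$, its degree in $\HG^{(i-1)}$ is at most $b_i$, and every collision at step $i$ involves an edge through $v_i$, giving $c_i\le b_i$; in particular $b_i-c_i\ge 0$. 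Summing, $\sum_ic_i=e(\HG)-e(\HG')$ and $\sum_ib_i\le k\Delta$. Moreover, for each merged class with representative $\bar w$, the final degree in $\HG'$ equals $\deg_\HG(\bar w^{(0)})+\sum_{i\in I(\bar w)}(b_i-c_i)$ where $I(\bar w)$ indexes the steps forming that class and $\deg_\HG(\bar w^{(0)})\le\Delta$; hence $(\deg_{\HG'}(w)-\Delta)^+\le\sum_{i\in I(w)}(b_i-c_i)$, and summing over $w$ yields the key refined bound
\[
\sum_w(\deg_{\HG'}(w)-\Delta)^+\le \sum_i b_i-\sum_i c_i\le k\Delta-(e(\HG)-e(\HG')).
\]
A standard greedy edge-deletion algorithm (using $\Phi=\sum_w(\deg-\Delta)^+$ as a monovariant) then removes at most this many edges from $\HG'$ to produce a spanning subgraph $\HG''$ with max degree at most $\Delta$, giving $e(\HG'')\ge e(\HG')-[k\Delta-(e(\HG)-e(\HG'))]=e(\HG)-k\Delta$ as required.

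The main obstacle is the refined bound coupling excess degree with collisions. It is essential to order the identifications so that each step merges a fresh original vertex into the growing class (rather than merging two partially-merged classes), which is what guarantees $c_i\le b_i\le\Delta$ and hence $b_i-c_i\ge0$; without this the telescoping argument breaks. The weaker bound $\sum_w(\deg-\Delta)^+\le k\Delta$ alone would only give $e(\HG'')\ge e(\HG')-k\Delta$, which is insufficient whenever collisions are substantial, and in turn fails to recover the target exponent $e(\HG)-\Delta$ when combined with $p\ge n^{-1/\Delta}$.
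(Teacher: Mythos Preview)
Your proof is correct, and the overall architecture matches the paper's: bound $\hom(\HG',Q)$ by $\hom(\HG'',Q)$ for a spanning subgraph $\HG''$ of $\HG'$ with $\Delta(\HG'')\le\Delta$ and $\edges(\HG'')\ge\edges(\HG)-k\Delta$, then invoke Lemma~\ref{lem:countGip} and the assumption $p\ge n^{-1/\Delta}$. One minor correction: your displayed degree formula $\deg_{\HG'}(\bar w)=\deg_\HG(\bar w^{(0)})+\sum_{i\in I(\bar w)}(b_i-c_i)$ is in general only an inequality $(\le)$, since the degree of $v_i$ in $\HG^{(i-1)}$ may already be strictly below $b_i$, and the degree of $\bar w$ can also drop at steps $i\notin I(\bar w)$ when a collision involves an edge through $\bar w$. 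But the inequality is all you need, and your derivation of $c_i\le b_i$ (so each summand is nonnegative) and the subsequent summation are sound.

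Where you differ from the paper is in how you produce $\HG''$. You build it by a step-by-step merging process with collision accounting, followed by a greedy edge-deletion to reduce excess degrees. The paper instead takes any section $\Psi':\Verts(\HG')\to\Verts(\HG)$ of the quotient map $\Psi$ and lets $\overline{\HG}$ be the induced subgraph of $\HG$ on $\Psi'(\Verts(\HG'))$. Then $\Psi|_{\Psi'(\Verts(\HG'))}$ embeds $\overline{\HG}$ as a spanning subgraph of $\HG'$; as an induced subgraph of $\HG$ it automatically has maximum degree $\le\Delta$; and $\edges(\HG)-\edges(\overline{\HG})\le\Delta\cdot|\Verts(\HG)\setminus\Psi'(\Verts(\HG'))|=k\Delta$ since every missing edge meets a deleted vertex. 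This one-line construction yields exactly your $\HG''$ without the merging bookkeeping or the greedy cleanup, so while your route is valid, the section-and-induced-subgraph trick is considerably shorter.
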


\begin{proof}
Since $\HG'\in\mathcal{S}(\HG)$, we have a surjection $\Psi:\Verts(\HG)\to \Verts(\HG')$
such that $\Edges(\HG')=\{\Psi(e):e\in \Edges(\HG)\}$.
Let $\Psi':\Verts(\HG')\to \Verts(\HG)$ be any map such that $\Psi(\Psi'(v))=v$
for all $v\in \Verts(\HG')$. Let $\overline{\HG}$ be the induced subgraph of $\HG$
on $(\Verts(\HG'))$. Note that $\Psi'$ gives a bijection between $\Verts(\HG')$ and
$\Verts(\overline{\HG})$ such that $\Psi'^{-1}(\Edges(\overline{\HG}))\subseteq \Edges(\HG')$. 
For the first claim, observe that
\begin{align*}
\hom(\HG',Q) & =\sum_{\psi:\Verts(\HG')\to[n]}\,\,\prod_{e\in \Edges(\HG')}Q(\psi(e))
\le\sum_{\psi:\Verts(\HG')\to[n]}\,\,\prod_{e\in \Edges(\overline{\HG})}Q(\psi(\Psi'^{-1}(e)))\\
 & =\sum_{\phi:\Verts(\overline{\HG})\to[n]}\,\,\prod_{e\in \Edges(\overline{\HG})}Q(\phi(e))
  =\hom(\overline{\HG},Q),
\end{align*}
where in the second equality we let $\phi=\psi\circ \Psi'^{-1}$. 
Note that $\overline{\HG}$ has maximum degree at most $\Delta$ as it is
an induced subgraph of $\HG$. By the above and Lemma \ref{lem:countGip}, we
have 
\[
\hom(G,Q) \le 
\hom(\overline{\HG},Q)\ls_{K,\overline{\HG}} n^{\verts(\overline{\HG})}p^{\edges(\overline{\HG})}.
\]
In the case all entries of $Q$ are equal to $q\le K' p$, we trivially
have 
\[
\hom(\HG',Q)\ls_{\HG,K' }n^{\verts(\overline{\HG})}p^{\edges(\overline{\HG})}\,.
\]
Thus, for both cases, to obtain (\ref{eq:little-o}) it suffices
to show that 
\[
n^{\verts(\overline{\HG})-\verts(\HG)}p^{\edges(\overline{\HG})-\edges(\HG)}
\le (np^\Delta)^{-1}.
\]
By our assumptions that $p\ge n^{-1/\Delta}$ and $\verts(\HG')=\verts(\overline{\HG})<\verts(\HG)$,
it suffices to show that 
\[
\Delta(\verts(\HG)-\verts(\overline{\HG}))\ge\edges(\HG)-\edges(\overline{\HG}).
\]
Noting that 
\[
\Delta(\verts(\HG)-\verts(\overline{\HG}))=\Delta|\Verts(\HG)\setminus \Psi'(\Verts(\HG'))|\ge|\{e\in \Edges(\HG):e\not\subset g(\Verts(\HG'))\}|,
\]
and 
\[
\edges(\HG)-\edges(\overline{\HG})=|\{e\in \Edges(\HG):e\not\subset \Psi'(\Verts(\HG'))\}|,
\]
we obtain the desired conclusion. 
\end{proof}

\begin{proof}[Proof of \Cref{prop:tilt.concentration}]
From the Efron--Stein inequality we have that
\[
\Var_{Q}[\hom(\mathcal{H},{\bA}) ] \le\frac{1}{2}\sum_{I\in{[n] \choose r}}\mathbb{E}_{Q}\left[(\hom(\mathcal{H},{\bA})-\hom(\mathcal{H},{\bA}_{I}))^{2}\right],
\]
where $\bA_{I}, \bA_{I}^0, {\bA}_{I}^1 \in \cAn$ are equal to $\bA$
except (up to the symmetry constraint) for ${\bA}_{I}(I)$ being a $\textrm{Bernoulli}(Q(I))$ random variable
independent of ${\bA}$, whereas $\bA_{I}^1(I)=1$ and $\bA_{I}^0(I)=0$.
Taking expectation over the variables $\bA_{I}(I)$, we have the bound
\begin{align}	\label{conc.1}
\Var_{Q}[\hom(\mathcal{H},{\bA}) ] & \le
\sum_{I} Q(I)(1-Q(I))\mathbb{E}_{Q}\Delta_{I}(\cH,\bA)^2 \nonumber \\
& \le \sum_{I} Q(I)\mathbb{E}_{Q}\Delta_{I}(\cH,\bA)^2
\end{align}
where
\[
\Delta_{I}(\cH,\bA) := \hom(\mathcal{H},{\bA}_{I}^{1})-\hom(\mathcal{H},{\bA}_{I}^{0})\,.
\]
Recalling the notation
\[
A_\xi^e =\begin{cases}  A & e\in \Edges(H_+)\\ 1- A &e\in \Edges(H_-)\,.\end{cases}
\]
for $A\in\cA_n$,
we have
\begin{align*}
&\Delta_{I}(\cH,\bA)  =\sum_{E_0\subset\Edges(H)} \sum_{\substack{\phi:\Verts(H)\to [n]\\\phi^{-1}(I)=E_0}}
\prod_{e\in\Edges(H)} (\bA_{I}^1)_\xi^e(\phi_e) - \prod_{e\in\Edges(H)} (\bA_{I}^0)_\xi^e(\phi_e) \\
&=\sum_{E_0\subset\Edges(H)} \sum_{\substack{\phi:\Verts(H)\to [n]\\\phi^{-1}(I)=E_0}}
\big[1_{E_0\subseteq \Edges(H_+)} - 1_{E_0\subseteq \Edges(H_-)}\big]
\!\!\!\!\!\! 
\prod_{e\in\Edges(H_+)\setminus E_0} \!\!\!\!\!\!\!\! \bA(\phi_e) \!\!\!\!\!\! \prod_{e\in\Edges(H_-)\setminus E_0} \!\!\!\!\!\! (1-\bA(\phi_e)) .
\end{align*}
Squaring, dropping the negative summands, and bounding terms $1-\bA(\phi_e)$ by 1, we obtain
\begin{align*}
\Delta_{I}(\cH,\bA) ^2  
&\le \!\!\!\!
\sum_{\substack{ E_0,E_1\subset \Edges(H_+)\\ \text{ or } E_0,E_1\subset \Edges(H_-)}}
\sum_{\substack{\phi^0,\phi^1:\Verts(H)\to [n]\\ \phi^a(e)=I\,\forall e\in E_a, a=1,2}}
\!\!\!\!\!
\prod_{e\in \Edges(H_+)\setminus E_0}\!\!\!
\bA(\phi^0_e)\!\!\!\! \prod_{e\in \Edges(H_+)\setminus E_1} \!\!\!\!\!\bA(\phi^1_e)\,.
\end{align*}
Combining the above with the FKG inequality, we get that \eqref{conc.1} is bounded by
\begin{align*}
\E_Q
\sum_{I} \bA(I)
 \sum_{\substack{ E_0,E_1\subset \Edges(H_+)\\ \text{ or } E_0,E_1\subset \Edges(H_-)}}
\sum_{\substack{\phi^0,\phi^1:\Verts(H)\to [n]\\ \phi^a(e)=I\,\forall e\in E_a, a=1,2}}
\!\!\!\!
\prod_{e\in \Edges(H_+)\setminus E_0}\bA(\phi^0_e) \!\!\!\!\! \prod_{e\in \Edges(H_+)\setminus E_1} \!\!\!\!\bA(\phi^1_e)\,.
\end{align*}

We can interpret the above sum as counting homomorphisms of graphs $G_+$ obtained from $\cH$ as follows.
Let $H^0,H^1$ be disjoint copies of $H$. For $a=0,1$ let $E_a\subset \Edges(H^a)$ and write $V_a$ for the union of the edges in $E_a$; further, let $H_+^a$ be a subgraph of $H^a$ isomorphic to $H_+$ (with $\Verts(H_+^a)=\Verts(H^a)$ and edges corresponding to the positively labeled edges of $H$).  
Let $\wt H$ be the union of $H^0\cup H^1$ with a disjoint edge $e_\star$, and define a surjection $\Psi$ from $\Verts(\wt H) = \Verts(H^0)\cup \Verts(H^1)\cup e_\star$ to $V':=(\Verts(H^0)\setminus V_0)\cup (\Verts(H^1)\setminus V_1) \cup e_*$ as follows: 
\begin{itemize}
\item On $V'$ we take $\Psi$ to be the identity map.
\item On each $V_a$ we take $\Psi$ to be any surjection to $e_\star$ such that $\Psi(e)=e_\star$ for every $e\in E_a$.
\end{itemize}
Let $G$ be the graph on $V'$ that is the image of $\wt H$ under $\Psi$ -- that is, its edge set consists of the images $\Psi(e)$ of $e\in \Edges(\wt H)$, removing any repetitions. We further let $G_+$ be the subgraph of $G$ on the same vertex set $V'$ with edges that are the images of the edges of $H_+^0\cup H_+^1$, together with the edge $e_\star$. 
Letting $\cG_{\cH}$ be the collection of graphs $G_+$ over $V'$ that can be obtained in this way, we have
\[
\sum_{I} \bA(I) \Delta_{I}(\cH,\bA)^2 \le \sum_{G_+\in \cG_{\cH}} \hom(G_+, \bA). 
\]
Combining the previous displays and applying \eqref{EQin}, we have shown
\begin{small}
\begin{equation}	\label{VarQ.above}
\Var_Q[\hom(\cH,\bA)] \le \E_Q\sum_{G_+\in \cG_{\cH}} \hom(G_+,\bA) \le \sum_{G_+\in \cG_{\cH}} \sum_{F\in \cS(G_+)}\hom(F,Q)\,.
\end{equation}
\end{small}
Now consider an arbitrary $F\in \cS(G_+)$ for some $G_+\in \cG_{\cH}$. 
By definition, $F$ is the image of $G_+$ under a surjection $\Psi':V'\to \Verts(F)$ that maps edges to edges. Let $F'$ be the subgraph over $\Verts(F)$ obtained by removing the edge $\Psi'(e_*)$, and observe that $F'$ is the image under the restriction $\Psi''$ of $\Psi'\circ\Psi$ to $\Verts(H^0\cup H^1)$ of a subgraph $H'$ of $H_+^0\cup H_+^1$ over the same vertex set. (Specifically, it is the image of the subgraph of $H_+^0\cup H_-^0$ obtained by removing the edges $E_0\cup E_1$.)
Since $\Psi''$ is a surjection from $\Verts(H^0\cup H^1)$ to $\Verts(F)$ we have that $F'\in \cS(H_+^0\cup H_-^0)$. Moreover, 
\[
\verts(F') = \verts(F) \le |V'| = 2\verts(H)-|V_0|-|V_1|+ r < 2\verts(H)= \verts(H_+^0\cup H_+^1)
\] 
where the strict inequality follows from the fact that each of $V_0$ and $V_1$ contain at least one edge by assumption. 
Let $Q'$ be defined by the coordinate-wise maximum between $Q$ and $p$. Then $\eye_p(Q') \le \eye_p(Q) \le K n^rp^{\Delta(H_+)}\log(1/p)$ and
\[
\hom(F,Q)\le \hom(F',Q)\le \hom(F',Q').
\]
From \Cref{lem:hom-control} we have
\[
\hom(F',Q') \ls_{K,\cH} \frac{n^{\verts(H_+^0\cup H_+^1)} p^{\edges(H_+^0\cup H_+^1)}}{np^{\Delta(H_+)}}
= \frac{n^{2\verts(H)} p^{2\edges(H_+)}}{np^{\Delta(H_+)}}\,.
\]
Combining these bounds with \eqref{VarQ.above} yields the claim.
\end{proof}

\section{Other applications}
\label{sec:other}

\subsection{Upper tails for induced homomorphism counts }

For an $r$-graph $H$ and $Q\in\cQnr$, the induced homomorphism
count of $H$ in $Q$ is defined as 
\[
\indhom(H,Q)=\sum_{\phi:\Verts(H)\to [n]}\prod_{e\in \Edges(H)}Q(\phi(e))\prod_{e\in\binom{\Verts(H)}{r}\setminus \Edges(H)}(1-Q(\phi(e))).
\]
As before, this definition extends to symmetric $r$-tensors. 

For simplicity we only consider the analogue of \eqref{UT.LB}--\eqref{UT.UB} for the case $m=1$. 
Define 
\[
\UT_{n,p}^{\indhom}(H,\delta)=-\log\mathbb{P}\left(\indhom(H,{\bG})\ge(1+\delta)n^{\verts(H)}p^{\edges(H)}(1-p)^{\binom{\verts(H)}{r}-\edges(H)}\right)
\]
and the corresponding upper-tail optimization problem
\begin{align*}
\Phi_{n,p}^{\indhom}(H,\delta)=\inf\Big\{ \eye_p(Q): & Q\in\cQnr,\\
& \indhom(H,Q)\ge(1+\delta)n^{\verts(H)}p^{\edges(H)}(1-p)^{\binom{\verts(H)}{r}-\edges(H)}\Big\} .
\end{align*}

\begin{thm}
\label{thm:induced-main}
The bounds \eqref{UT.UB} and \eqref{UT.LB} hold with $\UT_{n,p}^{\indhom}(H,\delta)$, $\Phi_{n,p}^{\indhom}(H,\delta)$ in place of $\UT_{n,p}(H,\delta)$, $\Phi_{n,p}(H,\delta)$ (in the case $m=1$), under the same lower bound assumptions on $p$, and also assuming that $p\le p_0$ for an arbitrary fixed $p_0\in(0,1)$.
\end{thm}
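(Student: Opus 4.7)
The plan is to treat induced homomorphism counts within the signed hypergraph framework of \Cref{thm:count.gen}. Let $K$ denote the complete $r$-graph on $\Verts(H)$, and let $\xi:\Edges(K)\to\{\pm 1\}$ be the labelling $\xi(e)=+1$ iff $e\in\Edges(H)$. Setting $\cH=(K,\xi)$, one has $H_+=H$ and $\hom(\cH,Q)=\indhom(H,Q)$. For each $e\in\Edges(K)$ take the $K$-dominating base $\Base(e)=\{\emptyset\}\cup\binom{e}{r-1}$ and form the weighted base $\Basys(e)$ with weights as in \eqref{Deg.usual} (i.e.\ $\dd^{H_+}(e)=\dd^H(e)$, $\dd^{H_+}_\base(e)=\dd^H_\base(e)$), yielding a base system $\BBasys=\{\Basys(e)\}_{e\in\Edges(K)}$. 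By \Cref{thm:count.gen} the functional $\indhom(H,\cdot)$ is then Lipschitz in $\|\cdot\|_\BBasys^*$. The upper bound $p\le p_0<1$ keeps $(1-p)^{\binom{\verts(H)}{r}-\edges(H)}$ bounded away from zero, so the induced normalization is of the same order as $n^{\verts(H)}p^{\edges(H)}$.

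For the analogue of \eqref{UT.LB} (upper bound on the tail probability), I would mimic \Cref{prop:upper} inductively in $\edges(H)$, carrying along the crude-bound of \Cref{claim:crude-upper}. Let $\cL_{\le H}(L_0):=\bigcap_{H'\subseteq H}\cL_{H'}(L_0)$ (including $H$ itself, not just proper subgraphs) and set
\[
\cE:=\cA_{n,r}\cap\Big\{A:\indhom(H,A)\ge(1+\delta)n^{\verts(H)}p^{\edges(H)}(1-p)^{\binom{\verts(H)}{r}-\edges(H)}\Big\}\cap\cL_{\le H}(L_0).
\]
The probability $\P(\bA\notin\cL_{\le H}(L_0))$ is controlled inductively by \Cref{claim:crude-upper}. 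For any $A_0\in\cE$, the crude-bound hypothesis of \Cref{thm:count.gen} is satisfied via $\hom(\cH',A_0)\le\hom(H'_+,A_0)\le L_0 n^{\verts(H')}p^{\edges(H'_+)}$ for every $\cH'\subset\cH$, using $(1-Q)\le 1$ and the fact that $H'_+\subseteq H$. Applying \Cref{thm:LDP}(a) and using \Cref{thm:count.gen} to estimate the outer approximation, one obtains $(\cE)_{\BBasys,\eps p}\subseteq\{Q:\indhom(H,Q)\ge(1+\delta-\xi)n^{\verts(H)}p^{\edges(H)}(1-p)^{\binom{\verts(H)}{r}-\edges(H)}\}$ for $\eps$ small in $H,\xi,p_0$, hence $\eye_p((\cE)_{\BBasys,\eps p})\ge\Phi_{n,p}^{\indhom}(H,\delta-\xi)$, closing the induction as in \Cref{prop:upper}.

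The analogue of \eqref{UT.UB} (lower bound on the tail probability) transfers from \Cref{sec:UTUB.tilt} essentially verbatim: \Cref{prop:tilt.concentration} is already formulated for signed hypergraphs and yields $\Var_Q(\indhom(H,\bA))\ll(\E_Q\indhom(H,\bA))^2$ for any $Q$ with $\eye_p(Q)\ls n^rp^{\Delta(H)}\log(1/p)$. One constructs a near-optimal $Q_\star$ for the induced variational problem $\Phi_{n,p}^{\indhom}(H,\delta+\xi)$, applies Chebyshev under the tilted law $\mu_{Q_\star}$ to show the induced upper-tail event is likely, and concludes via the change-of-measure/log-likelihood computation of \Cref{prop:lower}; the normalization $(1-p)^{\binom{\verts(H)}{r}-\edges(H)}$ is absorbed into the $\xi$-slack since $p\le p_0$.

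The main obstacle is that \Cref{thm:count.gen} demands a $K$-dominating base, which on the complete graph forces $\Base(e)$ to include $\binom{e}{r-1}$ for every $e$. This inflates the worst-case growing function to $\growing_{n,p}(\Basys(e))$ of order $np^{\Delta(H)+2}$ (from a singleton $e\setminus\base$ on a non-$H$ edge $e\in\Edges(K)\setminus\Edges(H)$), which is slightly more restrictive than the natural threshold $np^{\Delta'(H)}$ used in the non-induced argument. Pushing the argument down to the stated hypothesis $p\gg n^{-1/\Delta'(H)}$ — exploiting that the weights $\dd^{H_+}_\base$ on edges $e\notin\Edges(H)$ decay as $e$ moves away from the edges of $H$, and that many of the seemingly forbidden bases are redundant under these smaller weights — is the main technical point. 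A secondary subtlety, noted in the excerpt's footnote, is that a proper signed subgraph $\cH'\subset\cH$ can satisfy $H'_+=H$ (when only negatively-signed edges are deleted), which is why the crude bound for $H$ itself must be included in $\cL_{\le H}$.
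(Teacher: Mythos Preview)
Your overall strategy is right and matches the paper: set up the signed hypergraph $\cH=(K,\xi)$ with $K=K^r_{\verts(H)}$, run the argument of \Cref{prop:upper} through \Cref{thm:count.gen} for the analogue of \eqref{UT.LB}, and run the tilting argument of \Cref{prop:lower} for the analogue of \eqref{UT.UB}. Your observation that the crude-bound event must include $H$ itself (since a proper $\cH'\subset\cH$ can have $H'_+=H$) is correct and is a detail the paper leaves implicit.

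There is, however, a genuine gap in your handling of the analogue of \eqref{UT.UB}. You assert that \Cref{prop:tilt.concentration} directly yields $\Var_Q(\indhom(H,\bA))\ll(\E_Q\indhom(H,\bA))^2$, but the proposition only bounds the variance by $\hom(\cH,Q)^2/(np^{\Delta(H_+)})$, not by $(\E_Q\hom(\cH,\bA))^2/(np^{\Delta(H_+)})$. In the non-induced proof of \Cref{prop:lower} the bridge between these is the easy monotonicity $\E_Q\hom(H,\bA)\ge\hom(H,Q)$ (see \eqref{E_QhA-hQ}), which \emph{fails} for signed or induced counts: there is no a priori inequality $\E_Q\indhom(H,\bA)\ge\indhom(H,Q)$. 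The paper supplies exactly this missing link by passing through injective counts: one has $\E_Q\hom(\cH,\bA)\ge\E_Q\injo(\cH,\bA)=\injo(\cH,Q)$ (for injective $\phi$ the edges $\phi(e)$, $e\in\Edges(K)$, are pairwise distinct, so the expectation factors), and then bounds $\hom(\cH,Q)-\injo(\cH,Q)$ by the non-injective contribution to $\hom(H,Q)$, which is $o(n^{\verts(H)}p^{\edges(H)})$ by \Cref{lem:hom-control}. Since $p\le p_0$ keeps $(1-p)^{\binom{\verts(H)}{r}-\edges(H)}$ bounded below, this gives $\hom(\cH,Q)\le(1+o(1))\E_Q\hom(\cH,\bA)$, which is precisely what Chebyshev needs. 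Without this step your concentration claim does not follow.

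Your concern about $\growing_{n,p}$ under $K$-dominating bases is legitimate, and the paper's brief proof sketch does not make its resolution explicit either; but this is orthogonal to the gap above, which would remain even if the $\growing_{n,p}$ issue were granted.
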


For the lower bound on $\UT_{n,p}^{\indhom}(H,\delta)$, we follow the proof of \Cref{prop:upper}.
The only difference is that in place of \Cref{thm:count} we apply the generalized
counting lemma \Cref{thm:count.gen} with the signed hypergraph $\mathcal{K}=(K^r_{\verts(H)},\xi)$ where $\xi(e) = +1$ if $e\in \Edges(H)$ and $\xi(e) = -1$ if $e\in \binom{\Verts(H)}{r} \setminus \Edges(H)$. The subgraphs $K_{\pm}$ induced by $\xi$ are then defined by $\Verts(K_\pm)=V(H)$, $\Edges(K_+)=\Edges(H)$ and $\Edges(K_-)=\binom{\Verts(H)}{r} \setminus \Edges(H)$. We also use in the proof the fact that 
\[
\Phi_{n,p}^{\indhom}(H,\delta) \gs_{H,\delta}n^{r}p^{\Delta(H)}\log(1/p)),
\]
which follows from the argument of \cite[Theorem 2.2]{LiZh}. 
For $p=\omega(n^{-1/\Delta(H)})$, we obtain a matching upper bound 
\[
\Phi_{n,p}^{\indhom}(H,\delta) \ls_{H,\delta}n^{r}p^{\Delta(H)}\log(1/p),
\]
by fixing a subset $J_0$ of $[n]$ of size $\Theta_{H,\delta}(np^{\Delta(H)})$,
and let $Q$ be so that $Q$ takes value $1/2$ on hyperedges which
intersect $J_0$ and $Q$ takes value $p$ elsewhere. We can verify
that $\eye_p(Q)=\Theta_{H,\delta}(n^{r}p^{\Delta(H)}\log(1/p))$.

Next, we show the upper bound on $\UT_{n,p}^{\indhom}(H,\delta)$, following the proof of \Cref{prop:lower}. We highlight the main changes and additional steps. 
Let $Q\in\cQnr$
be such that
\[
\hom(\mathcal{K},Q)\ge(1+\delta+\xi)n^{\verts(K)}p^{\edges(K_+)}(1-p)^{\edges(K_-)} \gs n^{\verts(K)}p^{\edges(K_+)}
\]
and 
\[
\eye_p(Q)=\sum_{I\in{[n]\choose r}}\eye_{p}(Q(I))\le(1+\xi/4)\Phi_{n,p}^{\indhom}(H,\delta+\xi) \ls n^r p^{\Delta(K_+)}\log(1/p)\,.
\]
In order to show that
\begin{equation}	\label{ind.goal0}
\Var_{Q}(\hom(\cK,{\bA})) \ll (\mathbb{E}_{Q}\hom(\cK,{\bA}))^{2},
\end{equation}
by \Cref{prop:tilt.concentration} it suffices to show
\begin{equation}	\label{ind.goal1}
\hom(\cK,Q)\ls \E_Q \hom(\cK,\bA).
\end{equation}
The right hand side is bounded below by $\E_Q\injo(\cK,\bA)=\injo(\cK,Q)$, where we write $\injo(\cK,\cdot)$ for the count of injective signed homomorphisms. On the other hand, the count of non-injective signed homomorphisms of $\cK$ in $Q$ is at most the count of non-injective homomorphisms of $K_+=H$ in $Q$, for which \Cref{lem:hom-control} gives
\[
\hom(H,Q)-\injo(H,Q) = o(n^{\verts(H)}p^{\edges(H)}).
\]
Since $1-p \ge 1-(1+\delta)^{-1/\edges(H)} \gs_{\delta,H}1$, the right hand side is $o(\hom(\cK,Q))$. Thus, $\hom(\cK,Q)=(1+o(1))\injo(\cK,Q)$, which establishes \eqref{ind.goal1} and hence \eqref{ind.goal0}.

We can easily show that 
\[
W({\bA})=\sum_{I}\Big\{ {\bA}(I)\log\frac{Q(I)}{p}+(1-{\bA}(I))\log\frac{1-Q(I)}{1-p}\Big\}
\]
concentrates around its expectation under $\mathbb{P}_{Q}$. We then obtain the desired upper bound on $\UT_{n,p}^{\indhom}(H,\delta)$ as in \Cref{prop:lower}.

\subsection{Lower tails for Sidorenko hypergraphs}

We \nick{call} $H$ a \textit{Sidorenko hypergraph} if 
\[
\frac{\hom(H,Q)}{n^{\verts(H)}}\ge\hom(K_{r}^{r},Q)^{\edges(H)}\qquad\forall Q \in \cQnr,
\]
where $K_{r}^{r}$ is simply one hyperedge. 
For the case $r=2$, a famous conjecture
in extremal combinatorics by Erd\H{o}s and Simonovits \cite{ErSi}
and Sidorenko \cite{Sidorenko} states that all bipartite graphs are Sidorenko.
This conjecture has been verified for a large family of bipartite
graphs, including trees, even cycles, paths, hypercubes, and bipartite graphs with one vertex complete to
the other side -- see \cite{CFS,CKLL,Szegedy} and references therein. 
While a natural generalization
of Sidorenko's conjecture to hypergraphs is false, it is known that
many families of hypergraphs satisfy the Sidorenko property \cite{Szegedy}. 

Let $H$ be a graph (so $r=2$). Let $\hat{q}$ be so that $\hat{q}^{\edges(H)}\le(1-\delta)p^{\edges(H)}$
and let $q=\hat{q}\frac{n}{n-1}$. It is established in \cite{CoDe}
that 
\[
\LT_{n,p}(H,\delta)\le(1+o(1))\binom{n}{2}\eye_{p}(q),
\]
as long as $p=\omega(n^{-1/(2\Delta_{2}(H)-1)})$. Furthermore, if
$H$ is a Sidorenko graph, then the following non-asymptotic bound
holds:
\[
\LT_{n,p}(H,\delta)\ge\binom{n}{2}\eye_{p}(q).
\]
Our next
theorem generalizes this result to $r$-uniform Sidorenko hypergraphs,
and improves on the range of $p$ where the lower tail asymptotics
hold. We remark that in \cite{KoSa} the reduction of the lower tail asymptotics to the corresponding variational problem has been shown in an optimal range of sparsity for graphs ($r=2$), where the reduction is also shown for hypergraphs satisfying appropriate degree conditions.

Denote by $\mathbb{E}_{q}$ the expectation with respect to the
random $r$-graph where each hyperedge is independently included with probability
$q$.
\begin{thm}
\label{thm:LT}Let $H$ be an $r$-graph. Assume $p=\omega(n^{-1/\Delta(H)})$. Fix $\delta\in(0,1)$,  let $\hat{q}=(1-\delta)^{1/\edges(H)}p$ and let $q=\hat{q}\frac{n^{r}}{n\cdots(n-r+1)}$.
Then 
\begin{equation}
\LT_{n,p}(H,\delta)\le(1+o(1))\binom{n}{r}\eye_p\left((1-o(1))q\right).\label{eq:LT-upper}
\end{equation}
Furthermore, if $H$ is a Sidorenko hypergraph, then we have 
\begin{equation}
\LT_{n,p}(H,\delta)\ge\binom{n}{r}\eye_p\left(q\right).\label{eq:LT-lower}
\end{equation}
\end{thm}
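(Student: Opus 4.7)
The plan for proving \Cref{thm:LT} is to handle its two halves separately, since they rest on quite different ingredients.

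For the lower bound \eqref{eq:LT-lower} I will use the Sidorenko property to reduce the lower-tail event to a tail event on edge counts. Viewing $\bA\in\cAnr\subset\cQnr$ and applying the Sidorenko inequality $(\hom(K_r^r,Q)/n^r)^{\edges(H)}\le\hom(H,Q)/n^{\verts(H)}$ at $Q=\bA$, I observe that the event $\{t(H,\bG)\le(1-\delta)p^{\edges(H)}\}$ forces $\hom(K_r^r,\bA)/n^r\le(1-\delta)^{1/\edges(H)}p=\hat q$. Since $\hom(K_r^r,\bA)=r!\,|\Edges(\bG)|$ and $q\binom{n}{r}=\hat q n^r/r!$ by the definition of $q$, the lower-tail event is contained in $\{|\Edges(\bG)|\le q\binom{n}{r}\}$. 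As $|\Edges(\bG)|\sim\Bin(\binom{n}{r},p)$ and $q<p$ for all $n$ large, the Cram\'er--Chernoff bound $\mathbb{P}(\Bin(N,p)\le kN)\le e^{-N\eye_p(k)}$, valid for $k\le p$, immediately delivers \eqref{eq:LT-lower}.

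For \eqref{eq:LT-upper} I propose a direct change-of-measure argument in the spirit of the proof of \Cref{prop:lower}, but tilting toward the lower tail rather than the upper one. \Cref{prop:LT.LB} cannot be invoked out of the box, because its growing-scale hypothesis forces $p\gg n^{-1/\Delta'(H)}$, which is strictly stronger than $p\gg n^{-1/\Delta(H)}$ in general. Instead I fix a slowly vanishing $\eta_n\to 0$, set $q_n':=(1-\eta_n)\hat q=(1-o(1))q$, and tilt by the homogeneous product measure $\mu_{q_n'}$. The log-likelihood ratio
\[
W(\bA):=\sum_{I}\Big[\bA(I)\log\frac{q_n'}{p}+(1-\bA(I))\log\frac{1-q_n'}{1-p}\Big]
\]
has mean $\binom{n}{r}\eye_p(q_n')\asymp n^rp$ and variance $O(\binom{n}{r}(\log(1/p))^2)$, so Chebyshev yields $W(\bA)\le(1+o(1))\binom{n}{r}\eye_p(q_n')$ with $\mu_{q_n'}$-probability $1-o(1)$.

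The main step will be to show that $t_p(H,\bA)$ concentrates under $\mu_{q_n'}$ strictly below $1-\delta$. Its mean equals $(q_n'/p)^{\edges(H)}(1+O(1/n))\le1-\delta-\Omega(\eta_n)$, and a standard second-moment calculation, organizing pairs of homomorphisms $(\phi,\phi')$ by the common subgraph $F\subseteq H$ they trace out in $[n]$, should yield
\[
\frac{\Var_{q_n'}(\hom(H,\bA))}{(\E_{q_n'}\hom(H,\bA))^2}\,\ls_H\,\max_{F\subseteq H,\,\edges(F)\ge1}\frac{1}{n^{\verts(F)}(q_n')^{\edges(F)}}.
\]
Combining the degree-sum inequality $m(H):=\max_{F\subseteq H}\edges(F)/\verts(F)\le\Delta(H)/r$ with the hypothesis $p\gg n^{-1/\Delta(H)}$ gives $q_n'\asymp p\gg n^{-1/\Delta(H)}\ge n^{-r/\Delta(H)}\ge n^{-1/m(H)}$, so every denominator blows up and the ratio is $o(1)$. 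A second Chebyshev application then gives $\mu_{q_n'}(t_p(H,\bA)\le1-\delta)\ge 3/4$. Intersecting the two good events and applying the tilt identity $\mathbb{P}(\cE)=\E_{q_n'}[\ind_{\cE}\,e^{-W}]$ produces $\mathbb{P}(t_p(H,\bA)\le1-\delta)\ge\tfrac12\exp\!\big(-(1+o(1))\binom{n}{r}\eye_p(q_n')\big)$, which is \eqref{eq:LT-upper} after sending $\eta_n\to0$.

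The main obstacle will be the variance estimate for $\hom(H,\bA)$ under the tilted product measure in the hypergraph setting; while classical for $r=2$ (as in \cite{CoDe}), one must carefully index overlap subgraphs $F$ and exploit $m(H)\le\Delta(H)/r$ to convert the hypothesis $p\gg n^{-1/\Delta(H)}$ into the required lower bound on $q_n'$. Everything else is routine bookkeeping on the tilt.
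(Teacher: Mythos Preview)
Your proposal is correct and mirrors the paper's proof: the Sidorenko reduction to a binomial tail for \eqref{eq:LT-lower} is identical, and for \eqref{eq:LT-upper} both you and the paper tilt to the homogeneous law $\mu_{(1-o(1))q}$ and establish concentration of $\hom(H,\bA)$ and of the log-likelihood $W$ via second-moment bounds.

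One technical point deserves care. Your variance bound indexed by subgraphs $F\subseteq H$ (and the accompanying use of $m(H)\le\Delta(H)/r$) is only literally valid for pairs of \emph{injective} homomorphisms; since $\hom(H,\cdot)$ includes non-injective maps, the correct organization runs over quotients $G\in\mathcal{S}(\tilde H)$ of two disjoint copies of $H$, and such $G$ can have maximum degree strictly larger than $\Delta(H)$ (e.g.\ collapsing alternate vertices of $C_6$ yields $K_{1,3}$). The paper handles this by routing the variance through the Efron--Stein argument of \Cref{prop:tilt.concentration} and then invoking \Cref{lem:hom-control}, which uses the full hypothesis $p\ge n^{-1/\Delta(H)}$ to kill each such quotient term. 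The same lemma is what upgrades your mean estimate from ``$(1+O(1/n))$'' to the correct ``$(1+o(1))$'' --- the non-injective contribution is $o(1)$ but not $O(1/n)$ in general --- which then forces you to take $\eta_n\to0$ slowly enough to dominate that correction. Once you invoke \Cref{lem:hom-control}, your argument closes exactly as the paper's does.
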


Our result thus yields the lower tail asymptotics as long as $H$
is Sidorenko and $p=\omega(n^{-1/\Delta(H)})$. We remark that in
the regime $p=\omega(n^{-1/\Delta(H)})$, we can verify that $\hat{q}=\Theta(p)$
so $q=\Theta(p)$. In the case $r=2$, this improves the threshold
in \cite{CoDe}. 

We turn to the proof of \Cref{thm:LT}.
We first give the proof of (\ref{eq:LT-upper}) following
the proof of \Cref{prop:lower}. Let $\xi>0$ be any sufficiently small real number. We choose $\tilde{q}=q(1-\xi)$.
We write $\mathbb{E}_{\tilde{q}}$ and $\Var_{\tilde{q}}$
for expectation and variance under the distribution of a random tensor ${\bA}$ whose entries are 
i.i.d. $\textrm{Bernoulli}(\tilde{q})$ variables. We first establish an analogue of \Cref{prop:tilt.concentration} showing the concentration of $\hom(H,{\bA})$ where ${\bA}$
has independent $\textrm{Bernoulli}(\tilde{q})$ entries. In particular, we
show that 
\begin{equation}
\Var_{\tilde{q}}(\hom(H,{\bA}))\ll(\mathbb{E}_{\tilde{q}}\hom(H,{\bA}))^{2}.\label{eq:hom-var}
\end{equation}

First, notice that $cp\le \tilde{q}\le p$ for some constant $c\in (0,1)$ depending only on $\delta$. We have that 
\begin{equation}
\mathbb{E}_{\tilde{q}}\hom(H,{\bA})\ge(1+o(1))n^{\verts(H)}\tilde{q}^{\edges(H)}.\label{eq:Eqhom}
\end{equation}
Indeed, by summing over the injective homomorphisms, we obtain 
\[
\mathbb{E}_{\tilde{q}}\hom(H,{\bA})\ge(1-o(1))n^{\verts(H)}\tilde{q}^{\edges(H)}.
\]

Recall that we denote by $\jay \in \cAnr$ the tensor with all ``off-diagonal'' elements equal to $1$. Following identically the proof of \Cref{prop:tilt.concentration}, we obtain that 
\[
\Var_{\tilde{q}}(\hom(H,{\bf A})) \ls_H \sum_{\HG\in \mathcal{S}(\tilde{H}):\verts(\HG)<2\verts(H)} \hom(\HG,\tilde{q}\jay),
\]where $\tilde{H}$ is the hypergraph obtained from two disjoint copies of $H$, and recall that $\mathcal{S}(\tilde{H})$ is the collection of hypergraphs $\HG$ such that there exists a surjective map $f$ from $\Verts(\tilde{H})$ to $\Verts(\HG)$ such that $\Edges(\HG)=f(\Edges(\tilde{H}))$. 
By Lemma \ref{lem:hom-control} applied with $Q = \tilde{q}\jay$, we obtain that for each $\HG\in \mathcal{S}(\tilde{H})$, $$\hom(\HG,\tilde{q}\jay) = o(n^{2\verts(H)}p^{2\edges(H)}).$$Thus, $$\Var_{\tilde{q}}(\hom(H,{\bf A})) = o(n^{2\verts(H)}p^{2\edges(H)}) = o((\mathbb{E}_{\tilde{q}}\hom(H,{\bA}))^2),$$using (\ref{eq:Eqhom}), yielding (\ref{eq:hom-var}). 
The concentration of
\[
W({\bA})=\sum_{\bs i}\Big\{ {\bA}(\bs i)\log\frac{\tilde{q}}{p}+(1-{\bA}(\bs i))\log\frac{1-\tilde{q}}{1-p}\Big\} 
\]
easily follows noting that 
\[
\mathbb{E}_{\tilde{q}}W({\bA})\ge cn^{r}\Big(\tilde{q}\log\frac{\tilde{q}}{p}+(1-\tilde{q})\log\frac{1-\tilde{q}}{1-p}\Big),
\]
and 
\[
\Var_{\tilde{q}}(W({\bA}))\le Cn^{r}\tilde{q}(1-\tilde{q})\Big(\log\frac{p}{\tilde{q}}-\log\frac{1-p}{1-\tilde{q}}\Big)^{2},
\]
so as $\tilde{q}\in [cp,p]$, we have
$
\Var_{\tilde{q}}(W({\bA}))=o((\mathbb{E}_{\tilde{q}}W({\bA}))^{2}).\label{eq:W-var}
$
Combining 
this with (\ref{eq:hom-var}),
we obtain (\ref{eq:LT-upper})
as in the proof of \Cref{prop:lower}. 

To establish (\ref{eq:LT-lower}) under the additional assumption
that $H$ is Sidorenko, we note that if $\hom(H,{\bA})\le(1-\delta)p^{\edges(H)}n^{\verts(H)}$,
then by the Sidorenko property, 
$
\hom(K_{r}^{r},{\bA})\le\hat{q}.
$
Noting that 
\[
\hom(K_{r}^{r},{\bA})=n^{-r}\sum_{\bs i \in [n]^r}{\bA}(\bs i)=\frac{n\cdots(n-r+1)}{n^{r}}\binom{n}{r}^{-1}\sum_{\bs i}{\bA}(\bs i),
\]
(\ref{eq:LT-lower}) follows from basic properties of the
binomial distribution.
\qedhere

\subsection*{Acknowledgments}
We thank Bhaswar Bhattacharya, Ronen Eldan, Jacob Fox, Shirshendu Ganguly, Eyal Lubetzky, Wojciech Samotij and Yufei Zhao for helpful comments on an earlier version of the paper, as well as Yang P.\ Liu and Yufei Zhao for their permission to 
use \Cref{fig:LiZh}.
\nick{We additionally thank the anonymous referees for their feedback and helpful suggestions to improve the exposition.}

\bibliographystyle{abbrv}
\bibliography{hgtails.bib}

\begin{thebibliography}{10}

\bibitem{ABSS}
P.~Allen, J.~B\"{o}ttcher, J.~Skokan, and M.~Stein.
\newblock Regularity inheritance in pseudorandom graphs.
\newblock {\em Random Structures Algorithms}, 56(2):306--338, 2020.

\bibitem{Augeri}
F.~Augeri.
\newblock {Nonlinear large deviation bounds with applications to {W}igner
  matrices and sparse {E}rd\H{o}s--{R}\'{e}nyi graphs}.
\newblock {\em Ann. Probab.}, 48(5):2404--2448, 2020.

\bibitem{Austin:Gibbs}
T.~Austin.
\newblock The structure of low-complexity gibbs measures on product spaces.
\newblock {\em Ann. Probab.}, 47(6):4002--4023, 11 2019.

\bibitem{BMS:containers}
J.~Balogh, R.~Morris, and W.~Samotij.
\newblock Independent sets in hypergraphs.
\newblock {\em J. Amer. Math. Soc.}, 28(3):669--709, 2015.

\bibitem{Basak:perron}
A.~Basak.
\newblock Upper tail of the spectral radius of sparse {E}rd{\H o}s-{R}{\'e}nyi
  graphs.
\newblock Preprint. arXiv:2109.06242.

\bibitem{BaBa}
A.~Basak and R.~Basu.
\newblock Upper tail large deviations of regular subgraph counts in {E}rd{\H
  o}s--{R}{\'e}nyi graphs in the full localized regime.
\newblock {\em Comm. Pure Appl. Math.}, 76(1):3--72, 2023.

\bibitem{BGLZ}
B.~Bhattacharya, S.~Ganguly, E.~Lubetzky, and Y.~Zhao.
\newblock Upper tails and independence polynomials in random graphs.
\newblock {\em Adv. Math.}, 319:313--347, 2017.

\bibitem{BBG}
B.~B. Bhattacharya, S.~Bhattacharya, and S.~Ganguly.
\newblock Spectral edge in sparse random graphs: Upper and lower tail large
  deviations.
\newblock {\em Ann. Probab.}, 49(4):1847--1885, 2021.

\bibitem{BhGa}
B.~B. Bhattacharya and S.~Ganguly.
\newblock Upper tails for edge eigenvalues of random graphs.
\newblock {\em SIAM J. Discrete Math.}, 34(2):1069--1083, 2020.

\bibitem{BhDe}
S.~Bhattacharya and A.~Dembo.
\newblock Upper tail for homomorphism counts in constrained sparse random
  graphs.
\newblock {\em Random Structures Algorithms}, 59(3):315--338, 2021.

\bibitem{BCGPS}
C.~Borgs, J.~Chayes, J.~Gaudio, S.~Petti, and S.~Sen.
\newblock A large deviation principle for block models.
\newblock To appear, Combinatorics, Probability and Computing.
  arXiv:2007.14508.

\bibitem{BCLSSV}
C.~Borgs, J.~Chayes, L.~Lov\'{a}sz, V.~T. S\'{o}s, B.~Szegedy, and
  K.~Vesztergombi.
\newblock Graph limits and parameter testing.
\newblock In {\em S{TOC}'06: {P}roceedings of the 38th {A}nnual {ACM}
  {S}ymposium on {T}heory of {C}omputing}, pages 261--270. ACM, New York, 2006.

\bibitem{BCCL19}
C.~Borgs, J.~T. Chayes, H.~Cohn, and L.~M. Lov\'{a}sz.
\newblock Identifiability for graphexes and the weak kernel metric.
\newblock In {\em Building {B}ridges. {II}}, volume~28 of {\em Bolyai Soc.
  Math. Stud.}, pages 29--157. Springer, Berlin, [2019] \copyright 2019.

\bibitem{BCLSV1}
C.~Borgs, J.~T. Chayes, L.~Lov\'{a}sz, V.~T. S\'{o}s, and K.~Vesztergombi.
\newblock Convergent sequences of dense graphs. {I}. {S}ubgraph frequencies,
  metric properties and testing.
\newblock {\em Adv. Math.}, 219(6):1801--1851, 2008.

\bibitem{BCLSV2}
C.~Borgs, J.~T. Chayes, L.~Lov\'{a}sz, V.~T. S\'{o}s, and K.~Vesztergombi.
\newblock Convergent sequences of dense graphs {II}. {M}ultiway cuts and
  statistical physics.
\newblock {\em Ann. of Math. (2)}, 176(1):151--219, 2012.

\bibitem{Chatterjee:triangles}
S.~Chatterjee.
\newblock The missing log in large deviations for triangle counts.
\newblock {\em Random Struct. Algorithms}, 40(4):437--451, 2012.

\bibitem{Chatterjee:survey}
S.~Chatterjee.
\newblock An introduction to large deviations for random graphs.
\newblock {\em Bull. Amer. Math. Soc. (N.S.)}, 53(4):617--642, 2016.

\bibitem{Chatterjee:book}
S.~Chatterjee.
\newblock {\em Large deviations for random graphs}, volume 2197 of {\em Lecture
  Notes in Mathematics}.
\newblock Springer, Cham, 2017.
\newblock Lecture notes from the 45th Probability Summer School held in
  Saint-Flour, June 2015, \'{E}cole d'\'{E}t\'{e} de Probabilit\'{e}s de
  Saint-Flour. [Saint-Flour Probability Summer School].

\bibitem{ChDe}
S.~Chatterjee and A.~Dembo.
\newblock Nonlinear large deviations.
\newblock {\em Adv. Math.}, 299:396--450, 2016.

\bibitem{ChDi}
S.~Chatterjee and P.~Diaconis.
\newblock Estimating and understanding exponential random graph models.
\newblock {\em Ann. Statist.}, 41(5):2428--2461, 2013.

\bibitem{ChVa}
S.~Chatterjee and S.~Varadhan.
\newblock {The large deviation principle for the {E}rd\H{o}s--{R}\'{e}nyi
  random graph}.
\newblock {\em Eur. J. Comb.}, 32(7):1000--1017, 2011.

\bibitem{Cohen22}
A.~Cohen~Antonir.
\newblock The upper tail problem for induced 4-cycles in sparse random graphs.
\newblock Preprint. arXiv:2201.02970.

\bibitem{CCF:sparsereg}
A.~Coja-Oghlan, C.~Cooper, and A.~Frieze.
\newblock An efficient sparse regularity concept.
\newblock {\em SIAM J. Discrete Math.}, 23(4):2000--2034, 2009/10.

\bibitem{CFS}
D.~Conlon, J.~Fox, and B.~Sudakov.
\newblock An approximate version of {S}idorenko's conjecture.
\newblock {\em Geom. Funct. Anal.}, 20(6):1354--1366, 2010.

\bibitem{CFZ:sparsereg}
D.~Conlon, J.~Fox, and Y.~Zhao.
\newblock Extremal results in sparse pseudorandom graphs.
\newblock {\em Adv. Math.}, 256:206--290, 2014.

\bibitem{CFZ:relative}
D.~Conlon, J.~Fox, and Y.~Zhao.
\newblock A relative {S}zemer\'{e}di theorem.
\newblock {\em Geom. Funct. Anal.}, 25(3):733--762, 2015.

\bibitem{CoGo}
D.~Conlon and W.~T. Gowers.
\newblock Combinatorial theorems in sparse random sets.
\newblock {\em Ann. of Math. (2)}, 184(2):367--454, 2016.

\bibitem{CGSS:KLR}
D.~Conlon, W.~T. Gowers, W.~Samotij, and M.~Schacht.
\newblock On the {K}{\l}{R} conjecture in random graphs.
\newblock {\em Israel J. Math.}, 203(1):535--580, 2014.

\bibitem{CKLL}
D.~Conlon, J.~H. Kim, C.~Lee, and J.~Lee.
\newblock Some advances on {S}idorenko's conjecture.
\newblock {\em J. Lond. Math. Soc. (2)}, 98(3):593--608, 2018.

\bibitem{CoLe}
D.~Conlon and J.~Lee.
\newblock Finite reflection groups and graph norms.
\newblock {\em Adv. Math.}, 315:130--165, 2017.

\bibitem{CoDe:ergm}
N.~Cook and A.~Dembo.
\newblock Typical structure of sparse exponential random graph models.
\newblock To appear, Ann. Appl. Prob. arXiv:2208.06397.

\bibitem{CoDe}
N.~Cook and A.~Dembo.
\newblock {Large deviations of subgraph counts for sparse
  {E}rd\H{o}s-{R}\'{e}nyi graphs}.
\newblock {\em Adv. Math.}, 373:107289, 53, 2020.

\bibitem{DeKa:cliques}
B.~Demarco and J.~Kahn.
\newblock Tight upper tail bounds for cliques.
\newblock {\em Random Struct. Algorithms}, 41(4):469--487, 2012.

\bibitem{DeKa:triangles}
B.~Demarco and J.~Kahn.
\newblock Upper tails for triangles.
\newblock {\em Random Struct. Algorithms}, 40(4):452--459, 2012.

\bibitem{DeLu}
A.~Dembo and E.~Lubetzky.
\newblock A large deviation principle for the {E}rd{\H o}s--{R}{\'e}nyi uniform
  random graph.
\newblock {\em Electron. Commun. Probab.}, 23:13 pp., 2018.

\bibitem{dz}
A.~Dembo and O.~Zeitouni.
\newblock {\em Large deviations techniques and applications}.
\newblock Springer, 2010.

\bibitem{DhSe19}
S.~Dhara and S.~Sen.
\newblock Large deviation for uniform graphs with given degrees.
\newblock {\em Ann. Appl. Probab.}, 32(3):2327--2353, 2022.

\bibitem{Eldan}
R.~Eldan.
\newblock Gaussian-width gradient complexity, reverse log-sobolev inequalities
  and nonlinear large deviations.
\newblock {\em Geom. Funct. Anal.}, 28(6):1548--1596, 2018.

\bibitem{ElGr:decomp}
R.~Eldan and R.~Gross.
\newblock Decomposition of mean-field {G}ibbs distributions into product
  measures.
\newblock {\em Electron. J. Probab.}, 23:Paper No. 35, 24, 2018.

\bibitem{ElGr:ergm}
R.~Eldan and R.~Gross.
\newblock Exponential random graphs behave like mixtures of stochastic block
  models.
\newblock {\em Ann. Appl. Probab.}, 28(6):3698--3735, 2018.

\bibitem{Finner}
H.~Finner.
\newblock A generalization of {H}older's inequality and some probability
  inequalities.
\newblock {\em Ann. Probab.}, 20(4):1893--1901, 1992.

\bibitem{FrKa1}
A.~Frieze and R.~Kannan.
\newblock The regularity lemma and approximation schemes for dense problems.
\newblock In {\em 37th {A}nnual {S}ymposium on {F}oundations of {C}omputer
  {S}cience ({B}urlington, {VT}, 1996)}, pages 12--20. IEEE Comput. Soc. Press,
  Los Alamitos, CA, 1996.

\bibitem{GMS:prob-counting}
S.~Gerke, M.~Marciniszyn, and A.~Steger.
\newblock A probabilistic counting lemma for complete graphs.
\newblock {\em Random Structures Algorithms}, 31(4):517--534, 2007.

\bibitem{Gowers:3graphs}
W.~T. Gowers.
\newblock Quasirandomness, counting and regularity for 3-uniform hypergraphs.
\newblock {\em Combin. Probab. Comput.}, 15(1-2):143--184, 2006.

\bibitem{Gowers:hypergraphs}
W.~T. Gowers.
\newblock Hypergraph regularity and the multidimensional {S}zemer\'{e}di
  theorem.
\newblock {\em Ann. of Math. (2)}, 166(3):897--946, 2007.

\bibitem{Gowers:decomp}
W.~T. Gowers.
\newblock Decompositions, approximate structure, transference, and the
  {H}ahn-{B}anach theorem.
\newblock {\em Bull. Lond. Math. Soc.}, 42(4):573--606, 2010.

\bibitem{GrPi21}
J.~Greb{\'\i}k and O.~Pikhurko.
\newblock Large deviation principles for block and step graphon random graph
  models.
\newblock Preprint. arXiv:2101.07025.

\bibitem{GrTa}
B.~Green and T.~Tao.
\newblock The primes contain arbitrarily long arithmetic progressions.
\newblock {\em Ann. of Math. (2)}, 167(2):481--547, 2008.

\bibitem{Gunby}
B.~Gunby.
\newblock Upper tails of subgraph counts in sparse regular graphs.
\newblock Preprint, arXiv:2010.00658.

\bibitem{HMS}
M.~{Harel}, F.~{Mousset}, and W.~{Samotij}.
\newblock {Upper tails via high moments and entropic stability}.
\newblock {\em Duke Math. J.}, 171:2089---2192, 2022.

\bibitem{Janson:LT}
S.~Janson.
\newblock Poisson approximation for large deviations.
\newblock {\em Random Structures Algorithms}, 1(2):221--229, 1990.

\bibitem{JOR}
S.~Janson, K.~Oleszkiewicz, and A.~Ruci{\'{n}}ski.
\newblock Upper tails for subgraph counts in random graphs.
\newblock {\em Israel Journal of Mathematics}, 142(1):61--92, 2004.

\bibitem{JaWa:LT}
S.~Janson and L.~Warnke.
\newblock The lower tail: {P}oisson approximation revisited.
\newblock {\em Random Structures Algorithms}, 48(2):219--246, 2016.

\bibitem{KRRS17b}
R.~Kenyon, C.~Radin, K.~Ren, and L.~Sadun.
\newblock Multipodal structure and phase transitions in large constrained
  graphs.
\newblock {\em J. Stat. Phys.}, 168(2):233--258, 2017.

\bibitem{KRRS17a}
R.~Kenyon, C.~Radin, K.~Ren, and L.~Sadun.
\newblock The phases of large networks with edge and triangle constraints.
\newblock {\em J. Phys. A}, 50(43):435001, 22, 2017.

\bibitem{KRRS18}
R.~Kenyon, C.~Radin, K.~Ren, and L.~Sadun.
\newblock Bipodal structure in oversaturated random graphs.
\newblock {\em Int. Math. Res. Not. IMRN}, (4):1009--1044, 2018.

\bibitem{KiVu}
J.~H. Kim and V.~H. Vu.
\newblock Divide and conquer martingales and the number of triangles in a
  random graph.
\newblock {\em Random Struct. Algorithms}, 24:166--174, 2004.

\bibitem{Kohayakawa}
Y.~Kohayakawa.
\newblock Szemer\'{e}di's regularity lemma for sparse graphs.
\newblock In {\em Foundations of computational mathematics ({R}io de {J}aneiro,
  1997)}, pages 216--230. Springer, Berlin, 1997.

\bibitem{KLR97}
Y.~Kohayakawa, T.~{\L}uczak, and V.~R\"{o}dl.
\newblock On {$K^4$}-free subgraphs of random graphs.
\newblock {\em Combinatorica}, 17(2):173--213, 1997.

\bibitem{KoSa}
G.~Kozma and W.~Samotij.
\newblock Lower tails via relative entropy.
\newblock {\em Ann. Probab.}, 51(2):665--698, 2023.

\bibitem{LiZh}
Y.~P. Liu and Y.~Zhao.
\newblock On the upper tail problem for random hypergraphs.
\newblock {\em Random Struct. Algorithms}, 58(2):179--220, 2021.

\bibitem{Lovasz:book}
L.~Lov\'{a}sz.
\newblock {\em Large networks and graph limits}, volume~60 of {\em American
  Mathematical Society Colloquium Publications}.
\newblock American Mathematical Society, Providence, RI, 2012.

\bibitem{LoSz06}
L.~Lov\'{a}sz and B.~Szegedy.
\newblock Limits of dense graph sequences.
\newblock {\em J. Combin. Theory Ser. B}, 96(6):933--957, 2006.

\bibitem{LuZh:dense}
E.~Lubetzky and Y.~Zhao.
\newblock On replica symmetry of large deviations in random graphs.
\newblock {\em Random Struct. Algorithms}, 47(1):109--146, 2015.

\bibitem{LuZh:sparse}
E.~Lubetzky and Y.~Zhao.
\newblock On the variational problem for upper tails in sparse random graphs.
\newblock {\em Random Struct. Algorithms}, 50(3):420--436, 2017.

\bibitem{Luc}
T.~{\L}uczak.
\newblock On triangle-free random graphs.
\newblock {\em Random Struct. Algorithms}, 16(3):260--276, 2000.

\bibitem{Markering}
M.~Markering.
\newblock The large deviation principle for inhomogeneous {E}rd{\H
  o}s-{R}{\'e}nyi random graphs.
\newblock To appear, J. Theor. Probab. arXiv:2010.03504.

\bibitem{MuBh}
S.~Mukherjee and B.~B. Bhattacharya.
\newblock Replica symmetry in upper tails of mean-field hypergraphs.
\newblock {\em Adv. in Appl. Math.}, 119:102047, 25, 2020.

\bibitem{RRS}
C.~Radin, K.~Ren, and L.~Sadun.
\newblock A symmetry breaking transition in the edge/triangle network model.
\newblock {\em Ann. Inst. Henri Poincar\'{e} D}, 5(2):251--286, 2018.

\bibitem{Raz:cycles}
A.~Raz.
\newblock Upper tail bounds for cycles.
\newblock {\em SIAM J. Discrete Math.}, 34(3):1559--1586, 2020.

\bibitem{SaTo:containers}
D.~Saxton and A.~Thomason.
\newblock Hypergraph containers.
\newblock {\em Invent. Math.}, 201(3):925--992, 2015.

\bibitem{Schacht}
M.~Schacht.
\newblock Extremal results for random discrete structures.
\newblock {\em Ann. of Math. (2)}, 184(2):333--365, 2016.

\bibitem{Sidorenko}
A.~Sidorenko.
\newblock A correlation inequality for bipartite graphs.
\newblock {\em Graphs Combin.}, 9(2):201--204, 1993.

\bibitem{ErSi}
M.~Simonovits.
\newblock Extremal graph problems, degenerate extremal problems, and
  supersaturated graphs.
\newblock In {\em Progress in graph theory ({W}aterloo, {O}nt., 1982)}, pages
  419--437. Academic Press, Toronto, ON, 1984.

\bibitem{Szegedy}
B.~Szegedy.
\newblock An information theoretic approach to {S}idorenko's conjecture.
\newblock Preprint, arXiv:1406.6738.

\bibitem{Szemeredi}
E.~Szemer\'{e}di.
\newblock On sets of integers containing no {$k$} elements in arithmetic
  progression.
\newblock In {\em Proceedings of the {I}nternational {C}ongress of
  {M}athematicians ({V}ancouver, {B}. {C}., 1974), {V}ol. 2}, pages 503--505,
  1975.

\bibitem{Turan}
P.~Tur\'an.
\newblock On an extremal problem in graph theory.
\newblock {\em Matematikai \'es {F}izikai {L}apok}, 48:436--452, 1941.

\bibitem{Varadhan66}
S.~R.~S. Varadhan.
\newblock Asymptotic probabilities and differential equations.
\newblock {\em Comm. Pure Appl. Math.}, 19:261--286, 1966.

\bibitem{SiWa:stars}
M.~\v{S}ileikis and L.~Warnke.
\newblock Upper tail bounds for stars.
\newblock {\em Electron. J. Combin.}, 27(1):Paper No. 1.67, 23, 2020.

\bibitem{Yan:NLDT}
J.~Yan.
\newblock Nonlinear large deviations: beyond the hypercube.
\newblock {\em Ann. Appl. Probab.}, 30(2):812--846, 2020.

\bibitem{Zhao:LT}
Y.~Zhao.
\newblock On the lower tail variational problem for random graphs.
\newblock {\em Combin. Probab. Comput.}, 26(2):301--320, 2017.

\end{thebibliography}

\end{document}